\numberwithin{equation}{subsection}
\theoremstyle{plain}
\newtheorem{theorem}[equation]{Theorem}
\newtheorem{proposition}[equation]{Proposition}
\newtheorem{lemma}[equation]{Lemma}
\newtheorem{corollary}[equation]{Corollary}
\newtheorem*{rep@theorem}{\rep@title}
\newcommand{\newreptheorem}[2]{%
\newenvironment{rep#1}[1]{%
 \def\rep@title{#2 \ref{##1}}%
 \begin{rep@theorem}}%
 {\end{rep@theorem}}}
\theoremstyle{definition}
\newtheorem{definition}[equation]{Definition}
\newtheorem{remark}[equation]{Remark}
\let\scr=\mathcal
\let\phi=\varphi
\let\into=\hookrightarrow
\let\onto=\twoheadrightarrow
\def\AA{\scr A}
\def\BB{\scr B}
\def\CC{\scr C}
\def\LL{\scr L}
\def\RR{\scr R}
\def\SS{\scr S}
\def\AAA{\widehat{\AA}}
\def\BBB{\widehat{\BB}}
\def\SSS{\widehat{\SS}}
\def\SSSS{\hathat{\SS}}
\def\BBBB{\hathat{\BB}}
\def\bU{\mathbf{U}}
\def\bV{\mathbf{V}}
\def\bW{\mathbf{W}}
\DeclareMathOperator{\id}{id}
\DeclareMathOperator{\ev}{ev}
\DeclareMathOperator{\PSh}{PSh}
\DeclareMathOperator{\IPSh}{\mathsf{PSh}}
\DeclareMathOperator{\Cat}{Cat}
\DeclareMathOperator{\ICat}{\mathsf{Cat}}
\DeclareMathOperator{\Shv}{Sh}
\DeclareMathOperator{\Cart}{Cart}
\DeclareMathOperator{\Cocart}{Cocart}
\DeclareMathOperator{\ICocart}{\mathsf{Cocart}}
\DeclareMathOperator{\ICart}{\mathsf{Cart}}
\DeclareMathOperator{\RFib}{RFib}
\DeclareMathOperator{\LFib}{LFib}
\DeclareMathOperator{\ILFib}{\mathsf{LFib}}
\DeclareMathOperator{\IRFib}{\mathsf{RFib}}
\DeclareMathOperator{\LTop}{LTop}
\DeclareMathOperator{\Set}{Set}
\DeclareMathOperator{\Sub}{Sub}
\DeclareMathOperator{\Tw}{Tw}
\DeclareMathOperator{\Fun}{Fun}
\DeclareMathOperator{\Map}{map}
\DeclareMathOperator{\Grpd}{Grpd}
\DeclareMathOperator{\const}{const}
\DeclareMathOperator{\diag}{diag}
\DeclareMathOperator{\pr}{pr}
\DeclareMathOperator{\res}{res}
\DeclareMathOperator{\lift}{lift}
\DeclareMathOperator{\Prop}{Prp}
\DeclareMathOperator{\IProp}{\mathsf{Prp}}
\DeclareMathOperator{\mSmooth}{\mathsf{Sm}}
\DeclareMathOperator{\St}{St}
\DeclareMathOperator{\Un}{Un}
\DeclareMathOperator{\Nec}{Nec}
\DeclareMathOperator{\Sk}{Sk}
\newcommand{\op}{\mathrm{op}}
\newcommand{\core}{\simeq}
\newcommand{\gp}{\mathrm{gpd}}
\newcommand{\ord}[1]{\langle#1\rangle}
\newcommand{\map}[1]{\Map_{#1}}
\newcommand{\Over}[2]{#1_{\hspace{-1pt}/#2}}
\newcommand{\Under}[2]{#1_{\hspace{-1pt}#2/}}
\newcommand{\sslash}{\mathbin{/\mkern-6mu/}}
\newcommand{\I}[1]{\mathsf{#1}}
\renewcommand{\smallint}{\textstyle\int}
\newcommand{\iFun}[2]{{[#1,#2]}}
\newcommand{\Comma}[3]{{#1}\downarrow_{#2}{#3}}
\newcommand{\Simp}[1]{#1_{\Delta}}
\newcommand{\mSimp}[1]{#1_{\Delta}^+}
\newcommand{\LaxUnder}[2]{#1_{\hspace{-1pt}#2\sslash}}
\newcommand{\CatS}{\Cat_{\infty}}
\newcommand{\CatSS}{\widehat{\Cat}_\infty}
\newcommand{\cc}{\text{\normalfont{cc}}}
\newcommand{\cocont}[1]{{#1\text{\normalfont{-cc}}}}
\NewDocumentCommand{\Gen}{m o}{%
	\IfNoValueTF{#2}{%
		\langle #1\rangle%
	}{%
		\langle #1\rangle_{#2}%
	}%
}
\NewDocumentCommand{\Univ}{o}{%
	\IfNoValueTF{#1}{%
		\I{\Omega}%
	}{%
		\I{\Omega}_{#1}%
	}%
}
\NewDocumentCommand{\UnivHat}{o}{%
	\IfNoValueTF{#1}{%
		\widehat{\I{\Omega}}%
	}{%
		\widehat{\I{\Omega}}_{#1}%
	}%
}
\let\lim=\relax
\DeclareMathOperator*{\lim}{lim}
\DeclareMathOperator*{\colim}{colim}
\g@addto@macro\bfseries{\boldmath}
\newcommand{\hathatInternal}[2]{%
	\begingroup%
	\let\macc@kerna\z@%
	\let\macc@kernb\z@%
	\let\macc@nucleus\@empty%
	\widehat{\raisebox{#2}{\vphantom{\ensuremath{#1}}}\smash{\widehat{#1}}}%
	\endgroup%
}
\newcommand{\hathat}[1]{\mathchoice
	{\hathatInternal{#1}{.2ex}}
	{\hathatInternal{#1}{.2ex}}
	{\hathatInternal{#1}{-1.5pt}}
	{\hathatInternal{#1}{1pt}}
}
\title{Cocartesian fibrations and straightening internal to an $\infty$-topos}
\author{Louis Martini}
\address{Norwegian University of Science and Technology (NTNU)\\
Alfred Getz' vei 1\\
7034 Trondheim\\
Norway}
\email{\href{mailto:louis.o.martini@ntnu.no}{louis.o.martini@ntnu.no}}
\date{\today}
\begin{document}
\begin{abstract}
We define and study cartesian and cocartesian fibrations between categories internal to an $\infty$-topos and prove a straightening equivalence in this context.
\end{abstract}
\maketitle
\setcounter{tocdepth}{1}
\tableofcontents

\section{Introduction}

\subsection*{Motivation}
One of the fundamental results in higher category theory is Lurie's \emph{straightening theorem}~\cite{htt}, which provides an equivalence $\Fun(\CC,\CatS)\simeq\Cocart(\CC)$ between the $\infty$-category of $\CatS$-valued functors on an $\infty$-category $\CC$ and that of \emph{cocartesian fibrations} over $\CC$. This theorem generalises Grothendieck's classical result on the equivalence between pseudo-functors from a $1$-category into the $2$-category of $1$-categories and Grothendieck opfibrations over that $1$-category. As it is notoriously challenging to directly construct a functor $\CC\to\CatS$ due to the infinite tower of coherence conditions, the straightening theorem provides an indispensable tool for the study of such functors. It is therefore not surprising that this result is (sometimes implicitly) present throughout higher category theory. For example, it has been used by Lurie to define and study adjunctions between $\infty$-categories, limits and colimits of $\infty$-categories as well as to develop the theory of $\infty$-operads and monoidal $\infty$-categories~\cite{htt, Lurie2017}.

This paper is the third in a series in which we aim to develop the theory of higher categories \emph{internal} to an arbitrary $\infty$-topos $\BB$. An internal higher category can be defined as a certain simplicial object in $\BB$ that satisfies the Segal conditions and univalence. Equivalently, one can view these objects as sheaves of $\infty$-categories on $\BB$.  As such, they arise in various different contexts, for example in the form of (higher) categorical invariants in algebraic geometry or topology such as the (un)stable motivic homotopy $\infty$-category of a scheme. From a different point of view, many relative constructions in higher category theory can be realised as internal categories. For example, presentable $\BB$-modules give rise to presentable categories internal to $\BB$, and $\infty$-topoi over $\BB$ can be realised as internal topoi in $\BB$. One can therefore employ techniques from internal higher category theory to study these objects.

 In~\cite{Martini2021}, we have already set up the basic language of higher categories internal to an $\infty$-topos $\BB$, referred to hereafter as \emph{$\BB$-categories}. Notably, we defined left and right fibrations between $\BB$-categories and proved that these are in correspondence with internal functors into the \emph{universe} $\Univ$, which is the internal analogue of the $\infty$-category of spaces. We used this result to prove Yoneda's lemma for $\BB$-categories. In joint work with Sebastian Wolf, we continued our study of internal higher categories in~\cite{Martini2021a}, where we developed a few basic tools such as the theory of adjunctions, limits and colimits as well as Kan extensions for $\BB$-categories. We also constructed the (large) $\BB$-category $\ICat_{\BB}$ of small $\BB$-categories. 
 
 The main goal of this text is to establish a straightening theorem in the context of higher categories \emph{internal} to an arbitrary $\infty$-topos $\BB$. Our central result can be formulated as follows:
\begin{reptheorem}{thm:StraighteningEquivalence}
	For every $\BB$-category $\I{C}$, there is an adjoint equivalence
	\begin{equation*}
	(\Un_{\I{C}}\dashv\St_{\I{C}})\colon \ICocart_{\I{C}}\simeq\iFun{\I{C}}{\ICat_{\BB}}
	\end{equation*}
	between the $\BB$-category of cocartesian fibrations over $\I{C}$ and the $\BB$-category of $\ICat_{\BB}$-valued functors on $\I{C}$.
\end{reptheorem}
Our strategy for the proof of Theorem~\ref{thm:StraighteningEquivalence} is to build upon the equivalence $\ILFib_{\I{C}}\simeq\iFun{\I{C}}{\Univ}$ between left fibrations over $\I{C}$ and $\Univ$-valued functors that we established in~\cite{Martini2021}, a strategy that has previously been outlined by Vladimir Hinich~\cite{Hinich2017} as a proof of the straightening equivalence for $\infty$-categories. By definition of the $\BB$-category $\ICat_{\BB}$, the functor $\BB$-category $\iFun{\I{C}}{\ICat_{\BB}}$ is a full subcategory of the $\BB$-category $\iFun{\Delta^{\op}}{\iFun{\I{C}}{\Univ}}$ of simplicial objects in $\iFun{\I{C}}{\Univ}$. Therefore, one may idenfity functors $\I{C}\to\ICat_{\BB}$ with certain simplicial objects in the $\BB$-category $\ILFib_{\I{C}}$ of left fibrations over $\I{C}$. We will show that cocartesian fibrations over $\I{C}$ are \emph{powered} over $\Delta$, i.e.\ that there is a functor $(-)^{\Delta^{\bullet}}\colon \Delta^{\op}\times\ICocart_{\I{C}}\to\ICocart_{\I{C}}$. Moreover, there is an inclusion $\ILFib_{\I{C}}\into\ICocart_{\I{C}}$ that admits a right adjoint $(-)_\sharp$ which carries a cocartesian fibration $\I{P}\to\I{C}$ to the underlying left fibration that is spanned by the cocartesian morphisms in $\I{P}$. This allows us to explicitly define the straightening functor $\St_{\I{C}}=(-)^{\Delta^{\bullet}}_\sharp\colon \ICocart_{\I{C}}\to\iFun{\Delta^{\op}}{\ILFib_{\I{C}}}$. Conversely, the $\BB$-category of cocartesian fibrations over $\I{C}$ is also \emph{tensored} over $\Delta$ in the form of a functor $\Delta^{\bullet}\otimes -\colon \Delta\times\ICocart_{\I{C}}\to\ICocart_{\I{C}}$. By making use of the universal property of presheaf $\BB$-categories that was established in~\cite{Martini2021a}, we may therefore define the \emph{unstraightening} functor $\Un_{\I{C}}\colon \iFun{\Delta^{\op}}{\iFun{\I{C}}{\Univ}}\simeq\IPSh_{\Univ}(\Delta\times\I{C}^{\op})\to\ICocart_{\I{C}}$ as the left Kan extension of the functor $\Delta^{\bullet}\otimes\Under{\I{C}}{-}\colon\Delta^{\op}\times\I{C}^{\op}\to\ICocart_{\I{C}}$ along the Yoneda embedding $h_{\Delta\times\I{C}^{\op}}$. By construction, the unstraightening functor is left adjoint to the straightening functor. We will complete our argument by showing that this adjunction is natural in $\I{C}$ in the appropriate sense, so that we can reduce to the case $\I{C}=1$, in which case the desired result follows trivially.

In the special case where $\BB$ is the $\infty$-topos of spaces, Theorem~\ref{thm:StraighteningEquivalence} recovers Lurie's straightening equivalence. One can therefore regard our proof of Theorem~\ref{thm:StraighteningEquivalence} as another approach to the straightening equivalence, complementing existing proofs such as Lurie's original account in~\cite{htt} and the more recent approaches by Boavida de Brito~\cite{BoavidadeBrito2018}, Nuiten~\cite{Nuiten2021}, Hebestreit-Heuts-Ruit~\cite{Hebestreit2021} and Rasekh~\cite{Rasekh2021b}.

The straightening equivalence for $\BB$-categories provides a versatile tool for the study of $\ICat_{\BB}$-valued functors in internal higher category theory. For example, the fibrational point of view can be used to characterise adjunctions between $\BB$-categories and to develop formulas for the (co)limit of a diagram in $\ICat_{\BB}$. We will discuss both applications in the last chapter of this paper. Further down the line, the straightening equivalence will become indispensable for the study of internal topos theory: in order to define the notion of \emph{descent} for $\BB$-topoi, it will be crucial to be able to define the internal presheaf of $\BB$-categories that classifies the codomain fibration over a given $\BB$-category. The study of internal topos theory, however, is beyond the scope of this paper and will be the content of upcoming work.

\subsection*{Structure of the paper}
The first part of this paper is devoted to the definition and study of cocartesian fibrations between $\BB$-categories. In \S~\ref{sec:CocartesianFibrations}, we define such cocartesian fibrations via an internal analogue of what is sometimes known as the \emph{Chevalley criterion} in (higher) category theory (see for example~\cite[Theorem~5.2.8]{Riehl2022}). Moreover, we study the concept of cocartesian morphisms in this context and show that a cocartesian fibration can be characterised by the existence of a sufficient amount of such cocartesian morphisms in the domain.

In \S~\ref{sec:markedModel}, we establish an internal analogue of Lurie's \emph{marked model stucture} for cocartesian fibrations. We define the marked simplex category $\Delta_+$ and study the $\infty$-topos $\mSimp\BB$ of marked simplicial objects in $\BB$, i.e.\ of $\BB$-valued presheaves on $\Delta_+$ . The benefit of passing to marked simplicial objects is that cocartesian fibrations are determined by a factorisation system in $\mSimp\BB$, which enables us to make use of the many desirable properties of factorisation systems in an $\infty$-topos to deepen our study of cocartesian fibrations. This already comes in handy when we define and study the $\BB$-category $\ICocart_{\I{C}}$ of cocartesian fibrations over a $\BB$-category $\I{C}$ in \S~\ref{sec:CategoryOfCocartesianFibrations}.

In \S~\ref{sec:SU}, we set up and discuss the straightening and unstraightening functors, which culminates in the proof of our main theorem. We complement this result with a study of the \emph{universal} cocartesian fibration, which helps us understand how the internal straightening of a cocartesian fibration is related to the straightening of the underlying cocartesian fibration of $\infty$-categories that is obtained by passing to global sections. Lastly, we investigate the special case of cocartesian fibrations over the interval $\Delta^1$ and how these can be used to characterise adjunctions between $\BB$-categories.

We conclude this paper by briefly mentioning two applications of the straightening equivalence in~\S~\ref{sec:app}. The first application gives a formula for the limit and colimit of $\ICat_{\BB}$-valued diagrams in terms of the associated cocartesian fibrations. In the second application, we use our knowledge of cocartesian fibrations over the interval to establish that passing from a right adjoint functor to its left adjoint (and vice versa) constitutes an equivalence between the $\BB$-category of $\BB$-categories with right adjoint functors and that of $\BB$-categories with left adjoint functors.

\subsection*{Related work}
Our strategy for the proof of the straightening equivalence is an adaptation of a proof that was sketched by Hinich in his lecture notes on $\infty$-categories~\cite{Hinich2017}. Notably, we adopted his construction of the straightening and unstraightening functors. However, Hinich did not provide details for his claim that this construction commutes with base change. A substantial portion of the present paper is devoted to a proof of this claim.

We have already mentioned above that by now there exist several proofs for the $\infty$-categorical straightening equivalence~\cite{htt, BoavidadeBrito2018, Nuiten2021, Hebestreit2021, Rasekh2021b}. In~\cite{Shah2018}, Jay Shah builds upon this result to derive a straightening equivalence for \emph{parametrised} higher categories. In our language, this corresponds to straightening internal to presheaf $\infty$-topoi.

Buchholtz and Weinberger~\cite{Buchholtz2021} developed the theory of cocartesian fibrations in the framework of synthetic $\infty$-category theory, which is an extension of homotopy type theory that makes it possible to study higher categories from a type theoretic point of view. As homotopy type theory admits semantics in arbitrary $\infty$-topoi, their results recover most of what is covered in \S~\ref{sec:CocartesianFibrations}.

\subsection*{Acknowledgments}
I would like express my gratitude to Rune Haugseng for his advice and support and for countless helpful discussions throughout the process of writing this paper. I also thank Sebastian Wolf for sharing his valuable insight during many discussions on the subject. Finally, I thank Bastiaan Cnossen for his comments on an earlier version of this manuscript.

\section{Preliminaries}

\subsection{General conventions and notation}
We generally follow the conventions and notation from~\cite{Martini2021} and~\cite{Martini2021a}. For the convenience of the reader, we will briefly recall the main setup. 

Throughout this paper we freely make use of the language of higher category theory. We will generally follow a model-independent approach to higher categories. This means that as a general rule, all statements and constructions that are considered herein will be invariant under equivalences in the ambient $\infty$-category, and we will always be working within such an ambient $\infty$-category.

We denote by $\Delta$ the simplex category, i.e.\ the category of non-empty totally ordered finite sets with order-preserving maps. Every natural number $n\in\mathbb N$ can be considered as an object in $\Delta$ by identifying $n$ with the totally ordered set $\ord{n}=\{0,\dots n\}$. For $i=0,\dots,n$ we denote by $\delta^i\colon \ord{n-1}\to \ord{n}$ the unique injective map in $\Delta$ whose image does not contain $i$. Dually, for $i=0,\dots n$ we denote by $\sigma^i\colon \ord{n+1}\to \ord{n}$ the unique surjective map in $\Delta$ such that the preimage of $i$ contains two elements. Furthermore, if $S\subset n$ is an arbitrary subset of $k$ elements, we denote by $\delta^S\colon \ord{k}\to \ord{n}$ the unique injective map in $\Delta$ whose image is precisely $S$. In the case that $S$ is an interval, we will denote by $\sigma^S\colon \ord{n}\to \ord{n-k}$ the unique surjective map that sends $S$ to a single object. If $\CC$ is an $\infty$-category, we refer to a functor $C\colon\Delta^{\op}\to\CC$ as a simplicial object in $\CC$. We write $C_n$ for the image of $n\in\Delta$ under this functor, and we write $d_i$, $s_i$, $d_S$ and $s_S$ for the image of the maps $\delta^i$, $\sigma^i$, $\delta^S$ and $\sigma^S$ under this functor. Dually, a functor $C^{\bullet}\colon \Delta\to\CC$ is referred to as a cosimplicial object in $\CC$. In this case we denote the image of $\delta^i$, $\sigma^i$, $\delta^S$ and $\sigma^S$ by $d^i$, $s^i$, $d^S$ and $\sigma^S$.

The $1$-category $\Delta$ embeds fully faithfully into the $\infty$-category of $\infty$-categories by means of identifying posets with $0$-categories and order-preserving maps between posets with functors between such $0$-categories. We denote by $\Delta^n$ the image of $n\in\Delta$ under this embedding.

\subsection{Set-theoretical foundations}
Once and for all we will fix three Grothendieck universes $\bU\in\bV\in\bW$ that contain the first infinite ordinal $\omega$. A set is \emph{small} if it is contained in $\bU$, \emph{large} if it is contained in $\bV$ and \emph{very large} if it is contained in $\bW$. An analogous naming convention will be adopted for $\infty$-categories and $\infty$-groupoids. The large $\infty$-category of small $\infty$-groupoids is denoted by $\SS$, and the very large $\infty$-category of large $\infty$-groupoids by $\SSS$. The (even larger) $\infty$-category of very large $\infty$-groupoids will be denoted by $\SSSS$. Similarly, we denote the large $\infty$-category of small $\infty$-categories by $\CatS$ and the very large $\infty$-category of large $\infty$-categories by $\CatSS$. We shall not need the $\infty$-category of very large $\infty$-categories in this article.

\subsection{$\infty$-topoi}
For $\infty$-topoi $\AA$ and $\BB$, a \emph{geometric morphism} is a functor $f_\ast\colon \BB\to \AA$ that admits a left exact left adjoint, and an \emph{algebraic morphism} is a left exact functor $f^\ast\colon \AA\to \BB$ that admits a right adjoint. The \emph{global sections} functor is the unique geometric morphism $\Gamma_{\BB}\colon \BB\to \SS$ into the $\infty$-topos of $\infty$-groupoids $\SS$. Dually, the unique algebraic morphism originating from $\SS$ is denoted by $\const_{\BB}\colon \SS\to \BB$ and referred to as the \emph{constant sheaf} functor. We will often omit the subscripts if they can be inferred from the context.
For an object $A \in \BB$, we denote the induced étale geometric morphism by $(\pi_A)_\ast \colon \BB_{/A} \rightarrow \BB$.

\subsection{Universe enlargement} 
If $\BB$ is an $\infty$-topos, we define its \emph{universe enlargement} $\BBB=\Shv_{\SSS}(\BB)$ as the $\infty$-category of $\SSS$-valued sheaves on $\BB$, i.e.\ of those functors $\BB^{\op}\to \SSS$ that preserve small limits; this is an $\infty$-topos relative to the larger universe $\bV$. Moreover, the Yoneda embedding gives rise to an inclusion $\BB\into\BBB$ that commutes with small limits and colimits and with taking internal mapping objects. The operation of enlarging universes is transitive: when defining the $\infty$-topos $\BBBB$ relative to $\bW$ as the universe enlargement of $\BBB$ with respect to the inclusion $\bV\in\bW$, the $\infty$-category $\BBBB$ is equivalent to the universe enlargement of $\BB$ with respect to $\bU\in\bW$.

\subsection{Factorisation systems} 
If $\CC$ is a presentable $\infty$-category and if $S$ is a small set of maps in $\CC$, there is a unique factorisation system $(\LL,\RR)$ in which a map is contained in $\RR$ if and only if it is \emph{right orthogonal} to the maps in $S$, and where $\LL$ is dually defined as the set of maps that are left orthogonal to the maps in $\RR$. We refer to $\LL$ as the \emph{saturation} of $S$; this is the smallest set of maps containing $S$ that is stable under pushouts, contains all equivalences and is stable under small colimits in $\Fun(\Delta^1,\CC)$. An object $c\in\CC$ is said to be \emph{$S$-local} if the unique morphism $c\to 1$ is contained in $\RR$. 
    
If $\CC$ is cartesian closed, one can analogously construct a factorisation system $(\LL^\prime,\RR^\prime)$ in which $\RR^\prime$ is the set of maps in $\BB$ that are \emph{internally} right orthogonal to the maps in $S$. Explicitly, a map is contained in $\RR^\prime$ if and only if it is right orthogonal to maps of the form $s\times \id_c$ for any $s\in S$ and any $c\in \CC$. The left complement $\LL^\prime$ consists of those maps in $\CC$ that are left orthogonal to the maps in $\RR^\prime$ and is referred to as the \emph{internal} saturation of $S$. Equivalently, $\LL^\prime$ is the saturation of the set of maps $s\times\id_c$ for $s\in S$ and $c\in\CC$. An object $c\in\CC$ is said to be \emph{internally $S$-local} if the unique morphism $c\to 1$ is contained in $\RR^\prime$. 
    
Given any factorisation system $(\LL,\RR)$ in $\CC$ in which $\LL$ is the saturation of a small set of maps in $\CC$, the inclusion $\RR\into\Fun(\Delta^1,\CC)$ admits a left adjoint that carries a map $f\in\Fun(\Delta^1,\CC)$ to the map $r\in\RR$ that arises from the unique factorisation $f\simeq rl$ into maps $l\in \LL$ and $r\in \RR$. By taking fibres over an object $c\in\CC$, one furthermore obtains a reflective subcategory $\Over{\RR}{c}\leftrightarrows\Over{\CC}{c}$ such that if $f\colon d\to c$ is an object in $\Over{\CC}{c}$ and if $f\simeq rl$ is its unique factorisation into maps $l\in \LL$ and $r\in \RR$, the adjunction unit is given by $l$.

\subsection{Recollection on $\BB$-categories}
\label{sec:recollection}

In this section we recall the basic framework of higher category theory internal to an $\infty$-topos from~\cite{Martini2021} and~\cite{Martini2021a}.
\begin{description}
    \item[Simplicial objects] If $\BB$ is an $\infty$-topos, we denote by $\Simp\BB=\Fun(\Delta^{\op},\BB)$ the $\infty$-topos of \emph{simplicial objects} in $\BB$. By precomposition with the global sections and the constant sheaf functor, one obtains an adjunction $\const_{\BB}\dashv \Gamma_{\BB}\colon \Simp\BB\leftrightarrows\Simp\SS$. We will often implicitly identify a simplicial $\infty$-groupoid $K$ with its image in $\Simp\BB$ along $\const_{\BB}$.
    
    \item[$\BB$-categories] A \emph{$\BB$-category} is a simplicial object $\I{C}\in\Simp\BB$ that is internally local with respect to $I^2\into\Delta^2$ (Segal conditions) and $E^1\to 1$ (univalence). Here $I^2=\Delta^1\sqcup_{\Delta^0}\Delta^1\into\Delta^2$ is the inclusion of the $2$-spine, and $E^1=\Delta^3\sqcup_{\Delta^1\sqcup\Delta^1}(\Delta^0\sqcup\Delta^0)$ is the walking equivalence. One obtains a reflective subcategory $\Cat(\BB)\into\Simp\BB$ in which the left adjoint commutes with finite products. In particular, $\Cat(\BB)$ is cartesian closed, and we denote by $\iFun{-}{-}$ the internal mapping bifunctor. We refer to the maps in $\Cat(\BB)$ as \emph{functors} between $\BB$-categories.
    \item[$\BB$-groupoids] A \emph{$\BB$-groupoid} is a simplicial object $\I{G}\in\Simp\BB$ that is internally local with respect to $s^0\colon\Delta^1\to\Delta^0$. A simplicial object in $\BB$ is a $\BB$-groupoid if and only if it is contained in the essential image of the diagonal embedding $\BB\into \Simp\BB$. Every $\BB$-groupoid is a $\BB$-category. Therefore, one obtains a full subcategory $\BB\simeq \Grpd(\BB)\into\Cat(\BB)$ that admits both a left adjoint $(-)^{\gp}$ and a right adjoint $(-)^\core$. We refer to the left adjoint as the \emph{groupoidification functor} and to the right adjoint as the \emph{core $\BB$-groupoid functor}. Explicitly, if $\I{C}$ is a $\BB$-category, one has $\I{C}^\gp \simeq \colim_{\Delta^{\op}}\I{C}$ and $\I{C}^{\simeq}\simeq \I{C}_0$.
    
    \item[Base change] If $f_\ast\colon \BB\to\AA$ is a geometric morphism and if $f^\ast$ is the associated algebraic morphism, postcomposition induces an adjunction $f_\ast\dashv f^\ast\colon \Cat(\BB)\leftrightarrows\Cat(\AA)$. If $f_\ast$ is furthermore \emph{\'etale}, the further left adjoint $f_!$ also induces a functor $f_!\colon \Cat(\BB)\to\Cat(\AA)$ that identifies $\Cat(\BB)$ with $\Over{\Cat(\AA)}{f_! 1}$. In particular, one obtains an adjunction $\const_{\BB}\dashv \Gamma_{\BB}\colon \Cat(\BB)\leftrightarrows \CatS$. We will often implicitly identify an $\infty$-category $\CC$ with the associated \emph{constant $\BB$-category} $\const_{\BB}(\CC)\in\Cat(\BB)$.
    
    \item[Tensoring and powering] One defines bifunctors
    \begin{align}
        \Fun_{\BB}(-,-)=\Gamma_{\BB}\circ\iFun{-}{-} \tag{Functor $\infty$-category} \\
        (-)^{(-)}=\iFun{\const_{\BB}(-)}{-} \tag{Powering}\\
        -\otimes - = \const_{\BB}(-)\times - \tag{Tensoring}
    \end{align}
    which fit into equivalences
    \begin{equation*}
        \map{\Cat(\BB)}(-\otimes -, -)\simeq \map{\CatS}(-,\Fun_{\BB}(-,-))\simeq\map{\Cat(\BB)}(-, (-)^{(-)}).
    \end{equation*}
    There is moreover an equivalence of functors $\id_{\Cat(\BB)}\simeq ((-)^{\Delta^\bullet})^\simeq$. In other words, for any $\BB$-category $\I{C}$ and any integer $n\geq 0$ one may canonically identify $\I{C}_n\simeq (\I{C}^{\Delta^n})_0$.
    
    \item[Sheaves of $\infty$-categories] $\BB$-categories are equivalently given by $\CatS$-valued sheaves on $\BB$: There is a canonical equivalence $\Cat(\BB)\simeq \Shv_{\CatS}(\BB)$ that is natural in $\BB$. Explicitly, this equivalence sends $\I{C}\in\Cat(\BB)$ to the sheaf $\Fun_{\BB}(-,\I{C})$ on $\BB$. We will often implicitly identify $\BB$-categories with their associated $\CatS$-valued sheaves on $\BB$. For example, if $\I{C}$ is a $\BB$-category, we will write $\I{C}(A)=\Fun_{\BB}(A,\I{C})$ for the $\infty$-category of local sections over $A\in\BB$. If $s \colon A \rightarrow B$ is a morphism in $\BB$, we write $s^\ast\colon \I{C}(B)\to\I{C}(A)$ for the associated map in $\CatS$.
    
    \item[Large $\BB$-categories] Postcomposition with the universe enlargement $\BB\into\BBB$ determines an inclusion $\Cat(\BB)\into\Cat(\BBB)$ that corresponds to the inclusion $\Shv_{\CatS}(\BB)\into\Shv_{\CatSS}(\BB)$ on the level of sheaves on $\BB$. Either inclusion is furthermore natural in $\BB$. We refer to the objects in $\Cat(\BBB)$ as \emph{large} $\BB$-categories (or as $\BBB$-categories) and to the objects in $\Cat(\BB)$ as \emph{small} $\BB$-categories. If not specified otherwise, every $\BB$-category is small. Note, however, that by replacing the universe $\bU$ with the larger universe $\bV$ (i.e.\ by working internally to $\BBB$), every statement about $\BB$-categories carries over to one about large $\BB$-categories as well. Also, we will sometimes omit specifying the relative size of a $\BB$-category if it is evident from the context.
    
    \item[Objects and morphisms] An object of a $\BB$-category $\I{C}$ is a local section $c\colon A\to\I{C}$ where $A\in\BB$ is called the \emph{context} of $c$. A morphism in $\I{C}$ is given by a local section $f\colon A\to \I{C}^{\Delta^1}$. Given objects $c,d\colon A\rightrightarrows \I{C}$, one defines the $\BB$-groupoid $\map{\I{C}}(c,d)$ of morphisms between $c$ and $d$ as the pullback
    \begin{equation*}
        \begin{tikzcd}
            \map{\I{C}}(c,d)\arrow[r]\arrow[d] & \I{C}_1\arrow[d, "{(d^1,d^0)}"]\\
            A\arrow[r, "{(c,d)}"]& \I{C}_0\times\I{C}_0.
        \end{tikzcd}
    \end{equation*}
    We denote a section $f\colon A\to \map{\I{C}}(c,d)$ by $f\colon c\to d$. A map $f\colon A\to \I{C}^{\Delta^1}$ is an equivalence if it factors through $s_0\colon \I{C}\into\I{C}^{\Delta^1}$. For any object $c\colon A\to \I{C}$ there is a canonical equivalence $\id_c\colon c\to c$ that is determined by the lift $s_0 c\colon A\to \I{C}_0\to\I{C}_1$ of $(c,c)\colon A\to \I{C}_0\times\I{C}_0$.
    
    Viewed as an $\SS$-valued sheaf on $\Over{\BB}{A}$, the object $\map{\I{C}}(c,d)$ is given by the assignment
    \begin{equation*}
        \Over{\BB}{A}\ni(s\colon B\to A)\mapsto \map{\I{C}(B)}(s^\ast c,s^\ast d)
    \end{equation*}
    where $s^\ast c= c s$ and likewise for $d$.
    
    \item[Fully faithful functors] A functor $f\colon \I{C}\to\I{D}$ between $\BB$-categories is said to be fully faithful if it is internally right orthogonal to the map $\Delta^0\sqcup\Delta^0\to \Delta^1$. Fully faithful functors are monomorphisms, hence the full subcategory $\Sub^{\mathrm{full}}(\I{D})\into\Over{\Cat(\BB)}{\I{D}}$ that is spanned by the fully faithful functors into $\I{D}$ is a poset whose objects we call \emph{full subcategories} of $\I{D}$. Taking core $\BB$-groupoids yields an equivalence $\Sub^{\mathrm{full}}(\I{D})\simeq \Sub(\I{D}_0)$ between the poset of full subcategories of $\I{D}$ and the poset of subobjects of $\I{D}_0\in\BB$.
    
    Dually, a functor $f\colon \I{C}\to\I{D}$ between $\BB$-categories is essentially surjective if contained in the internal saturation of the map $\Delta^0\sqcup\Delta^0\to \Delta^1$. This turns out to be the case if and only if $f_0\colon \I{C}_0\to\I{D}_0$ is a cover in $\BB$.
    
 \item[The universe] The $\CatSS$-valued sheaf $\Over{\BB}{-}$ on $\BB$ gives rise to a large $\BB$-category $\Univ[\BB]$ that we refer to as the \emph{universe for $\BB$-groupoids}. We will often omit the subscript if it is clear from the context. 
    By definition, the objects of $\Univ$ in context $A\in\BB$ are in bijection with the $\Over{\BB}{A}$-groupoids. Moreover, the mapping $\Over{\BB}{A}$-groupoid between two such objects can be identified with the internal mapping object of the $\infty$-topos $\Over{\BB}{A}$. Also, the poset of full subcategories of the universe $\Univ$ (i.e.\ of \emph{subuniverses}) is canonically equivalent to the poset of \emph{local classes} of morphisms in $\BB$.
    
    There is a fully faithful functor $\Univ[\BB]\into\Univ[\BBB]$ of $\BBBB$-categories ( we will call these \emph{very large} $\BB$-categories) that corresponds to the inclusion $\Over{\BB}{-}\into\Over{\BBB}{-}$. An object $g\colon A\to \Univ[\BBB]$ in context $A\in\BBB$ is contained in $\Univ[\BB]$ if and only if the associated map $P\to A\in \Over{\BBB}{A}$ is $\bU$-small, i.e.\ satisfies the condition that whenever $A^\prime\to A$ is a map in $\BBB$ where $A^\prime$ is contained in $\BB$, the fibre product $A^\prime \times_{A} P$ is contained in $\BB$ as well. 
    
    \item[The $\BB$-category of $\BB$-categories] The $\CatSS$-valued presheaf $\Cat(\Over{\BB}{-})$ defines a sheaf on $\BB$ and therefore a large $\BB$-category $\ICat_{\BB}$. The inclusion $\Over{\BB}{-}\into\Cat(\Over{\BB}{-})$ determines a fully faithful functor $\Univ\into\ICat_{\BB}$, and both the groupoidification functor and the core $\BB$-groupoid functor determine maps $(-)^\gp\colon\ICat_{\BB}\to\Univ$ and $(-)^\core\colon\ICat_{\BB}\to\Univ$. A full subcategory $\I{U}\into\ICat_{\BB}$ is referred to as an \emph{internal class} of $\BB$-categories.
    
    There is a fully faithful functor $\ICat_{\BB}\into\ICat_{\BBB}$ of very large $\BB$-categories that corresponds to the inclusion $\Over{\Cat(\BB)}{-}\into\Over{\Cat(\BBB)}{-}$. An object $g\colon A\to \ICat_{\BBB}$ in context $A\in\BBB$ is contained in $\ICat_{\BB}$ if and only if the associated map $\I{P}\to A\in \Cat(\Over{\BBB}{A})$ is $\bU$-small, i.e.\ satisfies the condition that whenever $A^\prime\to A$ is a map in $\BBB$ where $A^\prime$ is contained in $\BB$, the fibre product $A^\prime \times_{A} P$ is contained in $\Cat(\Over{\BB}{A})$. 
    
    \item[Left fibrations] A functor $p\colon \I{P}\to\I{C}$ between $\BB$-categories is called a left fibration if it is internally right orthogonal to the map $d^1\colon \Delta^0\into\Delta^1$. A functor that is contained in the internal saturation of this map is said to be initial. One obtains a $\CatSS$-valued sheaf $\LFib$ on $\Cat(\BB)$. Given any $\BB$-category $\I{C}$, the large $\BB$-category that corresponds to the sheaf $\LFib(-\times \I{C})$ on $\BB$ is denoted by $\ILFib_{\I{C}}$.
    
    Dually, a functor $p\colon \I{P}\to\I{C}$ is a right fibration if it is internally right orthogonal to $d^0\colon \Delta^0\into\Delta^1$, and a functor which is contained in the internal saturation of this map is called final. The associated sheaf of $\infty$-categories on $\Cat(\BB)$ is denoted by $\RFib$, and for $\I{C}\in\Cat(\BB)$ one obtains a large $\BB$-category $\IRFib_{\I{C}}$ via the sheaf $\RFib(\I{C}\times -)$ on $\BB$.
    
    \item[Slice categories] For any $\BB$-category $\I{C}$ and any object $c\colon A\to \I{C}$, one defines the slice $\BB$-category $\Under{\I{C}}{c}$ via the pullback
    \begin{equation*}
        \begin{tikzcd}
            \Under{\I{C}}{c}\arrow[d, "(\pi_c)_!"]\arrow[r] & \I{C}^{\Delta^1}\arrow[d, "{(d^1,d^0)}"]\\
            A\times\I{C}\arrow[r, "{c\times\id}"] & \I{C}\times\I{C}.
        \end{tikzcd}
    \end{equation*}
    The map $(\pi_c)_!$ turns out to be a left fibration. The slice $\BB$-category $\Over{\I{C}}{c}$ is defined in the evident dual way, and the map $(\pi_c)_!\colon \Over{\I{C}}{c}\to \I{C}\times A$ is a right fibration.
    
    \item[Initial objects] An object $c\colon A\to \I{C}$ in a $\BB$-category is said to be initial if the associated map $1\to \pi_A^\ast\I{C}$ in $\Cat(\Over{\BB}{A})$ is an initial functor. Dually, $c$ is final if the associated map in $\Cat(\Over{\BB}{A})$ is a final functor. For any $c\colon A\to \I{C}$ the object $\id_{c}\colon A\to \Under{\I{C}}{c}$ is initial, and the object $\id_c\colon A\to \Over{\I{C}}{c}$ is final.
    \item[The Grothendieck construction]
    There is an equivalence
    \begin{equation*}
        \ILFib_{\I{C}}\simeq \iFun{\I{C}}{\Univ}
    \end{equation*}
    of large $\BB$-categories that is natural in $\I{C}$ and that we call the \emph{Grothendieck construction}. 
    If $1_{\Univ}\colon 1\to \Univ$ denotes the object that corresponds to the final object in $\BB$, the map $\Under{\Univ}{1_{\Univ}}\to\Univ$ is the \emph{universal} left fibration: any left fibration $p\colon \I{P}\to \I{C}$ between $\BB$-categories arises in a unique way as the pullback of the universal left fibration along the functor $f\colon \I{C}\to\Univ$ that corresponds to $p$ by the Grothendieck construction.
    
    Dually, there is an equivalence
        \begin{equation*}
        \IRFib_{\I{C}}\simeq \iFun{\I{C}^{\op}}{\Univ}
    \end{equation*}
    where $\I{C}^{\op}$ is the $\BB$-category that is obtained from $\I{C}$ by precomposing the underlying simplicial object with the equivalence $\op\colon \Delta\simeq\Delta$ that carries a linearly ordered set to its opposite. Accordingly, the map $(\Under{\Univ}{1})^{\op}\to \Univ^{\op}$ is the universal right fibration.
    
    \item[Mapping bifunctors]
    For any $\BB$-category $\I{C}$ one defines the twisted arrow $\BB$-category $\Tw(\I{C})$ by means of the formula $\Tw(\I{C})_n=(\I{C}^{(\Delta^n)^{\op}\diamond\Delta^n})_0$, where $-\diamond -$ denotes the join of $\infty$-categories. By construction this $\BB$-category admits a natural left fibration $\Tw(\I{C})\to\I{C}^{\op}\times\I{C}$. By the Grothendieck construction, this left fibration is classified by a bifunctor $\I{C}^\op\times\I{C}\to\Univ$ which we denote by $\map{\I{C}}$ and that we refer to as the \emph{mapping $\BB$-groupoid} bifunctor. The image of a pair of objects $(c,d)\colon A\to \I{C}^{\op}\times\I{C}$ recovers the mapping $\BB$-groupoid $\map{\I{C}}(c,d)\in\Over{\BB}{A}$. The bifunctor $\map{\I{C}}$ transposes to a functor $h\colon \I{C}\to \IPSh_{\Univ}(\I{C})=\iFun{\I{C}^{\op}}{\Univ}$ that is called the \emph{Yoneda embedding}.

    \item[Yoneda's lemma]
    For any $\BB$-category $\I{C}$, the composition
    \begin{equation*}
            \I{C}^{\op}\times\IPSh_{\Univ}(\I{C})\xrightarrow{"h\times \id} \IPSh_{\Univ}(\I{C}) \times  \IPSh_{\Univ}(\I{C}) \xrightarrow{\map{\IPSh_{\Univ}(\I{C})}} \Univ
    \end{equation*}
    is equivalent to the evaluation functor $\ev\colon \I{C}^{\op}\times\IPSh_{\Univ}(\I{C})\to\Univ$. In particular, this implies that the Yoneda embedding $h\colon \I{C}\to\IPSh_{\Univ}(\I{C})$ is fully faithful. An object $A\to \IPSh_{\Univ}(\I{C})$ is contained in $\I{C}$ if and only if the associated right fibration $p\colon \I{P}\to \I{C}\times A$ admits a final section $A\to \I{P}$ over $A$. If this is the case, one obtains an equivalence $\Over{\I{C}}{c}\simeq \I{P}$ over $\I{C}\times A$ where $c$ is the image of the final section $A\to \I{P}$ along the functor $\I{P}\to \I{C}$.
    
    \item[Adjunctions]
    The bifunctor $\Fun_{\BB}(-,-)$ exhibits $\Cat(\BB)$ as a $\CatS$-enriched $\infty$-category and therefore in particular as an $(\infty,2)$-category. One can therefore make sense of the usual $2$-categorical definition of an adjunction in $\Cat(\BB)$. Explicitly, an adjunction between $\BB$-categories $\I{C}$ and $\I{D}$ is a pair of functors $(l,r)\colon \I{C}\leftrightarrows\I{D}$ together with maps $\eta\colon\id_{\I{D}}\to rl$ and $\epsilon\colon lr\to\id_{\I{C}}$ that satisfy the \emph{triangle identities}: the compositions $\epsilon l\circ l\eta$ and $r\epsilon\circ\eta r$ are equivalent to the identity. The datum of such an adjunction corresponds precisely to a \emph{relative} adjunction between the cartesian fibrations $\int \I{C}\to\BB$ and $\int\I{D}\to \BB$ that are classified by (the underlying $\CatS$-valued presheaves of) $\I{C}$ and $\I{D}$. An alternative and equivalent approach to adjunctions is one that is given in terms of mapping $\BB$-groupoids: a pair $(l,r)$ as above defines an adjunction precisely if there is an equivalence of functors
    \begin{equation*}
    \map{\I{D}}(,-r(-))\simeq\map{\I{C}}(l(-),-).
    \end{equation*}
    In particular, a functor $r\colon\I{C}\to\I{D}$ admits a left adjoint if and only if the copresheaf $\map{\I{D}}(d,r(-))$ is corepresentable for every object $d\colon A\to\I{D}$.
    
    \item[Limits and colimits]
    If $\I{I}$ and $\I{C}$ are $\BB$-categories, the \emph{colimit} functor $\colim_{\I{I}}$ is defined to be the left adjoint of the diagonal map $\diag\colon\I{C}\to\iFun{\I{I}}{\I{C}}$, provided that it exists. Dually, the \emph{limit} functor $\lim_{\I{I}}$ is defined as the right adjoint of the diagonal map. Even if such a colimit or limit functor does not exist, one can define the colimit $\colim d$ of a diagram $d\colon A\to\iFun{\I{I}}{\I{C}}$ as a corepresenting object of the copresheaf $\map{\iFun{\I{I}}{\I{C}}}(d,\diag(-))$, and the limit $\lim d$ of the diagram $d$ as a representing object of the presheaf $\map{\iFun{\I{I}}{\I{C}}}(\diag(-), d)$.
    
    A functor $f\colon\I{C}\to\I{D}$ \emph{preserves} $\I{I}$-indexed colimits if the map $\colim_{\I{I}} f_\ast\to f\colim_{\I{I}}$ that is defined as the mate transformation of the equivalence $f_\ast \diag\simeq\diag f$ is an equivalence. Dually, $f$ preserves $\I{I}$-indexed limits if the map $f\lim_{\I{I}}\to\lim_{\I{I}}f_\ast$ is an equivalence.
    
    \item[$\I{U}$-Cocompleteness]
    If $\I{U}$ is an internal class of $\BB$-categories, a (large) $\BB$-category $\I{C}$ is \emph{$\I{U}$-cocomplete} if for every $A\in\BB$ the $\Over{\BB}{A}$-category $\pi_A^\ast\I{C}$ admits all colimits that are indexed by objects in $\I{U}(A)$. Similarly, a functor $f\colon\I{C}\to\I{D}$ between $\I{U}$-cocomplete large $\BB$-categories is $\I{U}$-cocontinuous if $\pi_A^\ast f$ preserves all such colimits. If $\I{U}=\ICat_{\BB}$, we simply refer to such $\BB$-categories and functors as being \emph{cocomplete} and \emph{cocontinuous}, respectively. The universe $\Univ$ is a cocomplete $\BB$-category, and so is $\IPSh_{\Univ}(\I{C})$ for every $\I{C}\in\Cat(\BB)$. Since $\ICat_{\BB}$ arises as a reflective subcategory of $\IPSh_{\Univ}(\Delta)$, this implies that $\ICat_{\BB}$ is cocomplete as well.
    
    \item[Kan extensions]
    If $f\colon\I{C}\to\I{D}$ is a functor in $\Cat(\BB)$ and if $\I{E}$ is an arbitrary $\BB$-category, the functor of left Kan extension $f_!\colon\iFun{\I{C}}{\I{E}}\to\iFun{\I{D}}{\I{E}}$ is defined as the left adjoint of the functor $f^\ast$ that is given by precomposition with $f$. Such a functor of left Kan extension exists for example whenever $\I{C}$ is small, $\I{D}$ is locally small and $\I{E}$ is (possibly large and) cocomplete.
    
    \item[The universal property of presheaves]
    The Yoneda embedding $h\colon\I{C}\into\IPSh_{\Univ}(\I{C})$ exhibits $\IPSh_{\Univ}(\I{C})$ as the \emph{universal} cocomplete $\BB$-category that admits a map from $\I{C}$. More precisely, for any cocomplete large $\BB$-category $\I{E}$, the functor $h_!$ of left Kan extension determines an equivalence
    \begin{equation*}
    h_!\colon\iFun{\I{C}}{\I{E}}\simeq\iFun{\IPSh_{\Univ}(\I{C})}{\I{E}}^{\cc}
    \end{equation*}
    where the right-hand side denotes the full subcategory of $\iFun{\IPSh_{\Univ}(\I{C})}{\I{E}}$ that is spanned by the cocontinuous functors. More generally, if $\I{U}$ is an internal class of $\BB$-categories, the full subcategory $\IPSh_{\Univ}^{\I{U}}(\I{C})\into\IPSh_{\Univ}(\I{C})$ that is generated by $\I{C}$ under $\I{U}$-colimits is the free $\I{U}$-cocompletion of $\I{C}$, in that for every $\I{U}$-cocomplete $\BB$-category $\I{E}$ the functor of left Kan extension along $h\colon\I{C}\into\IPSh_{\Univ}^{\I{U}}(\I{C})$ exists and determines an equivalence
    \begin{equation*}
        h_!\colon\iFun{\I{C}}{\I{E}}\simeq\iFun{\IPSh_{\Univ}^{\I{U}}(\I{C})}{\I{E}}^{\cocont{\I{U}}}
     \end{equation*}
     in which the right-hand side denotes the full subcategory of $\iFun{\IPSh_{\Univ}^{\I{U}}(\I{C})}{\I{E}}$ that is spanned by the $\I{U}$-cocontinuous functors.
    
\end{description}

\section{Cocartesian fibrations}
\label{sec:CocartesianFibrations}
In this chapter, we define and study cocartesian fibrations between $\BB$-categories. We introduce the notion in \S~\ref{sec:Definitions}, where we also discuss how cocartesian fibrations can be detected from the perspective of maps between sheaves of $\infty$-categories. In \S~\ref{sec:cocartesianMorphisms}, we study cocartesian morphisms and show that cocartesian fibrations are precisely those functors with respect to which their domain has sufficiently  many cocartesian morphisms.
\subsection{Definition and section-wise characterisation}
\label{sec:Definitions}
If $p\colon\I{P}\to\I{C}$ is a functor in $\Cat(\BB)$, we obtain a functor $\res_p\colon\I{P}^{\Delta^1}\to\Comma{\I{P}}{\I{C}}{\I{C}}$ that makes the diagram
\begin{equation*}
	\begin{tikzcd}
	\I{P}^{\Delta^1}\arrow[d]\arrow[r, "\res_p"]\arrow[rr, bend left, "p_\ast"] & \Comma{\I{P}}{\I{C}}{\I{C}}\arrow[r]\arrow[d] & \I{C}^{\Delta^1}\arrow[d]\\
	\I{P}\times\I{P}\arrow[r, "\id\times p"] & \I{P}\times\I{C}\arrow[r, "p\times\id"] & \I{C}\times\I{C}
	\end{tikzcd}
\end{equation*}
commute. Here the right square is a pullback by definition of the comma $\BB$-category $\Comma{\I{P}}{\I{C}}{\I{C}}$ (cf.~\cite[\S~4.2]{Martini2021}).
\begin{definition}
	\label{def:cocartesianFibration}
	A functor $p\colon \I{P}\to\I{C}$ between $\BB$-categories is said to be a \emph{cocartesian fibration} if the functor $\res_p\colon \I{P}^{\Delta^1}\to \Comma{\I{P}}{\I{C}}{\I{C}}$ admits a fully faithful left adjoint $\lift_p\colon\colon \Comma{\I{P}}{\I{C}}{\I{C}}\into\I{P}^{\Delta^1}$. 
	
	If $p\colon \I{P}\to\I{C}$ and $q\colon \I{Q}\to\I{D}$ are cocartesian fibrations, a \emph{cocartesian functor} between $p$ and $q$ is a commutative square
	\begin{equation*}
	\begin{tikzcd}
		\I{P}\arrow[d, "p"]\arrow[r, "g"] & \I{Q}\arrow[d, "q"]\\
		\I{C}\arrow[r, "f"] & \I{D}
	\end{tikzcd}
	\end{equation*}
	such that the mate of the induced commutative square
	\begin{equation*}
			\begin{tikzcd}
				\I{P}^{\Delta^1}\arrow[d, "\res_p"] \arrow[r, "g_\ast"] & \I{Q}^{\Delta^1}\arrow[d, "\res_q"]\\
				\Comma{\I{P}}{\I{C}}{\I{C}}\arrow[r, "f_\ast"] & \Comma{\I{Q}}{\I{D}}{\I{D}}
			\end{tikzcd}
	\end{equation*}
	commutes as well.
\end{definition}

\begin{remark}
	\label{rem:CocartIsLocal}
	Recall from~\cite[Remark~3.3.6]{Martini2021a} that the condition of a functor to be a right adjoint is \emph{local} in $\BB$. Since similarly the condition of a functor to be fully faithful is local as well, we conclude that a functor $p\colon\I{P}\to\I{C}$ is a cocartesian fibration if and only if there is a cover $\bigsqcup_i A_i\onto 1$ such that $\pi_{A_i}^\ast(p)$ is a cocartesian fibration for all $i$. A similar observation can be made for cocartesian functors.
\end{remark}

\begin{remark}
	\label{rem:cocartesianFibrationsSpaces}
	In the case $\BB\simeq \SS$, our definition recovers the notion of cocartesian fibrations and cocartesian functors in $\CatS$ in the sense of~\cite{htt}, cf.\ Proposition~\ref{prop:characterisationCocartesianFibrationCocartesianMaps} below.
\end{remark}

\begin{remark}
	There is an evident dual notion of \emph{cartesian} fibrations, namely those maps $p\colon \I{P}\to\I{C}$ for which the restriction functor $\res_p\colon \I{P}^{\Delta^1}\to\Comma{\I{C}}{\I{C}}{\I{P}}$ admits a fully faithful \emph{right} adjoint. One defines cartesian functors between such cartesian fibrations in the obvious way. By~\cite[Proposition~3.1.14]{Martini2021a} a functor $p$ is a cartesian fibration if and only if $p^{\op}$ is a cocartesian fibration, and a map between cartesian fibrations defines a cartesian functor if and only if its opposite defines a cocartesian functor. In what follows, we will therefore restrict our attention to cocartesian fibrations, as every statement about those can be dualised in the appropriate sense to be turned into a statement about cartesian fibrations.
\end{remark}

\begin{remark}
	\label{rem:cocartesianSquaresSimplified}
	In the situation of Definition~\ref{def:cocartesianFibration}, the argumentation in~\cite[Remark~3.2.9]{Martini2021a} shows that the square
	\begin{equation*}
		\begin{tikzcd}
			\I{P}\arrow[d, "p"]\arrow[r, "g"] & \I{Q}\arrow[d, "q"]\\
			\I{C}\arrow[r, "f"] & \I{D}
		\end{tikzcd}
	\end{equation*}
	defines a cocartesian functor already when there is an \emph{arbitrary} equivalence $\lift_q f_\ast\simeq g_\ast\lift_p$.
\end{remark}

\begin{remark}
	\label{rem:equivalencesAreCocartesian}
	Suppose that $p\colon \I{P}\to\I{C}$ is a cocartesian fibration. Since the projection $\Comma{\I{P}}{\I{C}}{\I{C}}\to\I{P}$ is the pullback of $d_1\colon \I{C}^{\Delta^1}\to\I{C}$ along $p$ and since $d_1$ admits a fully faithful left adjoint $s_0$, the projection $\Comma{\I{P}}{\I{C}}{\I{C}}\to\I{P}$ also admits a fully faithful left adjoint~\cite[Lemma~6.3.9]{Martini2021a}, which we will denote by $s_0$ as well. By the uniqueness of adjoints, we thus obtain a commutative diagram
	\begin{equation*}
	\begin{tikzcd}
	\I{P} \arrow[d, "s_0", hookrightarrow] \arrow[dr, "s_0", hookrightarrow]& \\
	\Comma{\I{P}}{\I{C}}{\I{C}}\arrow[r, "\lift_p", hookrightarrow] & \I{P}^{\Delta^1}. 
	\end{tikzcd}
	\end{equation*}
\end{remark}

\begin{proposition}
	\label{prop:externalCharacterisationCocartesian}
	A functor $p\colon \I{P}\to\I{C}$ in $\BB$ is a cocartesian fibration if and only if
	\begin{enumerate}
	\item for every $A\in \BB$ the functor $p(A)$ is a cocartesian fibration of $\infty$-categories;
	\item for every $s\colon B\to A$ in $\BB$ the commutative square
	\begin{equation*}
		\begin{tikzcd}
		\I{P}(A)\arrow[r, "s^\ast"]\arrow[d, "p(A)"] & \I{P}(B)\arrow[d, "p(B)"]\\
		\I{C}(A)\arrow[r, "s^\ast"] &\I{C}(B)
		\end{tikzcd}
	\end{equation*}
	defines a cocartesian functor.
	\end{enumerate}
	Furthermore, if $p\colon \I{P}\to \I{C}$ and $q\colon \I{Q}\to\I{D}$ are cocartesian fibrations, a commutative square
	\begin{equation*}
		\begin{tikzcd}
			\I{P}\arrow[d, "p"]\arrow[r, "g"] & \I{Q}\arrow[d, "q"]\\
			\I{C}\arrow[r, "f"] & \I{D}
		\end{tikzcd}
	\end{equation*}
	defines a cocartesian functor precisely if it does so section-wise.
\end{proposition}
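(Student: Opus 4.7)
The plan is to reduce the question of existence of a fully faithful left adjoint to $\res_p$ to a section-wise problem by exploiting the identification $\Cat(\BB) \simeq \Shv_{\CatS}(\BB)$. The starting observation is that both functor $\BB$-categories and comma $\BB$-categories are computed section-wise: for every $A \in \BB$ there are natural equivalences $\I{P}^{\Delta^1}(A) \simeq \I{P}(A)^{\Delta^1}$ and $\Comma{\I{P}}{\I{C}}{\I{C}}(A) \simeq \Comma{\I{P}(A)}{\I{C}(A)}{\I{C}(A)}$, both compatible with the restriction maps $s^\ast$ for $s\colon B \to A$ in $\BB$. Consequently, the functor $\res_p$ in $\Cat(\BB)$ corresponds, under the sheaf perspective, to the collection of $\infty$-categorical restriction functors $\res_{p(A)}$, natural in $A$.

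The key technical input is the section-wise criterion for adjunctions between $\BB$-categories: a functor $F\colon \I{D}\to\I{E}$ in $\Cat(\BB)$ admits a left adjoint precisely when (i) each $F(A)$ admits a left adjoint $L_A$ in $\CatS$, and (ii) for every $s\colon B\to A$ the Beck--Chevalley mate $L_B s^\ast \to s^\ast L_A$ is an equivalence; moreover, the resulting left adjoint is fully faithful exactly when each $L_A$ is fully faithful, since the counit can be evaluated section-wise. This is a consequence of the equivalence $\Cat(\BB)\simeq\Shv_{\CatS}(\BB)$ together with the standard theory of relative adjunctions recalled in~\S~\ref{sec:recollection}, and is the main engine of the argument.

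With that input, I would apply this criterion to $F = \res_p$. Condition (i) becomes the statement that each $\res_{p(A)}$ admits a fully faithful left adjoint, which by Remark~\ref{rem:cocartesianFibrationsSpaces} is equivalent to $p(A)$ being a cocartesian fibration of $\infty$-categories. Condition (ii) says that for every $s\colon B\to A$ the Beck--Chevalley mate $\lift_{p(B)} s^\ast \to s^\ast \lift_{p(A)}$ (or equivalently, using Remark~\ref{rem:cocartesianSquaresSimplified}, \emph{any} equivalence of this form) holds; unwinding the definitions, this is precisely the condition that the restriction square of part (2) defines a cocartesian functor of $\infty$-categorical cocartesian fibrations. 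Combining these two translations gives the claimed characterisation.

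For the final sentence about cocartesian functors, the mate transformation $\lift_q f_\ast \to g_\ast \lift_p$ in $\Cat(\BB)$ is constructed from the adjunctions $\lift_p\dashv \res_p$ and $\lift_q\dashv \res_q$ using units and counits, all of which are computed section-wise under the sheaf identification. Thus this mate is an equivalence in $\Cat(\BB)$ if and only if each of its evaluations at $A\in\BB$ is an equivalence in $\CatS$, which is exactly the section-wise cocartesian functor condition. The main obstacle is the Beck--Chevalley criterion for adjunctions in $\Cat(\BB)$ that I am invoking in the middle paragraph; everything else is a bookkeeping exercise once that is in place.
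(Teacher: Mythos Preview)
Your approach is essentially the same as the paper's: both reduce to the section-wise characterisation of adjunctions in $\Cat(\BB)$ (the paper cites \cite[Proposition~3.2.8 and Remark~3.2.9]{Martini2021a}), combined with the fact that $\Fun_{\BB}(A,-)$ commutes with limits and powering (so $\res_p$ is computed section-wise) and that full faithfulness is detected section-wise. One small slip: for a \emph{left} adjoint to be fully faithful it is the \emph{unit}, not the counit, that must be invertible; otherwise your argument matches the paper's.
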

\begin{proof}
	Since the local sections functor $\Fun_{\BB}(A,-)$ commutes with limits and the powering functor and preserves full faithfulness, this statement is an immediate consequence of the section-wise characterisation of right adjoint functors (~\cite[Proposition~3.2.8]{Martini2021a} together with~\cite[Remark~3.2.9]{Martini2021a}) and the fact that full faithfulness can be detected section-wise as well.
\end{proof}

\begin{proposition}
	\label{prop:pullbackCocartesianFibration}
	Suppose that
	\begin{equation*}
		\begin{tikzcd}
			\I{P}\arrow[d, "p"]\arrow[r, "g"] & \I{Q}\arrow[d, "q"]\\
			\I{C}\arrow[r, "f"] & \I{D}
		\end{tikzcd}
	\end{equation*}
	is a pullback square in $\Cat(\BB)$ such that $q$ is a cocartesian fibration. Then $p$ is a cocartesian fibration, and the square itself defines a cocartesian functor.
\end{proposition}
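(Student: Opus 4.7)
My plan is to reduce the claim to the section-wise characterisation of cocartesian fibrations established in Proposition~\ref{prop:externalCharacterisationCocartesian}. Via the identification $\Cat(\BB)\simeq\Shv_{\CatS}(\BB)$, limits in $\Cat(\BB)$ are computed section-wise; consequently the evaluation $\I{X}\mapsto \I{X}(A)$ preserves pullbacks, so for every $A\in\BB$ applying $(-)(A)$ to the given square yields a pullback in $\CatS$ whose right vertical arrow $q(A)$ is a cocartesian fibration of $\infty$-categories (using the hypothesis on $q$ together with Proposition~\ref{prop:externalCharacterisationCocartesian}).

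Next I would invoke the classical result on pullbacks of cocartesian fibrations of $\infty$-categories, cf.~\cite[Proposition~2.4.2.3]{htt}: $p(A)$ is a cocartesian fibration, a morphism in $\I{P}(A)$ is $p(A)$-cocartesian precisely if its image in $\I{Q}(A)$ is $q(A)$-cocartesian, and the pullback square itself defines a cocartesian functor in $\CatS$. This immediately verifies condition~(1) of Proposition~\ref{prop:externalCharacterisationCocartesian}. Together with the detection criterion for $p(A)$-cocartesian morphisms, it also yields condition~(2): for any $s\colon B\to A$ in $\BB$, the base change square for $p$ fits into a cube over the analogous base change square for $q$, the latter being a cocartesian functor by the hypothesis on $q$ applied again via Proposition~\ref{prop:externalCharacterisationCocartesian}; hence if $\phi$ is $p(A)$-cocartesian, then $g(A)(\phi)$ is $q(A)$-cocartesian, so $s^\ast g(A)(\phi)\simeq g(B)(s^\ast\phi)$ is $q(B)$-cocartesian, and therefore $s^\ast\phi$ is $p(B)$-cocartesian.

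Thus $p$ is a cocartesian fibration. The assertion that the original square is a cocartesian functor then follows from the second clause of Proposition~\ref{prop:externalCharacterisationCocartesian} together with the fact, already contained in the classical pullback statement, that the section-wise pullback square is a cocartesian functor in $\CatS$. The only obstacle I foresee is bookkeeping: one must carefully track how $p$-cocartesian morphisms are characterised in terms of $q$-cocartesian morphisms and verify that the various pullback squares compose compatibly. Since limits in $\Cat(\BB)$ are computed section-wise and HTT supplies the corresponding input in $\CatS$, no genuinely new argument beyond the classical one is required. An alternative direct route would be to observe that powering with $\Delta^1$ preserves pullbacks, whence the square comparing $\res_p$ to $\res_q$ is itself a pullback and one may attempt to pull back the adjunction $\lift_q\dashv\res_q$; however, this strategy requires an ad hoc adjointability verification that is cleanly packaged by the section-wise approach.
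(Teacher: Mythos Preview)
Your argument is correct, but the paper takes precisely the ``alternative direct route'' you mention at the end and dismiss. The paper observes that the square
\[
\begin{tikzcd}
\I{P}^{\Delta^1}\arrow[d, "\res_p"]\arrow[r, "g_\ast"] & \I{Q}^{\Delta^1}\arrow[d, "\res_q"]\\
\Comma{\I{P}}{\I{C}}{\I{C}}\arrow[r, "f_\ast"] & \Comma{\I{Q}}{\I{D}}{\I{D}}
\end{tikzcd}
\]
is again a pullback (since powering by $\Delta^1$ and the comma construction are built from limits) and then invokes~\cite[Lemma~6.3.9]{Martini2021a}, which says exactly that a pullback of a functor admitting a fully faithful left adjoint again admits a fully faithful left adjoint, with the resulting mate square commuting. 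So what you call an ``ad hoc adjointability verification'' is in fact a general lemma already available in the companion paper (and already cited in Remark~\ref{rem:equivalencesAreCocartesian} just above).

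Your approach trades this for the section-wise characterisation of Proposition~\ref{prop:externalCharacterisationCocartesian} together with the classical input from~\cite{htt}. This is perfectly sound and has the merit of making explicit the identification of $p$-cocartesian morphisms as those mapping to $q$-cocartesian ones, which the paper only obtains later via Remark~\ref{rem:characterisationCocartesianMorphisms}. The paper's route is shorter and stays entirely internal, avoiding the bookkeeping with transition maps that your argument must carry out; on the other hand, your route does not depend on the external reference~\cite[Lemma~6.3.9]{Martini2021a}.
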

\begin{proof}
	The pullback square gives rise to a commutative square
	\begin{equation*}
	\begin{tikzcd}
	\I{P}^{\Delta^1}\arrow[d, "\res_p"]\arrow[r, "g_\ast"] & \I{Q}^{\Delta^1}\arrow[d, "\res_q"]\\
	\Comma{P}{\I{C}}{\I{C}}\arrow[r, "f_\ast"] & \Comma{\I{P}}{\I{D}}{\I{D}}
	\end{tikzcd}
	\end{equation*}
	that is easily seen to be a pullback too. Thus the claim follows from~\cite[Lemma~6.3.9]{Martini2021a}.
\end{proof}
We denote by $\Cocart\into \Fun(\Delta^1,\Cat(\BB))$ the subcategory that is spanned by the cocartesian fibrations and cocartesian squares. By Proposition~\ref{prop:pullbackCocartesianFibration}, this defines a cartesian subfibration of the target fibration $d_0\colon \Fun(\Delta^1,\Cat(\BB))\to\Cat(\BB)$. For $\I{C}\in\Cat(\BB)$, we denote by $\Cocart(\I{C})$ the fibre of $\Cocart$ over $\I{C}$. Clearly we have $\Cocart(A)\simeq \Cat(\Over{\BB}{A})$ for any $A\in\BB$ since for any $\Over{\BB}{A}$-category $\I{P}$ and for $\pi_{\I{P}}\colon \I{P}\to A$ the structure map, the restriction functor $\res_{\pi_{\I{P}}}$ is an equivalence. In other words, the restriction of the presheaf $\Cocart$ along the inclusion $\BB\into\Cat(\BB)$ recovers the sheaf $\Cat(\Over{\BB}{-})$.

\begin{proposition}
	For every $\BB$-category $\I{C}$ and every simplicial object $K$ in $\BB$, the functor $\iFun{K}{-}$ restricts to a functor $\Cocart(\I{C})\to\Cocart(\iFun{K}{\I{C}})$.
\end{proposition}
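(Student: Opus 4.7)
The plan is to exploit the fact that $\iFun{K}{-}$, viewed as an endofunctor of $\Cat(\BB)$ (by either precomposing with the reflection $\Simp\BB\to\Cat(\BB)$ or by observing that the internal hom into a $\BB$-category computed in $\Simp\BB$ automatically lands in $\Cat(\BB)$), is a right adjoint and is moreover a $\Cat(\BB)$-enriched endofunctor, hence a $2$-functor on $\Cat(\BB)$. Concretely, I would use two compatibilities: first, that $\iFun{K}{-}$ preserves small limits; and second, that it commutes with powering, in the sense of a natural equivalence $\iFun{K}{\I{X}^{\Delta^1}}\simeq\iFun{K}{\I{X}}^{\Delta^1}$, which follows by symmetry and associativity of the internal hom: $\iFun{K}{\iFun{\Delta^1}{\I{X}}}\simeq\iFun{K\times\Delta^1}{\I{X}}\simeq\iFun{\Delta^1}{\iFun{K}{\I{X}}}$.

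Using these, I would first check that for any functor $p\colon\I{P}\to\I{C}$, applying $\iFun{K}{-}$ to the defining diagram of $\res_p$ produces the analogous diagram for $\iFun{K}{p}$. Since $\Comma{\I{P}}{\I{C}}{\I{C}}$ is a pullback and $\iFun{K}{-}$ preserves pullbacks, one obtains a natural identification $\iFun{K}{\Comma{\I{P}}{\I{C}}{\I{C}}}\simeq\Comma{\iFun{K}{\I{P}}}{\iFun{K}{\I{C}}}{\iFun{K}{\I{C}}}$, and combined with the previous equivalence this yields a canonical identification $\iFun{K}{\res_p}\simeq\res_{\iFun{K}{p}}$.

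Assuming $p$ is cocartesian with fully faithful left adjoint $\lift_p\dashv\res_p$, I would then observe that the $2$-functor $\iFun{K}{-}$ carries this adjunction to an adjunction $\iFun{K}{\lift_p}\dashv\iFun{K}{\res_p}$, whose unit is the image of the original unit. Since full faithfulness of $\lift_p$ is equivalent to the unit $\id\to\res_p\lift_p$ being an equivalence, and equivalences are preserved by any functor, it follows that $\iFun{K}{\lift_p}$ is again fully faithful. Under the identification above this exhibits $\iFun{K}{p}$ as a cocartesian fibration, which gives the object-level part of the statement.

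For morphisms, given a cocartesian functor $(f,g)\colon p\to q$, applying $\iFun{K}{-}$ to the commutative square of restriction functors produces the corresponding square for $\iFun{K}{p}$ and $\iFun{K}{q}$ together with $\iFun{K}{f}$, $\iFun{K}{g}$; because $\iFun{K}{-}$ is a $2$-functor preserving units and counits of the adjunctions $\lift\dashv\res$, the mate of the new square is the image under $\iFun{K}{-}$ of the original mate, hence is an equivalence. The main subtle point of the argument is checking the compatibility $\iFun{K}{\res_p}\simeq\res_{\iFun{K}{p}}$ and the preservation of the adjunction $\lift_p\dashv\res_p$ under $\iFun{K}{-}$, but once the identifications of powering and comma objects are in place, both reduce to formal properties of $2$-functors and limit-preserving functors.
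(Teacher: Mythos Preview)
Your proposal is correct and follows essentially the same approach as the paper: the paper's proof simply observes that $\iFun{K}{-}$ commutes with limits and powering, carries adjunctions to adjunctions, and preserves full faithfulness. Your write-up unpacks these same ingredients in more detail (including the identification $\iFun{K}{\res_p}\simeq\res_{\iFun{K}{p}}$ and the mate argument for cocartesian functors), but there is no substantive difference in strategy.
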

\begin{proof}
	This follows from the observation that $\iFun{K}{-}$ commutes with limits and powering, carries adjunctions to adjunctions (see~\cite[Corollary~3.1.11]{Martini2021a}) and preserves the property of functors to be fully faithful.
\end{proof}

\subsection{Cocartesian morphisms}
\label{sec:cocartesianMorphisms}
Let $p\colon\I{P}\to\I{C}$ be a cocartesian fibration. Then $\lift_p\colon \Comma{\I{P}}{\I{C}}{\I{C}}\into\I{P}^{\Delta^1}$ determines a subobject of maps in $\I{P}$. Our goal in this section is to study these maps.

\begin{definition}
	\label{def:cocartesianMorphism}
	Let $p\colon \I{P}\to\I{C}$ be a functor between $\BB$-categories. A map $f\colon x\to y$ in $\I{P}$ (in context $A\in\BB$) is said to be \emph{$p$-cocartesian} if the commutative square
	\begin{equation*}
	\begin{tikzcd}
	\Under{\I{P}}{y}\arrow[r, "f^\ast"]\arrow[d, "p"] & \Under{\I{P}}{x}\arrow[d, "p"]\\
	\Under{\I{C}}{p(y)}\arrow[r, "p(f)^\ast"] & \Under{\I{C}}{p(x)}
	\end{tikzcd}
	\end{equation*}
	is a pullback square in $\Cat(\Over{\BB}{A})$.
\end{definition}
The commutative square in Definition~\ref{def:cocartesianMorphism} formally arises from evaluating the morphism of bifunctors $\map{\I{P}}(-,-)\to\map{\I{C}}(p(-),p(-))$ (which is itself constructed by using functoriality of the twisted arrow construction) at $f\colon \Delta^1\otimes A\to \I{C}^{\op}$. This produces a commutative square
\begin{equation*}
		\begin{tikzcd}
		\Under{\I{P}}{y}\arrow[r, "f^\ast"]\arrow[d, "p"] & \Under{\I{P}}{x}\arrow[d, "p"]\\
		\Under{\I{C}}{p(y)}\times_{\I{C}}\I{P}\arrow[r, "p(f)^\ast"] & \Under{\I{C}}{p(x)}\times_{\I{C}}\I{{P}},
		\end{tikzcd}
\end{equation*}
that recovers the square from Definition~\ref{def:cocartesianMorphism} upon pasting with the pullback square
	\begin{equation*}
			\begin{tikzcd}
			\Under{\I{C}}{p(y)}\times_{\I{C}}\I{P}\arrow[r, "p(f)^\ast"]\arrow[d] & \Under{\I{C}}{p(x)}\times_{\I{C}}\I{{P}}\arrow[d]\\
			\Under{\I{C}}{p(y)}\arrow[r, "p(f)^\ast"] & \Under{\I{C}}{p(x)}.
			\end{tikzcd}
	\end{equation*}

\begin{remark}
	\label{rem:stabilityCocartesianMaps}
	In the situation of Definition~\ref{def:cocartesianMorphism}, the pasting lemma for pullback squares implies that the subobject of $\I{P}_1$ that is spanned by the cocartesian morphisms in $\I{P}$ is closed under composition and equivalences in the sense of~\cite[Proposition~2.9.8]{Martini2021a}. Furthermore, the pasting lemma shows that if $f\colon x\to y$ is a cocartesian map and $g\colon y\to z$ is an arbitrary map, then $gf$ is cocartesian if and only if $g$ is cocartesian. 
\end{remark}

Let $p\colon \I{P}\to \I{C}$ be a functor in $\Cat(\BB)$ and let $f\colon x\to y$ be a map in $\I{P}$ in context $A\in\BB$ with image $\alpha\colon c\to d$ in $\I{C}$. Let $\I{P}^{\Delta^2}\vert_f$ and $\I{P}^{\Lambda^2_0}\vert_f$ be the fibres of $d_{\{0,1\}}\colon \I{P}^{\Delta^2}\to\I{P}^{\Delta^1}$ and $d_{\{0,1\}}\colon \I{P}^{\Lambda^2_0}\to\I{P}^{\Delta^1}$ over $f\colon A\to \I{P}^{\Delta^1}$. Define $\I{C}^{\Delta^2}\vert_\alpha$ and $\I{C}^{\Lambda^2_0}\vert_\alpha$ likewise. One then obtains:
\begin{proposition}
	\label{prop:characterisationCocartesianMorphismLifting}
	Let $p\colon \I{P}\to \I{C}$ be a functor in $\Cat(\BB)$ and let $f\colon x\to y$ be a map in $\I{P}$ in context $A\in\BB$. Let $\alpha$ be the image of $f$ along $p$. Then $f$ is cocartesian if and only if the commutative square
	\begin{equation*}
	\begin{tikzcd}
	\I{P}^{\Delta^2}\vert_f\arrow[r]\arrow[d] & \I{P}^{\Lambda^2_0}\vert_f\arrow[d]\\
	\I{C}^{\Delta^2}\vert_{\alpha}\arrow[r] & \I{C}^{\Lambda^2_0}\vert_{\alpha}
	\end{tikzcd}
	\end{equation*}
	is cartesian.
\end{proposition}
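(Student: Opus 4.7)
My approach is to identify, via the Segal conditions for $\I{P}$ and $\I{C}$, the square of the proposition with the defining square of a cocartesian morphism from Definition~\ref{def:cocartesianMorphism}; the claim then follows immediately.

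The first step is to unpack the four fibres. Since $\I{P}$ is a $\BB$-category, the Segal condition yields an equivalence $\I{P}^{\Delta^2}\simeq\I{P}^{\Delta^1}\times_{\I{P}}\I{P}^{\Delta^1}$ in which the two factors are the $\{0,1\}$- and $\{1,2\}$-edges and the pullback identifies the target of the first factor with the source of the second. Pulling back along $f\colon A\to\I{P}^{\Delta^1}$ (viewed as the $\{0,1\}$-edge) therefore gives $\I{P}^{\Delta^2}\vert_f\simeq A\times_{\I{P}}\I{P}^{\Delta^1}\simeq\Under{\I{P}}{y}$, with $A\to\I{P}$ being the target $y$ of $f$ and $\I{P}^{\Delta^1}\to\I{P}$ the source evaluation. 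An analogous argument starting from the pushout decomposition $\Lambda^2_0\simeq\Delta^{\{0,1\}}\cup_{\Delta^{\{0\}}}\Delta^{\{0,2\}}$ gives $\I{P}^{\Lambda^2_0}\simeq\I{P}^{\Delta^1}\times_{\I{P}}\I{P}^{\Delta^1}$ (the pullback now identifying the sources of both factors) and, after pulling back along $f$, the identification $\I{P}^{\Lambda^2_0}\vert_f\simeq\Under{\I{P}}{x}$. The same analysis applied to $\I{C}$ and $\alpha=p(f)$ yields $\I{C}^{\Delta^2}\vert_\alpha\simeq\Under{\I{C}}{p(y)}$ and $\I{C}^{\Lambda^2_0}\vert_\alpha\simeq\Under{\I{C}}{p(x)}$.

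Next I verify that under these four identifications the square of the proposition is identified with the defining square of Definition~\ref{def:cocartesianMorphism}. The horizontal map $\I{P}^{\Delta^2}\vert_f\to\I{P}^{\Lambda^2_0}\vert_f$ is induced by the horn inclusion $\Lambda^2_0\into\Delta^2$; on the level of the Segal decompositions it sends a pair $(f,g)$ of composable arrows to the pair $(f,gf)$, because the $\{0,2\}$-edge of a $2$-simplex with $\{0,1\}$- and $\{1,2\}$-edges $(f,g)$ is the composite $gf$. After fixing the first component to be $f$ this becomes the precomposition map $g\mapsto gf$ from $\Under{\I{P}}{y}$ to $\Under{\I{P}}{x}$, which is exactly $f^{\ast}$ as characterised in the discussion after Definition~\ref{def:cocartesianMorphism}. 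The vertical maps, induced by $p$, respect the Segal decompositions by naturality and therefore correspond to the slice-functoriality maps $\Under{\I{P}}{y}\to\Under{\I{C}}{p(y)}$ and $\Under{\I{P}}{x}\to\Under{\I{C}}{p(x)}$.

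Putting the two steps together, the square of the proposition is equivalent to the defining square of a cocartesian morphism, and one is cartesian precisely when the other is. The main obstacle, such as it is, is the naturality bookkeeping needed to assemble the four pointwise identifications into an equivalence of squares; this should reduce to repeated applications of the pasting lemma for pullbacks, with no deeper input required.
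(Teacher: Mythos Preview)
Your approach is correct and is morally the same as the paper's, though the paper executes it differently. Where you use the Segal condition to identify the four corners directly with the slice categories $\Under{\I{P}}{y}$, $\Under{\I{P}}{x}$, $\Under{\I{C}}{p(y)}$, $\Under{\I{C}}{p(x)}$ and then match the square with Definition~\ref{def:cocartesianMorphism}, the paper first observes that both horizontal maps are left fibrations over $\I{P}$ (resp.\ $\I{C}$) via $d_{\{2\}}$, invokes~\cite[Proposition~4.1.18]{Martini2021} to reduce cartesianness of the square to cartesianness after applying $(-)^\simeq$, and then identifies the resulting core $\BB$-groupoids with mapping $\BB$-groupoids $\map{\I{P}}(\pi_{\I{P}_0}^\ast y,z)$ etc.\ using the tautological object $z\colon\I{P}_0\to\I{P}$ and a large cube diagram. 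Your route is more direct and avoids the detour through cores; the paper's route buys slightly easier bookkeeping since it only has to match objects of $\BB$ rather than slice $\BB$-categories.

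One point you underestimate: identifying your ``precomposition by $f$'' map with the $f^\ast$ of Definition~\ref{def:cocartesianMorphism} is not just pasting-lemma bookkeeping. In the paper $f^\ast$ is \emph{defined} via the twisted-arrow/mapping-bifunctor construction (see the discussion after Definition~\ref{def:cocartesianMorphism}), and the paper's proof explicitly invokes~\cite[Remark~4.7.7]{Martini2021} to make this identification. The cleanest way to fill this gap in your argument is a Yoneda step: both candidate maps $\Under{\I{P}}{y}\to\Under{\I{P}}{x}$ are maps of left fibrations over $A\times\I{P}$ carrying the initial object $\id_y$ to $f$, hence they coincide. With that supplied, your direct argument goes through.
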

\begin{proof}
	We may assume without loss of generality $A\simeq 1$. Moreover, note that there is a commutative diagram
	\begin{equation*}
	\begin{tikzcd}[column sep=small]
	\I{P}^{\Delta^2}\vert_f\arrow[rr]\arrow[dr, "d_{\{2\}}"'] && \I{P}^{\Lambda^2_0}\vert_f\arrow[dl, "d_{\{2\}}"]\\
	& \I{P}. &
	\end{tikzcd}
	\end{equation*}
	in which the diagonal maps are left fibrations. Hence the map $\I{P}^{\Delta^2}\vert_f\to\I{P}^{\Lambda^2_0}\vert_f$ is a left fibration as well. As the same is true for the map $\I{C}^{\Delta^2}\vert_\alpha\to\I{C}^{\Lambda^2_0}\vert_\alpha$, the square in the statement of the proposition is a pullback if and only if it is carried to a pullback square by the core $\BB$-groupoid functor~\cite[Proposition~4.1.18]{Martini2021}. Let $z$ be the tautological object in $\I{P}$ in context $\I{P}_0$, i.e.\ the object that is determined by the identity $\I{P}_0\simeq\I{P}_0$. We then obtain a commutative diagram
	\begin{equation*}
		\begin{tikzcd}[column sep={5.8em,between origins}, row sep={4em,between origins}]
		& \map{\I{P}}(\pi_{\I{P}_0}^\ast y,z)\arrow[from=rr] \arrow[dl]\arrow[dd]&& (\I{P}^{\Delta^2}\vert_f)_0\arrow[rr, dashed] \arrow[dd]\arrow[dl]&& (\I{P}^{\Lambda^2_0}\vert_f)_0\arrow[rr]\arrow[dl]\arrow[dd]&& \map{\I{P}}(\pi_{\I{P}_0}^\ast x,z)\arrow[dl]\arrow[dd]\\
		\I{P}_1\arrow[from=rr, "d_{\{1,2\}}"', crossing over, near start]\arrow[dd, "{(d_1,d_0)}", near start] && \I{P}_2 \arrow[rr, crossing over]&& (\I{P}^{\Lambda^2_0})_0\arrow[rr, "d_{\{0,2\}}", crossing over, near end] && \I{P}_1 & \\
		& \I{P}_0 \arrow[dl, "y\times\id"]\arrow[from=rr, "\id"', near end]&&\I{P}_0 \arrow[dl, "f\times\id"]\arrow[rr, "\id", near start]&& \I{P}_0\arrow[rr, "\id", near start]\arrow[dl, "f\times\id"]&&\I{P}_0\arrow[dl, "x\times\id"]\\
		\I{P}_0\times\I{P}_0\arrow[from=rr, "d_0\times\id"', near start] && \I{P}_1\times\I{P}_0\arrow[rr, "\id", near end]\arrow[from=uu, "{(d_{\{0,1\}}, d_{\{2\}})}", crossing over, near start]&& \I{P}_1\times\I{P}_0\arrow[rr, "d_1\times\id", near end]\arrow[from=uu, "{(d_{\{0,1\}}, d_{\{2\}})}", crossing over, near start] && \I{P}_0\times\I{P}_0 \arrow[from=uu, "{(d_1,d_0)}", crossing over, near start]&
		\end{tikzcd}
	\end{equation*}
	in which the dotted arrow is the map that is induced by the functor $\I{P}^{\Delta^2}\vert_f\to \I{P}^{\Lambda^2_0}\vert_f$ upon applying the core $\BB$-groupoid functor. Note that both the front left and the front right square is cartesian, hence both  $(\I{P}^{\Delta^2}\vert_f)_0\to \map{\I{P}}(\pi_{\I{P}_0}^\ast y,z)$ and $(\I{P}^{\Lambda^2_0}\vert_f)_0\to \map{\I{P}}(\pi_{\I{P}_0}^\ast x,z)$ must be an equivalence. Now by using the argument in~\cite[Remark~4.7.7]{Martini2021}, the composition
	\begin{equation*}
	\map{\I{P}}(\pi_{\I{P}_0}^\ast y,z)\xrightarrow{\simeq} (\I{P}^{\Delta^2}\vert_f)_0\rightarrow (\I{P}^{\Lambda^2_0}\vert_f)_0\xrightarrow{\simeq} \map{\I{P}}(\pi_{\I{P}_0}^\ast x,z)
	\end{equation*}
	recovers the map $f^\ast$. We now note that the above construction is natural in $\I{P}$, in that we may identify the commutative square that arises from applying $(-)^\simeq$ to the square in the statement of the proposition with the commutative diagram
	\begin{equation*}
	\begin{tikzcd}
	\map{\I{P}}(\pi_{\I{P}_0}^\ast y,z)\arrow[r, "(\pi_{\I{P}_0}^\ast f)^\ast"] \arrow[d]&  \map{\I{P}}(\pi_{\I{P}_0}^\ast x,z)\arrow[d]\\
	\map{\I{C}}(\pi_{\I{P}_0}^\ast d,p(z))\arrow[r, "(\pi_{\I{P}_0}^\ast \alpha)^\ast"] & \map{\I{C}}(\pi_{\I{P}_0}^\ast c,p(z))
	\end{tikzcd}
	\end{equation*}
	that is obtained by evaluating the square from Definition~\ref{def:cocartesianMorphism} at $z$. As $z$ is the tautological object, we conclude that this diagram is a pullback if and only if $f$ is cocartesian, as desired.
\end{proof}

Let $\I{C}$ be a $\BB$-category. Observe that the evaluation functor $\ev\colon \Delta^1\otimes\I{C}^{\Delta^1}\to\I{C}$ can be regarded as a morphism $\phi\colon d_1\to d_0$ in $\iFun{\I{C}^{\Delta^1}}{\I{C}}$. By postcomposition with the Yoneda embedding, one thus obtains a map
\begin{equation*}
\phi_\ast\colon \map{\I{C}}(-,d_1(-))\to \map{\I{C}}(-,d_0(-)).
\end{equation*}
Dually, one obtains a map
\begin{equation*}
\phi^\ast\colon \map{\I{C}}(d_0(-),-)\to\map{\I{C}}(d_1(-),-).
\end{equation*}

\begin{lemma}
	\label{lem:mappingGroupoidArrowCategory}
	For any $\BB$-category $\I{C}$, there is a cartesian square
	\begin{equation*}
	\begin{tikzcd}
	\map{\I{C}^{\Delta^1}}(-,-)\arrow[r]\arrow[d] & \map{\I{C}}(d_0(-),d_0(-))\arrow[d, "\phi^\ast"]\\
	\map{\I{C}}(d_1(-),d_1(-))\arrow[r, "\phi_\ast"] & \map{\I{C}}(d_1(-),d_0(-))
	\end{tikzcd}
	\end{equation*}
	in which the left vertical and the upper horizontal map are given by the action of the functors $d_1,d_0\colon \I{C}^{\Delta^1}\rightrightarrows\I{C}$ on mapping groupoids.
\end{lemma}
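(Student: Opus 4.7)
I will exploit the pushout decomposition $\Delta^1 \times \Delta^1 \simeq \Delta^2 \sqcup_{\Delta^1} \Delta^2$ in $\CatS$, in which the square is covered by its two constituent triangles glued along the diagonal (the long edge $d_1$ of each $\Delta^2$). Applying $\I{C}^{(-)}$ turns this colimit into a limit, yielding a natural equivalence
\[
\I{C}^{\Delta^1 \times \Delta^1} \simeq \I{C}^{\Delta^2} \times_{\I{C}^{\Delta^1}} \I{C}^{\Delta^2}
\]
in $\Cat(\BB)$, with both structure maps to $\I{C}^{\Delta^1}$ being restriction to the long edge. Since the core functor preserves limits and $(\I{C}^{\Delta^1})_n \simeq (\I{C}^{\Delta^1 \times \Delta^n})^{\simeq}$, this gives $(\I{C}^{\Delta^1})_1 \simeq \I{C}_2 \times_{\I{C}_1} \I{C}_2$. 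Under this identification, the source-target projection to $(\I{C}^{\Delta^1})_0 \times (\I{C}^{\Delta^1})_0 \simeq \I{C}_1 \times \I{C}_1$ corresponds to the initial-edge restriction $d_2$ on the first $\I{C}_2$ (picking out the left edge $\{0\} \times \Delta^1 = f$) and the final-edge restriction $d_0$ on the second (picking out the right edge $\{1\} \times \Delta^1 = g$).

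Taking fibres over $(f, g)$ in context $A$ then yields
\[
\map{\I{C}^{\Delta^1}}(f, g) \simeq (\I{C}_2)|_{d_2 = f} \times_{\I{C}_1} (\I{C}_2)|_{d_0 = g}.
\]
The Segal equivalence $\I{C}_2 \simeq \I{C}_1 \times_{\I{C}_0} \I{C}_1$ identifies $(\I{C}_2)|_{d_2 = f}$ with the space of arrows out of $d_0(f)$ and turns the long-edge map $d_1$ into precomposition with $f$; dually, $(\I{C}_2)|_{d_0 = g}$ becomes the space of arrows into $d_1(g)$, with long-edge map given by postcomposition with $g$. The endpoint constraints forced by the pullback in $\I{C}_1$ then restrict the source to $d_1(f)$ and the target to $d_0(g)$, producing exactly the asserted pullback of mapping $\BB$-groupoids, with boundary maps $\phi^\ast$ and $\phi_\ast$ as in the statement.

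The main technical obstacle is to make all of this functorial in the bifunctor $(f, g)$, so that the resulting equivalence is truly an equivalence of bifunctors on $(\I{C}^{\Delta^1})^{\op} \times \I{C}^{\Delta^1}$. The cleanest way to handle this is to apply the entire construction to the universal example, realizing both sides of the square as classified by left fibrations over $(\I{C}^{\Delta^1})^{\op} \times \I{C}^{\Delta^1}$ via the Grothendieck correspondence, and then deducing the equivalence of fibrations from the global pushout decomposition together with Yoneda applied to the natural transformation $\phi\colon d_1 \to d_0$, which by construction represents the composition operation in $\I{C}$.
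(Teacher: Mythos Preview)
Your approach is correct and genuinely different from the paper's. You argue by decomposing $\Delta^1\times\Delta^1$ as a pushout of two $2$-simplices and then invoking the Segal condition, which produces the pullback square directly and self-containedly; the residual work is to match the resulting maps with the specific maps $\phi_\ast$, $\phi^\ast$ and the $d_i$-induced maps stipulated in the statement, which is what your final paragraph on functoriality is gesturing at. The paper proceeds in the opposite order: it first \emph{constructs} the square with the correct maps by exploiting the adjunctions $s_0\dashv d_1$ and $d_0\dashv s_0$ (writing $\phi$ simultaneously as $d_1\eta$ and $d_0\epsilon$), so that the identification of the boundary maps is automatic; it then checks cartesianness only pointwise and section-wise, reducing to the analogous statement for $\infty$-categories, which it imports from~\cite[Proposition~2.3]{glasman2016}. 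Your route is more hands-on and avoids the external citation, at the price of having to pin down the functoriality carefully; the paper's route is slicker on the construction side but black-boxes the actual pullback verification. One small point: your last paragraph is more of a promise than an argument---to complete it cleanly you would want to exhibit the two $\I{C}_2$-factors as the twisted arrow fibrations pulled back along the relevant maps, so that the long-edge projections are visibly $\phi^\ast$ and $\phi_\ast$ under the Grothendieck correspondence.
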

\begin{proof}
	Let $\epsilon \colon s_0d_1\to \id$ be the counit of the adjunction $s_0\dashv d_1$ and let $\eta\colon \id\to s_0 d_0$ be the unit of the adjunction $d_0\dashv s_0$. Then $\phi\colon \Delta^1\otimes\I{C}^{\Delta^1}\to\I{C}$ can be recovered both by postcomposing $\eta$ with $d_1$ and $\epsilon$ with $d_0$. We may therefore construct a commutative square as in the statement of the lemma as the unique square that makes the diagram
	\begin{equation*}
	\begin{tikzcd}[column sep={between origins,7em}, row sep={between origins,3em}]
	&\map{\I{C}^{\Delta^1}}(-,-)\arrow[rr]\arrow[dd]\arrow[dl, "\simeq"'] && \map{\I{C}}(d_0(-),d_0(-))\arrow[dd, "\phi^\ast"]\arrow[dl, "\simeq"']\\
	\map{\I{C}^{\Delta^1}}(-,-)\arrow[rr, "\eta_\ast", crossing over]\arrow[dd, "\epsilon^\ast"] && \map{\I{C}^{\Delta^1}}(-, s_0 d_0(-))&\\
	&\map{\I{C}}(d_1(-),d_1(-))\arrow[rr, "\phi_\ast", near start]\arrow[dl, "\simeq"'] && \map{\I{C}}(d_1(-),d_0(-))\arrow[dl, "\simeq"']\\
	\map{\I{C}^{\Delta^1}}(s_0d_1(-),-)\arrow[rr, "\eta_\ast"] && \map{\I{C}^{\Delta^1}}(s_0 d_1(-), s_0 d_0(-))\arrow[from=uu, "\epsilon^\ast", crossing over, near start]&
	\end{tikzcd}
	\end{equation*}
	commute. We still need to show that this square is cartesian, for which it suffices to show that it becomes a pullback after being evaluated at an arbitrary pair of maps $f\colon c\to d$ and $g\colon c^\prime\to d^\prime$ in $\I{C}$ in context $A\in\BB$, see~\cite[Proposition~4.3.2]{Martini2021a}. This in turn allows us to argue section-wise in $\BB$, which by using~\cite[Corollary~4.6.8]{Martini2021} lets us further reduce the statement to its analogue for $\infty$-categories. This appears (in a more general form) as~\cite[Proposition~2.3]{glasman2016}.
\end{proof}

\begin{proposition}
	\label{prop:characterisationCocartesianFibrationCocartesianMaps}
	A functor $p\colon \I{P}\to\I{C}$ in $\Cat(\BB)$ is cocartesian if and only if for every object $x$ in $\I{P}$ in context $A\in\BB$ and every map $\alpha\colon c\simeq p(x)\to d$ in $\I{C}$, there exists a cocartesian lift of $\alpha$, i.e.\ a cocartesian morphism $f\colon x\to y$ such that $p(f)\simeq \alpha$.
\end{proposition}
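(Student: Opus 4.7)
The plan is to combine the adjunction data $\lift_p\dashv\res_p$ with Proposition~\ref{prop:externalCharacterisationCocartesian} and Lemma~\ref{lem:mappingGroupoidArrowCategory}.

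For the forward direction, I would argue as follows. Given $x\colon A\to\I{P}$ and $\alpha\colon p\circ x\to d$ in $\I{C}$, the pair $(x,\alpha)$ defines an object of $\Comma{\I{P}}{\I{C}}{\I{C}}$ in context $A$, and I would set $f = \lift_p(x,\alpha)\colon x'\to y$ in $\I{P}^{\Delta^1}$. Full faithfulness of $\lift_p$ forces the unit $(x,\alpha)\to\res_p f = (x',p(f))$ to be an equivalence, giving $x\simeq x'$ and $\alpha\simeq p(f)$. To check that $f$ is cocartesian, I would first use the Grothendieck construction and Yoneda's lemma to recast the defining pullback square of Definition~\ref{def:cocartesianMorphism} as the statement that, for every $z\colon B\to\I{P}$, the square of mapping $\BB$-groupoids
\[
\begin{tikzcd}
\map{\I{P}}(y,z)\arrow[r, "f^\ast"]\arrow[d, "p"] & \map{\I{P}}(x,z)\arrow[d, "p"]\\
\map{\I{C}}(p(y),p(z))\arrow[r, "\alpha^\ast"] & \map{\I{C}}(p(x),p(z))
\end{tikzcd}
\]
is cartesian. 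Applying the adjunction equivalence $\map{\I{P}^{\Delta^1}}(f, s_0 z)\simeq \map{\Comma{\I{P}}{\I{C}}{\I{C}}}((x,\alpha),\res_p s_0 z)$ together with Lemma~\ref{lem:mappingGroupoidArrowCategory} (once for $\I{P}^{\Delta^1}$ on the left and once for the $\I{C}^{\Delta^1}$-factor of $\Comma{\I{P}}{\I{C}}{\I{C}}\simeq\I{P}\times_{\I{C}}\I{C}^{\Delta^1}$ on the right), the left side collapses to $\map{\I{P}}(y,z)$ while the right side collapses to the pullback $\map{\I{P}}(x,z)\times_{\map{\I{C}}(p(x),p(z))}\map{\I{C}}(d,p(z))$, yielding the required equivalence.

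For the reverse direction, I would invoke Proposition~\ref{prop:externalCharacterisationCocartesian} to reduce the task to showing that each $p(A)\colon\I{P}(A)\to\I{C}(A)$ is a cocartesian fibration of $\infty$-categories and that every restriction square is a cocartesian functor. The hypothesis yields, for any $x\in\I{P}(A)$ and $\alpha\colon p(A)(x)\to d$, a $p$-cocartesian $f\colon x\to y$ in context $A$. Specializing the mapping-groupoid reformulation of the defining pullback to test objects $z\in\I{P}(A)$ recovers the classical cocartesian condition for $f$ in $\I{P}(A)$, so $p(A)$ is a cocartesian fibration by the classical characterization in~\cite{htt}. The defining pullback in Definition~\ref{def:cocartesianMorphism} is manifestly stable under restriction along $s\colon B\to A$, so $s^\ast f$ remains cocartesian; by the uniqueness of cocartesian lifts of a given morphism in the base, this forces each restriction square to be a cocartesian functor.

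The main technical difficulty lies in the forward direction, namely in correctly translating between the slice-$\BB$-category formulation of cocartesianness and its mapping-$\BB$-groupoid reformulation via Yoneda, and in carefully identifying both sides of the adjunction equivalence via Lemma~\ref{lem:mappingGroupoidArrowCategory}.
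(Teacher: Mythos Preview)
Your forward direction is essentially the paper's argument: both use the adjunction $\lift_p\dashv\res_p$ together with Lemma~\ref{lem:mappingGroupoidArrowCategory} to identify the adjunction equivalence at $s_0 z$ with the comparison map into the pullback $\map{\I{P}}(x,z)\times_{\map{\I{C}}(c,p(z))}\map{\I{C}}(d,p(z))$. The paper packages this into a single large commutative cube whose dashed arrow $\rho$ is precisely this comparison; you have correctly isolated the one real verification, namely that the adjunction equivalence \emph{is} the canonical comparison map and not merely some abstract equivalence.

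Your reverse direction, however, takes a genuinely different route. The paper argues uniformly and internally: it uses the pointwise criterion for $\res_p$ to admit a fully faithful left adjoint (every $w$ admits a lift $f$ for which $\rho$ is an equivalence) and then shows, via the same cube, that $\rho$ being an equivalence is \emph{equivalent} to $f$ being cocartesian. This handles both directions at once and never leaves the internal setting. You instead invoke Proposition~\ref{prop:externalCharacterisationCocartesian} to reduce to $\infty$-categories, push the internal cocartesian-morphism condition through global sections of $\Over{\BB}{A}$ to obtain the classical cocartesian condition for $p(A)$, and then use uniqueness of cocartesian lifts to deduce the cocartesian-functor condition on transition maps. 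This is correct (note the definition for $\BB=\SS$ coincides with the Chevalley criterion, so no circularity arises via Remark~\ref{rem:cocartesianFibrationsSpaces}), and it trades the cube-chase for an appeal to established $\infty$-categorical facts. The paper's approach buys a self-contained, symmetric proof; yours buys a shorter reverse direction at the cost of an external reduction.
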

\begin{proof}
	The datum of an object $x\colon A\to\I{P}$ and a map $\alpha\colon c\simeq p(x)\to d$ in $\I{C}$ is tantamount to an object $w\colon A\to \Comma{\I{P}}{\I{C}}{\I{C}}$. In light of this observation, the datum of a lift $f\colon x\to y$ of $\alpha$ is equivalent to a lift of $w$ along $\res_p$. Given such a lift, the definition of comma $\BB$-categories and Lemma~\ref{lem:mappingGroupoidArrowCategory} provide a commutative diagram
	\begin{equation*}
	\begin{tikzcd}[column sep={6em,between origins},row sep={3em,between origins}]
	& && \map{\I{P}^{\Delta^1}}(f, -)\arrow[rr]\arrow[dl]\arrow[dd]&& \map{\I{P}}(y, d_0(-))\arrow[dl]\arrow[dd, "f^\ast"] \\
	\map{\Comma{\I{P}}{\I{C}}{\I{C}}}(w,\res_p(-))\arrow[dd]\arrow[rr]\arrow[from=urrr, dashed, "\rho"'] && \map{\I{C}^{\Delta^1}}(\alpha, p_\ast(-))\arrow[dd, crossing over]\arrow[rr, crossing over] && \map{\I{C}}(d,pd_0(-)) &\\
	& && \map{\I{P}}(x,d_1(-))\arrow[rr, "\phi_\ast", near start]\arrow[dl]&& \map{\I{P}}(x,d_0(-))\arrow[dl]\\
	\map{\I{P}}(x,d_1(-))\arrow[rr]\arrow[from=urrr, "\id"'] && \map{\I{C}}(c, pd_1(-))\arrow[rr, "p\phi_\ast"] && \map{\I{C}}(c,pd_0(-))\arrow[from=uu, crossing over, "\alpha^\ast", near start] &
	\end{tikzcd}
	\end{equation*}
	in which the two squares in the front and the one in the back are cartesian and in which the dotted arrow $\rho$ is given by the composition
	\begin{equation*}
	\map{\I{P}^{\Delta^1}}(f,-)\to \map{\Comma{\I{P}}{\I{C}}{\I{C}}}(\res_p(f),\res_p(-))\xrightarrow{\eta^\ast} \map{\Comma{\I{P}}{\I{C}}{\I{C}}}(w,\res_p(-))
	\end{equation*}
	in which $\eta\colon w\simeq\res_p(f)$ is the specified equivalence that exibits $f$ as a lift of $w$. Now $p$ is cocartesian precisely if every object $w$ in $\Comma{\I{P}}{\I{C}}{\I{C}}$ admits a lift $f$ along $\res_p$ such that the induced map $\rho$ is an equivalence. The proof is thus finished once we show that $\rho$ is an equivalence if and only if $f$ is a cocartesian morphism. If $f$ is a cocartesian morphism, then the right square in the above diagram is a pullback, which clearly implies that $\rho$ is a pullback of the identity on $\map{\I{P}}(x,d_1(-))$ and therefore an equivalence as well. Conversely, if $\rho$ is an equivalence, one obtains a pullback square
	\begin{equation*}
	\begin{tikzcd}
	\map{\I{P}^{\Delta^1}}(f,-)\arrow[r]\arrow[d] & \map{\I{C}}(d,pd_0(-))\arrow[d, "\alpha^\ast"]\\
	\map{\I{P}}(x,d_1(-))\arrow[r] & \map{\I{C}}(c,pd_0(-))
	\end{tikzcd}
	\end{equation*}
	that recovers the square from Definition~\ref{def:cocartesianMorphism} upon precomposition with $s_0\colon \I{P}\into\I{P}^{\Delta^1}$.
	\end{proof}
\begin{remark}
	\label{rem:characterisationCocartesianMorphisms}
	The proof of Proposition~\ref{prop:characterisationCocartesianFibrationCocartesianMaps} shows that if $p\colon \I{P}\to\I{C}$ is a cocartesian fibration, a map $f\colon x\to y$ in $\I{P}$ in context $A\in\BB$ is contained in the subobject $(\Comma{\I{P}}{\I{C}}{\I{C}})_0\into \I{P}_1$ if and only if it is a cocartesian morphism. In particular, the map $f$ is cocartesian with respect to $p$ if and only if it is cocartesian with respect to $p(A)$ when viewed as a map in the $\infty$-category $\I{P}(A)$.
\end{remark}

\begin{remark}
	\label{rem:CocartLiftIsLocal}
	The proof of Proposition~\ref{prop:characterisationCocartesianFibrationCocartesianMaps} shows that if $x\colon A\to\I{P}$ is an arbitrary object, a map $\alpha\colon c\simeq p(x)\to d$ admits a cocartesian lift $f\colon x\to y$ if and only if the copresheaf $\map{\Comma{\I{P}}{\I{C}}{\I{C}}}(w,\res_p(-))$ is corepresentable, where $w\colon A\to \Comma{\I{P}}{\I{C}}{\I{C}}$ is the object that corresponds to the datum $(x,\alpha\colon c\simeq p(x)\to d)$. By making use of~\cite[Remark~3.3.6]{Martini2021a}, we thus conclude that $\alpha$ admits a cocartesian lift $f\colon x\to y$ if and only if there is a cover $(s_i)\colon \bigsqcup_i A_i\onto A$ in $\BB$ such that $s_i^\ast\alpha$ admits a cocartesian lift $f_i\colon s_i^\ast x\to y_i$ for each $i$.
\end{remark}

\begin{corollary}
	\label{cor:fibrewiseCriterionEquivalencesCocartesian}
	Let $p\colon \I{P}\to\I{C}$ and $q\colon \I{Q}\to\I{C}$ be cocartesian fibrations and let $h\colon \I{P}\to\I{Q}$ be a cocartesian functor over $\I{C}$. Then $h$ is an equivalence precisely if it is a fibrewise equivalence, i.e.\ if for every object $c\colon A\to \I{C}$ in context $A\in\BB$ the induced map $h\vert_c\colon \I{P}\vert_c\to\I{Q}\vert_c$ between the fibres is an equivalence.
\end{corollary}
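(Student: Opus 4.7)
The forward implication is immediate: pulling $h$ back along any $c\colon A\to\I{C}$ produces $h\vert_c$, and pullbacks of equivalences are equivalences. For the converse, my plan is to reduce to the corresponding statement for $\infty$-categorical cocartesian fibrations by working sectionwise. Since $\Cat(\BB)\simeq\Shv_{\CatS}(\BB)$, a morphism in $\Cat(\BB)$ is an equivalence precisely if it induces an equivalence of $\infty$-categories on local sections over every $A\in\BB$. By Proposition~\ref{prop:externalCharacterisationCocartesian}, the map $h(A)\colon\I{P}(A)\to\I{Q}(A)$ is a cocartesian functor between cocartesian fibrations over $\I{C}(A)$, so assuming the $\infty$-categorical analogue of the corollary, it suffices to verify that $h(A)$ is a fibrewise equivalence for every $A$.

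To this end, I would exploit the fact that every object of $\I{C}(A)$ is by definition a morphism $c\colon A\to\I{C}$, and that the fibre of $p(A)$ over such an object is the $\infty$-category of sections of $\I{P}\vert_c\to A$ lifting $\id_A$. Via the étale base-change equivalence $\Over{\Cat(\BB)}{A}\simeq\Cat(\Over{\BB}{A})$, this fibre identifies with the global sections $\Fun_{\Over{\BB}{A}}(1,\I{P}\vert_c)$ of $\I{P}\vert_c$ viewed as a $\Over{\BB}{A}$-category, and similarly for $q(A)$. The hypothesis that $h\vert_c$ is an equivalence in $\Cat(\Over{\BB}{A})$ therefore yields an equivalence between the fibres of $p(A)$ and $q(A)$ over $c$, which is what is needed.

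The only ingredient I am importing is the $\infty$-categorical version of the statement, namely that a cocartesian functor between cocartesian fibrations over a common base is an equivalence precisely if it is a fibrewise equivalence; this is classical and can be obtained, for instance, by a direct mapping-space computation using cocartesian lifts, or by invoking straightening and the pointwise detection of equivalences in $\Fun(\I{C}(A),\CatS)$. I expect the main (and fairly minor) obstacle to be the careful bookkeeping between the internal fibre $\I{P}\vert_c\in\Cat(\Over{\BB}{A})$ and the sectionwise fibre of $p(A)$ over $c\in\I{C}(A)$ sketched above; once this identification is in place, the result follows formally.
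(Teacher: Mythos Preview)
Your argument is correct, and the bookkeeping you flag as the only obstacle is sound: applying $\Fun_{\BB}(A,-)$ to the pullback defining $\I{P}\vert_c$ and then taking the further fibre over $\id_A\in\Fun_{\BB}(A,A)$ does identify the fibre of $p(A)$ over $c$ with $\Fun_{\Over{\BB}{A}}(1,\I{P}\vert_c)$, so the hypothesis on $h\vert_c$ transports as you claim.

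The paper, however, does not reduce to the sectionwise statement. Instead it argues directly inside $\Cat(\BB)$: fibrewise equivalence immediately gives essential surjectivity, and full faithfulness is checked on mapping $\BB$-groupoids. For objects $x,y\colon A\rightrightarrows\I{P}$ with images $c,d$ in $\I{C}$, the map $\map{\I{P}}(x,y)\to\map{\I{Q}}(h(x),h(y))$ lives over $\map{\I{C}}(c,d)$; after base change one may assume there is a map $\alpha\colon c\to d$, choose a cocartesian lift $f\colon x\to z$, use that $h(f)$ is again cocartesian (via Remark~\ref{rem:characterisationCocartesianMorphisms}) to obtain a pullback square of mapping groupoids, and thereby reduce to the case of two objects in the same fibre, where the hypothesis applies directly. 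This is exactly the ``direct mapping-space computation using cocartesian lifts'' you mention as one way to obtain the $\infty$-categorical input, only carried out internally rather than after passing to sections. Your route is shorter and leans on the external result as a black box; the paper's route is self-contained and exercises the internal theory of cocartesian morphisms developed just before.
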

\begin{proof}
	Suppose that $h$ is a fibrewise equivalence. Then $h$ is certainly essentially surjective, so it suffices to show that it is fully faithful as well. To that end, let $x,y\colon A\rightrightarrows \I{P}$ be two objects in $\I{P}$. We wish to show that the induced map
	\begin{equation*}
	\map{\I{P}}(x,y)\to\map{\I{Q}}(h(x),h(y))
	\end{equation*}
	is an equivalence in $\Over{\BB}{A}$. Let $c=p(x)$ and $d=p(y)$. Then the above map lies over $\map{\I{C}}(c,d)$, so it suffices to show that it induces an equivalence after being pulled back along an arbitrary map $B\to \map{\I{C}}(c,d)$ for any $B\in\Over{\BB}{A}$. Upon replacing $\Over{\BB}{A}$ with $\Over{\BB}{B}$, we may thus assume without loss of generality $B\simeq A$ and can assume that there exists a map $\alpha\colon c\to d$ in context $A$. 
	 Using Proposition~\ref{prop:characterisationCocartesianFibrationCocartesianMaps}, we may choose a cocartesian lift $f\colon x\to z$ of $\alpha$ in $\I{P}$. By Remark~\ref{rem:characterisationCocartesianMorphisms} and the assumption that $h$ is a cocartesian functor, the map $h(f)$ must be cocartesian as well. Therefore, we obtain a pullback square
	\begin{equation*}
	\begin{tikzcd}
	\map{\I{P}}(z,y)\arrow[d, "f^\ast"] \arrow[r] & \map{\I{Q}}(h(z),h(y))\arrow[d, "h(f)^\ast"]\\
	\map{\I{P}}(x,y)\arrow[r] & \map{\I{Q}}(h(x),h(y))
	\end{tikzcd}
	\end{equation*}
	in which the upper horizontal map lies over $\map{\I{C}}(d,d)$, such that its pullback along $\id_d\colon A\to \map{\I{C}}(d,d)$ coincides with the pullback of $\map{\I{P}}(x,y)\to\map{\I{Q}}(h(x),h(y))$ over $\alpha\colon A\to \map{\I{C}}(c,d)$. We may therefore assume without loss of generality $c=d$ and need only show that the pullback of $\map{\I{P}}(x,y)\to\map{\I{Q}}(h(x),h(y))$ along $\id_d\colon A\to\map{\I{C}}(d,d)$ is an equivalence. Since this is precisely the morphism that is induced by the functor $h\vert_d$ on mapping $\BB$-groupoids, the result follows.
\end{proof}

\begin{remark}
	\label{rem:cartesianMorphism}
	If $p\colon\I{P}\to\I{C}$ is a functor between $\BB$-categories, we can define a map $f\colon y\to x$ in $\I{P}$ to be \emph{cartesian} if it is cocartesian when viewed as a map $x\to y$ in $\I{P}^\op$. Explicitly, this amounts to the condition that the commutative square
	\begin{equation*}
	\begin{tikzcd}
	\Over{\I{P}}{y}\arrow[r, "f_!"]\arrow[d, "p"] & \Over{\I{P}}{x}\arrow[d, "p"]\\
	\Over{\I{C}}{d}\arrow[r, "\alpha_!"] & \Over{\I{C}}{c}
	\end{tikzcd}
	\end{equation*}
	(where $\alpha\colon d\to c$ is the image of $f$ along $p$) is a pullback. The results in this section can therefore be dualised to cartesian fibrations, with cartesian maps in place of cocartesian maps.
\end{remark}

\section{The marked model for cocartesian fibrations}
\label{sec:markedModel}
As opposed to left fibrations between $\BB$-categories, cocartesian fibration do not arise as the right complement of a factorisation system in $\Cat(\BB)$. In order to rectify this, one must treat cocartesian maps as extra data. This naturally leads us to the study of \emph{marked simplicial objects} in $\BB$, which are an internal (and higher-categorical) analogue of marked simplicial sets as studied in~\cite[\S~3.1]{htt}. In \S~\ref{sec:markedSimplicialObjects} we introduce the $\infty$-topos of marked simplicial objects in $\BB$ and study its basic properties. In \S~\ref{sec:markedAnodyne} and \S~\ref{sec:markedCocartesianFibrations}, we study the factorisation system in the $\infty$-topos of marked simplicial objects that gives rise to the desired model for cocartesian fibrations. In \S~\ref{sec:markedLeftFibrations}, we discuss how left fibrations can be recovered in the marked model. Finally, \S~\ref{sec:markedProper} features a discussion of the notion of marked \emph{proper} and marked \emph{smooth} maps, which will be important for our study of the $\BB$-category of cocartesian fibrations over a fixed base $\BB$-category.

\subsection{Marked simplicial objects}
\label{sec:markedSimplicialObjects}
Recall that we denote by $\sigma_0\colon\ord{1}\to\ord{0}$ the unique map in $\Delta$. Let $\sigma_0\colon \Delta^1\to \Delta$ be the associated functor that picks out this map, and let $\Delta_+$ be the $\infty$-category that arises as the pushout
\begin{equation*}
\begin{tikzcd}
\Delta^1\arrow[d, "\sigma_0"]\arrow[r, hookrightarrow, "d_{\{0,2\}}"] & \Delta^2\arrow[d, "\nu"]\\
\Delta\arrow[r, hookrightarrow, "\iota"] & \Delta_+
\end{tikzcd}
\end{equation*}
in $\CatS$. The fact that the functor $\iota\colon\Delta\into \Delta_+$ is fully faithful follows from~\cite[Lemma~6.3.9]{Martini2021a}. Note that an object in $\Delta_+$ is either of the form $\iota\ord{n}$ for some $\ord{n}\in \Delta$ or the image of $\{1\}\in\Delta^2$ along the map $\nu\colon\Delta^2\to \Delta_+$, which we will denote by $+$. 

A priori, the $\infty$-category $\Delta_+$ need not be a $1$-category, so it is not clear that this definition recovers the usual marked simplex $1$-category (which can be defined as the homotopy $1$-category of $\Delta_+$, i.e.\ as the pushout $\Delta\sqcup_{\Delta^1}\Delta^2$ in $\Cat_1$). However, the following lemma shows that this is nonetheless the case.
\begin{lemma}
	\label{lem:markedSimplexOrdinaryCategory}
	The $\infty$-category $\Delta_+$ is a $1$-category.
\end{lemma}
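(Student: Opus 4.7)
The plan is to identify $\Delta_+$ with an explicit $1$-category $C$ and verify that $C$ is also the pushout of the defining diagram in $\CatS$. First I would introduce the $1$-category $C$ with object set $\{\ord{n} : n \geq 0\} \sqcup \{+\}$ whose morphisms are generated by those of $\Delta$ together with two new arrows $\alpha \colon \ord{1} \to +$ and $\beta \colon + \to \ord{0}$, subject to the single relation $\beta\alpha = \sigma^0$. Unwinding the construction, $C$ is precisely the pushout $\Delta \sqcup_{\Delta^1} \Delta^2$ computed in $\Cat_1$. Since $\Cat_1 \into \CatS$ is reflective with left adjoint $\tau_1$, one has $\tau_1(\Delta_+) \simeq C$, and the unit of the adjunction provides a canonical comparison functor $\Delta_+ \to C$; it remains to show that this is an equivalence in $\CatS$.

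For this I would work in the Joyal model structure on simplicial sets. Because $d_{\{0,2\}} \colon \Delta^1 \into \Delta^2$ is a monomorphism, the simplicial-set pushout $N(\Delta) \sqcup_{\Delta^1} \Delta^2$ is a Joyal-cofibrant model for the pushout in $\CatS$. The functors $\Delta \to C$ and $\Delta^2 \to C$ (the latter picking out the commutative triangle $(\alpha,\beta,\sigma^0)$ in $C$) induce a canonical map from this simplicial pushout into $N(C)$, and the core of the argument is to verify that this map is inner anodyne. The missing simplices of $N(C)$ are precisely those composable chains mixing morphisms of $\Delta$ with $\alpha$ or $\beta$; each missing composite arrow is the third edge of an inner horn $\Lambda^2_1$ whose other two edges already lie in the simplicial pushout, and higher missing simplices are added analogously by iterated inner horn fillings, organised as a transfinite composite of pushouts of inner anodyne maps.

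Because $N(C)$ is itself a quasi-category (being the nerve of a $1$-category), the above shows that it is a Joyal-fibrant replacement of the simplicial pushout, and hence represents the pushout in $\CatS$. We conclude $\Delta_+ \simeq C$ in $\CatS$, so $\Delta_+$ is a $1$-category. The main delicate point is the bookkeeping of the inner anodyne filtration; the potential concern that new relations among generators could accidentally be forced along the way does not arise, since $C$ is already generated by the prescribed morphisms modulo only the stated relation.
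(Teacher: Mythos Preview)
Your strategy --- identify $\Delta_+$ with the $1$-categorical pushout $C$ by exhibiting $N(C)$ as a fibrant replacement of the simplicial-set pushout $K = N(\Delta) \sqcup_{\Delta^1} \Delta^2$ --- is sound in outline, but the entire content of the lemma lives in the step you defer as ``bookkeeping''. Showing that $K \to N(C)$ is inner anodyne is not a formality: each pushout along $\Lambda^n_j \into \Delta^n$ adds both a top cell and its $j$th face \emph{freely}, so you must arrange the non-degenerate simplices of $N(C)\setminus K$ into a filtration in which every simplex appears exactly once as the top cell of an inner horn whose other faces are already present and whose missing face has not yet been added. Your closing remark that ``no new relations are forced'' misidentifies the difficulty: the relations in $C$ (for instance $e_i e_j = e_i$ for the two nontrivial endomorphisms $e_i = \alpha\,\delta^i\,\beta$ of $+$) are encoded by higher simplices of $N(C)$, and the question is precisely whether those simplices can be reached by such a filtration. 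Note also that you only need $K \to N(C)$ to be a Joyal trivial cofibration, which is strictly weaker than inner anodyne; but even that weaker statement is equivalent to the lemma itself, so some concrete argument is unavoidable.

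The paper takes a genuinely different route that sidesteps this global combinatorics. It first observes (independently of any truncatedness) that $\iota\colon \Delta \into \Delta_+$ has both adjoints $\flat \dashv \iota \dashv \sharp$, obtained by pushing out the adjoints of $d_{\{0,2\}}$. These adjunctions identify $\map{\Delta_+}(\iota\ord{n}, +)$ and $\map{\Delta_+}(+, \iota\ord{n})$ with mapping sets in $\Delta$, reducing everything to showing that the single space $\map{\Delta_+}(+,+)$ is discrete. That space is then computed directly via the Dugger--Spivak necklace model: the category $\Over{\Nec}{K_{(+,+)}}$ splits into a degenerate part (with a terminal object, hence contractible) and a non-degenerate part, which a cofinality argument identifies with the homotopy type of $\map{\Delta_+}(\ord{0},+)$, already known to be a set. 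Your approach would, if completed, give the identification $\Delta_+ \simeq C$ in one stroke; the paper's targeted computation is what makes the argument tractable.
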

The proof of Lemma~\ref{lem:markedSimplexOrdinaryCategory} requires some machinery that is not needed for the rest of this paper and is therefore deferred to appendix~\ref{sec:appA}.

\begin{definition}
\label{def:markedSimplicialObjects}
A \emph{marked simplicial object} in $\BB$ is a functor $\Delta_+^{\op}\to\BB$. We denote by $\mSimp\BB = \Fun(\Delta_+^{\op},\BB)$ the $\infty$-topos of marked simplicial objects in $\BB$.
\end{definition}

\begin{remark}
	\label{tensoringCotensoringMarked}
	Analogous to the case of simplicial objects in $\BB$, postcomposition with the global sections functor induces a geometric morphism $\Gamma\colon\mSimp\BB\to\mSimp\SS$, which in particular implies that the $\infty$-topos $\mSimp\BB$ is tensored and powered over $\mSimp\SS$. For $\ord{n}\in\Delta$, we will denote by $\Delta_+^n$ the object in $\mSimp\SS$ that is represented by $\iota\ord n$, and we will denote by $\Delta_+^+$ the marked simplicial $\infty$-groupoid that is represented by $+$. As usual, we will implicitly identify such marked simplicial $\infty$-groupoids with the associated constant marked simplicial objects in $\BB$. Note that analogously as in the case of simplicial objects in $\BB$, the identity functor on $\mSimp\BB$ is equivalent to the composition $\ev_0\circ(-)^{\Delta^\bullet_+}$. In other words, for every marked simplicial object $P$ in $\BB$ there is an equivalence $P_\bullet\simeq (P^{\Delta^\bullet_+})_0$ which is natural in $P$.
\end{remark}

Observe that the functor $d_{\{0,2\}}$ admits both a left adjoint $s_{\{1,2\}}$ and a right adjoint $s_{\{0,1\}}$. The pushout of these adjoints along $\Delta^1\into\Delta$ then define a left adjoint $\flat\colon \Delta_+\to\Delta$ and a right adjoint $\sharp\colon\Delta_+\to\Delta$ to the inclusion $\iota$. This follows from the following lemma (and its dual version):
\begin{lemma}
If
\begin{equation*}
\begin{tikzcd}
	\I{C}\arrow[d, hookrightarrow, "f"] \arrow[r, "h"] & \I{E}\arrow[d, "g", hookrightarrow]\\
	\I{D}\arrow[r, "k"] & \I{F}
\end{tikzcd}
\end{equation*}
is a pushout square in $\Cat(\BB)$ such that $f$ is fully faithful and admits a left adjoint $l$, then the pushout of $l$ along $k$ defines a left adjoint of $g$.
\end{lemma}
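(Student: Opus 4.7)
The plan is to identify $m\colon\I{F}\to\I{E}$ with the pushout of $l$ along $k$ via a pasting argument (which simultaneously produces the counit of the adjunction), and then to construct the unit using the pushout universal property of $\I{F}$ applied to the arrow $\BB$-category $\I{F}^{\Delta^1}$. The triangle identities will be verified by the same reduction principle.

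For the first step, form the pasted diagram
\begin{equation*}
\begin{tikzcd}
\I{C}\arrow[r, "f", hookrightarrow]\arrow[d, "h"] & \I{D}\arrow[r, "l"]\arrow[d, "k"] & \I{C}\arrow[d]\\
\I{E}\arrow[r, "g"] & \I{F}\arrow[r] & \I{C}\sqcup_{\I{D}}\I{F}.
\end{tikzcd}
\end{equation*}
The right square is a pushout by definition and the left by hypothesis, so by the pasting lemma the outer rectangle is again a pushout. Since $f$ is fully faithful, the counit $\epsilon\colon lf\to\id_{\I{C}}$ of $l\dashv f$ is an equivalence, hence the outer pushout is canonically $\I{E}$ in such a way that the composite $\I{E}\to\I{F}\to\I{C}\sqcup_{\I{D}}\I{F}\simeq\I{E}$ is the identity. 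Define $m\colon\I{F}\to\I{E}$ to be the induced map; then $mg\simeq\id_{\I{E}}$ and $mk\simeq hl$, and this equivalence $\epsilon'\colon mg\xrightarrow{\simeq}\id_{\I{E}}$ will serve as the counit of the desired adjunction.

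For the unit, the pullback description $\iFun{\I{F}}{\I{F}^{\Delta^1}}\simeq\iFun{\I{E}}{\I{F}^{\Delta^1}}\times_{\iFun{\I{C}}{\I{F}^{\Delta^1}}}\iFun{\I{D}}{\I{F}^{\Delta^1}}$ reduces the construction of $\eta'\colon\id_{\I{F}}\to gm$ to compatible functors out of $\I{E}$ and $\I{D}$. On $\I{E}$, take the degenerate arrow $s_0 g$ witnessing $\id_g\colon g\to gmg\simeq g$. On $\I{D}$, take the arrow classifying the whiskered unit $k\eta_{\I{D}}\colon k\to kfl\simeq ghl\simeq gmk$, where $\eta_{\I{D}}\colon\id_{\I{D}}\to fl$ is the unit of $l\dashv f$. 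The required compatibility on $\I{C}$ is that $\id_{gh}$ and $k\eta_{\I{D}} f$ agree as arrows $gh\to gh$ under the identification $kflf\simeq kf\simeq gh$ induced by $kf\epsilon$; this is exactly the content of the triangle identity $f\epsilon\cdot\eta_{\I{D}} f\simeq\id_f$ for $l\dashv f$, made meaningful here by the invertibility of $f\epsilon$ that comes from fully faithfulness of $f$. Both triangle identities for $(m,g,\eta',\epsilon')$ are natural transformations between functors out of $\I{F}$, and the same pushout principle reduces them to compatible checks over $\I{E}$ (tautological, since $\epsilon'$ is an equivalence and $\eta'$ restricts to the identity) and over $\I{D}$ (where they unpack to the triangle identities for $l\dashv f$ postcomposed with $k$).

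The main technical obstacle is the careful bookkeeping of the coherent homotopies witnessing the pushout compatibilities throughout, both in constructing $\eta'$ and in verifying the triangle identities. Everything hinges on $f$ being fully faithful and not merely right adjoint to $l$: this is what upgrades the triangle identity into an honest equivalence $\eta_{\I{D}} f\simeq(f\epsilon)^{-1}$ and allows all the ``compatibility up to canonical homotopy'' in the pushout arguments to go through.
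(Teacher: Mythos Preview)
Your approach is correct and shares its core with the paper's proof: both identify the candidate left adjoint $l'$ (your $m$) with the map $\I{F}\to\I{E}$ determined by $\id_{\I{E}}$ and $hl$ via the pushout, and both construct the unit $\eta'\colon\id_{\I{F}}\to gl'$ from the pushout description of $\I{F}$ (you map into $\I{F}^{\Delta^1}$, the paper writes the adjoint datum as a map $\Delta^1\otimes\I{F}\to\I{F}$ obtained by extending $\eta\colon\Delta^1\otimes\I{D}\to\I{D}$). The difference is in the endgame. You build the counit $\epsilon'$ explicitly from the equivalence $mg\simeq\id_{\I{E}}$ furnished by the pasting argument and then verify the triangle identities by decomposing along the pushout into compatible checks over $\I{E}$ and $\I{D}$; as you note, this requires tracking the coherences on $\I{C}$. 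The paper instead reads off from its cube that both whiskerings $\eta' g$ and $l'\eta'$ are (homotopic to) identities, and then invokes the criterion~\cite[Corollary~3.4.3]{Martini2021a}: a transformation $\eta'\colon\id\to gl'$ with these two whiskerings invertible already exhibits $l'$ as left adjoint to the fully faithful $g$. This bypasses the explicit triangle-identity bookkeeping entirely, at the cost of importing a black box.

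One minor slip in your write-up: you say that both triangle identities are ``natural transformations between functors out of $\I{F}$'', but only $\epsilon'm\circ m\eta'\simeq\id_m$ is of that shape; the other, $g\epsilon'\circ\eta'g\simeq\id_g$, concerns functors $\I{E}\to\I{F}$ and holds immediately from how you built $\eta'$ on the $\I{E}$-summand together with your choice of $\epsilon'$. This does not affect the validity of the argument.
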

\begin{proof}
Let $l\colon \I{D}\to\I{C}$ be the left adjoint of $f$ and let $\eta\colon \Delta^1\otimes \I{D}\to\I{D}$ be the adjunction unit. We define $l^\prime\colon \I{F}\to\I{E}$ to be the pushout of $l$ along $k$. Consider the commutative diagram
\begin{equation*}
\begin{tikzcd}[column sep={4.7em,between origins}, row sep={2.5em,between origins}]
& \Delta^1\otimes \I{E}\arrow[rr, dashed, "\id\otimes g"]\arrow[dd, "s^0", near end] \arrow[rrrr, bend left=20, "s^0\otimes g"]&& \Delta^1\otimes\I{F}\arrow[dd, "\eta^\prime", near end, dashed]\arrow[rr, "s^0\otimes\id", dashed] && \I{F}\arrow[dd, "l^\prime"]\\
\Delta^1\otimes\I{C}\arrow[rr, "\id\otimes f", crossing over, near end]\arrow[dd, "s^0"]\arrow[ur, "\id\otimes h"] && \Delta^1\otimes\I{D}\arrow[ur, "\id\otimes k",near start, dashed]\arrow[rr, crossing over, "s^0\otimes\id", near end]&& \I{D}\arrow[ur, "k"]&\\
& \I{E} \arrow[rr, "g", near start]&& \I{F}\arrow[rr, "l^\prime", near start]&& \I{E}\\
\I{C}\arrow[ur, "h"]\arrow[rr, "f"] && \I{D}\arrow[from=uu, crossing over, near start, "\eta"] \arrow[ur, "k"]\arrow[rr, "l"]&&\I{C}\arrow[from=uu, crossing over, "l", near start] \arrow[ur, "h"]&
\end{tikzcd}
\end{equation*}
which is constructed as the left Kan extension of its solid part.
Since $\eta\circ (d^1\otimes \id)$ is equivalent to the identity and $\eta\circ (d^0\otimes \id)$ is equivalent to $fl$, the same must be true when replacing $\eta$ by $\eta^\prime$ and $fl$ by $gl^\prime$. In other words, $\eta^\prime$ encodes a map $\id\to gl^\prime$. The two squares in the back of the above diagram now precisely express the two conditions that $\eta^\prime k$ and $l^\prime\eta^\prime$ are equivalences. Using~\cite[Corollary~3.4.3]{Martini2021a}, this shows that $l^\prime$ is left adjoint to $g$.
\end{proof}

By precomposition, the restriction functor $(-)\vert_{\Delta}=\iota^\ast\colon \mSimp{\BB}\to\Simp\BB$ admits both a left adjoint $(-)^\flat$ and a right adjoint $(-)^\sharp$, both of which are fully faithful. We denote by $(-)_\sharp$ the right adjoint of $(-)^\sharp$ that is given by right Kan extension along $\sharp$. There is also a further left adjoint $(-)_\flat$ of $(-)^\flat$ given by left Kan extension along $\flat$, but we will not need this functor.
Note that applying the unit of the adjunction $(-)\vert_{\Delta}\dashv (-)^\sharp$ to $(-)^\flat$ gives rise to a canonical morphism $(-)^\flat\to (-)^\sharp$. Explicitly, this map is given by precomposition with $\flat \epsilon\colon \sharp\simeq \flat \iota \sharp\to \flat$, where $\epsilon$ is the counit of the adjunction $\iota\dashv \sharp$.
\begin{remark}
	\label{rem:FlatSharpRestrictionGroupoids}
	Since the map $\flat\epsilon\colon\sharp\to\flat$ evaluates to the identity on $\ord{0}$, the natural morphism $(-)^\flat\to (-)^\sharp$ is an equivalence when restricted to $\BB\into\Simp\BB$.
\end{remark}

Observe that the fact that $(-)^\flat$ is left adjoint to $(-)\vert_{\Delta}$ and therefore equivalent to the functor of left Kan extension $\iota_!$ implies that there is a canonical equivalence $\Delta_+^\bullet\simeq (\Delta^\bullet)^\flat$ of functors $\Delta\to \mSimp\BB$. We will also need to identify the marked simplicial object $\Delta^+_+$. To that end, observe that there is an equivalence $(\Delta_+^+)\vert_{\Delta}\simeq\Delta^1$ and therefore a canonical morphism $\Delta_+^+\to (\Delta^1)^\sharp$ in $\mSimp\BB$.
\begin{lemma}
	\label{lem:walkingMarkedEdge}
	The map $\Delta^+_+\to (\Delta^1)^\sharp$ is an equivalence.
\end{lemma}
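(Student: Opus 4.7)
The strategy is to verify the comparison $\Delta^+_+\to(\Delta^1)^\sharp$ is an equivalence in $\mSimp\BB$ pointwise on $\Delta_+$. Both objects lie in the essential image of $\const\colon\mSimp\SS\to\mSimp\BB$ --- the domain being the representable at $+$ and the codomain being $\sharp^\ast\Delta^1$ --- so one may work in $\mSimp\SS$, where the presheaves are moreover discrete; this reduces the problem to an assertion about hom-sets in the $1$-category $\Delta_+$ (using Lemma~\ref{lem:markedSimplexOrdinaryCategory}). By construction, the comparison is the unit at $\Delta^+_+$ of the adjunction $\iota^\ast\dashv(-)^\sharp$, so its pointwise value at $Y\in\Delta_+$ is the precomposition map
\[
\epsilon_Y^\ast\colon\Map_{\Delta_+}(Y,+)\longrightarrow\Map_{\Delta_+}(\iota\sharp(Y),+)\simeq\Map_\Delta(\sharp(Y),\ord{1})
\]
where $\epsilon\colon\iota\sharp\to\id_{\Delta_+}$ is the counit of $\iota\dashv\sharp$ and the last identification comes from the adjunction together with $\sharp(+)=\ord{1}$.

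For $Y=\iota(\ord{n})$ the triangle identity and the full faithfulness of $\iota$ (so $\sharp\iota\simeq\id$) force $\epsilon_{\iota(\ord n)}$ to be an equivalence, so the pointwise check is automatic. The content of the lemma is therefore the remaining case $Y=+$: one must show that $\epsilon_+^\ast$ is a bijection onto the three-element set $\Map_\Delta(\ord{1},\ord{1})=\{\id_{\ord{1}},\,\delta^0\sigma^0,\,\delta^1\sigma^0\}$.

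To handle this I would use Lemma~\ref{lem:markedSimplexOrdinaryCategory} to compute the defining pushout $\Delta\sqcup_{\Delta^1}\Delta^2$ inside $\Cat_1$. Writing $e_{01}\colon\iota(\ord{1})\to+$ and $e_{12}\colon+\to\iota(\ord{0})$ for the images of the edges $0\to 1$ and $1\to 2$ of $\Delta^2$, the pushout relation reads $e_{12}\circ e_{01}=\sigma^0$ in $\Delta_+$. Since $\Map_{\Delta_+}(+,\iota(\ord{k}))\simeq\Map_\Delta(\ord{0},\ord{k})$ (via $\flat\dashv\iota$) and $\Map_{\Delta_+}(\iota(\ord{k}),+)\simeq\Map_\Delta(\ord{k},\ord{1})$ (via $\iota\dashv\sharp$), every non-identity endomorphism of $+$ must start with $e_{12}$ and end with $e_{01}$; the pushout relation together with the identity $\sigma^0\delta^i=\id_{\ord{0}}$ valid in $\Delta$ then collapses any such zigzag to a composite $e_{01}\circ\iota(\alpha)\circ e_{12}$ with $\alpha\in\{\delta^0,\delta^1\}$. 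Hence $|\Map_{\Delta_+}(+,+)|=3$.

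It remains to match the three elements explicitly. Since $\epsilon_+$ corresponds under $\iota\dashv\sharp$ to $\id_{\ord{1}}$, a direct inspection of the pushout shows $\epsilon_+=e_{01}$; precomposing the three endomorphisms of $+$ with $e_{01}$ and applying $e_{12}\circ e_{01}=\sigma^0$ yields, upon passing through the identification with $\Map_\Delta(\ord{1},\ord{1})$, the morphisms $\id_{\ord{1}}$, $\delta^0\sigma^0$, $\delta^1\sigma^0$, which exhaust the codomain. The main obstacle I anticipate is verifying the zigzag collapse carefully, as one must argue that the pushout relation already suffices to reduce arbitrarily long chains of alternations through $+$ to the claimed normal form; this should follow by an inductive use of $e_{12}\circ e_{01}=\sigma^0$ combined with $\sigma^0\delta^i=\id_{\ord{0}}$.
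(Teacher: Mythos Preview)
Your argument is correct and follows the same outline as the paper's: reduce to $\mSimp\SS$, observe that the comparison is automatic at each $\iota\ord n$ by the triangle identity, and verify the single remaining case $Y=+$ by checking that the map $\map{\Delta_+}(+,+)\to\map_\Delta(\ord 1,\ord 1)$ induced by $\sharp$ is a bijection. The only difference is in this last step: the paper simply cites the computation of $\map{\Delta_+}(+,+)$ carried out in Appendix~\ref{sec:appA} via the Dugger--Spivak necklace model, whereas you extract the same hom-set from the $1$-categorical pushout description of $\Delta_+$ by a zigzag normalisation. This is a legitimate and arguably more transparent alternative once Lemma~\ref{lem:markedSimplexOrdinaryCategory} is available; your observation that the three normal forms are sent by $\sharp$ to the three distinct elements $\id_{\ord 1},\delta^0\sigma^0,\delta^1\sigma^0$ of $\map_\Delta(\ord 1,\ord 1)$ supplies precisely the injectivity that the zigzag collapse alone does not, so the two steps together close the argument cleanly.
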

\begin{proof}
	We can assume $\BB\simeq\SS$. By construction, the restriction functor $(-)\vert_{\Delta}$ carries the map $\Delta^+_+\to (\Delta^1)^\sharp$ to an equivalence. As equivalences in $\mSimp\SS$ are detected object-wise, it therefore suffices to show that the evaluation of this map at $+\in\Delta_+$ is an equivalence as well. On account of Yoneda's lemma, this amounts to showing that the morphism
	\begin{equation*}
	\map{\Delta_+}(+,+)\to\map{\Delta}(\ord{1},\ord{1})
	\end{equation*}
	that is induced by the action of the functor $\sharp$ on mapping $\infty$-groupoids is an equivalence. In light of the explicit computation of $\map{\Delta_+}(+,+)$ in appendix~\ref{sec:appA}, this is immediate.
	\end{proof}
\begin{remark}
	\label{rem:identificationMapFlatSharp}
	The canonical map $\Delta^1_+\to\Delta^+_+$ gives rise to a commutative diagram
	\begin{equation*}
	\begin{tikzcd}
	(\Delta^1_+)\vert_\Delta^\flat\arrow[r] \arrow[d] & \Delta^1_+\arrow[r]\arrow[d] & (\Delta^1_+)\vert_{\Delta}^\sharp\arrow[d]\\
	(\Delta^+_+)\vert_{\Delta}^\flat\arrow[r]& \Delta^+_+\arrow[r]& (\Delta^+_+)\vert_{\Delta}^\sharp
	\end{tikzcd}
	\end{equation*}
	in which the two horizontal maps on the left are given by the counit of the adjunction $(-)^\flat\dashv (-)\vert_{\Delta}$ and the ones on the right are given by the unit of the adjunction $(-)\vert_{\Delta}\dashv (-)^\sharp$. As $\Delta^1_+$ is in the essential image of $(-)^\flat$, the upper left horizontal map is an equivalence, and by Lemma~\ref{lem:walkingMarkedEdge} the lower right horizontal map is an equivalence too. Hence the morphism $(\Delta^1)^\flat\to(\Delta^1)^\sharp$ recovers the canonical map $\Delta^1_+\to\Delta^+_+$ upon identifying $(\Delta^1)^\flat\simeq \Delta^1_+$ and $\Delta^+_+\simeq (\Delta^1)^\sharp$.
\end{remark}

\subsection{Marked left anodyne morphisms}
\label{sec:markedAnodyne}
The goal of this section is to construct a saturated class of maps in $\mSimp\BB$ whose right complement ought to model cocartesian fibrations. Our approach is in large parts an adaptation of Lurie's construction of the cocartesian model structure in~\cite[\S~3.1]{htt}, but as we work internally there will be some deviations. In particular, the generators that we list in~\ref{def:markedAnodyne} are slightly different from the class of marked anodyne morphisms as defined in~\cite[Definition~3.1.1.1]{htt}.
\begin{definition}
\label{def:markedAnodyne}
	A map in $\mSimp{\BB}$ is said to be \emph{marked left anodyne} if it is contained in the internal saturation of the following collection of maps:
	\begin{enumerate}
	\item $(I^2)^\flat\into(\Delta^2)^\flat$;
	\item $(E^1)^\flat\to 1$;
	\item $(\Delta^1)^\sharp\sqcup_{(\Delta^1)^\flat}(\Delta^1)^\sharp\to (\Delta^1)^\sharp$;
	\item $d^1\colon 1\into(\Delta^1)^\sharp$.
\end{enumerate}
\end{definition}

For practical purposes, we will need a slightly smaller set of generators for the collection of marked left anodyne maps. In what follows, we shall adopt Jay Shah's notation in~\cite{Shah2018} and let $\prescript{}{\natural}(\Delta^n)^\flat=(\Delta^1)^{\sharp}\sqcup_{(\Delta^1)^\flat}(\Delta^n)^\flat$ denote the pushout of $(\Delta^1)^\flat\to (\Delta^1)^\sharp$ along $d^{\{0,1\}}\colon(\Delta^1)^\flat\into(\Delta^n)^\flat$ for every $n\geq 2$. We will use the same notation for any subobject of $\Delta^n$ that contains the edge $\{0,1\}$.
\begin{proposition}
\label{prop:markedAnodyneGenerators}
	A map in $\mSimp{\BB}$ is marked left anodyne if and only if it is contained in the saturation of the following collection of maps:
	\begin{enumerate}
	\item $(I^2\otimes K)^\flat\into(\Delta^2\otimes K)^\flat$ for all $K\in\Simp\BB$;
	\item $(E^1\otimes K)^\flat\to K^{\flat}$ for all $K\in\Simp\BB$;
	\item $(\Delta^1\otimes A)^\sharp\sqcup_{(\Delta^1\otimes A)^\flat}(\Delta^1\otimes A)^\sharp\to (\Delta^1\otimes A)^\sharp$ for all $A\in\BB$;
	\item $\prescript{}{\natural}(\Lambda^2_0)^\flat\otimes A\into \prescript{}{\natural}(\Delta^2)^\flat\otimes A$ for all $A\in\BB$;
	\item $d^1\colon A^\sharp\into(\Delta^1\otimes A)^\sharp$ for all $A\in\BB$;
	\item $(I^2\otimes A)^\sharp\into (\Delta^2\otimes A)^\sharp$ for all $A\in\BB$.
\end{enumerate}
\end{proposition}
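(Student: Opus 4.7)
The goal is to identify two saturated classes in $\mSimp\BB$: the internal saturation of the four generators in Definition~\ref{def:markedAnodyne} and the (ordinary) saturation of the six generators in the proposition. I would prove the two inclusions separately.

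For the forward inclusion, I would verify that each of the six generators in the proposition lies in the internal saturation of the four generators in Definition~\ref{def:markedAnodyne}. Generators (1) and (2) are products of generators (1) and (2) of Definition~\ref{def:markedAnodyne} with the flat-marked object $K^\flat$, and generators (3) and (5) are products of generators (3) and (4) with an object $A \in \BB$ (using $A^\flat \simeq A^\sharp$ from Remark~\ref{rem:FlatSharpRestrictionGroupoids}). Generator (4) of the proposition is a filler for a $2$-horn whose initial edge is marked; it decomposes into the spine filler $(I^2)^\flat \otimes A \into (\Delta^2)^\flat \otimes A$ from generator (1) of the definition, combined with the marking data coming from generator (3) of the definition. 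Generator (6) is obtained analogously by iterating the spine filler and then promoting flat markings to sharp via generator (4) of the definition.

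For the reverse inclusion, I would reduce to a bounded check via a colimit argument. Since every marked simplicial object is a small colimit of representables of the form $\Delta_+^n \otimes A$ (with $n \geq 0$) and $\Delta_+^+ \otimes A$ (for $A \in \BB$), and since pushout products commute with colimits in each variable, it suffices to show that for each generator $s$ of Definition~\ref{def:markedAnodyne} and each such representable $T$, the product $s \times T$ lies in the saturation of the six generators. The case $T = \Delta_+^n \otimes A = (\Delta^n \otimes A)^\flat$ is straightforward: the product $s \times T$ stays within flat markings and is subsumed by generators (1), (2), or (5) of the proposition applied to an appropriate simplicial object $K$.

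The main obstacle is the case $T = \Delta_+^+ \otimes A = (\Delta^1 \otimes A)^\sharp$, where the sharp marking introduces new marked edges in the product that must be accounted for. Here I would construct, for each of the four generators $s$, an explicit filtration of $s \times T$ by marked simplicial subobjects, with each successive stage built from the previous one by pushout along a generator of the proposition. The verification proceeds along the lines of Lurie's argument in~\cite[Proposition~3.1.2.3]{htt}: filling in non-degenerate product cells of $\Delta^k \times \Delta^1$ by carefully chosen marked horns (using generator (4) for horns with marked initial edge and generator (6) for sharp $2$-spine fillers), while generator (3) of the proposition collapses pairs of coincident sharp edges that arise from the sharp marking on the second factor. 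By working section-wise via the sheaf presentation $\mSimp\BB = \Fun(\Delta_+^{\op}, \BB)$, these filtrations can be imported from the classical theory of marked simplicial sets, since the required maps are compatible with base change along arbitrary morphisms in $\BB$.
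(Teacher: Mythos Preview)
Your overall structure—two inclusions, with the harder one reduced to products of each generator against each representable—matches the paper's approach. But there is a genuine gap in your ``straightforward'' case.

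You claim that when $T=(\Delta^n\otimes A)^\flat$ the product $s\times T$ ``stays within flat markings'' for each generator $s$ of Definition~\ref{def:markedAnodyne}. This is false for generators~(3) and~(4). For generator~(4), the product is $d^1\colon (\Delta^n\otimes A)^\flat\into(\Delta^1)^\sharp\times(\Delta^n\otimes A)^\flat$, whose target carries sharp edges; for generator~(3) the source already has sharp markings. Neither map is subsumed by generators~(1), (2), or~(5) of the proposition. In the paper these are precisely the cases that require real work: the product of generator~(3) with $(\Delta^n)^\flat$ is handled in Lemma~\ref{lem:markedAnodyneGenerators1} (case~5) by a cube diagram expressing it as a pushout of maps already known to lie in $S$, and the product of generator~(4) with $(\Delta^1)^\flat$ is handled in Lemma~\ref{lem:markedAnodyneGenerators2} by decomposing $(\Delta^1)^\sharp\times(\Delta^1)^\flat$ along the equivalence $\Delta^1\times\Delta^1\simeq\Delta^2\sqcup_{\Delta^1}\Delta^2$ and then using generator~(4) of the proposition—this is exactly where $\prescript{}{\natural}(\Lambda^2_0)^\flat\into\prescript{}{\natural}(\Delta^2)^\flat$ enters, and without it the argument does not close.

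Your treatment of the other inclusion is also too loose. Generator~(4) of the proposition does not ``decompose into a spine filler combined with marking data'': the underlying map $\Lambda^2_0\into\Delta^2$ is a left horn, not inner anodyne, so generator~(1) of the definition alone cannot produce it. The paper instead exhibits $\prescript{}{\natural}(\Lambda^2_0)^\flat\into\prescript{}{\natural}(\Delta^2)^\flat$ as a \emph{retract} of a pushout-product built from $d^1\colon 1\into(\Delta^1)^\sharp$ (Lemma~\ref{lem:markedAnodyneGenerators2}, case~1). Similarly, generator~(6) is obtained by realising $d^{\{0,1\}}\colon(\Delta^1)^\sharp\into(\Delta^2)^\sharp$ as a retract of $d^1\colon(\Delta^1)^\sharp\into(\Delta^1\times\Delta^1)^\sharp$. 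These retract arguments are the missing ideas; your sketched decompositions do not replace them.
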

We will spread out the proof of Proposition~\ref{prop:markedAnodyneGenerators} over the two combinatorial lemmas~\ref{lem:markedAnodyneGenerators1} and~\ref{lem:markedAnodyneGenerators2}. Both of them will make repeated use of the following basic observation:

\begin{lemma}
	\label{lem:pushoutProductEquivalenceMarked}
	Let
	\begin{equation*}
	\begin{tikzcd}
	K\arrow[r]\arrow[d, "f"] & M\arrow[d, "g"]\\
	L\arrow[r] & N
	\end{tikzcd}
	\end{equation*}
	be a commutative square in $\mSimp\BB$ such that $f\vert_{\Delta}$ and $g\vert_{\Delta}$ are equivalences. Then the square is a pushout if and only if it becomes a pushout after evaluation at $+\in\Delta_+$. 
	In particular, if $C\to D$ is a map in $\Simp\BB$, the map $C^\sharp\sqcup_{C^\flat}D^\flat\to D^\sharp$ is an equivalence if and only if $C_1\sqcup_{C_0}D_0\to D_1$ is an equivalence. An analogous result holds for pullbacks.
\end{lemma}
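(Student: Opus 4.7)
The key point is that $\mSimp\BB = \Fun(\Delta_+^\op,\BB)$, so both limits and colimits are computed pointwise under evaluation at the objects of $\Delta_+$. Every object of $\Delta_+$ is either of the form $\iota\ord n$ for some $n \in \Delta$ or else the single extra object $+$, so the family of these evaluation functors jointly detects pushouts and pullbacks.

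For the main assertion, I would evaluate the given square at each $\iota\ord n$. By hypothesis, $f|_\Delta$ and $g|_\Delta$ are equivalences, so the evaluations $f_n$ and $g_n$ are equivalences in $\BB$, and the resulting square therefore has two parallel sides that are equivalences. Any such square in an $\infty$-topos is automatically both a pushout and a pullback. This leaves $+$ as the unique object of $\Delta_+$ at which the pushout (resp.\ pullback) condition can fail, which is precisely the content of the lemma.

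For the supplementary assertion, apply the main part to the naturality square
\begin{equation*}
\begin{tikzcd}
C^\flat \arrow[r] \arrow[d] & D^\flat \arrow[d] \\
C^\sharp \arrow[r] & D^\sharp.
\end{tikzcd}
\end{equation*}
Because both $(-)^\flat$ and $(-)^\sharp$ restrict along $\iota$ to the identity on $\Simp\BB$, the restrictions of the two vertical maps to $\Delta$ are equivalences, so the main assertion applies. Unwinding the construction of $\flat$ and $\sharp$ from the adjoints of $d_{\{0,2\}}$ yields $\flat(+) = \ord 0$ and $\sharp(+) = \ord 1$, whence $(X^\flat)_+ = X_0$ and $(X^\sharp)_+ = X_1$; hence evaluation of the square at $+$ produces precisely the square whose induced map from the pushout is $C_1 \sqcup_{C_0} D_0 \to D_1$. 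The claim now follows, with the dual reasoning covering pullbacks.

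The only real work beyond invoking pointwise (co)limits in presheaf $\infty$-categories is the combinatorial bookkeeping of identifying $\flat(+)$ and $\sharp(+)$ as objects of $\Delta$, which is done directly from the pushout definitions at the start of the section.
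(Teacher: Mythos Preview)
Your proof is correct and follows the same approach as the paper: both use that (co)limits in $\mSimp\BB=\Fun(\Delta_+^{\op},\BB)$ are detected pointwise, observe that the hypothesis forces the square to be a pushout (resp.\ pullback) at every $\iota\ord n$, and then identify the evaluation at $+$ for the second claim via $\flat(+)=\ord 0$ and $\sharp(+)=\ord 1$. The paper's proof is merely more terse, omitting the explicit bookkeeping for $\flat(+)$ and $\sharp(+)$ that you spell out.
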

\begin{proof}
	The square is a pushout if and only if its evaluation at each object in $\Delta_+$ is a pushout in $\BB$. Since all but the object $+\in\Delta_+$ are contained in the essential image of the inclusion $\Delta\into\Delta_+$ and since the functor $(-)\vert_{\Delta}$ by assumption carries the vertical maps to equivalences, the first claim follows. As for the second claim, it suffices to observe that the map $C_1\sqcup_{C_0}D_0\to D_1$ is precisely the evaluation of $C^\sharp\sqcup_{C^\flat}D^\flat\to D^\sharp$ at $+\in\Delta_+$.
\end{proof}

\begin{lemma}
	\label{lem:markedAnodyneGenerators1}
	The internal saturation of the maps in~(1)--(3) in Definition~\ref{def:markedAnodyne} is equal to the saturation of the following maps:
	\begin{enumerate}
	\item $(I^2\otimes K)^\flat\into(\Delta^2\otimes K)^\flat$ for all $K\in\Simp\BB$;
	\item $(E^1\otimes K)^\flat\to K^{\flat}$ for all $K\in\Simp\BB$;
	\item $(\Delta^1\otimes A)^\sharp\sqcup_{(\Delta^1\otimes A)^\flat}(\Delta^1\otimes A)^\sharp\to (\Delta^1\otimes A)^\sharp$ for all $A\in\BB$.
	\end{enumerate}
\end{lemma}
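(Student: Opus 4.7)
The plan is to establish both inclusions of saturated classes separately. For the direction that the saturation of the maps in (1)--(3) of the lemma is contained in the internal saturation of the generators of Definition~\ref{def:markedAnodyne}, it suffices to exhibit each of (1), (2), (3) as a product of a generator with a suitable marked simplicial object. Using that $(-)^\flat$ is left Kan extension along the fully faithful $\iota\colon\Delta\into\Delta_+$ together with a short direct check, one obtains the identification $(X\otimes K)^\flat\simeq X^\flat\times K^\flat$ for $X\in\Simp\SS$ and $K\in\Simp\BB$, which realises (1) and (2) as products of the corresponding generators of Definition~\ref{def:markedAnodyne} with $K^\flat$. For (3) one uses that $(-)^\sharp$, being a right adjoint, preserves the product $(\Delta^1)^\sharp\times A^\sharp\simeq(\Delta^1\otimes A)^\sharp$, together with Lemma~\ref{lem:pushoutProductEquivalenceMarked} to see that the relevant pushout survives crossing with $A^\sharp$.

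For the reverse inclusion, I would show that for every generator $s$ in Definition~\ref{def:markedAnodyne}(1)--(3) and every $c\in\mSimp\BB$, the product $s\times\id_c$ lies in the saturation of the maps in (1)--(3) of the lemma. Since saturated classes are closed under small colimits in $\Fun(\Delta^1,\mSimp\BB)$ and the functor $s\times(-)$ commutes with colimits, one can decompose $c$ as a colimit of basic objects of the form $(\Delta^n)^\flat\otimes A$ and $\Delta^+_+\otimes A\simeq(\Delta^1\otimes A)^\sharp$ with $A\in\BB$, reducing to verifying the claim only for these. When $c$ has the form $K^\flat$ with $K\in\Simp\BB$, the products are literally instances of (1) or (2); and when $c=A^\sharp$ with $A\in\BB$, Remark~\ref{rem:FlatSharpRestrictionGroupoids} gives $A^\sharp\simeq A^\flat$, so these products again reduce to (1) or (2). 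For the remaining case $c=(\Delta^1\otimes A)^\sharp$, I would decompose $c$ through the canonical map $(\Delta^1\otimes A)^\flat\to(\Delta^1\otimes A)^\sharp$, so that closure of the saturation under pushouts reduces the assertion to the flat case, handled above, together with a check involving the generator (3) of the lemma.

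The principal obstacle is controlling products of the flat generators (a) and (b) of Definition~\ref{def:markedAnodyne} with $\sharp$-type marked objects: since $(-)^\sharp$ does not preserve colimits, one cannot simply pass a colimit decomposition through it, and the cleanest route is via the auxiliary pushout encoded by the generator (3) of the lemma. This difficulty is also what motivates why item (3) of the lemma is formulated in this internal form rather than inherited directly from Definition~\ref{def:markedAnodyne}(3): absorbing the $\sharp$-contributions into an enriched generator leaves only flat checks that can be handled straightforwardly via (1) and (2).
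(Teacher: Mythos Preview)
Your overall strategy is sound and matches the paper's: reduce to products $s\times\id_c$ for $s$ among the generators of Definition~\ref{def:markedAnodyne} and $c$ among the representables $(\Delta^n)^\flat\otimes A$ and $(\Delta^1\otimes A)^\sharp$, then handle these case by case. The easy inclusion and the flat cases for generators (1) and (2) are indeed immediate, as you say.

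There are, however, two genuine gaps. First, you never explain what happens to generator~(3) of Definition~\ref{def:markedAnodyne}, i.e.\ the map $(\Delta^1)^\sharp\sqcup_{(\Delta^1)^\flat}(\Delta^1)^\sharp\to(\Delta^1)^\sharp$, when one crosses it with $(\Delta^n)^\flat$ or with $(\Delta^1)^\sharp$. These products are \emph{not} instances of (1), (2) or (3) of the lemma, and establishing that they lie in $S$ requires real work: in the paper these are cases~(5) and~(6), each needing a cube diagram and an appeal to Lemma~\ref{lem:pushoutProductEquivalenceMarked} to identify the relevant pushouts. Your sentence ``the products are literally instances of (1) or (2)'' only covers the first two generators.

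Second, your reduction ``decompose $c$ through $(\Delta^1\otimes A)^\flat\to(\Delta^1\otimes A)^\sharp$ and use pushout closure'' does not work uniformly. For generator~(1) the square
\[
\begin{tikzcd}
(I^2\times\Delta^1\otimes A)^\flat\arrow[r]\arrow[d] & (\Delta^2\times\Delta^1\otimes A)^\flat\arrow[d]\\
(I^2)^\flat\times(\Delta^1\otimes A)^\sharp\arrow[r] & (\Delta^2)^\flat\times(\Delta^1\otimes A)^\sharp
\end{tikzcd}
\]
is indeed a pushout (by Lemma~\ref{lem:pushoutProductEquivalenceMarked}, since $I^2_0\to\Delta^2_0$ is an isomorphism). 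But for generator~(2), the analogous square with $E^1\to 1$ is \emph{not} a pushout: at level~$+$ one would need $E^1_0\to 1_0$ to be a monomorphism, and it is a $2$-to-$1$ map. The paper's case~(4) handles $(E^1)^\flat\times(\Delta^1)^\sharp\to(\Delta^1)^\sharp$ by a different decomposition, identifying a pushout square whose remaining map is precisely an instance of the lemma's generator~(3). So your intuition that generator~(3) of the lemma absorbs the sharp contributions is correct, but the mechanism is more delicate than a single pushout along $\flat\to\sharp$.
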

\begin{proof}
	 Let $S$ be the saturation of the maps in~(1)--(3) in the lemma. As the internal saturation of the maps in~(1)--(3) in Definition~\ref{def:markedAnodyne} clearly contains S, it suffices to prove the converse direction. We need to show that for every marked simplicial object $K$, the map $f\otimes\id_K$ is contained in $S$, where $f$ is one of the maps in~(1)--(3) in Definition~\ref{def:markedAnodyne}. As every marked simplicial object can be obtained as a small colimit of objects of the form $\Delta_+^n\otimes A$, where $A\in\BB$ and either $n\geq 0$ or $n=+$, we need only show this for $K\in\{(\Delta^n)^\flat~\vert~n\geq 0\}\cup\{(\Delta^1)^\sharp\}$. There are therefore six cases:
	 	\begin{enumerate}
	 	\item For $n\geq 0$, the map $(I^2\times \Delta^n)^\flat \into (\Delta^2\times \Delta^n)^\flat$ is by definition contained in $S$.
	 	\item In order to show that the map $(I^2)^\flat\times (\Delta^1)^\sharp\into (\Delta^2)^\flat\times(\Delta^1)^\sharp$ is contained in $S$, it suffices to show that the map $((\Delta^2)^\flat\times(\Delta^1)^\sharp)\sqcup_{(I^2)^\flat\times(\Delta^1)^\sharp}((I^2)^\flat\times (\Delta^1)^\sharp)\to (\Delta^2)^\flat\times(\Delta^1)^\sharp$ is contained in $S$. Using Lemma~\ref{lem:pushoutProductEquivalenceMarked}, one easily verifies that this map is an equivalence.
	 	\item The maps $(E^1\times \Delta^n)^\flat \to (\Delta^n)^\flat$ are by definition contained in $S$.
	 	\item Consider the commutative diagram
	 	\begin{equation*}
	 	\begin{tikzcd}[column sep={6.5em,between origins}, row sep={3em,between origins}]
	 		& (\Delta^1\sqcup\Delta^1)^\flat\arrow[rr]\arrow[dd] && (\Delta^1\sqcup\Delta^1)^\flat\arrow[dd]\\
	 		(\Delta^1\sqcup\Delta^1)^\flat\arrow[ur, "\id"]\arrow[rr, crossing over]\arrow[dd] && (\Delta^1\sqcup\Delta^1)^\sharp \arrow[ur, "\id"]&\\
	 		& (\Delta^1)^\flat\arrow[rr] && (\Delta^1)^\flat\sqcup_{(\Delta^1\sqcup\Delta^1)^\flat}(\Delta^1\sqcup\Delta^1)^\sharp\\
	 		(E^1\times\Delta^1)^\flat\arrow[rr] \arrow[ur]&& (E^1)^\flat\times(\Delta^1)^\sharp\arrow[ur, "\phi"']\arrow[from=uu, crossing over] &
	 	\end{tikzcd}
	 	\end{equation*}
	 	in which the two vertical maps in the front square are induced by the inclusion of the two points of $E^1$. Since Lemma~\ref{lem:pushoutProductEquivalenceMarked} implies that the front square in this diagram is a pushout, the map $\phi$ is obtained as a pushout of maps that are contained in $S$ and must therefore be in $S$ too. Hence, to show that $(E^1)^\flat\times(\Delta^1)^\sharp\to(\Delta^1)^\sharp$ is contained in $S$, it suffices to show that $(\Delta^1)^\flat\sqcup_{(\Delta^1\sqcup\Delta^1)^\flat}(\Delta^1\sqcup\Delta^1)^\sharp\to (\Delta^1)^\sharp$ is in $S$, which follows from the observation that this is precisely the map in~(3) in the case where $A\simeq 1$.
	 	\item	Let us set $K= (\Delta^1)^\sharp\sqcup_{(\Delta^1)^\flat}(\Delta^1)^\sharp$. We have a commutative diagram
	 	\begin{equation*}
	 		\begin{tikzcd}[column sep={8.5em,between origins}, row sep={3.5em,between origins}]
	 			& \bigsqcup_{i\in \ord{n}}(\Delta^1)^\flat \sqcup \bigsqcup_{i\in \ord{n}}(\Delta^1)^\flat \arrow[rr]\arrow[dl]\arrow[dd] && \bigsqcup_{i\in \ord{n}}(\Delta^1)^\sharp \sqcup \bigsqcup_{i\in \ord{n}}(\Delta^1)^\sharp\arrow[dl]\arrow[dd]\\
	 			(\Delta^1\times\Delta^n)^\flat\sqcup (\Delta^1\times\Delta^n)^\flat\arrow[rr, crossing over]\arrow[dd] && ((\Delta^1)^\sharp\times(\Delta^n)^\flat)\sqcup ((\Delta^1)^\sharp\times(\Delta^n)^\flat)\arrow[dd] & \\
	 			&\bigsqcup_{i\in \ord{n}}(\Delta^1)^\flat \arrow[rr] \arrow[dl] && \bigsqcup_{i\in \ord{n}} K\arrow[dl]\\
	 			(\Delta^1\times\Delta^n)^\flat\arrow[rr] && K\times(\Delta^n)^\flat\arrow[from=uu, crossing over]
	 		\end{tikzcd}
	 	\end{equation*}
	 	in which both the front and the back square is a pushout. Using Lemma~\ref{lem:pushoutProductEquivalenceMarked}, one moreover easily verifies that the top square is a pushout too, which implies that the bottom square is one as well. As a consequence, we obtain a commutative diagram
	 	\begin{equation*}
	 		\begin{tikzcd}
	 		\bigsqcup_{i\in \ord{n}} (\Delta^1)^\flat\arrow[r]\arrow[d] & \bigsqcup_{i\in \ord{n}} K\arrow[r] \arrow[d]& \bigsqcup_{i\in \ord{n}} (\Delta^1)^\sharp\arrow[d]\\
	 		(\Delta^1\times\Delta^n)^\flat\arrow[r] & K\times(\Delta^n)^\flat\arrow[r] & (\Delta^1)^\sharp\times(\Delta^n)^\flat
	 		\end{tikzcd}
	 	\end{equation*}
	 	in which the left square is cocartesian. Since the map $K\to (\Delta^1)^\sharp$ is contained in $S$, we conclude that the map $K\times(\Delta^n)^\flat\to (\Delta^1)^\sharp\times(\Delta^n)^\flat$ is an element of $S$ whenever the right square is a pushout diagram. This follows from the observation that the outer square of this diagram is cocartesian, which is easily verified using Lemma~\ref{lem:pushoutProductEquivalenceMarked}.
	 	\item Let again $K= (\Delta^1)^\sharp\sqcup_{(\Delta^1)^\flat}(\Delta^1)^\sharp$ and consider the commutative diagram
	 	\begin{equation*}
	 		\begin{tikzcd}
	 		(\Delta^1\sqcup\Delta^1)^\flat\arrow[r]\arrow[d] & K\times(\Delta^1)^\flat\arrow[d] \arrow[r] & (\Delta^1\times\Delta^1)^\flat\arrow[d]\\
	 		(\Delta^1\sqcup\Delta^1)^\sharp\arrow[r] & K\times(\Delta^1)^\sharp\arrow[r] & (\Delta^1)^\flat\times(\Delta^1)^\sharp
	 		\end{tikzcd}
	 	\end{equation*}
	 	in which the two horizontal maps on the left are induced by the inclusion of the two points of $K_0$. Using Lemma~\ref{lem:pushoutProductEquivalenceMarked}, one finds that the composite square is cocartesian, and the fact that the two horizontal maps on the left induce an equivalence when evaluated at $+\in \Delta_+$ similarly implies that the left square is a pushout too. We thus conclude that the right square is cocartesian. As the upper right horizontal morphism is contained in $S$, this shows that the map $K\times(\Delta^1)^\sharp\to (\Delta^1)^\flat\times(\Delta^1)^\sharp$ is in $S$ as well.\qedhere
	 	\end{enumerate}
\end{proof}

\begin{remark}
	\label{rem:markedMonomorphismGenerator}
	Note that the internal saturation of the maps in~(1)--(3) in Definition~\ref{def:markedAnodyne} also contains the map $K^\sharp\sqcup_{K^\flat} K^\sharp\to K^\sharp$ for every simplicial object $K$. In fact, this follows from the observation that this map arises as a retract of $(\Delta^1\otimes K)^\sharp\sqcup_{(\Delta^1\otimes K)^\flat}(\Delta^1\otimes K)^\sharp\to(\Delta^1\otimes K)^\sharp$.
\end{remark}

\begin{lemma}
	\label{lem:markedAnodyneGenerators2}
	Let $S$ be a saturated class of maps in $\mSimp\BB$ that contains the internal saturation of the maps in~(1)--(3) in Definition~\ref{def:markedAnodyne}. Then $S$ contains the internal saturation of $d^1\colon 1\into (\Delta^1)^\sharp$ if and only if it contains the following maps:
	\begin{enumerate}
	\item $\prescript{}{\natural}(\Lambda^2_0)^\flat\otimes A\into \prescript{}{\natural}(\Delta^2)^\flat\otimes A$ for all $A\in\BB$;
	\item $d^1\colon A^\sharp\into(\Delta^1\otimes A)^\sharp$ for all $A\in\BB$;
	\item $(I^2\otimes A)^\sharp\into (\Delta^2\otimes A)^\sharp$ for all $A\in\BB$.
	\end{enumerate}
\end{lemma}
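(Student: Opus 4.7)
The plan is to prove both implications by explicit manipulations of pushouts in $\mSimp\BB$, systematically using three tools: that $(-)^\sharp$ preserves products, so that $A^\sharp\times B^\sharp\simeq (A\times B)^\sharp$; Lemma~\ref{lem:pushoutProductEquivalenceMarked}, which reduces the verification of pushouts in $\mSimp\BB$ to their underlying pushouts in $\BB$ together with an evaluation at $+$; and the availability inside $S$ of the internally parametrised generators provided by Lemma~\ref{lem:markedAnodyneGenerators1}.

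For the ``only if'' direction, assume that the internal saturation of $d^1\colon 1\to (\Delta^1)^\sharp$ is contained in $S$. Item~(2) is then immediate: the product-preservation of $(-)^\sharp$ identifies $d^1\colon A^\sharp\to (\Delta^1\otimes A)^\sharp$ with $d^1\times\id_{A^\sharp}$, which is in the internal saturation of the generator. For~(3), I will factor $(I^2\otimes A)^\sharp\into(\Delta^2\otimes A)^\sharp$ as a 2-simplex filling performed at the ``flat'' level, realised as a pushout of the internally saturated $(I^2\otimes A)^\flat\into(\Delta^2\otimes A)^\flat$, followed by a marking of the new edge $\{0,2\}$ that reduces to a pushout of $d^1\colon 1\to(\Delta^1)^\sharp$ attached along $\{0,2\}$; here one uses the uniqueness-of-marking generator~(3) of Definition~\ref{def:markedAnodyne} together with Lemma~\ref{lem:pushoutProductEquivalenceMarked} to match up the intermediate markings. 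Item~(1) is derived analogously: a Segal-style filling of $\prescript{}{\natural}(\Lambda^2_0)^\flat\otimes A$ combined with a marking step, each reduced to generators already in $S$.

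For the ``if'' direction, the goal is to show $d^1\times\id_C\in S$ for every $C\in\mSimp\BB$. Since the class of such $C$ is closed under colimits in $\mSimp\BB$, it suffices to treat $C=(\Delta^n)^\flat\otimes B$ for $n\ge 0$ and $C=\Delta_+^+\otimes B$ for $B\in\BB$. The case $n=0$ is immediate from~(2) via the identification $B^\flat\simeq B^\sharp$ of Remark~\ref{rem:FlatSharpRestrictionGroupoids}. The case $C\simeq (\Delta^1\otimes B)^\sharp$ is reduced to the $n=0$ case by writing $(\Delta^1)^\sharp\times(\Delta^1\otimes B)^\sharp\simeq (\Delta^1\times\Delta^1\otimes B)^\sharp$, decomposing $\Delta^1\times\Delta^1$ as a union of two $2$-simplices glued along their diagonal (applying Lemma~\ref{lem:pushoutProductEquivalenceMarked} to promote the decomposition to the $\sharp$-level), and building the required map from two instances of~(2) together with~(3) of Definition~\ref{def:markedAnodyne} to reconcile the double markings that appear at the diagonal. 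The remaining cases $n\ge 1$ form the technical core of the argument: one exhibits $d^1\times\id_{(\Delta^n)^\flat\otimes B}$ as an iterated pushout of maps in $S$ by induction on $n$, using the classical ``staircase'' decomposition of $\Delta^1\times\Delta^n$ into $(n+1)$-simplices with the vertical edges marked. Each attachment of an $(n+1)$-cell becomes a pushout of either~(1) of the lemma (when the edge $\{0,1\}$ of the attached cell carries a $\sharp$-marking) or of the internally saturated~(1) of Definition~\ref{def:markedAnodyne} (otherwise), after propagating markings via~(3) of Definition~\ref{def:markedAnodyne}. The \textbf{main obstacle} is precisely this inductive step: performing the staircase decomposition while carefully tracking which edges carry the $\sharp$-marking at each intermediate stage, and verifying that each cell attachment is indeed a pushout of one of the listed generators.
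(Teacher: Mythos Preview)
There is a genuine gap in your ``only if'' direction, specifically in items~(3) and~(1). For~(3) you propose to reach $(\Delta^2\otimes A)^\sharp$ from $(I^2\otimes A)^\sharp$ by first pushing out the flat Segal inclusion $(I^2)^\flat\into(\Delta^2)^\flat$ and then marking the remaining edge $\{0,2\}$. But marking an existing unmarked edge is a pushout of $(\Delta^1)^\flat\to(\Delta^1)^\sharp$, not of $d^1\colon 1\to(\Delta^1)^\sharp$, and the former map is \emph{not} in $S$: it is not marked left anodyne, since a marked cocartesian fibration need not have every morphism cocartesian. The uniqueness-of-marking generator~(3) of Definition~\ref{def:markedAnodyne} collapses a double marking, it does not promote a flat edge to a sharp one. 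The paper avoids this obstruction by a retract argument: $d^{\{0,1\}}\colon(\Delta^1\otimes A)^\sharp\into(\Delta^2\otimes A)^\sharp$ is exhibited as a retract of the map $d^1\colon(\Delta^1\otimes A)^\sharp\into(\Delta^1\otimes(\Delta^1\otimes A))^\sharp$, which lies in the internal saturation of $d^1$; the map in~(3) then follows since $d^{\{0,1\}}\colon(\Delta^1\otimes A)^\sharp\into(I^2\otimes A)^\sharp$ is a pushout of~(2). The same obstruction breaks your plan for~(1): $\Lambda^2_0$ is not the spine $I^2=\Lambda^2_1$, so no Segal-style flat filling is available, and once again no edge-marking move lies in $S$. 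The paper instead realises $\prescript{}{\natural}(\Lambda^2_0)^\flat\into\prescript{}{\natural}(\Delta^2)^\flat$ as a retract of a map $L\into(\Delta^1)^\sharp\times\prescript{}{\natural}(\Delta^2)^\flat$ that is manifestly in the internal saturation of $d^1$.

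Your ``if'' direction is workable in principle but heavier than what the paper does. Rather than a staircase induction on $n$, the paper uses that $(I^n\otimes K)^\flat\into(\Delta^n\otimes K)^\flat$ already lies in $S$ (by the hypothesis on~(1)--(3) of Definition~\ref{def:markedAnodyne} together with~\cite[Lemma~3.2.5]{Martini2021}) to reduce the case $K=(\Delta^n)^\flat$ immediately to $n=1$, i.e.\ to the single square that you also analyse. Your general-$n$ staircase would require you first to derive the higher marked left horns $\prescript{}{\natural}(\Lambda^n_0)^\flat\into\prescript{}{\natural}(\Delta^n)^\flat$ from the $n=2$ generator alone, a non-trivial step you do not address.
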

\begin{proof}
	Suppose first that $S$ contains the internal saturation of $d^1\colon 1\into (\Delta^1)^\sharp$. There are now three cases to consider:
	\begin{enumerate}
	\item  Let $K\to L$ be the unique map in $\mSimp\BB$ that fits into the diagram
	\begin{equation*}
	\begin{tikzcd}[column sep={3.5em,between origins},row sep={2.5em,between origins}]
	& (\Lambda^2_0)^\flat\arrow[dd, "d^1", near start, hookrightarrow]\arrow[rr, hookrightarrow]\arrow[dl] && (\Delta^2)^\flat\arrow[dd, hookrightarrow]\arrow[dl]\arrow[ddrr, "d^1", bend left, hookrightarrow] &&\\
	\prescript{}{\natural}{(\Lambda^2_0)^\flat}\arrow[rr, crossing over, hookrightarrow] \arrow[dd, "d^1", near start, hookrightarrow]&& \prescript{}{\natural}{(\Delta^2)^\flat}\arrow[ddrr, "d^1", bend left, hookrightarrow]&& &\\ 
	& (\Delta^1\times\Lambda^2_0)^\flat\arrow[rr, hookrightarrow] \arrow[dl]&& K\arrow[dl]\arrow[rr, hookrightarrow] && (\Delta^1\times\Delta^2)^\flat \arrow[dl]\\
	(\Delta^1)^\sharp\times\prescript{}{\natural}{(\Lambda^2_0)^\flat}\arrow[rr, hookrightarrow] && L\arrow[from=uu, crossing over, hookrightarrow]\arrow[rr, hookrightarrow]&& (\Delta^1)^\sharp\times\prescript{}{\natural}{(\Delta^2)^\flat}&
	\end{tikzcd}
	\end{equation*}
	such that both the front and the back square is a pushout.
	Then the map $L\to (\Delta^1)^\sharp\times\prescript{}{\natural}{(\Delta^2)^\flat}$ is contained in $S$. We claim that the inclusion $\prescript{}{\natural}(\Lambda^2_0)^\flat\into \prescript{}{\natural}(\Delta^2)^\flat$ is a retract of this map. To see this, first note that the two squares on the bottom of the above diagram are cocartesian by Lemma~\ref{lem:pushoutProductEquivalenceMarked}. Now let $r\colon \Delta^1\times\Delta^2\to\Delta^2$ be the map given by $r(0,1)=0$ and $r(k,l)=l$ else. The $r^\flat$ restricts to a map $(r^\prime)^\flat\colon K\to(\Lambda^2_0)^\flat$. We obtain a commutative diagram
	\begin{equation*}
	\begin{tikzcd}[column sep={between origins,4em}, row sep={between origins,3em}]
	& (\Lambda^2_0)^\flat\arrow[rr, "d^0"]\arrow[dd]\arrow[dl] && K\arrow[rr, "(r^\prime)^\flat"]\arrow[dd]\arrow[dl] && (\Lambda^2_0)^\flat\arrow[dd]\arrow[dl]\\
	(\Delta^2)^\flat\arrow[rr, crossing over, "d^0"]\arrow[dd, dashed] && (\Delta^1\times\Delta^2)^\flat\arrow[rr, "r^\flat"] && (\Delta^2)^\flat &\\
	&\prescript{}{\natural}{(\Lambda^2_0)^\flat} \arrow[rr, "d^0", near start] \arrow[dl, dashed]&& L\arrow[rr]\arrow[dl, dashed] && \prescript{}{\natural}{(\Lambda^2_0)^\flat}\arrow[dl, dashed]\\
	\prescript{}{\natural}{(\Delta^2)^\flat}\arrow[rr, dashed]&& (\Delta^1)^\sharp\times \prescript{}{\natural}{(\Delta^2)^\flat}\arrow[rr, dashed] \arrow[from=uu, crossing over, dashed] && \prescript{}{\natural}{(\Delta^2)^\flat}\arrow[from=uu, crossing over, dashed] &
	\end{tikzcd}
	\end{equation*}
	in which the upper row is a retract diagram. Since the lower row is obtained as a pushout of the upper row, the claim follows. As a consequence, the maps in~(1) can be realised as retracts of maps in $S$, which shows that they too must be contained in $S$. 
	
	\item The maps $d^1\colon A^\sharp\into (\Delta^1\otimes A)^\sharp$ are by definition contained in $S$.
	\item Note that the map $d^{\{0,1\}}\colon (\Delta^1\otimes A)^\sharp\into (I^2\otimes A)^\sharp$ is a pushout of $d^1\colon A^\sharp\into(\Delta^1\otimes A)^\sharp$ and therefore contained in $S$. Hence, to show that the maps in~(3) are in $S$, it suffices to prove that the map $d^{\{0,1\}}\colon (\Delta^1\otimes A)^\sharp\into(\Delta^2\otimes A)^\sharp$ is an element of $S$. This in turn follows from the observation that this map is a retract of the morphism $d^1\colon (\Delta^1\otimes A)^\sharp\into(\Delta^1\otimes(\Delta^1\otimes A))^\sharp$.
	\end{enumerate}
	We now show the converse inclusion. As in the proof of Lemma~\ref{lem:markedAnodyneGenerators1}, we only need to show that the map $d^1\colon K\into (\Delta^1)^\sharp\otimes K$ is contained in $S$ for $K\in\{(\Delta^n\otimes A)^\flat~\vert~ n\geq 0,~A\in\BB\}\cup\{(\Delta^1\otimes A)^\sharp~\vert~A\in\BB\}$. As $d^1\colon A^\sharp\into(\Delta^1\otimes A)^\sharp$ is contained in $S$, we can replace $\BB$ by $\Over{\BB}{A}$ and therefore always assume $A\simeq 1$. Moreover, since $S$ by assumption contains the internal saturation of $(I^2)^\flat\into(\Delta^2)^\flat$, the map $(I^n)^\flat\otimes K\into (\Delta^n)^\flat\otimes K$ is in $S$ too, for every integer $n\geq 2$~\cite[Lemma~3.2.5]{Martini2021}. Thus, if $f\in S$ is an arbitrary map such that $\id_{(\Delta^1)^\flat}\otimes f$ is contained in $S$, the map $\id_{(\Delta^n)^\flat}\otimes f$ must be in $S$ too for every integer $n\geq 0$. In total, these considerations allow us to assume $K\in\{(\Delta^1)^\flat,(\Delta^1)^\sharp\}$. There are therefore two cases:
	\begin{enumerate}
		\item To show that $d^1\colon (\Delta^1)^\flat\into(\Delta^1)^\sharp\times(\Delta^1)^\flat$ is contained in $S$, first note that the codomain of this map is given by the pushout
		\begin{equation*}
		\begin{tikzcd}
		(\Delta^1\sqcup\Delta^1)^\flat\arrow[r]\arrow[d, "{(d^1\times\id, d^0\times\id)}"] & (\Delta^1\sqcup\Delta^1)^\sharp\arrow[d]\\
		(\Delta^1\times\Delta^1)^\flat\arrow[r] & (\Delta^1)^\flat\times(\Delta^1)^\sharp.
		\end{tikzcd}
		\end{equation*}
		Therefore, by using the decomposition $\Delta^1\times\Delta^1\simeq\Delta^2\sqcup_{\Delta^1}\Delta^2$, we obtain an equivalence of marked simplicial objects $(\Delta^1)^\flat \times(\Delta^1)^\sharp\simeq H\sqcup_{(\Delta^1)^\flat}K$, where $H$ and $K$ are defined as the pushouts
		\begin{equation*}
		\begin{tikzcd}
		(\Delta^1)^\flat\arrow[r]\arrow[d, hookrightarrow, "d^{\{1,2\}}"] & (\Delta^1)^\sharp\arrow[d] && (\Delta^1)^\flat\arrow[r] \arrow[d, "d^{\{0,1\}}"]& (\Delta^1)^\sharp\arrow[d]\\
		(\Delta^2)^\flat\arrow[r] & H && (\Delta^2)^\flat\arrow[r] & K.
		\end{tikzcd}
		\end{equation*}
		With respect to this identification, the inclusion $d^1\colon (\Delta^1)^\flat\into(\Delta^1)^\sharp\times(\Delta^1)^\flat$ is obtained by the composition
		\begin{equation*}
		(\Delta^1)^\flat\xhookrightarrow{d^{\{0,1\}}} H\into H\sqcup_{(\Delta^1)^\flat} K.
		\end{equation*}
		It therefore suffices to show that both $d^{\{0,1\}}\colon (\Delta^1)^\flat\into H$ and $d^{\{0,2\}}\colon (\Delta^1)^\flat\into K$ are contained in $S$. We begin with the first map. Observe that this morphism is equivalent to the composition
		\begin{equation*}
		(\Delta^1)^\flat \xhookrightarrow{d^{\{0,1\}}} (\Delta^1)^\sharp\sqcup_{(\Delta^1)^\flat}(I^2)^\flat\\into (\Delta^1)^\sharp\sqcup_{(\Delta^1)^\flat}(\Delta^2)^\flat.
		\end{equation*}
		Here the right map is of the form~(1) in Definition~\ref{def:markedAnodyne} and therefore included in $S$. The left map, on the other hand, is obtained as a pushout of $d^1\colon( \Delta^0)^\sharp\into (\Delta^1)^\sharp$, hence contained in $S$ too. In order to show that $d^{\{0,2\}}\colon (\Delta^1)^\flat\into K$ defines an element of $S$, it suffices to observe that this map can be obtained as the composition
		\begin{equation*}
		(\Delta^1)^\flat\xhookrightarrow{d^{\{0,2\}}"} \prescript{}{\natural}(\Lambda^2_0)^\flat\into \prescript{}{\natural}(\Delta^2)^\flat
		\end{equation*}
		in which the right map is of the form~(1) and therefore in $S$ and in which the left map is a pushout of $d^1\colon (\Delta^0)^\sharp\into (\Delta^1)^\sharp$, so contained in $S$ as well.
		
		\item Finally, we show that the map $d^1\colon(\Delta^1)^\sharp\into(\Delta^1\times\Delta^1)^\sharp$ is contained in $S$. On account of the commutative diagram
		\begin{equation*}
		\begin{tikzcd}[column sep={4em,between origins}, row sep={2.5em,between origins}]
		& (\Delta^1)^\sharp\arrow[rr, "d^{\{0,2\}}"]\arrow[dd, "d^{\{0,2\}}", near end] && (\Delta^2)^\sharp\arrow[dd]\\
		(\Delta^0)^\sharp\arrow[dd, "\id",near start]\arrow[rr, "d^1", crossing over, near end] \arrow[ur, "d^1"]&& (\Delta^1)^\sharp\arrow[ur, "d^{\{0,1\}}"] & \\
		& (\Delta^2)^\sharp\arrow[rr, hookrightarrow] && (\Delta^1\times\Delta^1)^\sharp\\
		(\Delta^0)^\sharp\arrow[rr, "d^1"] \arrow[ur, "d^{\{0\}}", near end]&& (\Delta^1)^\sharp\arrow[from=uu, "\id", crossing over, near start]\arrow[ur] &
		\end{tikzcd}
		\end{equation*}
		in which both the front and the back square is a pushout and in which the map $d^1\colon (\Delta^0)^\sharp\into(\Delta^1)^\sharp$ is contained in $S$, it suffices to show that the two maps $d^{\{0\}}\colon (\Delta^0)^\sharp\into(\Delta^2)^\sharp$ and $d^{\{0,1\}}\colon(\Delta^1)^\sharp\into(\Delta^2)^\sharp$ are contained in $S$ as well.
		As the first of these two maps can be factored into $d^1\colon (\Delta^0)^\sharp\into(\Delta^1)^\sharp$ followed by $d^{\{0,1\}}\colon (\Delta^1)^\sharp\into(\Delta^2)^\sharp$, we only need to prove this for the second map. By in turn factoring this morphism as
		\begin{equation*}
		(\Delta^1)^\sharp\xhookrightarrow{d^{\{0,1\}}} (I^2)^\sharp\into (\Delta^2)^\sharp, 
		\end{equation*}
		this is a consequence of the observation that the map $(\Delta^1)^\sharp\into(I^2)^\sharp$ is obtained as a pushout of $d^1\colon(\Delta^0)^\sharp\into(\Delta^1)^\sharp$.\qedhere
	\end{enumerate}
\end{proof}

\begin{proof}[{Proof of Proposition~\ref{prop:markedAnodyneGenerators}}]
	Combine Lemma~\ref{lem:markedAnodyneGenerators1} and Lemma~\ref{lem:markedAnodyneGenerators2}.
\end{proof}

\subsection{Marked cocartesian fibrations}
\label{sec:markedCocartesianFibrations}
In this section we turn to studying the right complement of the class of marked left anodyne morphisms in $\mSimp\BB$.
\begin{definition}
	\label{def:cocartesianFibrationsMarked}
	A map in $\mSimp\BB$ is a \emph{marked cocartesian fibration} if it is right orthogonal to the class of marked left anodyne maps. We write $\Cocart^+\into\Fun(\Delta^1,\mSimp\BB)$ for the full cartesian subfibration over $\mSimp\BB$ that is spanned by the marked cocartesian fibrations.
\end{definition}

The following proposition shows that marked cocartesian fibrations faithfully generalise cocartesian fibrations of $\BB$-categories. The analogous result for cocartesian fibrations of $\infty$-categories appears as (the dual of)~\cite[Proposition~3.1.1.6]{htt}.
\begin{proposition}
	\label{prop:comparisonCocartesianFibrationMarkedUnmarked}
	For any $\BB$-category $\I{C}$, a map $p\colon P\to \I{C}^{\sharp}$ is a marked cocartesian fibration if and only if $P\vert_{\Delta}$ is a $\BB$-category, the map $p\vert_{\Delta}$ is a cocartesian fibration in $\Cat(\BB)$, and the map $P_+\to P_1$ is a monomorphism that identifies $P_+$ with the subobject of cocartesian morphisms of $p\vert_{\Delta}$.
\end{proposition}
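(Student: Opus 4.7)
The plan is to invoke Proposition~\ref{prop:markedAnodyneGenerators} and translate right-orthogonality of $p$ against each of the six classes of generators into a concrete condition on $p\vert_\Delta$ and the morphism $P_+\to P_1$. The bookkeeping is simplified by the fact that the base is $\I{C}^\sharp$: every edge of $\I{C}^\sharp$ is marked (i.e.\ $\I{C}^\sharp_+\simeq\I{C}_1$), so the base side of each lifting problem reduces to the underlying simplicial base $\I{C}$. Schematically, generators~(1) and~(2) will control the Segal and univalence conditions on $P\vert_\Delta$; generator~(3) will impose that $P_+\to P_1$ is a monomorphism; generator~(5) will give the existence of marked lifts of morphisms out of objects in $P\vert_\Delta$; generator~(4) will force every marked edge to be cocartesian via Proposition~\ref{prop:characterisationCocartesianMorphismLifting}; and generator~(6) will enforce closure of marked edges under composition.

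For the forward direction, assume $p$ is marked cocartesian. Using the adjunction $(-)^\flat\dashv(-)\vert_\Delta$, right-orthogonality against the generators~(1) and~(2) translates into internal right orthogonality of $p\vert_\Delta$ against $I^2\otimes K\into\Delta^2\otimes K$ and $E^1\otimes K\to K$ for all $K\in\Simp\BB$; since $\I{C}$ is already a $\BB$-category, this forces $P\vert_\Delta$ to be one as well. Right-orthogonality against generator~(3) translates directly into the assertion that $P_+\to P_1$ is a monomorphism. Unpacking generator~(5), namely $d^1\colon A^\sharp\into(\Delta^1\otimes A)^\sharp$ (using Remark~\ref{rem:FlatSharpRestrictionGroupoids} so that $\Map(A^\sharp,P)\simeq\Map(A,P_0)$ and $\Map((\Delta^1\otimes A)^\sharp,P)\simeq\Map(A,P_+)$), yields: for every object $x\colon A\to P_0$ and every morphism $\alpha\colon p(x)\to d$ in $\I{C}$ there is a unique $f\colon A\to P_+$ with $d_1 f=x$ and $p(f)=\alpha$. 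Unpacking generator~(4), every marked $\Lambda^2_0$-horn in $P$ extends uniquely to a $\Delta^2$ in $P$ lying over any chosen $\Delta^2$ in $\I{C}$; by Proposition~\ref{prop:characterisationCocartesianMorphismLifting} this is precisely the statement that every edge in $P_+$ is a cocartesian morphism of $p\vert_\Delta$. Combining these facts, Proposition~\ref{prop:characterisationCocartesianFibrationCocartesianMaps} shows that $p\vert_\Delta$ is a cocartesian fibration, and the uniqueness of cocartesian lifts (combined with the monomorphism from~(3)) identifies $P_+$ with the subobject of $P_1$ spanned by the cocartesian morphisms.

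The converse runs the same correspondences in the opposite direction. Given the three stated conditions, right-orthogonality of $p$ against~(1) and~(2) follows by adjunction from $P\vert_\Delta$ being a $\BB$-category; against~(3) from the monomorphicity of $P_+\into P_1$; against~(5) from the existence of cocartesian lifts (Proposition~\ref{prop:characterisationCocartesianFibrationCocartesianMaps}) combined with the identification of $P_+$ with the cocartesian morphisms; against~(4) from the horn-filling criterion in Proposition~\ref{prop:characterisationCocartesianMorphismLifting}; and against~(6) from Remark~\ref{rem:stabilityCocartesianMaps}, which guarantees that composites of cocartesian morphisms are cocartesian and hence marked. The hardest part is the careful unpacking of each lifting problem in $\mSimp\BB$: the interplay between $(-)^\flat$ and $(-)^\sharp$, the handling of varying contexts $A\in\BB$ and $K\in\Simp\BB$, and the reduction of the over-$\I{C}^\sharp$ constraints to constraints on $p\vert_\Delta$ all require precise bookkeeping. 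The translation of generator~(4) into the cocartesianness of every marked edge via Proposition~\ref{prop:characterisationCocartesianMorphismLifting} is the most substantive step and is where the lifting criterion does the real work.
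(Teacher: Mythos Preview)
Your proposal is correct and follows essentially the same approach as the paper: both translate right-orthogonality against each of the six generator classes from Proposition~\ref{prop:markedAnodyneGenerators} into the stated conditions, with generator~(4) handled via Proposition~\ref{prop:characterisationCocartesianMorphismLifting} and the cocartesian-fibration conclusion via Proposition~\ref{prop:characterisationCocartesianFibrationCocartesianMaps}. The paper merely spells out two steps you compress---it makes the ``uniqueness of cocartesian lifts'' argument explicit (producing a marked lift $g$ and an equivalence $h$ with $hg\simeq f$) and unpacks generator~(6) via a small pushout computation before invoking closure under composition---but the underlying logic is identical.
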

\begin{proof}
	The map $p$ being right orthogonal to the maps of the form~(1) and~(2) in Proposition~\ref{prop:markedAnodyneGenerators} is equivalent to $P\vert_{\Delta}$ being a $\BB$-category.
	Moreover, $p$ is right orthogonal to the maps in~(3) in Proposition~\ref{prop:markedAnodyneGenerators} if and only if $P_+\to P_1$ is a monomorphism in $\BB$.
	
	Suppose now that $p$ is right orthogonal to the maps listed in~(1)--(3) in Proposition~\ref{prop:markedAnodyneGenerators}, and let us denote by $\I{P}=P\vert_{\Delta}$ the underlying $\BB$-category of $P$. The condition of $p$ to be right orthogonal to the maps in~(4) in Proposition~\ref{prop:markedAnodyneGenerators} is now equivalent to the commutative diagram
	\begin{equation*}
		\begin{tikzcd}
		P_+\times_{\I{P}_1}\I{P}_2\arrow[r] \arrow[d] & \I{C}_1\times_{\I{C}_1}\I{C}_2\arrow[d]\\
		P_+\times_{\I{P}_1}(\I{P}^{\Lambda^2_0})_0\arrow[r] & \I{C}_1\times_{\I{C}_1}(\I{C}^{\Lambda^2_0})_0
		\end{tikzcd}
	\end{equation*}
	to be cartesian. By employing Proposition~\ref{prop:characterisationCocartesianMorphismLifting}, this is equivalent to the condition that the inclusion $P_+\into \I{P}_1$ defines a cocartesian morphism in $\I{P}=P\vert_{\Delta}$. Therefore, if $p$ is in addition right orthogonal to the maps in~(5) in Proposition~\ref{prop:markedAnodyneGenerators}, we conclude from Proposition~\ref{prop:characterisationCocartesianFibrationCocartesianMaps} that $p\vert_{\Delta}$ must be a cocartesian fibration. Furthermore, under these conditions \emph{every} cocartesian map factors through $P_+\into \I{P}_1$. To see this, suppose that $f\colon x\to y$ is a cocartesian morphism in $\I{P}$ in context $A\in\BB$, and let $\alpha\colon c\to d$ be the image of $f$ along $p\vert_{\Delta}$. Using the maps in~(5) in Proposition~\ref{prop:markedAnodyneGenerators}, there exists a marked lift of $\alpha$, i.e.\ a map $g\colon x\to z$ in $\I{P}$ that is contained in $P_+\into \I{P}_1$ and that is sent to $\alpha$ by $p\vert_{\Delta}$. Since $g$ is marked and therefore cocartesian, Proposition~\ref{prop:characterisationCocartesianMorphismLifting} implies that one can find a map $h\colon z\to y$ in $\I{P}$ that is sent to $\id_d$ by $p\vert_{\Delta}$ such that $hg\simeq f$. This implies that $h$ must be cocartesian as well and therefore an equivalence. Thus $f$ is marked too, i.e\ contained in the image of $P_+\into \I{P}_1$.
	
	So far, we have shown that if $p$ is a marked cocartesian fibration, then the simplicial object $P\vert_{\Delta}$ is a $\BB$-category and $p\vert_{\Delta}$ is a cocartesian fibration such that $P_+\to  P_1$ is a monomorphism that identifes $P_+$ with the subobject of cocartesian maps in $P\vert_{\Delta}$. Conversely, if the map $p$ satisfies these conditions, the above argumentation shows that the proof is complete once we show that $p$ is right orthogonal to the maps in~(5) and~(6) in Proposition~\ref{prop:markedAnodyneGenerators}. Since orthogonality to the maps in~(5)
	precisely means that the map $P_+\to P_0\times_{C_0}C_1$ is an equivalence, this is immediate by the assumption that the inclusion $P_+\into P_1$ identifies $P_+$ with the subobject of cocartesian maps in $P\vert_{\Delta}$, cf.\ Remark~\ref{rem:characterisationCocartesianMorphisms}. Orthogonality to the maps in~(6), on the other hand, translates into the condition that the map $(P_\sharp)_2\to P_+\times_{P_0}P_+$ is an equivalence. To show this, first note that by Lemma~\ref{lem:pushoutProductEquivalenceMarked} the commutative square
	\begin{equation*}
	\begin{tikzcd}
	(\Delta^1\sqcup I^2)^\flat\arrow[r]\arrow[d] & (\Delta^1\sqcup I^2)^\sharp\arrow[d]\\
	(\Delta^2)^\flat\arrow[r] & (\Delta^2)^\sharp 
	\end{tikzcd}
	\end{equation*}
	is a pushout. Here the map $\Delta^1\sqcup I^2\to \Delta^2$ is given by $d^{\{0,2\}}$ on the first summand and by the canonical inclusion on the second one. One therefore obtains a pullback square
	\begin{equation*}
	\begin{tikzcd}
	(P_\sharp)_2\arrow[r]\arrow[d, hookrightarrow] & P_+\arrow[d, hookrightarrow]\\
	P_+\times_{P_0}P_+\arrow[r] &P_1
	\end{tikzcd}
	\end{equation*}
	in which the lower horizontal map is given by the composing the inclusion $P_+\times_{P_0}P_+\into P_1\times_{P_0}P_1\simeq P_2$ with $d_{\{0,2\}}\colon P_2\to P_1$. Since cocartesian maps in $P\vert_{\Delta}$ are closed under composition (see Remark~\ref{rem:stabilityCocartesianMaps}), we thus conclude that the lower horizontal map factors through the inclusion $P_+\into P_1$. This shows that the map $(P_\sharp)_2\into P_+\times_{P_0}P_+$ is an equivalence, as desired.
\end{proof}

As a consequence of Proposition~\ref{prop:comparisonCocartesianFibrationMarkedUnmarked}, the restriction of the functor $(-)\vert_{\Delta}\colon \Fun(\Delta^1,\mSimp\BB)\to\Fun(\Delta^1,\Simp\BB)$ along the inclusion $\Cocart^+\times_{\mSimp\BB}\Cat(\BB)\into\Fun(\Delta^1,\mSimp\BB)$ takes values in $\Cocart$ and therefore induces a functor
\begin{equation*}
(-)\vert_{\Delta}\colon \Cocart^+\times_{\mSimp\BB}\Cat(\BB)\to\Cocart
\end{equation*}
that commutes with the projections to $\Cat(\BB)$. Out next goal is to show:

\begin{proposition}
	\label{prop:cocartesianFibrationsMarkedUnmarkedEquivalence}
	The functor $(-)\vert_{\Delta}\colon \Cocart^+\times_{\mSimp\BB}\Cat(\BB)\to\Cocart$ is an equivalence.
\end{proposition}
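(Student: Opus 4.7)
The plan is to construct an explicit inverse to $(-)\vert_{\Delta}$ and verify that the two composites are naturally equivalent to the identity, using Proposition~\ref{prop:comparisonCocartesianFibrationMarkedUnmarked} as the primary input.

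For the construction of the inverse $(-)^{\natural}\colon \Cocart \to \Cocart^{+}\times_{\mSimp\BB}\Cat(\BB)$, I would send a cocartesian fibration $p\colon \I{P}\to\I{C}$ to the marked simplicial object $P$ whose underlying simplicial object is $\I{P}$ and whose subobject of marked $1$-simplices $P_{+}\into\I{P}_{1}$ is the subobject of cocartesian morphisms. Using the pushout description $\Delta_{+}\simeq\Delta\sqcup_{\Delta^{1}}\Delta^{2}$ (cf.~Lemma~\ref{lem:markedSimplexOrdinaryCategory}), giving such a marked simplicial object amounts to specifying a $2$-simplex in $\BB$ with vertices $\I{P}_{0}, P_{+}, \I{P}_{1}$ whose long edge is $s_{0}\colon \I{P}_{0}\to\I{P}_{1}$. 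The edge $\I{P}_{0}\to P_{+}$ is induced by $s_{0}$, which factors through $P_{+}$ because identities are cocartesian by Remark~\ref{rem:equivalencesAreCocartesian}; the edge $P_{+}\to\I{P}_{1}$ is the inclusion; and their composite is $s_{0}$ tautologically. By Proposition~\ref{prop:comparisonCocartesianFibrationMarkedUnmarked}, the resulting map $P\to\I{C}^{\sharp}$ is a marked cocartesian fibration.

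For functoriality, given a cocartesian square $(g\colon\I{P}\to\I{Q}, f\colon\I{C}\to\I{D})$, the map $g$ preserves cocartesian morphisms by Remark~\ref{rem:characterisationCocartesianMorphisms}, so it restricts to a map $P_{+}\to Q_{+}$ and thus extends canonically to a marked map $g^{\natural}\colon P\to Q$ over $f^{\sharp}$. To verify the composite $(-)\vert_{\Delta}\circ(-)^{\natural}$ is the identity: this is immediate from the construction, since $P^{\natural}\vert_{\Delta}=\I{P}$. Conversely, given any marked cocartesian fibration $P\to\I{C}^{\sharp}$, the characterisation in Proposition~\ref{prop:comparisonCocartesianFibrationMarkedUnmarked} asserts that $P_{+}\into P_{1}$ identifies $P_{+}$ with the subobject of cocartesian morphisms of $P\vert_{\Delta}$, so $(P\vert_{\Delta})^{\natural}\simeq P$, giving the other composite.

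The main obstacle is verifying fully faithfulness on morphism spaces. Given $p^{\natural}\colon P\to\I{C}^{\sharp}$ and $q^{\natural}\colon Q\to\I{D}^{\sharp}$ in the source, a morphism consists of a commutative square $(g\colon P\to Q, f\colon\I{C}\to\I{D})$ in $\mSimp\BB$ with $q^{\natural}g=f^{\sharp}p^{\natural}$. By the adjunction $(-)\vert_{\Delta}\dashv(-)^{\sharp}$ applied fibrewise over $f$, specifying such a $g$ is equivalent to specifying $g\vert_{\Delta}\colon\I{P}\to\I{Q}$ over $f$ together with the requirement that $g$ be well-defined on markings, i.e.\ that $g\vert_{\Delta}$ sends cocartesian morphisms to cocartesian morphisms. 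By Remarks~\ref{rem:cocartesianSquaresSimplified} and~\ref{rem:characterisationCocartesianMorphisms}, this is precisely the condition that the underlying square defines a cocartesian functor. Hence the induced map on hom-spaces is an equivalence, which combined with essential surjectivity completes the argument.
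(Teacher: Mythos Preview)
Your approach is essentially the same as the paper's: show essential surjectivity by constructing $\I{P}^{\natural}$ from the subobject of cocartesian morphisms (exactly as the paper does), and show full faithfulness on mapping spaces. The construction of $(-)^{\natural}$ on objects matches the paper verbatim; the paper does not attempt to make $(-)^{\natural}$ into an honest functor first, and neither need you, since essential surjectivity plus full faithfulness suffices.

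There is one imprecision in your full faithfulness step. You invoke the adjunction $(-)\vert_{\Delta}\dashv(-)^{\sharp}$, but that adjunction identifies $\map{\mSimp\BB}(P,R^{\sharp})$ with $\map{\Simp\BB}(P\vert_{\Delta},R)$, and $Q$ is not of the form $R^{\sharp}$. What actually makes the argument work is that $Q_{+}\into Q_{1}$ is a monomorphism (by Proposition~\ref{prop:comparisonCocartesianFibrationMarkedUnmarked}), so the unit $Q\into(Q\vert_{\Delta})^{\sharp}$ is a monomorphism; composing with it and then applying the adjunction exhibits $\map{\mSimp\BB}(P,Q)$ as a subobject of $\map{\Simp\BB}(P\vert_{\Delta},Q\vert_{\Delta})$ cut out by the condition that $P_{+}\to P_{1}\to Q_{1}$ lands in $Q_{+}$. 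The paper phrases this same fact via the pushout description $\Delta_{+}\simeq\Delta\sqcup_{\Delta^{1}}\Delta^{2}$ and an auxiliary lemma (Lemma~\ref{lem:restrictionSimplicesMonomorphism}) showing that restriction along $d_{\{0,2\}}$ is a monomorphism on mapping spaces when the target has $\tau(1)\into\tau(0)$ mono. Once you insert this observation, your argument is complete and equivalent to the paper's.
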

The proof of Proposition~\ref{prop:cocartesianFibrationsMarkedUnmarkedEquivalence} will need the following lemma:
\begin{lemma}
	\label{lem:restrictionSimplicesMonomorphism}
	Given two presheaves $\sigma,\tau\in\PSh_{\BB}(\Delta^2)$ such that the map $\tau(1)\to\tau(0)$ is a monomorphism in $\BB$, the map
	\begin{equation*}
	\map{\PSh_{\BB}(\Delta^2)}(\sigma,\tau)\to\map{\PSh_{\BB}(\Delta^1)}(d_{\{0,2\}}^\ast\sigma, d_{\{0,2\}}^\ast\tau)
	\end{equation*}
	is a monomorphism in $\SS$ whose image consists of those maps $d_{\{0,2\}}^\ast\sigma\to d_{\{0,2\}}^\ast\tau$ for which the composition $\sigma(1)\to\sigma(0)\to\tau(0)$ takes values in $\tau(1)\into\tau(0)$.
\end{lemma}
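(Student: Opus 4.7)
My plan is to prove the lemma by directly analysing fibres of the restriction map in terms of data in $\BB$. Using the equivalence $(\Delta^n)^{\op}\simeq\Delta^n$, I identify $\PSh_{\BB}(\Delta^2)\simeq\Fun(\Delta^2,\BB)$, so that a presheaf $\sigma$ corresponds to a pair of composable morphisms $\sigma(2)\xrightarrow{\beta_\sigma}\sigma(1)\xrightarrow{\alpha_\sigma}\sigma(0)$ in $\BB$ equipped with a $2$-simplex filler, while $d_{\{0,2\}}^\ast\sigma$ records only the composite edge $\alpha_\sigma\beta_\sigma\colon\sigma(2)\to\sigma(0)$. Under this identification, a morphism $f\colon\sigma\to\tau$ in $\PSh_{\BB}(\Delta^2)$ decomposes into components $f_0,f_1,f_2$ subject to higher compatibilities, and the restriction map forgets $f_1$ together with all the compatibilities involving it.

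The central observation is that, since $\alpha_\tau\colon\tau(1)\to\tau(0)$ is a monomorphism in $\BB$, post-composition with $\alpha_\tau$ induces a monomorphism of $\infty$-groupoids $\map{\BB}(X,\tau(1))\to\map{\BB}(X,\tau(0))$ for every $X\in\BB$. Consequently, given a pair $(f_0,f_2)$ representing an element of $\map{\PSh_{\BB}(\Delta^1)}(d_{\{0,2\}}^\ast\sigma,d_{\{0,2\}}^\ast\tau)$, the space of maps $f_1\colon\sigma(1)\to\tau(1)$ together with a homotopy exhibiting the $\{0,1\}$-square identity $\alpha_\tau f_1\simeq f_0\alpha_\sigma$ is either empty or contractible, being non-empty precisely when the composite $\sigma(1)\xrightarrow{\alpha_\sigma}\sigma(0)\xrightarrow{f_0}\tau(0)$ factors through $\alpha_\tau$. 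This is exactly the factorisation condition appearing in the lemma.

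Finally I would verify that all remaining coherences lift contractibly. The commutativity of the $\{1,2\}$-square $f_1\beta_\sigma\simeq\beta_\tau f_2$ becomes automatic after post-composing with $\alpha_\tau$: the left-hand side reduces to $f_0\alpha_\sigma\beta_\sigma\simeq f_0\cdot(\sigma(2)\to\sigma(0))$ via the $\{0,1\}$-compatibility and the $2$-simplex identity in $\sigma$, while the right-hand side reduces to $(\tau(2)\to\tau(0))\cdot f_2$ via the $2$-simplex identity in $\tau$, and these coincide by the commutativity of the long square for $(f_0,f_2)$. The monomorphism property of $\alpha_\tau$ then lifts this agreement back to $\tau(1)$, and the same truncatedness principle applies to all higher coherences relating the three $2$-simplex fillers. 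Assembling these fibre calculations proves that the restriction map is a monomorphism with image exactly characterised by the stated factorisation condition; I expect the main technical nuisance to be bookkeeping of these higher coherences, but the whole extension problem is governed by the $(-1)$-truncation of $\alpha_\tau$, so no essential difficulty arises.
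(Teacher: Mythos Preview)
Your approach is correct in outline, but the paper's argument is both different and tighter, and it is worth seeing why. Rather than computing fibres by hand, the paper exploits the adjunction $s_{\{0,1\}}^\ast\dashv d_{\{0,2\}}^\ast$ on presheaves (coming from the poset adjunction $d^{\{0,2\}}\dashv s^{\{0,1\}}$) to identify the restriction map with
\[
\map{\PSh_{\BB}(\Delta^2)}(\sigma,\tau)\xrightarrow{\ \eta_\ast\ }\map{\PSh_{\BB}(\Delta^2)}(\sigma,\,s_{\{0,1\}}^\ast d_{\{0,2\}}^\ast\tau),
\]
i.e.\ with post-composition along the unit $\eta\colon\tau\to s_{\{0,1\}}^\ast d_{\{0,2\}}^\ast\tau$. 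One then just reads off that $\eta$ is the identity at levels $0$ and $2$ and the given monomorphism $\tau(1)\hookrightarrow\tau(0)$ at level $1$; hence $\eta$ is a monomorphism in the functor category, and post-composition with a monomorphism is a monomorphism on mapping spaces. The image description falls out immediately: a map $\sigma\to s_{\{0,1\}}^\ast d_{\{0,2\}}^\ast\tau$ factors through $\eta$ precisely when its level-$1$ component $\sigma(1)\to\tau(0)$ (which is forced to be $\sigma(1)\to\sigma(0)\to\tau(0)$) lands in $\tau(1)$.

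The point is that this single observation ``$\eta$ is a levelwise monomorphism'' absorbs \emph{all} of the higher-coherence bookkeeping you defer to the last paragraph. In your approach, after fixing $(f_1,h_{01})$ you still need the space of pairs $(h_{12},c)$, with $c$ the coherence matching the composite of $h_{01}$ and $h_{12}$ to the given $h_{02}$, to be contractible; saying ``the same truncatedness principle applies'' is the right intuition but is not yet an argument, since the path space in $\map{\BB}(\sigma(2),\tau(1))$ you produce need not itself be contractible---only its fibre over the fixed $h_{02}$ must be. Making this precise essentially forces you either to do a cell-by-cell extension along $\Delta^1\times\Delta^{\{0,2\}}\hookrightarrow\Delta^1\times\Delta^2$ or to discover the adjunction trick. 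Either route closes the gap; the paper's route is just the cleaner packaging of the same idea.
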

\begin{proof}
	By making use of the adjunction $s_{\{0,1\}}^\ast\dashv d_{\{0,2\}}^\ast$, the map is equivalently given by postcomposition with the adjunction unit $\eta\colon \tau\to s_{\{0,1\}}^\ast d_{\{0,2\}}^\ast\tau$ which is explicitly given by the commutative diagram
	\begin{equation*}
	\begin{tikzcd}[row sep=small]
		& & \tau(0)\arrow[ddr, "\id"] & \\
		& \tau(1) \arrow[ur, hookrightarrow]& & \\
		& \tau(2) \arrow[rr]\arrow[uur]&& \tau(0)\\
		\tau(2) \arrow[ur, "\id"']\arrow[uur, crossing over]\arrow[rr]&& \tau(0). \arrow[ur, "\id"']\arrow[from=uul, crossing over]&
	\end{tikzcd}
	\end{equation*}
	Since $\tau(1)\into\tau(0)$ is by assumption a monomorphism, the entire map $\eta$ must be a monomorphism too. As a consequence, postcomposition with $\eta$ defines a monomorphism in $\SSS$, and it is clear from the description of $\eta$ that the image of this map is of the desired form.
\end{proof}

\begin{proof}[{Proof of Proposition~\ref{prop:cocartesianFibrationsMarkedUnmarkedEquivalence}}]
	We first show that the functor is fully faithful. To that end, let us fix two objects $p\colon P\to \I{C}^\sharp$ and $q\colon Q\to \I{D}^\sharp$ in $\Cocart^+\times_{\mSimp\BB}\Cat(\BB)$. We then obtain a pullback square
	\begin{equation*}
	\begin{tikzcd}
	\map{\Cocart^+}(p,q)\arrow[r]\arrow[d] & \map{\Fun(\Delta^1,\Simp{\BB})}(p\vert_{\Delta}, q\vert_{\Delta})\arrow[d]\\
	\map{\Fun(\Delta^1, \PSh_{\BB}(\Delta^2))}(\nu^\ast p,\nu^\ast q)\arrow[r] & \map{\Fun(\Delta^1, \PSh_{\BB}(\Delta^1))}(\sigma_0^\ast p\vert_{\Delta}, \sigma_0^\ast q\vert_{\Delta}).
	\end{tikzcd}
	\end{equation*}
	By Lemma~\ref{lem:restrictionSimplicesMonomorphism}, the lower horizontal map is a monomorphism, hence the upper horizontal map is one as well. The lemma furthermore implies that a map $ p\vert_{\Delta}\to q\vert_\Delta$ is contained in the image of the upper horizontal map if and only if the composition $P_+\into P_1\to Q_1$ takes values in $Q_+$, which by Proposition~\ref{prop:comparisonCocartesianFibrationMarkedUnmarked} is the case precisely when the map $ p\vert_{\Delta}\to q\vert_\Delta$ is a cocartesian functor. Hence the restriction functor $(-)\vert_{\Delta}$ induces an equivalence
	\begin{equation*}
	\map{\Cocart^+}(p,q)\simeq \map{\Cocart}(p\vert_{\Delta}, q\vert_{\Delta})
	\end{equation*}
	and is thus fully faithful.
	
	We complete the proof by showing that the functor is essentially surjective. If $p\colon\I{P}\to\I{C}$ is a cocartesian fibration in $\Cat(\BB)$ and if $Z\into \I{P}_1$ denotes the subobject that is spanned by the cocartesian maps, Remark~\ref{rem:stabilityCocartesianMaps} implies that the map $s_0\colon \I{P}_0\to\I{P}_1$ factors through $Z$ and therefore determines a map $(\Delta^2)^{\op}\to \BB$ whose restriction along $d_{\{0,2\}}\colon\Delta^1\into\Delta^2$ recovers $s_0\colon\I{P}_0\to\I{P}_1$. In that way, one obtains a marked simplicial object $\I{P}^\natural\in\mSimp\BB\simeq \Simp\BB\times_{\PSh_{\BB}(\Delta^1)}\PSh_{\BB}(\Delta^2)$ with $\I{P}^\natural_+=Z$ such that $\I{P}^\natural\vert_\Delta\simeq \I{P}$. By construction, the object $\I{P}^\natural$ comes equipped with a map $p^\natural\colon\I{P}^\natural\to \I{C}^\sharp$. By Proposition~\ref{prop:comparisonCocartesianFibrationMarkedUnmarked}, we now conclude that $p^\natural$ defines the desired object of $\Cocart^+$ that satisfies $p^\natural\vert_{\Delta}\simeq p$.
\end{proof}
\begin{corollary}
	\label{cor:pullbackSquareCocartesianMarkedUnmarked}
	There is a pullback square
	\begin{equation*}
	\begin{tikzcd}
	\Cocart\arrow[r, hookrightarrow, "(-)^\natural"]\arrow[d] & \Cocart^+\arrow[d]\\
	\Cat(\BB)\arrow[r, hookrightarrow, "(-)^\sharp"] & \mSimp\BB
	\end{tikzcd}
	\end{equation*}
	of $\infty$-categories.\qed
\end{corollary}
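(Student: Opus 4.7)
The plan is to deduce this corollary directly from Proposition~\ref{prop:cocartesianFibrationsMarkedUnmarkedEquivalence}. First I would observe that the functor $(-)^\natural\colon \Cocart \to \Cocart^+$ arises from the explicit construction given at the end of the proof of Proposition~\ref{prop:cocartesianFibrationsMarkedUnmarkedEquivalence}: a cocartesian fibration $p\colon \I{P}\to \I{C}$ is sent to the marked cocartesian fibration $p^\natural\colon \I{P}^\natural\to \I{C}^\sharp$ obtained by marking precisely the cocartesian morphisms of $\I{P}$. By construction, $p^\natural$ lies over $\I{C}^\sharp$, so that $(-)^\natural$ fits into a commutative square as in the statement, and one immediately checks $p^\natural\vert_\Delta \simeq p$.

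Next I would note that this square exhibits $\Cocart$ as a candidate for the pullback $\Cocart^+ \times_{\mSimp\BB} \Cat(\BB)$ via the universal property: the commutativity of the square produces a canonical comparison functor
\[
\Cocart \longrightarrow \Cocart^+ \times_{\mSimp\BB} \Cat(\BB),
\]
which on objects sends $p$ to the pair $(p^\natural, \I{C})$. Composing this comparison with the projection onto the first factor and then with $(-)\vert_\Delta$ recovers the identity on $\Cocart$ (since $(\I{P}^\natural)\vert_\Delta \simeq \I{P}$), and by Proposition~\ref{prop:cocartesianFibrationsMarkedUnmarkedEquivalence} the functor $(-)\vert_\Delta\colon \Cocart^+\times_{\mSimp\BB}\Cat(\BB)\to \Cocart$ is an equivalence. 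Hence the comparison functor is an equivalence as well, which is exactly the statement that the square is a pullback.

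Finally, the fact that $(-)^\natural$ and $(-)^\sharp$ are fully faithful follows formally from being pullbacks of each other together with the full faithfulness of $(-)^\sharp\colon \Cat(\BB)\into \mSimp\BB$ (which holds because $(-)^\sharp$ is a right adjoint to the restriction functor $(-)\vert_\Delta$, hence a section of it up to the natural counit being an equivalence). I do not expect any serious obstacle here; the entire content of the corollary has been packaged into Proposition~\ref{prop:cocartesianFibrationsMarkedUnmarkedEquivalence}, and the only task remaining is to translate the equivalence of categories into the statement that the displayed square is cartesian.
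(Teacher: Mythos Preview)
Your proposal is correct and matches the paper's approach exactly: the paper treats the corollary as an immediate consequence of Proposition~\ref{prop:cocartesianFibrationsMarkedUnmarkedEquivalence} (note the \qedsymbol\ placed directly after the statement), and you have simply spelled out the routine translation of that equivalence into the pullback statement.
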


\begin{remark}
	\label{rem:cartesianFibrationsAnodyneGenerators}
	There is an evident way to dually define a factorisation system of \emph{marked right anodyne maps} and \emph{marked cartesian fibrations} in $\mSimp\BB$. Since the equivalence $\op\colon \Delta\simeq\Delta$ can be uniquely extended to an equivalence $\op\colon\Delta_+\simeq\Delta_+$ upon specifying that $\op$ carries the factorisation $\ord{1}\to +\to \ord{0}$ to itself, we may simply define a map $f$ in $\mSimp\BB$ to be marked right anodyne if $f^{\op}$ is marked left anodyne. Explicitly, the class of marked right anodyne maps is the internal saturation of the maps in~(1)--(3) in Definition~\ref{def:markedAnodyne} together with the map
	\begin{enumerate}
	\item[4$^\prime$)] $d^0\colon 1\into (\Delta^1)^\sharp$.
	\end{enumerate}
	A map $f$ in $\mSimp\BB$ is then a marked cartesian fibration if it is right orthogonal to the class of marked right anodyne maps, or equivalently if $f^\op$ is a marked cocartesian fibration. We denote by $\Cart^+$ the associated cartesian fibration over $\mSimp\BB$. Note that by similarly replacing the maps in ~(4) and~(5) in Proposition~\ref{prop:markedAnodyneGenerators}, one obtains an analogous collection of generators for the dual case. In particular, Proposition~\ref{prop:comparisonCocartesianFibrationMarkedUnmarked} carries over to the case of cartesian fibrations, which implies that we also have a pullback square
	\begin{equation*}
	\begin{tikzcd}
	\Cart\arrow[r, hookrightarrow, "(-)^\natural"]\arrow[d] & \Cart^+\arrow[d]\\
	\Cat(\BB)\arrow[r, hookrightarrow, "(-)^\sharp"] & \mSimp\BB.
	\end{tikzcd}
	\end{equation*}
\end{remark}

\subsection{Marked left fibrations}
\label{sec:markedLeftFibrations}
In this section we discuss the marked analogue of the class of left fibrations between simplicial objects in $\BB$. We will use this notion to show that left fibrations form a coreflective subcategory of cocartesian fibrations.
\begin{definition}
	\label{def:markedInitial}
	A map in $\mSimp\BB$ is \emph{marked initial} if it is contained in the internal saturation of the two maps $d^1\colon 1\into (\Delta^1)^\flat$ and $d^1\colon 1\into (\Delta^1)^\sharp$.
\end{definition}
\begin{remark}
	\label{rem:markedInitialAlternativeDefinition}
	On account of the commutative diagram
	\begin{equation*}
	\begin{tikzcd}
	1\arrow[r, "d^1"]\arrow[dr, "d^1"']& (\Delta^1)^\flat\arrow[d]\\
	& (\Delta^1)^\sharp,
	\end{tikzcd}
	\end{equation*}
	the class of marked initial maps in $\mSimp\BB$ is equivalent to the internal saturation of $d^1\colon 1\into(\Delta^1)^\flat$ and $(\Delta^1)^\flat\into(\Delta^1)^\sharp$.
\end{remark}

\begin{proposition}
	\label{lem:markedInitialMarkedAnodyne}
	Every marked left anodyne map is marked initial.
\end{proposition}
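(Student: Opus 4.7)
The plan is to approach this via the duality between classes of maps and their right orthogonal complements: since marked left anodyne and marked initial are both internally saturated, the inclusion of marked left anodyne in marked initial is equivalent to the reverse inclusion of right complements. So I would fix a map $p\colon P\to B$ in $\mSimp\BB$ which is internally right orthogonal to the two generators $d^1\colon 1\into(\Delta^1)^\flat$ and $d^1\colon 1\into(\Delta^1)^\sharp$ of marked initial, and verify that $p$ is a marked cocartesian fibration by checking right orthogonality against each family of generators from Definition~\ref{def:markedAnodyne}.

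The key observation is that internal right orthogonality of $p$ to $d^1\colon 1\into(\Delta^1)^\flat$ in $\mSimp\BB$ is equivalent --- via the adjunction $(-)^\flat\dashv(-)\vert_\Delta$ combined with the fact that $(-)^\flat$ preserves finite products (a direct levelwise computation using the formula $K^\flat_+\simeq K_1$) --- to $p\vert_\Delta$ being internally right orthogonal to $d^1\colon 1\into\Delta^1$ in $\Simp\BB$, i.e.\ a left fibration. Standard facts then take over: since left fibrations lift all left horn inclusions (in particular the inner horn $I^2\into\Delta^2$), $p$ is right orthogonal to generator (1) $(I^2)^\flat\into(\Delta^2)^\flat$. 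Moreover, since fibres of a left fibration are $\BB$-groupoids and the groupoidification of $E^1$ is terminal, every map $E^1\to P\vert_\Delta$ over a constant map to $B\vert_\Delta$ factors essentially uniquely through a vertex, yielding right orthogonality to generator (2) $(E^1)^\flat\to 1$.

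On the other side, right orthogonality of $p$ to $d^1\colon 1\into(\Delta^1)^\sharp$ is literally generator (4). It also implies right orthogonality to the codiagonal generator (3) $(\Delta^1)^\sharp\cup_{(\Delta^1)^\flat}(\Delta^1)^\sharp\to(\Delta^1)^\sharp$ by the following argument: a lifting datum equips an underlying edge in $P\vert_\Delta$ with two markings $m_1,m_2\in P_+$ that share the same image in $B_+$, and since both $m_i$ are then marked lifts of the same marked edge in $B$ starting from the common source vertex, the uniqueness of such lifts forces $m_1\simeq m_2$, supplying the required unique marked lift.

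Combining these verifications shows that $p$ is right orthogonal to all generators of marked left anodyne, hence is a marked cocartesian fibration. By the duality of factorisation systems, this gives marked left anodyne $\subseteq$ marked initial as desired. The main technical point will be the clean translation across the adjunction $(-)^\flat\dashv(-)\vert_\Delta$ for generators (1) and (2), which hinges on the product-preservation of $(-)^\flat$ and the identification of fibres of left fibrations as $\BB$-groupoids; once this dictionary is in place, the verifications for (3) and (4) are essentially formal consequences of the uniqueness encoded in internal right orthogonality to $d^1\colon 1\into(\Delta^1)^\sharp$.
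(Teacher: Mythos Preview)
Your dual strategy (show that every marked left fibration is a marked cocartesian fibration) is sound in principle and equivalent to the statement. The paper takes the direct route instead: it shows that each generator of Definition~\ref{def:markedAnodyne} lies in the internal saturation of the two generators of marked initial. In practice both routes go through the same reduction step, and that step is where your argument has a gap.

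The problem is with generators (1) and (2). Your adjunction argument correctly shows that $p\vert_\Delta$ is a left fibration and hence internally right orthogonal to $I^2\into\Delta^2$ and $E^1\to 1$; transporting back via $(-)^\flat\dashv(-)\vert_\Delta$ and product preservation then gives $p\perp\bigl((I^2)^\flat\times K^\flat\into(\Delta^2)^\flat\times K^\flat\bigr)$ for all $K\in\Simp\BB$. But \emph{internal} right orthogonality of $p$ to $(I^2)^\flat\into(\Delta^2)^\flat$ requires $p\perp\bigl((I^2)^\flat\times L\into(\Delta^2)^\flat\times L\bigr)$ for every $L\in\mSimp\BB$, in particular for $L=(\Delta^1)^\sharp$, and such $L$ are not in the essential image of $(-)^\flat$. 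Handling these extra products is exactly the content of Lemma~\ref{lem:markedAnodyneGenerators1} (equivalently Proposition~\ref{prop:markedAnodyneGenerators}): it replaces the internal saturation of (1)--(3) by an ordinary saturation whose generators are already of the form $(-)^\flat$ of something, or else involve only objects in $\BB$. Once you invoke that lemma your dual approach closes; without it the step ``$p\vert_\Delta$ left fibration $\Rightarrow$ $p$ internally right orthogonal to $(I^2)^\flat\into(\Delta^2)^\flat$'' is unjustified. The paper's proof also relies on this reduction, then finishes by citing \cite[Lemma~4.1.4]{Martini2021} for (1) and (2) and Remark~\ref{rem:markedInitialAlternativeDefinition} for (3).

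Two smaller points. First, your formula $K^\flat_+\simeq K_1$ is incorrect: since $(-)^\flat=\flat^\ast$ and $\flat(+)=\ord{0}$, one has $K^\flat_+\simeq K_0$. This does not affect product preservation (which holds because $(-)^\flat$ is given by precomposition), but it is worth fixing. Second, your argument for generator (3) via uniqueness of marked lifts is correct at the base level; to make it internal, note that the hypothesis on $p$ is stable under passing to $\iFun{L}{p}$ for any $L$, so the same uniqueness argument applies after such replacement.
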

\begin{proof}
	We only need to show that the maps in~(1)--(3) in Definition~\ref{def:markedAnodyne} are marked initial. By Lemma~\ref{lem:markedAnodyneGenerators1}, we may equivalently show this for the maps in~(1)--(3) in Lemma~\ref{lem:markedAnodyneGenerators1}. The case of the first two maps is an immediate consequence of~\cite[Lemma~4.1.4]{Martini2021}. As for the map $(\Delta^1)^\sharp\sqcup_{(\Delta^1)^\flat}(\Delta^1)^\sharp\to(\Delta^1)^\sharp$, this follows from the fact that $(\Delta^1)^\flat\into(\Delta^1)^\sharp$ is marked initial.
\end{proof}

\begin{definition}
	\label{def:markedRightFibration}
	A map $p\colon P\to C$ in $\mSimp\BB$ is called a \emph{marked left fibration} if it is internally right orthogonal to both $d^1\colon 1\into (\Delta^1)^\flat$ and $d^1\colon 1\into (\Delta^1)^\sharp$. We write $\LFib^+\into \Fun(\Delta^1,\mSimp\BB)$ for the full cartesian subfibration over $\mSimp\BB$ that is spanned by the marked left fibrations.
\end{definition}

As a consequence of Proposition~\ref{prop:markedAnodyneGenerators}, one has:
\begin{proposition}
	\label{prop:markedLeftCocartesian}
	Every marked left fibration is marked cocartesian.\qed
\end{proposition}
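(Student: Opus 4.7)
The statement is essentially a formal consequence of the preceding Proposition~\ref{lem:markedInitialMarkedAnodyne}, so my plan is very short. By definition, a marked left fibration is internally right orthogonal to the two generators $d^1\colon 1\into (\Delta^1)^\flat$ and $d^1\colon 1\into (\Delta^1)^\sharp$ of the class of marked initial maps (see Definition~\ref{def:markedInitial}). Since internal right orthogonality is stable under internal saturation, being a marked left fibration is equivalent to being internally right orthogonal to the entire class of marked initial maps.

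The plan is then to invoke Proposition~\ref{lem:markedInitialMarkedAnodyne}, which asserts that every marked left anodyne morphism is marked initial. Consequently, any map that is internally right orthogonal to all marked initial maps is in particular internally right orthogonal to the class of marked left anodyne maps, which by Definition~\ref{def:cocartesianFibrationsMarked} is exactly the condition for being a marked cocartesian fibration. This yields the desired implication with no further work; there is no real obstacle, since the only nontrivial input, namely that the generators (1)--(3) of Definition~\ref{def:markedAnodyne} are marked initial, was already established in the proof of Proposition~\ref{lem:markedInitialMarkedAnodyne}, and the remaining generator $d^1\colon 1\into(\Delta^1)^\sharp$ is marked initial by definition.
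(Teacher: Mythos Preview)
Your argument is correct and is essentially the paper's intended proof: the paper records the proposition with an immediate \qed, deducing it from the fact that marked left anodyne maps lie inside the class of marked initial maps (Proposition~\ref{lem:markedInitialMarkedAnodyne}), so that anything internally right orthogonal to the latter is automatically right orthogonal to the former. The only cosmetic difference is that the paper phrases the attribution as ``a consequence of Proposition~\ref{prop:markedAnodyneGenerators}'' (the generator description underlying the proof of Proposition~\ref{lem:markedInitialMarkedAnodyne}), whereas you cite Proposition~\ref{lem:markedInitialMarkedAnodyne} directly; the logical content is identical.
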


\begin{lemma}
	\label{lem:generatorsMarkedInitial}
	Let $S$ be the saturation of the maps $d^1\colon K^\flat\into (\Delta^1\otimes K)^\flat$ for every $K\in\Simp\BB$ and $(\Delta^1\otimes A)^\flat\to(
	\Delta^1\otimes A)^\sharp$ for all $A\in\BB$. Then $S$ contains every marked initial map.
\end{lemma}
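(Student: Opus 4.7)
Since $\mSimp\BB$ is cartesian closed, the functor $-\times c$ preserves colimits for every $c$; together with the closure of saturated classes under colimits in the arrow category, this reduces the task of showing $d^1\times\id_c\in S$ (for $d^1$ one of the two marked initial generators and $c$ arbitrary) to the case where $c$ ranges over a colimit-generating family. Every marked simplicial object in $\BB$ is a colimit of representables $\Delta_+^n\otimes A$ with $n\in\{0,1,2,\dots\}$ or $n=+$ and $A\in\BB$; because both $(-)^\flat$ and $(-)^\sharp$ are precomposition functors on $\PSh(\Delta_+)$ and therefore preserve products, these building blocks identify with $(\Delta^n\otimes A)^\flat$ and $(\Delta^1\otimes A)^\sharp$ respectively. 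This gives four cases to analyse.

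Two cases follow immediately from the generators of $S$. When $c=(\Delta^n\otimes A)^\flat$ is paired with $d^1\colon 1\to(\Delta^1)^\flat$, the product is exactly the generator of $S$ of the first type with $K=\Delta^n\otimes A$. When $c=A^\flat$ is paired with $d^1\colon 1\to(\Delta^1)^\sharp$, the map factors as $A^\flat\to(\Delta^1\otimes A)^\flat\to(\Delta^1\otimes A)^\sharp$, where the first arrow is a generator of the first type with $K=A$ and the second is a generator of the second type.

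The main difficulty concerns the three remaining configurations, in which the target of $d^1\times\id_c$ is a mixed flat--sharp product that does not appear directly among the generators of $S$: namely $c=(\Delta^1\otimes A)^\sharp$ with either choice of $d^1$, and $c=(\Delta^n\otimes A)^\flat$ with $n\geq 1$ paired with $d^1\colon 1\to(\Delta^1)^\sharp$. My plan for these is to realise the desired map as an iterated pushout of generators of $S$, along the lines of the pushout arguments in the proofs of Lemmas~\ref{lem:markedAnodyneGenerators1} and~\ref{lem:markedAnodyneGenerators2}. For the cases involving $(\Delta^1\otimes A)^\sharp$ I would start from the commutative square obtained by combining an appropriate generator of type (i) (with $K=\Delta^1\otimes A$) and the generator of type (ii), then verify using Lemma~\ref{lem:pushoutProductEquivalenceMarked} that the induced comparison map out of the pushout is an equivalence; the verification splits into the trivial restriction to $\Delta$ and a direct object-wise computation at $+\in\Delta_+$ using $\flat(+)=\ord 0$ and $\sharp(+)=\ord 1$. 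For the case $c=(\Delta^n\otimes A)^\flat$ with $n\geq 1$, I would factor through $(\Delta^1\otimes\Delta^n\otimes A)^\flat$ to reduce to showing that $(\Delta^1\otimes K)^\flat\to(\Delta^1)^\sharp\times K^\flat$ lies in $S$ for $K=\Delta^n\otimes A$, then induct on $n$ using the simplicial decomposition of $\Delta^1\times\Delta^n$ as an iterated union of $(n+1)$-simplices; the base $n=0$ is a generator of type (ii), and each inductive step is a pushout along a generator of type (i) or (ii). The main obstacle throughout is the careful bookkeeping required to express these mixed comparison maps as pushouts of generators of $S$, which ultimately rests on small combinatorial computations at the object $+\in\Delta_+$.
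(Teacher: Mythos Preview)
Your overall strategy is the paper's: reduce the internal saturation to products of $d^1$ with the building blocks $(\Delta^n\otimes A)^\flat$ and $(\Delta^1\otimes A)^\sharp$, and handle the mixed cases by pushout arguments verified at the object $+$ via Lemma~\ref{lem:pushoutProductEquivalenceMarked}. Two tactical points are worth flagging. First, for case (c) your proposed induction on $n$ via the simplicial decomposition of $\Delta^1\times\Delta^n$ into $(n{+}1)$-simplices is not the right device: that decomposition does not interact well with the target marking on $(\Delta^1)^\sharp\times(\Delta^n)^\flat$. What you actually want is a single pushout
\[
\begin{tikzcd}
\bigsqcup_{i=0}^n(\Delta^1)^\flat\arrow[r]\arrow[d] & \bigsqcup_{i=0}^n(\Delta^1)^\sharp\arrow[d]\\
(\Delta^1\times\Delta^n)^\flat\arrow[r] & (\Delta^1)^\sharp\times(\Delta^n)^\flat
\end{tikzcd}
\]
along the inclusion of the vertical edges, which is a pushout by Lemma~\ref{lem:pushoutProductEquivalenceMarked}; no induction is needed. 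The paper takes yet another route here: it first uses a $1$-skeleton pushout to show $(\Delta^n\otimes A)^\flat\to(\Delta^n\otimes A)^\sharp$ lies in $S$ for all $n$, and then invokes the spine inclusions $(I^n)^\flat\into(\Delta^n)^\flat$ (via~\cite[Lemma~4.1.4]{Martini2021}) to reduce case (c) to $n\leq 1$. Second, for case (a) the paper's argument is exactly of the shape you describe, using the pushout of two copies of $(\Delta^1)^\flat\to(\Delta^1)^\sharp$ along the two vertical edges of $(\Delta^1\times\Delta^1)^\flat$; your description ``combining a generator of type (i) with $K=\Delta^1\otimes A$ and the generator of type (ii)'' is a bit imprecise about which square you mean, but the intended verification at $+$ goes through. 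In short, your plan is correct in spirit and close to the paper's; the main thing to fix is replacing the simplicial-decomposition induction with the direct pushout above.
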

\begin{proof}
	We begin by showing that $S$ contains the internal saturation of $d^1\colon 1\into(\Delta^1)^\flat$. To that end, note that $S$ is stable under taking products with any object $A\in\BB$. Therefore, it suffices to show that $S$ contains the map $d^1\colon (\Delta^1)^\sharp\into (\Delta^1)^\flat\times(\Delta^1)^\sharp$. The pushout square
	\begin{equation*}
		\begin{tikzcd}
		(\Delta^1\sqcup\Delta^1)^\flat\arrow[r]\arrow[d, "{(d^1,d^0)}"] & (\Delta^1\sqcup\Delta^1)^\sharp\arrow[d]\\
		(\Delta^1\times\Delta^1)^\flat\arrow[r] & (\Delta^1)^\flat\times(\Delta^1)^\sharp
		\end{tikzcd}
	\end{equation*}
	implies that $(\Delta^1\times\Delta^1)^\flat\into(\Delta^1)^\flat\times(\Delta^1)^\sharp$ is in $S$. On account of the commutative square
	\begin{equation*}
		\begin{tikzcd}
		(\Delta^1)^\flat\arrow[r]\arrow[d, "d^1"] & (\Delta^1)^\sharp\arrow[d, "d^1"] \\
		(\Delta^1\times\Delta^1)^\flat\arrow[r] & (\Delta^1)^\flat\times(\Delta^1)^\sharp,
		\end{tikzcd}
	\end{equation*}
	we therefore conclude that the right vertical map must be contained in $S$ as well. 

	We still need to show that $S$ also contains the internal saturation of $d^1\colon 1\into(\Delta^1)^\sharp$. By Lemma~\ref{lem:pushoutProductEquivalenceMarked}, the commutative square
	\begin{equation*}
	\begin{tikzcd}
	\Sk_1(\Delta^n)^\flat\arrow[r]\arrow[d, hookrightarrow] & \Sk_1(\Delta^n)^\sharp\arrow[d, hookrightarrow]\\
	(\Delta^n)^\flat\arrow[r] & (\Delta^n)^\sharp
	\end{tikzcd}
	\end{equation*}
	(where $\Sk_1(\Delta^n)$ is the $1$-skeleton of $\Delta^n$,~cf.~\cite[\S~2.9]{Martini2021a}) is a pushout for every $n\geq 2$. Hence $S$ contains the map $(\Delta^n\otimes A)^\flat\to(\Delta^n\otimes A)^\sharp$ for all $n\geq 0$ and all $A\in\BB$ and therefore also the maps $d^1\colon A^\sharp\into(\Delta^n\otimes A)^\sharp$. Using~\cite[Lemma~4.1.2]{Martini2021}, we conclude that for every $K\in\Simp\BB$ the map $K^\sharp\into(\Delta^1\otimes K)^\sharp$ is an element of $S$. To finish the proof, we now only need to verify that the morphism $d^1\colon (\Delta^n)^\flat\into(\Delta^1)^\sharp\times(\Delta^n)^\flat$ is in $S$ too. To that end,~\cite[Lemma~4.1.4]{Martini2021} implies that the maps $(I^n)^\flat\into (\Delta^n)^\flat$ are contained in $S$ and that we can therefore assume $n\in\{0,1\}$. By Remark~\ref{rem:markedInitialAlternativeDefinition}, we can further reduce this to $n=1$. 
	Now the commutative square
	\begin{equation*}
	\begin{tikzcd}
	1\arrow[d, "d^1"]\arrow[r, "d^1"] & (\Delta^1)^\sharp\arrow[d, "\id\times d^1"]\\
	(\Delta^1)^\flat\arrow[r, "d^1\times \id"] & (\Delta^1)^\sharp\times(\Delta^1)^\flat
	\end{tikzcd}
	\end{equation*}
	and the first part of the proof show that the lower horizontal map is contained in $S$, as desired.
\end{proof}
\begin{proposition}
	\label{prop:markedLeftFibrationSharp}
	Let $C$ be a simplicial object in $\BB$. Then a map $p\colon P\to C^\sharp$ is a marked left fibration if and only if the map $P\to P\vert_{\Delta}^\sharp$ is an equivalence and $p\vert_{\Delta}\colon P\vert_{\Delta}\to C$ is a left fibration.
\end{proposition}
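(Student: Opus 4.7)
Plan. The argument rests on the adjunction $(-)\vert_\Delta \dashv (-)^\sharp$ together with the identifications $(\Delta^1)^\flat\vert_\Delta = \Delta^1 = (\Delta^1)^\sharp\vert_\Delta$. Under restriction to $\Delta$, both generators $d^1\colon 1\into (\Delta^1)^\flat$ and $d^1\colon 1\into (\Delta^1)^\sharp$ of the marked left fibration condition are sent to the single generator $d^1\colon 1\into\Delta^1$ of the left fibration condition. Since $(-)\vert_\Delta$ is a right adjoint it preserves products, so this matching also extends to the internally saturated classes.

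For $(\Leftarrow)$, suppose $P\simeq (P\vert_\Delta)^\sharp$ and that $p\vert_\Delta$ is a left fibration. For every $Y\in\mSimp\BB$, the adjunction $(-)\vert_\Delta\dashv (-)^\sharp$ (applicable because both $P$ and $C^\sharp$ lie in the essential image of $(-)^\sharp$) yields natural equivalences
\[\Map_{\mSimp\BB}(Y, P)\simeq \Map_{\Simp\BB}(Y\vert_\Delta, P\vert_\Delta)\quad\text{and}\quad \Map_{\mSimp\BB}(Y, C^\sharp)\simeq \Map_{\Simp\BB}(Y\vert_\Delta, C).\]
Combined with the identities $((\Delta^1)^\flat\times Y)\vert_\Delta = \Delta^1\times Y\vert_\Delta = ((\Delta^1)^\sharp\times Y)\vert_\Delta$, the pullback squares expressing internal right orthogonality of $p$ with respect to either generator collapse to the square expressing internal right orthogonality of $p\vert_\Delta$ with respect to $d^1\colon 1\into\Delta^1$, which holds by hypothesis.

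For $(\Rightarrow)$, suppose $p$ is a marked left fibration. The plan is to first establish $P\simeq (P\vert_\Delta)^\sharp$ and then run the $(\Leftarrow)$ argument in reverse. Both generators factor through $1\xhookrightarrow{d^1}(\Delta^1)^\flat\to(\Delta^1)^\sharp$, and a straightforward two-out-of-three argument for internal right orthogonality (if $p\perp g$ and $p\perp hg$ internally, then $p\perp h$ internally, by pullback-pasting on the mapping spaces) then shows that $p$ is internally right orthogonal to $(\Delta^1)^\flat\to (\Delta^1)^\sharp$. Because this map restricts to the identity on $\Delta$, the induced map on $\Map(-, C^\sharp)$ becomes an equivalence via the adjunction, so the orthogonality square degenerates and produces, for every $Y\in\mSimp\BB$, an equivalence
\[\Map_{\mSimp\BB}((\Delta^1)^\sharp\times Y, P) \simeq \Map_{\mSimp\BB}((\Delta^1)^\flat\times Y, P).\]
Testing this at appropriate representables and tensoring with arbitrary $A\in\BB$ to obtain the statement internally to the $\infty$-topos $\BB$ identifies it with the map $P_+\to P_1$ being an equivalence in $\BB$. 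Together with the automatic identification on $\Delta$, this says precisely that the unit $P\to (P\vert_\Delta)^\sharp$ is an equivalence. With $P$ now in the image of $(-)^\sharp$, the $(\Leftarrow)$ argument runs in reverse to conclude that $p\vert_\Delta$ is a left fibration.

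The principal obstacle is the first step of the $(\Rightarrow)$ direction: one must upgrade the orthogonality, which naively produces an equivalence of $\infty$-groupoids of global sections, to an equivalence $P_+\simeq P_1$ in $\BB$ itself. This requires the \emph{internal} (product-stable) form of the orthogonality and a careful identification of the value of a marked simplicial object at $+\in\Delta_+$ with the appropriate internal mapping object of $\mSimp\BB$.
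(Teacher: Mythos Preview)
Your argument is correct and complete; the identification you flag as the ``principal obstacle'' is routine once observed: for $A\in\BB$ embedded diagonally, one has $(\Delta^1)^\sharp\times A\simeq h_+\otimes A$ and $(\Delta^1)^\flat\times A\simeq h_{\ord 1}\otimes A$ in $\mSimp\BB=\PSh_\BB(\Delta_+)$, so the Yoneda-type formula $\Map(h_c\otimes A,P)\simeq\Map_\BB(A,P_c)$ converts your equivalence of mapping spaces into $\Map_\BB(A,P_+)\simeq\Map_\BB(A,P_1)$ for all $A$, whence $P_+\simeq P_1$.

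Your route differs from the paper's. The paper first proves a combinatorial lemma (Lemma~\ref{lem:generatorsMarkedInitial}) showing that the saturation of the two families $d^1\colon K^\flat\into(\Delta^1\otimes K)^\flat$ (for $K\in\Simp\BB$) and $(\Delta^1\otimes A)^\flat\to(\Delta^1\otimes A)^\sharp$ (for $A\in\BB$) contains every marked initial map, and then observes that right orthogonality to the first family is exactly $p\vert_\Delta$ being a left fibration while right orthogonality to the second is exactly $P\to P\vert_\Delta^\sharp$ being an equivalence. You instead argue directly from the adjunction $(-)\vert_\Delta\dashv(-)^\sharp$, using two-out-of-three for internal orthogonality (via the pasting lemma) to extract orthogonality against $(\Delta^1)^\flat\to(\Delta^1)^\sharp$ from the two given generators. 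Your approach is more self-contained and avoids the inductive combinatorics of Lemma~\ref{lem:generatorsMarkedInitial}; the paper's approach is more modular in that the alternative generating set is reusable (and indeed is reused in Remark~\ref{rem:markedLeftFibrationFlat}).
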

\begin{proof}
	The map $p\vert_{\Delta}$ being a left fibration is equivalent to $p$ being right orthogonal to $d^1\colon K^\flat\into (\Delta^1\times K)^\flat$ for all $K\in\Simp\BB$. Also, since $C^\sharp\to (C^\sharp)\vert_{\Delta}^\sharp$ is trivially an equivalence, the map $P\to P\vert_{\Delta}^\sharp$ is an equivalence if and only if $p$ is right orthogonal to $(\Delta^1\otimes A)^\flat\to (\Delta^1\otimes A)^\sharp$ for all $A\in\BB$. Hence the result follows from Lemma~\ref{lem:generatorsMarkedInitial}.
\end{proof}

\begin{remark}
	\label{rem:markedLeftFibrationFlat}
	There is a dual version of Proposition~\ref{prop:markedLeftFibrationSharp} with $(-)^\flat$ in place of $(-)^\sharp$: a map $p\colon P\to C^\flat$ is a marked left fibration if and only if $p\vert_{\Delta}$ is a left fibration and $P\vert_{\Delta}^\flat\to P$ is an equivalence. To see this, note that $P\vert_{\Delta}^\flat\to P$ is an equivalence if and only if the unique map $P_0\to P_+$ is one, which is in turn equivalent to $p$ being local with respect to $s^0\colon(\Delta^1\otimes A)^\sharp\to A^\sharp$ for all $A\in\BB$. As the latter map is a retraction of $d^1\colon A^\sharp\to (\Delta^1\otimes A)^\sharp$, this condition is equivalent to $P$ being local with respect to $d^1\colon A\to (\Delta^1\otimes A)^\sharp$. Since $C^\flat$ is local with respect to this map as well, we conclude that $P\vert_{\Delta}^\flat\to P$ is an equivalence if and only if $p$ is right orthogonal to $d^1\colon A\to (\Delta^1\otimes A)^\sharp$. By applying Lemma~\ref{lem:generatorsMarkedInitial}, the claim now follows.
\end{remark}

Recall from~\cite[\S~4.1]{Martini2021} that the collection of left fibrations in $\Simp\BB$ determines a cartesian fibration $\LFib\to\Simp\BB$. Proposition~\ref{prop:markedLeftFibrationSharp} now implies:
\begin{corollary}
	\label{cor:pullbackSquareLFibMarkedUnmarked}
	The commutative square
	\begin{equation*}
	\begin{tikzcd}
	\LFib\arrow[r, hookrightarrow, "(-)^\sharp"]\arrow[d] & \LFib^+\arrow[d]\\
	\Simp\BB\arrow[r, hookrightarrow, "(-)^\sharp"] & \mSimp\BB
	\end{tikzcd}
	\end{equation*}
	is a pullback diagram of $\infty$-categories.\qed
\end{corollary}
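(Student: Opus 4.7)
The plan is to exhibit the comparison functor $\Phi\colon \LFib\to \LFib^+\times_{\mSimp\BB}\Simp\BB$ induced by the square as an equivalence of $\infty$-categories. That $\Phi$ is well defined is immediate from Proposition~\ref{prop:markedLeftFibrationSharp}, which asserts that $(-)^\sharp$ carries left fibrations to marked left fibrations and that the target of such a fibration, being of the form $C^\sharp$, produces an object in the pullback.

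For essential surjectivity, let $p\colon P\to C^\sharp$ be a marked left fibration with $C\in\Simp\BB$. Proposition~\ref{prop:markedLeftFibrationSharp} provides an equivalence $P\simeq (P\vert_{\Delta})^\sharp$ and shows that $p\vert_{\Delta}\colon P\vert_\Delta\to C$ is a left fibration in $\Simp\BB$, whence $p\simeq(p\vert_\Delta)^\sharp$ lies in the image of $\Phi$.

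For full faithfulness, the essential ingredient is that $(-)^\sharp\colon\Simp\BB\into\mSimp\BB$ is itself fully faithful: it is the right adjoint of $(-)\vert_\Delta$, and the counit $(X^\sharp)\vert_\Delta\simeq X$ is an equivalence owing to the identification $\sharp\circ\iota\simeq\id$ (a consequence of $\iota$ being fully faithful). Postcomposition therefore yields a fully faithful functor $\Fun(\Delta^1,\Simp\BB)\into\Fun(\Delta^1,\mSimp\BB)$, which restricts to a fully faithful functor $\LFib\into\LFib^+$. For $p\colon P\to C$ and $p'\colon P'\to C'$ in $\LFib$, the mapping space in the pullback unwinds to
\[
\map{\LFib^+}(p^\sharp,(p')^\sharp)\times_{\map{\mSimp\BB}(C^\sharp,(C')^\sharp)}\map{\Simp\BB}(C,C'),
\]
and the equivalence $\map{\Simp\BB}(C,C')\simeq \map{\mSimp\BB}(C^\sharp,(C')^\sharp)$ makes this pullback collapse to $\map{\LFib^+}(p^\sharp,(p')^\sharp)\simeq\map{\LFib}(p,p')$. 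All nontrivial content is already contained in Proposition~\ref{prop:markedLeftFibrationSharp} (which depends on Lemma~\ref{lem:generatorsMarkedInitial}); given those results, the corollary is a formal consequence of the fully faithful adjunction $(-)\vert_\Delta\dashv (-)^\sharp$, and I do not foresee any significant further obstacle.
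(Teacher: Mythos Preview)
Your proof is correct and follows exactly the approach implicit in the paper, which simply states that the corollary is an immediate consequence of Proposition~\ref{prop:markedLeftFibrationSharp}. You have faithfully unpacked this: essential surjectivity comes directly from that proposition, and full faithfulness follows formally from the full faithfulness of $(-)^\sharp$ as the right adjoint in $(-)\vert_\Delta\dashv(-)^\sharp$.
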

Note that since marked cocartesian fibrations are internally right orthogonal to $d^1\colon 1\into (\Delta^1)^\sharp$, the adjunction $(-)^\sharp\dashv (-)_\sharp\colon\Fun(\Delta^1,\mSimp\BB)\leftrightarrows\Fun(\Delta^1,\Simp\BB)$ restricts to an adjunction
\begin{equation*}
(-)^\sharp\dashv (-)_\sharp\colon \Cocart^+\leftrightarrows\LFib.
\end{equation*}
Upon restriction to the full subcategory $\Cocart=\Cocart^+\times_{\mSimp\BB}\Simp\BB\into\Cocart^+$, this yields:
\begin{proposition}
	\label{prop:coreflectionLeftFibrationsCocartesianFibrations}
	The inclusion $\LFib\into\Cocart$ admits a relative right adjoint $(-)_\sharp$ over $\Simp\BB$.\qed
\end{proposition}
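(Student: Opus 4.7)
The plan is to obtain this relative adjunction by restricting the marked-level adjunction $(-)^\sharp \dashv (-)_\sharp \colon \Cocart^+ \leftrightarrows \LFib$ along the fully faithful inclusion $(-)^\natural\colon \Cocart \hookrightarrow \Cocart^+$ furnished by Corollary~\ref{cor:pullbackSquareCocartesianMarkedUnmarked}. The first and arguably most delicate step is to verify that the square
\begin{equation*}
\begin{tikzcd}
\LFib \arrow[r, hookrightarrow] \arrow[d, hookrightarrow, "(-)^\sharp"'] & \Cocart \arrow[d, hookrightarrow, "(-)^\natural"] \\
\LFib^+ \arrow[r, hookrightarrow] & \Cocart^+
\end{tikzcd}
\end{equation*}
commutes, where the vertical inclusions come from Corollaries~\ref{cor:pullbackSquareLFibMarkedUnmarked} and~\ref{cor:pullbackSquareCocartesianMarkedUnmarked} and the bottom inclusion from Proposition~\ref{prop:markedLeftCocartesian}. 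Commutativity amounts to the identification $p^\natural \simeq p^\sharp$ for every left fibration $p\colon \I{P}\to\I{C}$ of $\BB$-categories, which in turn reduces to showing that every morphism of $\I{P}$ is $p$-cocartesian. This is immediate: for a left fibration one has $\Under{\I{P}}{x}\simeq\Under{\I{C}}{p(x)}\times_{\I{C}}\I{P}$, so the square in Definition~\ref{def:cocartesianMorphism} arises as a base change of the identity along the vertical projections and is trivially cartesian.

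With this established, I would define the candidate relative right adjoint as the composite $R\coloneqq (-)_\sharp\circ (-)^\natural\colon \Cocart\to\LFib$ and verify the adjunction by chasing mapping $\infty$-groupoids. For $q\in\LFib$ and $p\in\Cocart$ one has
\begin{equation*}
\map{\Cocart}(q,p) \simeq \map{\Cocart^+}(q^\natural, p^\natural) \simeq \map{\Cocart^+}(q^\sharp, p^\natural) \simeq \map{\LFib}(q, R(p)),
\end{equation*}
using full faithfulness of $(-)^\natural$, the commuting square above, and the marked adjunction respectively. Naturality in both variables is automatic, upgrading these pointwise equivalences to an adjunction of $\infty$-categories.

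To conclude that this is a relative adjunction over $\Simp\BB$, I would observe that $R$ is a functor over $\Simp\BB$: for $p\colon\I{P}\to\I{C}$ in $\Cocart$ the object $R(p)$ has base $(\I{C}^\sharp)_\sharp\simeq\I{C}$, where the equivalence uses that $(-)^\sharp$ is fully faithful, making the unit $\id\to(-)_\sharp(-)^\sharp$ an equivalence. The unit and counit of the marked adjunction live over $\mSimp\BB$, and upon restriction to the pullback subcategory $\Cocart\hookrightarrow\Cocart^+$ they descend to natural transformations over $\Cat(\BB)\hookrightarrow\Simp\BB$, yielding the desired relative adjunction.
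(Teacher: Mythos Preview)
Your proposal is correct and follows essentially the same route as the paper. The paper's argument is the short paragraph immediately preceding the proposition: the ambient adjunction $(-)^\sharp\dashv(-)_\sharp$ on arrow $\infty$-categories restricts to $\Cocart^+\leftrightarrows\LFib$, and one then pulls back along $(-)^\sharp\colon\Simp\BB\hookrightarrow\mSimp\BB$ to land in $\Cocart=\Cocart^+\times_{\mSimp\BB}\Simp\BB$. Your verification that $p^\natural\simeq p^\sharp$ for a left fibration $p$ (i.e.\ that every morphism is $p$-cocartesian) is exactly what identifies the two a priori different inclusions $\LFib\hookrightarrow\Cocart$; the paper leaves this implicit, relying on the pullback definition of $\Cocart$ over $\Simp\BB$ rather than the $(-)^\natural$-description, which only becomes relevant over $\Cat(\BB)$ (see the Remark following the proposition).
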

\begin{remark}
	In light of Corollary~\ref{cor:pullbackSquareCocartesianMarkedUnmarked}, the cartesian fibration $\Cocart\to\Cat(\BB)$ as defined in~\S~\ref{sec:Definitions} arises as the pullback of the cartesian fibration $\Cocart\to\Simp\BB$ along the inclusion $\Cat(\BB)\into\Simp\BB$. Therefore, our choice of using the same notation for both fibrations should not lead to confusion.
\end{remark}

\begin{remark}
	\label{rem:markedRightFibrations}
	As usual, one can dualise the notion of marked initial maps and marked left fibrations in the evident way to obtain marked final maps and marked right fibrations. All statements about marked left fibrations carry over to analogous statements about marked right fibrations. In particular, upon defining $\Cart=\Cart^+\times_{\mSimp\BB}\Simp\BB$, one obtains an inclusion $\RFib\into\Cart$ that admits a relative right adjoint $(-)_\sharp\colon \Cart\to \RFib$ over $\Simp\BB$ as well.
\end{remark}

\begin{remark}
	\label{rem:coreflectionCocartLFibExplicitly}
	If $p\colon \I{P}\to\I{C}$ is a cocartesian fibration, Proposition~\ref{prop:comparisonCocartesianFibrationMarkedUnmarked} implies that the adjunction counit $\I{P}_\sharp\to \I{P}$ is a monomorphism that identifies $\I{P}_\sharp$ with the subcategory of $\I{P}$ that is spanned by the subobject $(\Comma{\I{P}}{\I{C}}{\I{C}})_0\into\I{P}_1$ of cocartesian maps. See~\cite[\S~2.9]{Martini2021a} for a discussion of subcategories.
\end{remark}

\subsection{Proper maps of marked simplicial objects}
\label{sec:markedProper}
Recall from~\cite[\S~4.4]{Martini2021} that a map $p\colon P\to C$ between simplicial objects in $\BB$ is \emph{proper} if for every base change $q\colon Q\to D$ of $p$ along some map $f\colon D\to C$ the lax square
\begin{equation*}
	\begin{tikzcd}
		\Over{\LFib}{D}\arrow[d, "p^\ast"]\arrow[from=r, "\Over{L}{D}"'] & \Over{(\Simp\BB)}{D}\arrow[d, "q^\ast"]\\
		\Over{\LFib}{Q}\arrow[from=r, "\Over{L}{Q}"'] & \Over{(\Simp\BB)}{Q}\arrow[ul, Rightarrow, shorten=3mm]
	\end{tikzcd}
\end{equation*}
commutes. In this section we will discuss the analogous notion of proper maps between \emph{marked} simplicial objects.

\begin{definition}
	\label{def:proper}
	A map $p\colon P\to C$ in $\mSimp\BB$ is \emph{marked proper} if for every cartesian square
	\begin{equation*}
			\begin{tikzcd}
				Q\arrow[d, "q"]\arrow[r] & P\arrow[d, "p"]\\
				D\arrow[r] & C
			\end{tikzcd}
	\end{equation*}
	in $\mSimp\BB$ the left lax square
	\begin{equation*}
		\begin{tikzcd}
				\Over{\Cocart^+}{D}\arrow[d, "q^\ast"]\arrow[from=r, "\Over{L}{D}"'] & \Over{(\mSimp\BB)}{D}\arrow[d, "q^\ast"]\\
				\Over{\Cocart^+}{Q}\arrow[from=r, "\Over{L}{Q}"'] & \Over{(\mSimp\BB)}{Q}\arrow[ul, Rightarrow, shorten=3mm]
			\end{tikzcd}
	\end{equation*}
	(where $\Over{L}{D}$ and $\Over{L}{Q}$ are the localisation functors) commutes. The full cartesian subfibration of $\Fun(\Delta^1,\mSimp\BB)$ that is spanned by the marked proper maps is denoted by $\Prop^+$.
\end{definition}
Note that as in~\cite[\S~4.4]{Martini2021} a map $p\colon P\to C$ in $\mSimp\BB$ is marked proper if and only if for every base change $q\colon Q\to D$ of $p$ the pullback functor $q^\ast\colon \Over{(\mSimp\BB)}{D}\to\Over{(\mSimp\BB)}{Q}$ preserves marked left anodyne morphisms.

\begin{proposition}
	\label{prop:projectionsAreProper}
	For every two marked simplicial objects $C$ and $D$, the projection $C\times D\to D$ is marked proper.
\end{proposition}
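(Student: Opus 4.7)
The plan is to reduce the marked properness of the projection $C \times D \to D$ to the stability of the class of marked left anodyne maps under products with an arbitrary object of $\mSimp\BB$. Given any cartesian square
\[
\begin{tikzcd}
Q \arrow[r] \arrow[d, "q"] & C \times D \arrow[d] \\
E \arrow[r] & D
\end{tikzcd}
\]
in $\mSimp\BB$, I would first use the universal property of products to identify $Q$ with $C \times E$ and $q$ with the projection onto the second factor. Under this identification the pullback functor $q^{\ast}\colon \Over{(\mSimp\BB)}{E}\to\Over{(\mSimp\BB)}{Q}$ becomes the functor $(-) \times C$, carrying a map $h\colon K \to L$ over $E$ to $h \times \id_C \colon K \times C \to L \times C$ over $C \times E$.

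As observed in the paragraph immediately following Definition~\ref{def:proper}, the marked properness of a map is equivalent to each such pullback functor preserving marked left anodyne morphisms. Thus the proposition reduces to the following assertion: if $h\colon K \to L$ is marked left anodyne, then so is $h \times \id_C$ for any $C \in \mSimp\BB$.

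This final claim is a formal consequence of the way marked left anodyne maps were defined in Definition~\ref{def:markedAnodyne}, namely as the \emph{internal} saturation of an explicit set $S$ of generators. By the description of internal saturation recalled in the preliminaries, this class coincides with the ordinary saturation of $\{s \times \id_X : s \in S,\ X \in \mSimp\BB\}$. The functor $(-) \times C$ carries this generating set into itself (by replacing $X$ with $X \times C$) and preserves all colimits since $\mSimp\BB$ is cartesian closed; hence it preserves the saturation. There is no substantive obstacle here: once the pullback is identified with a product, the stability property follows directly from the closure properties of internal saturation.
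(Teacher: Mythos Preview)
Your proposal is correct and follows essentially the same argument as the paper's proof: both reduce to the observation that pullback along a projection is given by taking a product with $C$, and that the class of marked left anodyne maps, being an \emph{internal} saturation, is stable under such products. The paper phrases this as first reducing to the terminal map $\pi_C\colon C\to 1$ being marked proper, but your direct computation of the base change accomplishes the same thing.
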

\begin{proof}
	It suffices to show that the terminal map $\pi_C\colon C\to 1$ is marked proper, which follows immediately from the fact that marked left anodyne morphisms are \emph{internally} saturated and therefore preserved by $\pi_C^\ast$.
\end{proof}
\begin{remark}
	\label{rem:properMapBC}
	Given any $A\in\BB$, the forgetful functor $(\pi_A)_!\colon \mSimp{(\Over{\BB}{A})}\to\mSimp\BB$ preserves marked proper maps. In fact, since $(\pi_A)_!$ commutes with pullbacks, this follows from the straightforward observation that this functor also preserves the property of a map to be marked left anodyne. As a consequence, Proposition~\ref{prop:projectionsAreProper} also implies that the projection $C\times_A D\to D$ is marked proper for all $C,D\in\mSimp{(\Over{\BB}{A})}$.
\end{remark}

In~\cite[Proposition~4.4.7]{Martini2021} we showed that every right fibration between simplicial objects in $\BB$ is proper. Our next goal is to generalise this result to marked simplicial objects. We begin with the following lemma, the proof of which we learned from Denis-Charles Cisinski~\cite[Proposition~5.3.5]{cisinski2019a}.

\begin{lemma}
	\label{lem:pullbackInnerAnodyneRightFibration}
	Let $\I{C}$ be a $\BB$-category and let
	\begin{equation*}
		\begin{tikzcd}
		Q\arrow[d, "q"]\arrow[r, "j", hookrightarrow] & \I{P}\arrow[d, "p"]\\
		I^2\otimes \I{C}\arrow[r, hookrightarrow, "i"] & \Delta^2\otimes\I{C}
		\end{tikzcd}
	\end{equation*}
	be a pullback square in $\Simp\BB$ in which $p$ is a right fibration. Then $j$ is contained in the internal saturation of $I^2\into\Delta^2$ and $E^1\to 1$.
\end{lemma}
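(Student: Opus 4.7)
The strategy is to describe $\I{P}$ and $Q$ explicitly via the Grothendieck construction for right fibrations, and then to exhibit the inclusion $j$ as a composite of pushouts of the generators of the internal saturation.

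By the Grothendieck equivalence for right fibrations over the $\BB$-category $\Delta^2\otimes\I{C}$, the map $p$ is classified by a 2-simplex $F\colon\Delta^2\to\iFun{\I{C}^{\op}}{\Univ}$. Unstraightening yields three right fibrations $p_i\colon\I{P}_i\to\I{C}$ for $i\in\{0,1,2\}$ (one for each vertex), transition functors $\phi_{01}\colon\I{P}_1\to\I{P}_0$, $\phi_{12}\colon\I{P}_2\to\I{P}_1$, and $\phi_{02}\colon\I{P}_2\to\I{P}_0$ over $\I{C}$ (one for each edge), together with an equivalence $\phi_{02}\simeq\phi_{01}\phi_{12}$ arising from the filled 2-cell. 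For each edge $\{i,j\}$, the pullback $\I{P}_{ij}:=(\Delta^{\{i,j\}}\otimes\I{C})\times_{\Delta^2\otimes\I{C}}\I{P}$ is the right fibration over $\Delta^1\otimes\I{C}$ classified by $\phi_{ij}$, which admits an explicit description as the associated mapping cylinder.

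Next I would combine these descriptions to identify $Q\simeq\I{P}_{01}\cup_{\I{P}_1}\I{P}_{12}$ (using that pullback along $p$ preserves the colimit $I^2\simeq\Delta^{\{0,1\}}\cup_{\{1\}}\Delta^{\{1,2\}}$), and to give an analogous description of $\I{P}$ that additionally includes $\I{P}_{02}$ and a 2-cell component. The inclusion $j$ then factors as $Q\hookrightarrow Q'\hookrightarrow\I{P}$, where the first inclusion adjoins the edge-piece $\I{P}_{02}$ over $\Delta^{\{0,2\}}\otimes\I{C}$ and is realised as a pushout involving the generator $E^1\to 1$ — the essential point being that the composite edge $\phi_{01}\phi_{12}$ is already present in $Q$ and is equivalent to $\phi_{02}$ via the 2-cell, so adjoining $\I{P}_{02}$ amounts to inverting an equivalence parametrised by an $E^1$. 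The second inclusion $Q'\hookrightarrow\I{P}$ fills in the interior 2-cell and is obtained as a pushout of $I^2\otimes A\hookrightarrow\Delta^2\otimes A$ for a parameter object $A$ built from the fiber $\I{P}_2$.

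The main obstacle will be making this decomposition precise in the internal setting, particularly verifying that the Grothendieck construction yields the claimed explicit identifications of the $\I{P}_{ij}$ and the decomposition of $\I{P}$, and pinning down the correct parameter objects $A$ for the pushouts in terms of the $\I{P}_i$. An alternative, cleaner formulation would be to define the class $\mathcal{W}$ of maps whose pullback along any right fibration lies in $\mathcal{S}$, and to show that $\mathcal{W}$ is internally saturated and contains $I^2\hookrightarrow\Delta^2$; however, checking the latter containment still reduces to the explicit analysis above.
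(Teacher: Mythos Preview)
Your approach has a genuine gap in the first step. You claim that adjoining $\I{P}_{02}$ to $Q$ can be realised as a pushout involving $E^1\to 1$, on the grounds that ``the composite edge $\phi_{01}\phi_{12}$ is already present in $Q$''. But it is not: the simplicial object $Q$ sits over $I^2\otimes\I{C}$, and $I^2$ has no $1$-simplex $\{0,2\}$, so $Q$ contains \emph{no} edges lying over that edge of $\Delta^2$. The equivalence $\phi_{02}\simeq\phi_{01}\phi_{12}$ is a statement about the classifying functor, not about edges in $Q$. A pushout along $E^1\otimes A\to A$ can only collapse an already-existing equivalence; it cannot manufacture a new non-degenerate edge. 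So your map $Q\hookrightarrow Q'$ is not in the saturation for the reason you give, and it is unclear how to repair this. The second step is also underspecified: since $\I{P}$ is a $\BB$-category it has higher simplices in every degree, all forced by Segal conditions, and these are not captured by a single pushout along $I^2\otimes A\hookrightarrow\Delta^2\otimes A$ for one parameter object.

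The paper's argument is entirely different and avoids any explicit decomposition. Since membership in the internal saturation of $I^2\hookrightarrow\Delta^2$ and $E^1\to 1$ is equivalent to being inverted by the reflection $L\colon\Simp\BB\to\Cat(\BB)$, it suffices to show that $L(Q)\to\I{P}$ is an equivalence. Essential surjectivity is immediate because $i$ (hence $j$) is a bijection on $0$-simplices. For full faithfulness the paper uses the Grothendieck construction in the other direction: since $i^\ast\colon\RFib(\Delta^2\otimes\I{C})\to\RFib(I^2\otimes\I{C})$ is an equivalence and $p_!$ is conservative, one deduces that $j_!\colon\RFib(Q)\to\RFib(\I{P})$ is an equivalence; repeating this after $-\times A$ for all $A\in\BB$ shows that restriction along $L(Q)\to\I{P}$ induces an equivalence $\IPSh_{\Univ}(\I{P})\simeq\IPSh_{\Univ}(L(Q))$, which forces $L(Q)\to\I{P}$ to be fully faithful. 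This indirect route sidesteps exactly the bookkeeping that makes your explicit approach difficult.
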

\begin{proof}
	Let $L\colon\Simp\BB\to\Cat(\BB)$ be the localisation functor. Since $\I{P}$ is a $\BB$-category, we obtain a factorisation $Q\to L(Q)\to \I{P}$ of $j$, and our task is to show that the second map is an equivalence. Note that as $i$ is an equivalence on level $0$, so is $j$. As a consequence, the map $L(Q)\to \I{P}$ is essentially surjective. Let us show that it is fully faithful too. Consider the commutative square
	\begin{equation*}
		\begin{tikzcd}
			\RFib(Q)\arrow[r, "j_!"] \arrow[d, "q_!"]& \RFib(\I{P})\arrow[d, "p_!"]\\
			\RFib(I^2\otimes \I{C})\arrow[r, "i_!"] & \RFib(\Delta^2\otimes \I{C})
		\end{tikzcd}
	\end{equation*}
	in which each arrow is the left adjoint of the corresponding pullback functor. We claim that $j_!$ is an equivalence.  To see this, note that applying the functor $p_!$ to the adjunction counit $j_! j^\ast\to\id$ recovers the adjunction counit of $i_!\dashv i^\ast$. Since the Grothendieck construction~\cite[Theorem~4.5.1]{Martini2021} implies that $i^\ast$ is an equivalence and as $p_!$ is conservative since $p$ is a right fibration, we thus find that $j_! j^\ast\to\id$ is an equivalence. As a consequence, $j^\ast$ is fully faithful. But $i_!$ being an equivalence and both $p_!$ and $q_!$ being conservative also implies that $j_!$ is conservative. As a result, $j_!$ must be an equivalence. Upon applying the functor $-\times A$ to the original pullback square for any $A\in\BB$, the above argumentation also shows that $g_!\colon \RFib(Q\times A)\to\RFib(\I{P}\times A)$ is an equivalence. Together with the Grothendieck construction, this shows that restriction along $L(Q)\to \I{P}$ induces an equivalence $\IPSh_{\Univ}(\I{P})\simeq \IPSh_{\Univ}(L(Q))$ of $\BB$-categories. In light of~\cite[Corollary~3.3.3]{Martini2021a}, this implies that $L(Q)\to \I{P}$ is fully faithful, as desired.
\end{proof}

\begin{proposition}
	\label{prop:rightFibrationsProper}
	Every marked right fibration is marked proper.
\end{proposition}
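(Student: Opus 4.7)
I will show that for every cartesian square with $p$ a marked right fibration and $q \colon Q \to D$ its base change along some map $f \colon D \to C$, the pullback functor $q^\ast$ sends marked left anodyne morphisms over $D$ to marked left anodyne morphisms over $Q$. Observe first that $q^\ast$ is a left adjoint in the $\infty$-topos $\mSimp\BB$ and therefore preserves all small colimits; the class of maps whose pullback along $q$ is marked left anodyne is consequently internally saturated in $\Over{(\mSimp\BB)}{D}$. By Remark~\ref{rem:properMapBC} it is enough to verify the claim on a fixed set of generators of marked left anodyne, for which I will use the list from Proposition~\ref{prop:markedAnodyneGenerators}. Splitting this list naturally produces two regimes: the $(-)^\flat$-generators (types~(1), (2), and~(4)) and the $(-)^\sharp$-generators (types~(3), (5), and~(6)).

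For the $(-)^\flat$-generators the idea is to reduce to the unmarked case. The source and target of such a generator lie in the image of $(-)^\flat$, and any structure map to $C$ factors through the counit $(C\vert_{\Delta})^\flat \to C$ and so transposes to a map of simplicial objects into $C\vert_{\Delta}$. Because $(-)\vert_{\Delta}$ is a right adjoint it preserves the cartesian square computing the pullback; combined with the observation that the functor $(-)^\flat$ turns unmarked left anodyne morphisms into marked left anodyne morphisms (by the generators~(1) and~(2) of Lemma~\ref{lem:markedAnodyneGenerators1}), I can therefore invoke Lemma~\ref{lem:pullbackInnerAnodyneRightFibration} applied to the right fibration $p\vert_{\Delta}$ and conclude.

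For the $(-)^\sharp$-generators the additional ingredient is that a marked right fibration is, by the dual of Definition~\ref{def:markedRightFibration} (via Remark~\ref{rem:cartesianFibrationsAnodyneGenerators}), internally right orthogonal to $d^0 \colon A^\sharp \into (\Delta^1 \otimes A)^\sharp$ and $d^0 \colon A^\flat \into (\Delta^1\otimes A)^\flat$. In particular, if the structure map of the generator factors through $(\Delta^1\otimes A)^\sharp \to C$ then orthogonality forces the pullback to be itself a $(-)^\sharp$-completion of the pullback of $p\vert_{\Delta}$ along $\Delta^1 \otimes A \to C\vert_{\Delta}$, analogous to the content of (the dual of) Proposition~\ref{prop:markedLeftFibrationSharp}. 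Under this identification each of the pullbacks of generators~(3), (5) and~(6) decomposes as a base change of the same generator (after possibly replacing $A$ by a larger object of $\BB$), and is therefore visibly marked left anodyne. Generator~(4) is a hybrid: its codomain $\prescript{}{\natural}(\Delta^2)^\flat$ has one marked edge, but the argument is analogous, using that $p$ is right orthogonal to both the $d^0 \colon 1 \into (\Delta^1)^\flat$ and $d^0 \colon 1 \into (\Delta^1)^\sharp$ maps on the marked edge.

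The main obstacle is the combinatorial bookkeeping of markings: unlike in the unmarked case, one cannot simply appeal to $p\vert_{\Delta}$ being a right fibration, one must also verify that the markings on the pulled-back object match the markings prescribed by the $(-)^\sharp$-structure on the generator. The cleanest way to do so, as sketched above, is to first establish the local picture that a marked right fibration over $C^\sharp$ is necessarily of the form $(\I{P})^\sharp$ for an unmarked right fibration $\I{P} \to C$, and then reduce each pullback computation to this form by first pulling back along the structure map.
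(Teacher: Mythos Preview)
Your overall strategy of checking generators, and of reducing the $(-)^\flat$-cases to the unmarked setting via Lemma~\ref{lem:pullbackInnerAnodyneRightFibration}, is sound and aligns with the paper's treatment of generators~(1)--(3). (One minor correction: Lemma~\ref{lem:pullbackInnerAnodyneRightFibration} only covers the spine inclusion; for generator~(2) one argues directly that $E^1$ pulls out of the fibre product, which is what the paper does.)

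The genuine gap is in your handling of generators~(4)--(6). Your claim that ``each of the pullbacks of generators~(3), (5) and~(6) decomposes as a base change of the same generator (after possibly replacing $A$ by a larger object of $\BB$)'' is correct for~(3) but fails for~(5) and~(6). Take generator~(5), the map $d^1\colon A^\sharp\into(\Delta^1\otimes A)^\sharp$: a marked right fibration $P\to(\Delta^1\otimes A)^\sharp$ does indeed have the form $(P')^\sharp$ for a right fibration $P'\to\Delta^1\otimes A$, as you observe, but $P'$ is an \emph{arbitrary} right fibration over $\Delta^1\otimes A$ and need not be a product. The pulled-back map is then $(P'\vert_0)^\sharp\into(P')^\sharp$, which is not an instance of~(5) for any object of $\BB$. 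The same obstruction applies to~(6), and your sketch for the hybrid generator~(4) does not close this gap either.

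The paper avoids this difficulty by a different reduction: rather than checking~(4)--(6) directly, it invokes Lemma~\ref{lem:markedAnodyneGenerators2} to replace them by the single family $d^1\colon K\into(\Delta^1)^\sharp\otimes K$ for arbitrary $K\in\mSimp\BB$. For this family the pulled-back map $g\colon Q\to P$ is shown to be a \emph{retract} of the visibly marked left anodyne map $((\Delta^1)^\sharp\otimes Q)\sqcup_Q P\to(\Delta^1)^\sharp\otimes P$, the retraction being constructed from the lifting property of $p$ against $d^0\colon 1\into(\Delta^1)^\sharp$ (the same manoeuvre as in the unmarked case, \cite[Proposition~4.4.7]{Martini2021}). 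This retract argument is the missing idea: it is precisely what compensates for the failure of the pullback along a non-trivial right fibration to preserve the product form of the generator.
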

\begin{proof}
	As marked right fibrations form a local class in $\mSimp\BB$, it suffices to prove that whenever there is a pullback square
	\begin{equation*}
	\begin{tikzcd}
	Q\arrow[d, "q"]\arrow[r, "g"] & P\arrow[d, "p"]\\
	D\arrow[r, "f"] & C
	\end{tikzcd}
	\end{equation*}
	in which $f$ is one of the maps in Proposition~\ref{prop:markedAnodyneGenerators} and $p$ is a marked right fibration, the map $g$ is marked left anodyne. We will first go through the maps listed in Lemma~\ref{lem:markedAnodyneGenerators1}:
	\begin{enumerate}
	\item We begin with the case where $f$ is given by the inclusion $(I^2\otimes K)^\flat\into(\Delta^2\otimes K)^\flat$. As every simplicial object in $\BB$ is a colimit of $\BB$-categories, we can assume that $K$ is a $\BB$-category. By Remark~\ref{rem:markedLeftFibrationFlat}, the map $P\vert_{\Delta}^\flat\to P$ is an equivalence. As a consequence, to show that $g$ marked left anodyne, it suffices to show that $g\vert_\Delta$ is contained in the internal saturation of $I^2\into\Delta^2$ and $E^1\to 1$, which is a consequence of Lemma~\ref{lem:pullbackInnerAnodyneRightFibration}.

	\item The case where $f$ is the map $(E^1\otimes K)^\flat\to K^\flat$ follows immediately from the fact that marked left anodyne maps are stable under products.
	
	\item Finally, we prove the case where $f$ is the map $(\Delta^1\otimes A)^\sharp\sqcup_{(\Delta^1\otimes A)^\flat}(\Delta^1\otimes A)^\sharp\to(\Delta^1\otimes A)^\sharp$. By Proposition~\ref{prop:markedLeftFibrationSharp}, the morphism $P\to P\vert_{\Delta}^\sharp$ is an equivalence. Let us use the notation $P^\prime=P\vert_{\Delta}$ and $p^\prime=p\vert_{\Delta}$. Then $p\simeq(p^\prime)^\sharp$. By Lemma~\ref{lem:pushoutProductEquivalenceMarked} and the fact that left fibrations are conservative, the map $(P^\prime)^\flat\to (P^\prime)^\sharp\times_{(\Delta^1\otimes A)^\sharp} (\Delta^1\otimes A)^\flat$ is an equivalence. Therefore, the map $g\colon Q\to P$ is equivalent to $(P^\prime)^\sharp\sqcup_{(P^\prime)^\flat}(P^\prime)^\sharp\to (P^\prime)^\sharp$. By Remark~\ref{rem:markedMonomorphismGenerator}, this map is marked left anodyne.
	\end{enumerate}
	By using Lemma~\ref{lem:markedAnodyneGenerators2}, it now suffices to prove the case where $f$ is of the form $d^1\colon K\into(\Delta^1)^\sharp\otimes K$ for an arbitrary $K\in \mSimp\BB$. This is done in the same way as in the proof of~\cite[Proposition~4.4.7]{Martini2021}. Namely, the map $g\colon Q\to P$ can be shown to arise as a retract of the marked left anodyne map $((\Delta^1)^\sharp\otimes Q)\sqcup_{Q} P\to (\Delta^1)^\sharp\otimes P$ and is therefore marked left anodyne itself.
\end{proof}

\begin{remark}
	\label{rem:cartesianPartiallyProper}
	In the situation of Proposition~\ref{prop:rightFibrationsProper}, note that the argument in the last paragraph of its proof also works when $p$ is only a marked cartesian fibration, as this argument only requires $p$ to be internally right orthogonal to $d^0\colon 1\into(\Delta^1)^\sharp$. We will need this observation later for the proof of Theorem~\ref{thm:StraighteningEquivalence}.
\end{remark}

\begin{remark}
	\label{rem:markedSmoothMaps}
	One can dualise the discussion in this section to \emph{marked smooth maps}: a map $f$ in $\mSimp\BB$ is said to be smooth if $f^\op$ is proper. Then Proposition~\ref{prop:rightFibrationsProper} dualises to the statement that marked \emph{left} fibrations are smooth.
\end{remark}

\section{The $\BB$-category of cocartesian fibrations}
\label{sec:CategoryOfCocartesianFibrations}
The goal of this chapter is to construct and study the $\BB$-category of cocartesian fibrations over a $\BB$-category $\I{C}$. It will be useful to first adopt a slightly more global perspective, i.e.\ to allow $\I{C}$ to vary. Therefore, we begin in \S~\ref{sec:Cocart+} by defining and studying the $\mSimp\BB$-category of cocartesian fibrations. In~\S~\ref{sec:Cocart} we make use of this $\mSimp\BB$-category to obtain the $\BB$-category of cocartesian fibrations over a fixed base $\BB$-category and to show that it is tensored and powered over $\ICat_{\BB}$. Lastly, \S~\ref{sec:limitsColimitsCocart} contains a discussion of the existence of limits and colimits in this $\BB$-category.

\subsection{The global $\mSimp\BB$-category of cocartesian fibrations}
\label{sec:Cocart+}
Recall that since $\Cocart^+\subset\Fun(\Delta^1,\mSimp\BB)$ is defined as the right complement of a factorisation system, this defines  a local class in the $\infty$-topos $\mSimp\BB$. Therefore, the associated $\CatSS$-valued presheaf is a sheaf on $\mSimp\BB$. We may therefore define:
\begin{definition}
	The large $\mSimp\BB$-category $\ICocart^+$ is defined as the full subcategory of the universe $\Univ[\mSimp\BB]$ that is associated to the $\CatSS$-valued sheaf $\Cocart^+$.
\end{definition}
Being a full subuniverse of $\Univ[\mSimp\BB]$, the $\mSimp\BB$-category $\ICocart^+$ inherits a number of nice properties of $\Univ[\mSimp\BB]$.
For example, since $\ICocart^+$ is defined by the right complement of a factorisation system in $\mSimp\BB$, we deduce from~\cite[Example~5.3.3]{Martini2021a}:
\begin{lemma}
	\label{lem:CocartCocartCocomplete}
	The $\mSimp\BB$-category $\ICocart^+$ is  closed under $\ICocart^+$-colimits in $\Univ[\mSimp\BB]$. That is, $\ICocart^+$ is $\ICocart^+$-cocomplete and the inclusion $\ICocart^+\into\Univ[\mSimp\BB]$ is $\ICocart^+$-cocontinuous.\qed
\end{lemma}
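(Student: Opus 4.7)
The plan is to reduce the statement directly to \cite[Example~5.3.3]{Martini2021a}, which asserts that for any presentable $\infty$-topos $\I{E}$ and any factorisation system $(\LL,\RR)$ on $\I{E}$, the full subuniverse $\I{U}_\RR \into \Univ[\I{E}]$ classifying the right class $\RR$ is closed under $\I{U}_\RR$-indexed colimits inside $\Univ[\I{E}]$. One then applies this with $\I{E} = \mSimp\BB$ and $\I{U}_\RR = \ICocart^+$.

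The only thing to check is that the hypotheses are satisfied. First, by Definition~\ref{def:cocartesianFibrationsMarked} the class of marked cocartesian fibrations in $\mSimp\BB$ is precisely the right complement of the internal saturation of the small set of maps listed in Definition~\ref{def:markedAnodyne}, hence forms the right class of a genuine factorisation system on the presentable $\infty$-topos $\mSimp\BB$. Second, by construction $\ICocart^+$ is the full subuniverse of $\Univ[\mSimp\BB]$ associated to the corresponding local class. Thus the two clauses of the lemma, namely that $\ICocart^+$ admits all $\ICocart^+$-indexed colimits and that the inclusion $\ICocart^+ \into \Univ[\mSimp\BB]$ preserves them, are exactly the conclusion of the cited example in the special case $\I{E} = \mSimp\BB$. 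I do not anticipate any genuine obstacle; the content of the lemma is entirely absorbed into the general fact about subuniverses cut out by factorisation systems that was established in the preceding paper in the series.
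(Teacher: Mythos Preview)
Your proposal is correct and matches the paper's own justification exactly: the paper states the lemma as an immediate consequence of~\cite[Example~5.3.3]{Martini2021a}, noting that $\ICocart^+$ is defined as the right complement of a factorisation system in $\mSimp\BB$, and provides no further argument (hence the \qed). There is nothing to add.
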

Dually, by the discussion in~\cite[\S~5.4]{Martini2021a}, the class of marked proper maps in $\mSimp\BB$ is local and therefore determines a subuniverse $\IProp\into\Univ[\mSimp\BB]$. Furthermore,~\cite[Proposition~5.4.2]{Martini2021a} shows:
\begin{lemma}
	\label{lem:CocartPrpComplete}
	The $\mSimp\BB$-category $\ICocart^+$ is closed under $\IProp^+$-limits in $\Univ[\mSimp\BB]$.\qed
\end{lemma}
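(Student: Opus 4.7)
The plan is to prove this lemma as a direct application of Proposition~5.4.2 of~\cite{Martini2021a}, the same reference used for the closely parallel Lemma~\ref{lem:CocartCocartCocomplete}. That general result concerns the following abstract setup: given an $\infty$-topos $\EE$ together with a factorisation system $(\LL,\RR)$ on $\EE$ in which $\LL$ is the internal saturation of a small set of maps, the subuniverse $\I{R}\into\Univ[\EE]$ associated with the local class $\RR$ is closed under $\I{P}$-indexed limits in $\Univ[\EE]$, where $\I{P}\into\Univ[\EE]$ is the subuniverse whose objects are the maps $p$ with the property that for every base change $q$ of $p$ the pullback functor $q^\ast$ preserves $\LL$ (equivalently, such that the lax square of localisation functors associated with the pullback of $q$ along arbitrary maps commutes).

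The first step of the proof would be to specialise this setup to $\EE=\mSimp\BB$, to the factorisation system whose left class is the class of marked left anodyne morphisms (which is by Definition~\ref{def:markedAnodyne} the internal saturation of a small set of generators) and whose right class is that of marked cocartesian fibrations. By construction of $\ICocart^+$ as the subuniverse of $\Univ[\mSimp\BB]$ corresponding to the local class of marked cocartesian fibrations, the abstract subuniverse $\I{R}$ is precisely $\ICocart^+$.

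The key step is then to recognise that, with respect to this factorisation system, the abstract subuniverse $\I{P}$ coincides with $\IProp^+$ as defined in Definition~\ref{def:proper}. This is essentially tautological: Definition~\ref{def:proper} requires the commutativity of exactly the lax square of localisation functors which appears in the formulation of the cited proposition, and the remark directly after Definition~\ref{def:proper} reformulates marked properness as the preservation of marked left anodyne morphisms under pullback along every base change, which is the other characterisation of $\I{P}$.

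Once these two identifications are in place, the conclusion of Proposition~5.4.2 of~\cite{Martini2021a} specialises word for word to the statement that $\ICocart^+$ is closed under $\IProp^+$-limits in $\Univ[\mSimp\BB]$. No substantial obstacle is expected: the only thing to be verified is that the notion of ``proper map'' used in~\cite{Martini2021a} matches Definition~\ref{def:proper} on the nose, which is clear from the two equivalent formulations noted above.
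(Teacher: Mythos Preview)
Your proposal is correct and matches the paper's approach exactly: the paper also derives the lemma directly from~\cite[Proposition~5.4.2]{Martini2021a}, and your identification of $\ICocart^+$ and $\IProp^+$ with the abstract subuniverses $\I{R}$ and $\I{P}$ of that proposition is precisely the intended specialisation. (One minor remark: the parallel Lemma~\ref{lem:CocartCocartCocomplete} is actually cited from~\cite[Example~5.3.3]{Martini2021a}, not Proposition~5.4.2, though both results come from the same paper.)
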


Our next goal is to show that $\ICocart^+$ is tensored and powered over $\BB$-categories. To make this precise, we first need some preparations. Observe that the inclusion $(-)^\sharp\colon\Simp\BB\into\mSimp\BB$ can be regarded as an algebraic morphism whose right adjoint is given by $(-)_\sharp\colon \mSimp\BB\to\Simp\BB$. Similarly,  the diagonal embedding $\BB\into\Simp\BB$ is an algebraic morphism whose right adjoint is the functor $(-)_0$ of evaluation at $0$. To avoid confusion, we will denote the extension of the latter to the level of categories by $(-)\vert_\BB\colon \Cat(\Simp\BB)\to\Cat(\BB)$ and its left adjoint by $(-)\vert^{\Simp\BB}$. We now obtain adjunctions
\begin{equation*}
\begin{tikzcd}
	\Cat(\mSimp\BB)\arrow[r, shift right=1mm, "(-)_\sharp"']\arrow[r, shift left=1mm, hookleftarrow, "(-)^\sharp"]& \Cat(\Simp\BB)\arrow[r, shift right=1mm, "(-)\vert_\BB"']\arrow[r, shift left=1mm, hookleftarrow, "(-)\vert^{\Simp\BB}"] & \Cat(\BB).
\end{tikzcd}
\end{equation*}
Note that on the level of $\CatS$-valued sheaves, the two colocalisation functors $(-)_\sharp$ and $(-)\vert_\BB$ are given by precomposition with the inclusions $(-)^\sharp\colon \Simp\BB\into\mSimp\BB$ and $\BB\into\Simp\BB$, respectively.
\begin{remark}
	\label{rem:Warning}
	Be aware that there are now \emph{two} distinct inclusions $\Cat(\BB)\into\Cat(\Simp\BB)$: the first is given by $(-)^{\Simp\BB}$, and the second is given by the composition $\Cat(\BB)\into\Simp\BB\into\Cat(\Simp\BB)$ in which the second map is the diagonal embedding. The latter inclusion identifies $\Cat(\BB)$ with a class of $\Simp\BB$-groupoids, whereas this is not the case for the former map.
\end{remark}
By~\cite[Lemma~3.7.4]{Martini2021} there is a cartesian square
\begin{equation*}
\begin{tikzcd}
\int \IPSh_{\Univ}(\Delta)\arrow[r, hookrightarrow] \arrow[d] & \Fun(\Delta^1, \Simp\BB)\arrow[d]\\
\BB\arrow[r, hookrightarrow] & \Simp\BB, 
\end{tikzcd}
\end{equation*}
where $\int \IPSh_{\Univ}(\Delta)$ is the cartesian fibration over $\BB$ that corresponds to the $\BB$-category $\IPSh_{\Univ}(\Delta)$. In other words, there is an equivalence $\IPSh_{\Univ}(\Delta)\simeq( \Univ[\Simp\BB])\vert_{\BB}$ and therefore an inclusion $\ICat_{\BB}\into \Univ[\Simp\BB]\vert_{\BB}$. Similarly, the inclusion
$(-)^\flat\colon \Simp\BB\into\mSimp\BB$ determines an embedding $\Univ[\Simp\BB]\into (\Univ[\mSimp\BB])\vert_{\Delta}$ in $\Cat(\Simp\BB)$ and therefore in particular an inclusion $\ICat_{\BB}\into( \Univ[\mSimp\BB]\vert_{\Delta})\vert_{\BB}$ of large $\BB$-categories. By defining  $\ICat_{\BB}^+=(\ICat_{\BB}\vert^{\Simp\BB})^\sharp$ and by making use of the equivalence $((-)\vert_{\Delta})\vert_{\BB}\simeq ((-)_\sharp)\vert_{\BB}$ (see Remark~\ref{rem:FlatSharpRestrictionGroupoids}), we therefore obtain a functor $(-)^\flat\colon\ICat_{\BB}^+\to\Univ[\mSimp\BB]$ that carries a $\Over{\BB}{A}$-category $\I{E}\to A$ to the map $\I{E}^\flat\to A^\flat$. In light of Remark~\ref{rem:properMapBC}, this functor therefore takes values in $\IProp^+\cap\ICocart^+$. 
Consequently, we deduce from~\cite[Proposition~7.3.6]{Martini2021a} and~\cite[Remark~7.3.7]{Martini2021a}:
\begin{proposition}
	\label{prop:Cocart+TensoredPowered}
	The large $\mSimp\BB$-category $\ICocart^+$ is tensored and powered over $\ICat_{\BB}^+$. In other words, there are bifunctors
	\begin{equation*}
	-\otimes -\colon \ICat_{\BB}^+\times \ICocart^+\to\ICocart^+
	\end{equation*}
	and
	\begin{equation*}
	(-)^{(-)}\colon(\ICat_{\BB}^+)^{\op}\times\ICocart^+\to\ICocart^+
	\end{equation*}
	together with an equivalence
	\begin{equation*}
	\map{\ICocart^+}(-\otimes -,-)\simeq\map{\ICocart^+}(-, (-)^{(-)})
	\end{equation*}
	of functors in $\Cat(\mSimp\BBB)$.\qed
\end{proposition}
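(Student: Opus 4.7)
The plan is to deduce the statement by invoking the general tensoring and powering principle of~\cite[Proposition~7.3.6]{Martini2021a} together with its enhancement in~\cite[Remark~7.3.7]{Martini2021a}, but applied internally to the $\infty$-topos $\mSimp\BB$ in place of $\BB$. That principle asserts, roughly, that if $\I{V}$ is a full subcategory of the universe which is both $\I{V}$-cocomplete and closed under $\I{W}$-limits for some other full subcategory $\I{W}$, then any $\I{U}$ equipped with a functor $\I{U}\to\I{V}\cap\I{W}$ automatically induces a tensoring and powering of $\I{V}$ over $\I{U}$, together with the required hom-adjunction.

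In our situation I would take $\I{V}=\ICocart^+$ and $\I{W}=\IProp^+$, both regarded as full subcategories of $\Univ[\mSimp\BB]$. The $\I{V}$-cocompleteness input is Lemma~\ref{lem:CocartCocartCocomplete}, and the closure of $\ICocart^+$ under $\IProp^+$-limits is Lemma~\ref{lem:CocartPrpComplete}. For the category of ``indexing objects'' $\I{U}$ I would take $\ICat_{\BB}^+$, with the required functor $\ICat_{\BB}^+\to\ICocart^+\cap\IProp^+$ being precisely $(-)^\flat$ as constructed in the paragraph preceding the statement; the fact that this functor lands in $\IProp^+$ is exactly the content of Remark~\ref{rem:properMapBC}, and that it lands in $\ICocart^+$ follows from Proposition~\ref{prop:comparisonCocartesianFibrationMarkedUnmarked} (a flat marked simplicial object is automatically a marked cocartesian fibration over any base). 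Feeding these three inputs into the cited general principle yields the bifunctors $-\otimes-$ and $(-)^{(-)}$ together with the desired adjunction equivalence.

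Unwinding the construction, the tensor $\I{E}\otimes\I{P}$ is realised as the product $\I{E}^\flat\times\I{P}$ taken in $\Univ[\mSimp\BB]$, which lands in $\ICocart^+$ since projection out of the marked proper object $\I{E}^\flat$ preserves marked cocartesian fibrations; dually, the power $\I{P}^{\I{E}}$ is the internal mapping object $\iFun{\I{E}^\flat}{\I{P}}$ computed in $\Univ[\mSimp\BB]$, which again lies in $\ICocart^+$ by the closure under $\IProp^+$-limits. The main point requiring care is to ensure that $(-)^\flat$ is genuinely a functor of $\mSimp\BB$-categories rather than a mere fibrewise assignment, so that the cited principle applies verbatim; this is built into its construction from the algebraic morphisms $(-)^\flat\colon\Simp\BB\into\mSimp\BB$ and $(-)\vert^{\Simp\BB}\colon\BB\into\Simp\BB$ by passing to the associated functors between internal categories. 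I do not anticipate any serious obstacle beyond this compatibility check.
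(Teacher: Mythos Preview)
Your proposal is correct and follows essentially the same route as the paper: invoke \cite[Proposition~7.3.6]{Martini2021a} and \cite[Remark~7.3.7]{Martini2021a} with the inputs Lemma~\ref{lem:CocartCocartCocomplete}, Lemma~\ref{lem:CocartPrpComplete}, and the functor $(-)^\flat\colon\ICat_{\BB}^+\to\IProp^+\cap\ICocart^+$ constructed in the preceding paragraph. Your justification that $(-)^\flat$ lands in $\ICocart^+$ via Proposition~\ref{prop:comparisonCocartesianFibrationMarkedUnmarked} is in fact slightly more explicit than the paper, which only cites Remark~\ref{rem:properMapBC} and leaves the $\ICocart^+$-part implicit.
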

\begin{remark}
	\label{rem:tensoringPoweringCocart+Explicitly}
	Explicitly, the tensoring of $\ICocart^+$ over $\ICat_{\BB}^+$ is given by the restriction of the product functor $-\times -\colon \Univ[\mSimp\BB]\times\Univ[\mSimp\BB]\to\Univ[\mSimp\BB]$ along $(-)^\flat\times\id\colon\ICat_{\BB}^+\times\ICocart^+\to\ICocart^+\times\ICocart^+$.
\end{remark}

\begin{remark}
	\label{rem:Cart+}
	One can dualise the discussion in this section to marked cartesian fibrations in the evident way: the sheaf $\Cart^+$ on $\mSimp\BB$ determines a large $\mSimp\BB$-category $\ICart^+$ of marked cartesian fibrations, which is closed under $\ICart^+$-colimits and $\mSmooth^+$-limits in $\Univ[\mSimp\BB]$, where $\mSmooth^+$ denotes the full subcategory of $\Univ[\mSimp\BB]$ that is determined by the local class of marked smooth maps. Hence $\ICart^+$ is tensored and cotensored over $\ICat_{\BB}^+$ as well.
\end{remark}
\subsection{$\BB$-categories of cocartesian fibrations}
\label{sec:Cocart}
In this section we make use of the preparations made in \S~\ref{sec:Cocart+} to define the $\BB$-category of cocartesian fibrations over a fixed $\BB$-category $\I{C}$. As an intermediate step, it will be useful to go through the large $\Simp\BB$-category $\ICocart=(\ICocart^+)_\sharp$.
\begin{lemma}
	\label{lem:geometricMorphismPowering}
	Let $f_\ast\colon \BB\to \AA$ be a geometric morphism of $\infty$-topoi and let $\I{C}$ be a $\BB$-category. Then there is an equivalence $\Tw(f_\ast\I{C})\simeq f_\ast\Tw(\I{C})$ of left fibrations over $f_\ast\I{C}^{\op}\times f_\ast\I{C}$.
\end{lemma}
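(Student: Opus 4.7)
The plan is to observe that the twisted arrow construction is built level-wise from the powering of $\I{C}$ by the fixed $\infty$-categories $K_n=(\Delta^n)^{\op}\diamond\Delta^n$, so the lemma reduces to the claim that $f_\ast$ commutes with powering by any $K\in\CatS$ and with evaluation at $0$. The latter is immediate since $f_\ast\colon\Cat(\BB)\to\Cat(\AA)$ is defined by postcomposition, hence acts level-wise on the underlying simplicial objects. For the former, I would deduce from the identity $\Gamma_{\BB}\simeq\Gamma_{\AA}\circ f_\ast$ the companion identity $f^\ast\circ\const_{\AA}\simeq\const_{\BB}$ between left adjoints, and then prove by Yoneda that $f_\ast(\I{C}^K)\simeq (f_\ast\I{C})^K$ naturally in $\I{C}$ and $K$, via the chain (for $\I{D}\in\Cat(\AA)$)
\begin{equation*}
	\map{\Cat(\AA)}(\I{D},f_\ast(\I{C}^K))\simeq\map{\Cat(\BB)}(f^\ast\I{D}\times\const_{\BB}(K),\I{C})\simeq\map{\Cat(\AA)}(\I{D},(f_\ast\I{C})^K),
\end{equation*}
where the crucial step uses that $f^\ast$ is lex (hence preserves the product) together with $f^\ast\const_{\AA}\simeq\const_{\BB}$.

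Applying this to each $K_n$ and combining with $f_\ast((\I{C}^{K_n})_0)=(f_\ast(\I{C}^{K_n}))_0$ yields equivalences
\begin{equation*}
	\Tw(f_\ast\I{C})_n=((f_\ast\I{C})^{K_n})_0\simeq f_\ast((\I{C}^{K_n})_0)=(f_\ast\Tw(\I{C}))_n
\end{equation*}
that are natural in $[n]\in\Delta$ by functoriality of the construction in $K_n$, assembling into an equivalence $\Tw(f_\ast\I{C})\simeq f_\ast\Tw(\I{C})$ of simplicial objects in $\AA$.

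The projection $\Tw(\I{C})\to\I{C}^{\op}\times\I{C}$ is induced by the inclusion $(\Delta^n)^{\op}\sqcup\Delta^n\into K_n$ and is therefore natural in $\I{C}$, so the above equivalence sits over $f_\ast(\I{C}^{\op}\times\I{C})\simeq(f_\ast\I{C})^{\op}\times f_\ast\I{C}$ (using that $f_\ast$ preserves products and clearly commutes with taking opposites, since the latter is induced by precomposition with an involution of $\Delta$). It remains to see that this is an equivalence of left fibrations, for which it suffices to note that $f_\ast$ carries left fibrations in $\Cat(\BB)$ to left fibrations in $\Cat(\AA)$: by the $f^\ast\dashv f_\ast$-adjunction, the internal right orthogonality of $f_\ast p$ against $d^1\times L$ for $L\in\Cat(\AA)$ is equivalent to the orthogonality of $p$ against $f^\ast(d^1\times L)\simeq d^1\times f^\ast L$ in $\Cat(\BB)$, and the latter holds because $p$ is a left fibration there.

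The only genuine point of care is the identification $f^\ast\const_{\AA}\simeq\const_{\BB}$ and its compatibility with the cartesian closed structure on $\Cat(-)$; everything else is a formal consequence of $f_\ast$ being a right adjoint between $\infty$-topoi and acting level-wise on simplicial objects.
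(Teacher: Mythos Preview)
Your proposal is correct and follows essentially the same approach as the paper: deduce that $f_\ast$ commutes with powering by $K\in\CatS$ from the fact that $f^\ast$ commutes with tensoring (via $f^\ast\const_{\AA}\simeq\const_{\BB}$ and left exactness), and then use that $\Tw$ is built level-wise from such powerings. The paper's proof is considerably terser---it simply asserts the equivalence $f^\ast(-\otimes-)\simeq -\otimes f^\ast(-)$ and passes directly to $f_\ast(-)^{(-)}\simeq f_\ast((-)^{(-)})$---whereas you spell out the Yoneda argument, the compatibility with the projection, and the preservation of left fibrations; these are exactly the details the paper leaves implicit.
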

\begin{proof}
	The equivalence $f^\ast(-\otimes -)\simeq -\otimes f^\ast(-)$ implies that $f_\ast$ commutes with powering over $\CatS$. In other words, there is a natural equivalence
	\begin{equation*}
		f_\ast(-)^{(-)}\simeq f_\ast((-)^{(-)})
	\end{equation*}
	of bifunctors $\CatS^{\op}\times\Cat(\BB)\to\Cat(\AA)$. In particular, this observation implies that there is a natural equivalence $\Tw(f_\ast\I{C})\simeq f_\ast\Tw(\I{C})$ of left fibrations over $f_\ast\I{C}^{\op}\times f_\ast\I{C}$.
\end{proof}
By applying $(-)_\sharp$ to the bifunctors from Proposition~\ref{prop:Cocart+TensoredPowered} and by using Lemma~\ref{lem:geometricMorphismPowering} we now deduce:
\begin{corollary}
	\label{cor:CocartPoweredTensored}
	The large $\Simp\BB$-category $\ICocart$ is tensored and powered over $\ICat_{\BB}\vert^{\Simp\BB}$.\qed
\end{corollary}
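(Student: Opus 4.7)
The plan is to obtain the tensoring and powering on $\ICocart$ by pushing the bifunctors from Proposition~\ref{prop:Cocart+TensoredPowered} through the functor $(-)_\sharp\colon \Cat(\mSimp\BBB)\to\Cat(\Simp\BBB)$ that is induced by the geometric morphism $(-)_\sharp$ (right adjoint to the algebraic morphism $(-)^\sharp\colon\Simp\BBB\into\mSimp\BBB$). Since $(-)^\sharp$ is fully faithful and preserves finite limits, its right adjoint is indeed a geometric morphism and extends to $\Cat(-)$ by postcomposition.

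The first step is to identify the images of the relevant categories. By definition of $\ICocart$ we have $(\ICocart^+)_\sharp\simeq\ICocart$, and full faithfulness of $(-)^\sharp$ together with the adjunction counit gives
\begin{equation*}
(\ICat_{\BB}^+)_\sharp=((\ICat_{\BB}\vert^{\Simp\BB})^\sharp)_\sharp\simeq\ICat_{\BB}\vert^{\Simp\BB}.
\end{equation*}
Next, because $(-)_\sharp$ is a right adjoint it preserves products, and because the involution $\op\colon\Delta_+\simeq\Delta_+$ restricts to the standard involution on $\Delta$ the functor $(-)_\sharp$ also commutes with the opposite construction. Applying $(-)_\sharp$ to the bifunctors of Proposition~\ref{prop:Cocart+TensoredPowered} therefore yields
\begin{equation*}
-\otimes -\colon \ICat_{\BB}\vert^{\Simp\BB}\times\ICocart\to\ICocart
\end{equation*}
and
\begin{equation*}
(-)^{(-)}\colon (\ICat_{\BB}\vert^{\Simp\BB})^{\op}\times\ICocart\to\ICocart,
\end{equation*}
which are the candidates for the tensoring and powering.

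It remains to transport the adjunction equivalence $\map{\ICocart^+}(-\otimes-,-)\simeq\map{\ICocart^+}(-, (-)^{(-)})$ from $\mSimp\BBB$ down to $\Simp\BBB$. Here I will invoke Lemma~\ref{lem:geometricMorphismPowering}, which states that a geometric morphism commutes with the twisted arrow construction, and hence with the formation of mapping bifunctors. Applying $(-)_\sharp$ to the equivalence on $\ICocart^+$ thus produces the required equivalence
\begin{equation*}
\map{\ICocart}(-\otimes -,-)\simeq\map{\ICocart}(-,(-)^{(-)})
\end{equation*}
of functors in $\Cat(\Simp\BBB)$, completing the proof.

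The only real content is Lemma~\ref{lem:geometricMorphismPowering} itself; the rest is bookkeeping, and the main thing to watch is that the adjoint pair $(-)^\sharp\dashv(-)_\sharp$ really does qualify as a geometric morphism, so that mapping bifunctors are preserved. If one were worried that the equivalence fails to be natural in the appropriate $2$-categorical sense, one could alternatively check it by unwinding everything to the level of $\CatSS$-valued sheaves, where the action of $(-)_\sharp$ is precomposition with $\sharp\colon\Delta_+\to\Delta$ and the compatibility is immediate from the Yoneda lemma.
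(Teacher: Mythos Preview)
Your proposal is correct and follows essentially the same approach as the paper: apply $(-)_\sharp$ to the tensoring and powering bifunctors of Proposition~\ref{prop:Cocart+TensoredPowered} and invoke Lemma~\ref{lem:geometricMorphismPowering} to transport the mapping-groupoid equivalence. One small imprecision: your justification for $(-)_\sharp$ commuting with $(-)^{\op}$ via the involution on $\Delta_+$ is beside the point---what matters is simply that $(-)_\sharp$ on $\Cat$ is postcomposition while $(-)^{\op}$ is precomposition on the simplicial direction---and in the final paragraph, the functor $(-)_\sharp$ at the sheaf level is precomposition with $(-)^\sharp\colon\Simp\BB\into\mSimp\BB$, not with $\sharp\colon\Delta_+\to\Delta$.
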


\begin{definition}
	\label{def:categoryOfCocartesianFibrations}
	Let $\I{C}$ be a $\BB$-category, viewed as a $\Simp\BB$-groupoid (cf.~Remark~\ref{rem:Warning}). We then define the large $\BB$-category of cocartesian fibrations over $\I{C}$ as $\ICocart_{\I{C}}=\iFun{\I{C}}{\ICocart}\vert_{\BB}$.
\end{definition}
\begin{remark}
	\label{rem:cocartCUnderlyingSheaf}
	Explicitly, the $\CatSS$-valued sheaf associated with $\ICocart_{\I{C}}$ is given by $\Cocart^+(-\times \I{C}^\sharp)$ and therefore by $\Cocart(-\times \I{C})$, using Corollary~\ref{cor:pullbackSquareCocartesianMarkedUnmarked}.
\end{remark}

Note that owing to the fact that the functor $\iFun{\I{C}}{-}\colon \Cat(\Simp\BBB)\to\Cat(\Simp\BBB)$  commutes with the powering of $\Cat(\Simp\BBB)$ over $\CatSS$, there is a natural equivalence $\iFun{\I{C}}{\Tw(-)}\simeq \Tw(\iFun{\I{C}}{-})$. Together with Lemma~\ref{lem:geometricMorphismPowering}, this observation implies that by applying the functor $\iFun{\I{C}}{-}\vert_{\BB}$ to the two maps from Corollary~\ref{cor:CocartPoweredTensored} and by precomposing with the map $(\pi_{\I{C}})^\ast\colon \ICat_{\BB}\simeq\iFun{1}{\ICat_{\BB}\vert^{\Simp\BB}}\vert_{\BB}\to\iFun{\I{C}}{\ICat_{\BB}\vert^{\Simp\BB}}\vert_{\BB}$, one obtains bifunctors
	\begin{equation*}
	-\otimes -\colon \ICat_{\BB}\times \ICocart_{\I{C}}\to\ICocart_{\I{C}}
	\end{equation*}
	and
	\begin{equation*}
	(-)^{(-)}\colon\ICat_{\BB}^{\op}\times\ICocart_{\I{C}}\to\ICocart_{\I{C}}
	\end{equation*}
	that are natural in $\I{C}$ and that fit into an equivalence
	\begin{equation*}
	\map{\ICocart_{\I{C}}}(-\otimes -,-)\simeq\map{\ICocart_{\I{C}}}(-, (-)^{(-)}).
	\end{equation*}
	We conclude:
\begin{corollary}
	\label{cor:CocartCTensoredPowered}
	For any $\I{C}\in\Cat(\BB)$, the large $\BB$-category $\ICocart_{\I{C}}$ is both tensored and powered over the $\BB$-category $\ICat_{\BB}$, and both tensoring and powering is natural in $\I{C}$.\qed
\end{corollary}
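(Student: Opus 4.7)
The plan is to observe that essentially all of the work has already been done in the paragraph preceding the corollary, and the proof amounts to checking that the construction described there really does produce a tensoring and powering of $\ICocart_{\I{C}}$ over $\ICat_{\BB}$ (rather than over some variant) and that it depends functorially on $\I{C}$. First I would apply the lax-$2$-functor $\iFun{\I{C}}{-}\vert_{\BB}\colon \Cat(\Simp\BBB)\to\Cat(\BBB)$ to the bifunctors and natural equivalence provided by Corollary~\ref{cor:CocartPoweredTensored}. This yields bifunctors
\begin{align*}
    -\otimes -\colon \iFun{\I{C}}{\ICat_{\BB}\vert^{\Simp\BB}}\vert_{\BB}\times \ICocart_{\I{C}}&\to\ICocart_{\I{C}},\\
    (-)^{(-)}\colon\iFun{\I{C}}{\ICat_{\BB}\vert^{\Simp\BB}}\vert_{\BB}^{\op}\times\ICocart_{\I{C}}&\to\ICocart_{\I{C}},
\end{align*}
together with a candidate mapping-groupoid equivalence. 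Here the identification of the mapping $\BB$-groupoids on $\ICocart_{\I{C}}$ with the image of $\map{\ICocart}(-,-)$ under $\iFun{\I{C}}{-}\vert_{\BB}$ relies on Lemma~\ref{lem:geometricMorphismPowering}, which commutes the twisted-arrow construction past the geometric morphism underlying $\iFun{\I{C}}{-}\vert_{\BB}$; this lets us identify $\Tw(\ICocart_{\I{C}})$ with $\iFun{\I{C}}{\Tw(\ICocart)}\vert_{\BB}$.

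Then I would precompose with the functor $\pi_{\I{C}}^\ast\colon\ICat_{\BB}\simeq \iFun{1}{\ICat_{\BB}\vert^{\Simp\BB}}\vert_{\BB}\to\iFun{\I{C}}{\ICat_{\BB}\vert^{\Simp\BB}}\vert_{\BB}$ induced by the terminal map $\I{C}\to 1$ in $\Cat(\Simp\BB)$. Precomposition with the same functor on the right-hand side of the mapping-groupoid equivalence preserves the equivalence, so we obtain the required bifunctors out of $\ICat_{\BB}$ and the required equivalence
\begin{equation*}
    \map{\ICocart_{\I{C}}}(-\otimes -,-)\simeq\map{\ICocart_{\I{C}}}(-, (-)^{(-)}).
\end{equation*}
That the tensoring lands in $\ICocart_{\I{C}}$ (and not just in some larger subuniverse) is automatic, since by Remark~\ref{rem:tensoringPoweringCocart+Explicitly} the tensoring on $\ICocart^+$ is realised as a restriction of the cartesian product, which is preserved by the functor $(-)_\sharp$ and by $\iFun{\I{C}}{-}\vert_{\BB}$.

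For naturality in $\I{C}$, the assignment $\I{C}\mapsto \iFun{\I{C}}{-}\vert_{\BB}$ is itself functorial in $\I{C}^{\op}$ (with values in functors $\Cat(\Simp\BBB)\to\Cat(\BBB)$), so applying it to the fixed tensor-power structure on $\ICocart$ produces the bifunctors and equivalence as a natural transformation over $\Cat(\BB)^{\op}$. Naturality of the precomposition step reduces to the identity $f^\ast\pi_{\I{C}}^\ast\simeq\pi_{\I{D}}^\ast$ for any functor $f\colon\I{D}\to\I{C}$, which holds because $\pi_{\I{C}}\circ f\simeq\pi_{\I{D}}$. I do not expect any serious obstacle: the statement is a formal consequence of functoriality of $\iFun{-}{-}\vert_{\BB}$ together with the tensor-power structure already established on $\ICocart^+$, and the only mildly delicate point is the commutation of $\Tw$ past the geometric morphism involved in the restriction functors, which is precisely what Lemma~\ref{lem:geometricMorphismPowering} handles.
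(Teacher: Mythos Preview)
Your proposal is correct and follows essentially the same approach as the paper: apply $\iFun{\I{C}}{-}\vert_{\BB}$ to the tensoring/powering from Corollary~\ref{cor:CocartPoweredTensored}, precompose with $\pi_{\I{C}}^\ast$, and use the commutation of $\Tw$ with the relevant functors to identify mapping $\BB$-groupoids. The only minor refinement is that the paper separates this commutation into two pieces---first that $\iFun{\I{C}}{-}$ commutes with $\Tw$ because it commutes with powering over $\CatSS$, and then Lemma~\ref{lem:geometricMorphismPowering} for the $(-)\vert_{\BB}$ step---whereas you bundle them into a single invocation of the lemma; but this does not affect the argument.
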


\begin{remark}
	\label{rem:CocartCTensoringPoweringExplicitly}
	Using Remark~\ref{rem:tensoringPoweringCocart+Explicitly}, the tensoring of $\ICocart_{\I{C}}$ is given by the restriction of the product functor $-\times-\colon\ICocart_{\I{C}}\times\ICocart_{\I{C}}\to\ICocart_{\I{C}}$ along the map $\ICat_{\BB}\times\ICocart_{\I{C}}\to\ICocart_{\I{C}}\times\ICocart_{\I{C}}$ that is given on the first factor by the functor $\pi_{\I{C}}^\ast\colon \ICat_{\BB}\simeq\ICocart_{1}\to\ICocart_{\I{C}}$.
\end{remark}

\begin{remark}
	\label{rem:CocartCTensoringPoweringSectionwise}
	By yet another application of Lemma~\ref{lem:geometricMorphismPowering} and by using~\cite[Corollary~4.6.8]{Martini2021}, applying the global sections functor $\Gamma$ to the tensoring and powering bifunctors of $\ICocart_{\I{C}}$ exhibits $\Cocart(\I{C})$ as being tensored and powered over $\Cat(\BB)$. Moreover, Remark~\ref{rem:CocartCTensoringPoweringExplicitly} implies that the tensoring
	\begin{equation*}
	-\otimes -\colon \Cat(\BB)\times\Cocart(\I{C})\to\Cocart(\I{C})
	\end{equation*}
	is given by $\pi_{\I{C}}^\ast(-)\times_{\I{C}} -$. As a consequence, the cotensoring $(-)^{(-)}\colon \Cat(\BB)^{\op}\times\Cocart(\I{C})\to\Cocart(\I{C})$ can be explicitly described as the functor that carries a pair $(\I{D},(\I{P}\to\I{C}))$ to the pullback
	\begin{equation*}
	\begin{tikzcd}
	\I{P}^{\I{D}}\arrow[r]\arrow[d] & \iFun{\I{D}}{\I{P}}\arrow[d]\\
	\I{C}\arrow[r, "\diag"] & \iFun{\I{D}}{\I{C}}.
	\end{tikzcd}
	\end{equation*}
\end{remark}

\begin{remark}
	\label{rem:BCCocart}
	Let $A\in\BB$ be an an arbitrary object. Note that by postcomposition, the adjunction $(\pi_A)_!\dashv\pi_A^\ast\colon \BB\leftrightarrows\Over{\BB}{A}$ induces adjunctions $(\pi_A)_!\dashv\pi_A^\ast\colon\Simp\BB\leftrightarrows\Simp{(\Over{\BB}{A})}$ and $(\pi_A)_!\dashv\pi_A^\ast\colon\mSimp\BB\leftrightarrows\mSimp{(\Over{\BB}{A})}$ that give rise to a diagram
	\begin{equation*}
	\begin{tikzcd}[column sep={9em,between origins}, row sep={5em,between origins}]
		\Cat(\mSimp\BB)\arrow[r, shift right=1mm, "(-)_\sharp"']\arrow[r, shift left=1mm, hookleftarrow, "(-)^\sharp"]\arrow[d, shift right=1mm, "\pi_A^\ast"']\arrow[from=d, shift right=1mm, "(\pi_A)_!"'] & \Cat(\Simp\BB)\arrow[r, shift right=1mm, "(-)\vert_\BB"']\arrow[r, shift left=1mm, hookleftarrow, "(-)\vert^{\Simp\BB}"] \arrow[d, shift right=1mm, "\pi_A^\ast"']\arrow[from=d, shift right=1mm, "(\pi_A)_!"']& \Cat(\BB)\arrow[d, shift right=1mm, "\pi_A^\ast"']\arrow[from=d, shift right=1mm, "(\pi_A)_!"'] \\
		\Cat(\mSimp{(\Over{\BB}{A})})\arrow[r, shift right=1mm, "(-)_\sharp"']\arrow[r, shift left=1mm, hookleftarrow, "(-)^\sharp"]& \Cat(\Simp{(\Over{\BB}{A})})\arrow[r, shift right=1mm, "(-)\vert_{\Over{\BB}{A}}"']\arrow[r, shift left=1mm, hookleftarrow, "(-)\vert^{\Simp{(\Over{\BB}{A})}}"] & \Cat(\Over{\BB}{A})
	\end{tikzcd}
	\end{equation*}
	that commutes in every direction.
	Since the terminal map $\pi_A\colon A\to 1$ is a marked cocartesian fibration, a map $p\colon P\to C$ in $\mSimp{(\Over{\BB}{A})}$ is marked cocartesian if and only if $(\pi_A)_!(p)$ is marked cocartesian in $\mSimp{\BB}$. Therefore, the equivalence $\pi_A^\ast\Univ[\mSimp\BB]\simeq\Univ[\mSimp{(\Over{\BB}{A})}]$ restricts to an equivalence of large $\mSimp{(\Over{\BB}{A})}$-categories $\pi_A^\ast\ICocart^+\simeq\ICocart^+$. Also, there is an equivalence $\pi_A^\ast\ICat_{\BB}^+\simeq\ICat_{\Over{\BB}{A}}^+$ with respect to which the functor $(-)^\flat\colon\ICat_{\Over{\BB}{A}}^+\to \Univ[\mSimp{(\Over{\BB}{A})}]$ corresponds to the image of the map $(-)^\flat\colon\ICat_{\BB}^+\to\Univ[\mSimp\BB]$ along $\pi_A^\ast$. Since $\pi_A^\ast$ moreover carries the mapping bifunctor of a $\mSimp\BB$-category $\I{C}$ to the mapping bifunctor of $\pi_A^\ast\I{C}$, we find that applying $\pi_A^\ast$ to the tensoring and powering of the $\mSimp\BB$-category $\ICocart^+$ yields the tensoring and powering of the $\mSimp{(\Over{\BB}{A})}$-category $\ICocart^+$. The above commutative diagram and the fact that $\pi_A^\ast$ also commutes with taking functor $\Simp\BB$-categories now implies that for every $\BB$-category $\I{C}$ there is a canonical equivalence $\pi_A^\ast\ICocart_{\I{C}}\simeq\ICocart_{\pi_A^\ast\I{C}}$ with respect to which the tensoring and powering of $\ICocart_{\pi_A^\ast\I{C}}$ over $\ICat_{\Over{\BB}{A}}$ are obtained as the image of the tensoring and powering bifunctors of $\ICocart_{\I{C}}$ along $\pi_A^\ast$.
\end{remark}

\begin{remark}
	\label{rem:definitionCartC}
	If we set $\ICart=(\ICart^+)_\sharp$, we can define the large $\BB$-category of cartesian fibrations over a $\BB$-category $\I{C}$ as $\ICart_{\I{C}}=\iFun{\I{C}}{\ICart}\vert_{\BB}$, analogous to how we defined the large $\BB$-category $\ICocart_{\I{C}}$. Using this definition, it is immediate that the tensoring and powering of $\ICart^+$ over $\ICat_{\BB}^+$ induces tensoring and powering bifunctors of $\ICart_{\I{C}}$ over $\ICat_{\BB}$ as well.
\end{remark}

\subsection{Limits and colimits in $\ICocart_{\I{C}}$}
\label{sec:limitsColimitsCocart}
In this section we will study limits and colimits in the $\BB$-categories of cocartesian fibrations that we defined in the previous section and the behaviour of these under base change. To that end, recall that since  $\Cocart^+(C)$ is a reflective subcategory of the $\infty$-topos $\Over{(\mSimp\BB)}{C}$ for every marked simplicial object $C\in\mSimp\BB$, the $\infty$-category $\Cocart^+(C)$ admits small limits and colimits. Moreover, if $f\colon C\to D$ is a map of marked simplicial objects in $\BB$, the restriction functor $f^\ast\colon \Cocart^+(D)\to\Cocart^+(C)$ has a left adjoint $f_!$ that can be explicitly computed by means of the composition
		\begin{equation*}
		\Cocart^+(C)\into \Over{(\mSimp\BB)}{C}\xrightarrow{f_!} \Over{(\mSimp\BB)}{D} \xrightarrow{\Over{L}{D}}  \Cocart^+(D)
		\end{equation*}
in which $\Over{L}{D}$ denotes the localisation functor. In particular, $f^\ast$ preserves small limits. If $f$ happens to be marked proper, then  $f^\ast$ also has a right adjoint $f_\ast$ by Lemma~\ref{lem:CocartPrpComplete}, hence $f^\ast$ also preserves small colimits in this case. Now if $s\colon B\to A$ is an arbitrary map in $\BB$, the associated map $s^\sharp$ is both marked proper and a marked cocartesian fibration, hence so is its product with any marked simplicial object $C$. Given any map $f\colon C\to D$ of marked simplicial objects, the fact that $s^\sharp$ is marked cocartesian implies that the natural morphism $(s^\sharp)_! f^\ast\to f^\ast (s^\sharp)_!$ is an equivalence, see Lemma~\ref{lem:CocartCocartCocomplete}. The map $s^\sharp$ being marked proper, on the other hand, implies that the map $(s^\sharp)_\ast f^\ast\to f^\ast (s^\sharp)_\ast$ is an equivalence which is in turn equivalent to $f_! (s^\sharp)^\ast\to (s^\sharp)^\ast f_!$ being an equivalence. By using~\cite[Corollary~5.3.6]{Martini2021a}, we thus conclude:
\begin{proposition}
	\label{prop:CocartCocomplete}
	For every $\I{C}\in\Cat(\BB)$, the large $\BB$-category $\ICocart_{\I{C}}$ is complete and cocomplete.\qed
\end{proposition}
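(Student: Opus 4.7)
The plan is to verify the hypotheses of the sectionwise criterion \cite[Corollary~5.3.6]{Martini2021a} for the associated $\CatSS$-valued sheaf of $\ICocart_{\I{C}}$. By Remark~\ref{rem:cocartCUnderlyingSheaf}, this sheaf is given by $A\mapsto \Cocart^+(A^\sharp\times\I{C}^\sharp)$, with transition along $s\colon B\to A$ induced by pullback along $s^\sharp\times\id_{\I{C}^\sharp}$. I therefore need three ingredients: (i) each fibre $\Cocart^+(A^\sharp\times\I{C}^\sharp)$ has all small limits and colimits, (ii) each transition functor preserves them, and (iii) these assignments satisfy Beck--Chevalley against pullback squares in $\BB$.

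For (i), I invoke that $\Cocart^+$ is the right complement of a factorisation system in $\mSimp\BB$, hence $\Cocart^+(A^\sharp\times\I{C}^\sharp)\hookrightarrow \Over{(\mSimp\BB)}{A^\sharp\times\I{C}^\sharp}$ is a reflective subcategory of an $\infty$-topos; colimits are constructed by localising colimits computed in the slice topos, and limits are created by the inclusion. For (ii), the key observation is that for any $s\colon B\to A$ in $\BB$, the map $s^\sharp\times\id_{\I{C}^\sharp}$ is both marked cocartesian (apply Proposition~\ref{prop:comparisonCocartesianFibrationMarkedUnmarked} to the identity on the $\BB$-groupoid $s$, then take products) and marked proper (since $s^\sharp$ is a marked right fibration, by Proposition~\ref{prop:rightFibrationsProper} it is marked proper, and by Remark~\ref{rem:properMapBC} this is stable under product with $\I{C}^\sharp$). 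The first property implies that $(s^\sharp\times\id)^\ast$ has a left adjoint (colimits are preserved), using Lemma~\ref{lem:CocartCocartCocomplete}; the second, via Lemma~\ref{lem:CocartPrpComplete}, provides the right adjoint $(s^\sharp\times\id)_\ast$ (limits are preserved).

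For (iii), given a cartesian square of maps in $\BB$ pulled back in $\mSimp\BB$ after $(-)^\sharp\times\I{C}^\sharp$, the Beck--Chevalley transformation for colimits $(s^\sharp\times\id)_! f^\ast\to f^\ast (s^\sharp\times\id)_!$ is an equivalence because $s^\sharp\times\id$ is marked cocartesian, while the corresponding transformation for right adjoints $(s^\sharp\times\id)_\ast f^\ast\to f^\ast(s^\sharp\times\id)_\ast$ (equivalently $f_!(s^\sharp\times\id)^\ast\to (s^\sharp\times\id)^\ast f_!$) is an equivalence because $s^\sharp\times\id$ is marked proper. With (i)--(iii) in place, \cite[Corollary~5.3.6]{Martini2021a} yields that $\ICocart_{\I{C}}$ is both complete and cocomplete. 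There is no substantive obstacle here beyond correctly matching the conventions of the cited criterion; the heart of the argument is the dual role of $s^\sharp\times\id$ as both a marked cocartesian fibration and a marked proper map, which has already been established in \S\ref{sec:markedCocartesianFibrations} and \S\ref{sec:markedProper}.
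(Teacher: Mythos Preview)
Your proof is correct and follows essentially the same approach as the paper: the paragraph preceding the proposition establishes exactly the three ingredients you list (reflective subcategory for (i), left and right adjoints to the transition maps for (ii), and Beck--Chevalley via the marked cocartesian/marked proper dichotomy for (iii)), and then invokes \cite[Corollary~5.3.6]{Martini2021a}. The only cosmetic difference is that the paper observes that $f^\ast$ admits a left adjoint for \emph{any} map $f$ of marked simplicial objects (via the reflective localisation), rather than using that $s^\sharp\times\id$ is marked cocartesian; both arguments give the same conclusion.
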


In light of~\cite[Proposition~3.2.8]{Martini2021a}, the same argumentation shows:
\begin{proposition}
	\label{prop:CocartBaseChangeLeftAdjoint}
	For every functor $f\colon \I{C}\to\I{D}$ in $\Cat(\BB)$, the pullback functor $f^\ast\colon \ICocart_{\I{D}}\to\ICocart_{\I{C}}$ admits a left adjoint $f_!$. In particular, $f^\ast$ is continuous.\qed
\end{proposition}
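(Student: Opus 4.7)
The plan is to reduce the statement to a section-wise check by invoking~\cite[Proposition~3.2.8]{Martini2021a}, exactly as in the proof of Proposition~\ref{prop:CocartCocomplete}. Concretely, one must verify (a) that for every $A\in\BB$ the pullback functor $(f\times\id_A)^\ast\colon\Cocart(\I{D}\times A)\to\Cocart(\I{C}\times A)$ admits a left adjoint, and (b) that the associated Beck--Chevalley transformation is an equivalence for every $s\colon B\to A$ in $\BB$. Under the identifications $\ICocart_{\I{D}}(A)\simeq\Cocart(\I{D}\times A)$ and $\ICocart_{\I{C}}(A)\simeq\Cocart(\I{C}\times A)$ from Remark~\ref{rem:cocartCUnderlyingSheaf}, the section-wise functor $(f^\ast)_A$ is indeed pullback along $f\times\id_A$.

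For (a), I use the translation to the marked setting via Corollary~\ref{cor:pullbackSquareCocartesianMarkedUnmarked}, giving $\Cocart(\I{C}\times A)\simeq\Cocart^+(\I{C}^\sharp\times A^\sharp)$ and analogously for $\I{D}$. Since $\Cocart^+(C)$ is a reflective subcategory of the slice $\infty$-topos $\Over{(\mSimp\BB)}{C}$ for every marked simplicial object $C$, the pullback functor along $(f\times\id_A)^\sharp$ on those slices restricts to a functor $\Cocart^+(\I{D}^\sharp\times A^\sharp)\to\Cocart^+(\I{C}^\sharp\times A^\sharp)$ that admits a left adjoint, explicitly given by composing the forgetful functor on slices with the reflection $\Over{L}{(\I{D}^\sharp\times A^\sharp)}$, just as in the argument establishing Proposition~\ref{prop:CocartCocomplete}.

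For (b), one needs to show that for $s\colon B\to A$ in $\BB$ the mate of the commutative square comparing pullback along $f\times\id$ with pullback along $\id_{\I{C}}\times s$ (and the analogous square for $\I{D}$) is an equivalence. The key observation, already used in the proof of Proposition~\ref{prop:CocartCocomplete}, is that the map $s^\sharp\colon B^\sharp\to A^\sharp$ is a marked cocartesian fibration, and therefore so is its product $\id\times s^\sharp$ with any marked simplicial object. By Lemma~\ref{lem:CocartCocartCocomplete}, pullback along $\id\times s^\sharp$ on the relevant slices of $\mSimp\BB$ preserves the subcategories of marked cocartesian fibrations, and hence commutes with the reflection into $\Cocart^+$. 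This yields precisely the required Beck--Chevalley equivalence.

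Combining (a) and (b), \cite[Proposition~3.2.8]{Martini2021a} produces the desired left adjoint $f_!\dashv f^\ast$ in $\Cat(\BBB)$; continuity of $f^\ast$ is then automatic. There is no substantive obstacle beyond what was already handled for Proposition~\ref{prop:CocartCocomplete}: the only new bookkeeping is that the Beck--Chevalley square is now indexed by a morphism in $\BB$ rather than by an internal diagram shape, but the same marked-cocartesian property of $s^\sharp$ supplies the commutation.
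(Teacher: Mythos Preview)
Your overall strategy---apply \cite[Proposition~3.2.8]{Martini2021a} by checking section-wise left adjoints plus a Beck--Chevalley condition---is exactly the paper's approach, and step~(a) is correct. The issue is in step~(b): you invoke the wrong property of $s^\sharp$ and cite the wrong lemma.

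You argue that $s^\sharp$ being a marked cocartesian fibration, via Lemma~\ref{lem:CocartCocartCocomplete}, implies that $(\id\times s^\sharp)^\ast$ commutes with the reflection $L$ into $\Cocart^+$. But Lemma~\ref{lem:CocartCocartCocomplete} says something different: it yields that $(s^\sharp)_!$ preserves marked cocartesian fibrations, which translates to the Beck--Chevalley equivalence $(s^\sharp)_! f^\ast\simeq f^\ast (s^\sharp)_!$. That is precisely what the paper uses for Proposition~\ref{prop:CocartCocomplete}, but it is \emph{not} the condition you need here. The claim ``$(\id\times s^\sharp)^\ast$ commutes with $L$'' is, by Definition~\ref{def:proper}, exactly the statement that $s^\sharp$ is marked \emph{proper}; merely preserving the reflective subcategory (which any pullback does) does not imply commutation with the reflector.

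The paper's argument for~(b) uses that $s^\sharp$ is marked proper (being a marked right fibration between $\BB$-groupoids, cf.\ Proposition~\ref{prop:rightFibrationsProper}). Properness gives $(s^\sharp)_\ast f^\ast\simeq f^\ast (s^\sharp)_\ast$, and passing to mates yields the required equivalence $f_!(s^\sharp)^\ast\simeq (s^\sharp)^\ast f_!$. Equivalently, properness says $(s^\sharp)^\ast L\simeq L(s^\sharp)^\ast$, so combined with topos-level base change $s^\ast f_!^{\mathrm{top}}\simeq f_!^{\mathrm{top}} s^\ast$ one obtains $s^\ast f_!\simeq f_! s^\ast$ on $\Cocart^+$. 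Swap ``marked cocartesian'' for ``marked proper'' and Lemma~\ref{lem:CocartCocartCocomplete} for Lemma~\ref{lem:CocartPrpComplete} (or the definition of properness directly), and your argument goes through.
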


\begin{proposition}
	\label{prop:CocartBaseChangeRightAdjoint}
	Let $p\colon \I{P}\to\I{C}$ be a functor in $\Cat(\BB)$ such that the associated map $p^\sharp$ of marked simplicial objects is proper. Then the pullback functor $p^\ast\colon \ICocart_{\I{C}}\to\ICocart_{\I{P}}$ admits a right adjoint $p_\ast$. In particular, $p^\ast$ is cocontinuous.\qed
\end{proposition}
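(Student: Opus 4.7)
The plan is to mimic the argument sketched for Proposition~\ref{prop:CocartCocomplete} and Proposition~\ref{prop:CocartBaseChangeLeftAdjoint}, building a sectionwise right adjoint and then invoking the sectionwise criterion for the existence of adjunctions between $\BB$-categories (\cite[Corollary~3.2.8 / 5.3.6]{Martini2021a}) to assemble it into an adjoint $p_\ast\colon\ICocart_{\I{P}}\to\ICocart_{\I{C}}$.

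First I would fix an object $A\in\BB$ and use the identifications $\ICocart_{\I{C}}(A)\simeq\Cocart^+(A^\sharp\times\I{C}^\sharp)$ and $\ICocart_{\I{P}}(A)\simeq\Cocart^+(A^\sharp\times\I{P}^\sharp)$ of Remark~\ref{rem:cocartCUnderlyingSheaf}. Under these identifications, $(p^\ast)(A)$ is pullback along $\id_{A^\sharp}\times p^\sharp$. Since marked proper maps form a local class in $\mSimp\BB$ and are stable under base change, and since $p^\sharp$ is marked proper by hypothesis, the map $\id_{A^\sharp}\times p^\sharp$ is marked proper as well. Lemma~\ref{lem:CocartPrpComplete} therefore supplies a right adjoint $(\id_{A^\sharp}\times p^\sharp)_\ast$ to $(\id_{A^\sharp}\times p^\sharp)^\ast$.

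Next I would verify the Beck--Chevalley condition with respect to restriction in $\BB$. Given $s\colon B\to A$ in $\BB$, I need the mate transformation
\begin{equation*}
(s^\sharp\times\id_{\I{C}^\sharp})^\ast (\id_{A^\sharp}\times p^\sharp)_\ast\to (\id_{B^\sharp}\times p^\sharp)_\ast (s^\sharp\times\id_{\I{P}^\sharp})^\ast
\end{equation*}
to be an equivalence. Passing to left adjoints this is equivalent to showing
\begin{equation*}
(s^\sharp\times\id_{\I{P}^\sharp})_! (\id_{B^\sharp}\times p^\sharp)^\ast\to (\id_{A^\sharp}\times p^\sharp)^\ast (s^\sharp\times\id_{\I{C}^\sharp})_!
\end{equation*}
is an equivalence. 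The map $s^\sharp\times\id$ is itself a marked cocartesian fibration, so by Lemma~\ref{lem:CocartCocartCocomplete} and the discussion preceding Proposition~\ref{prop:CocartCocomplete}, base change along $s^\sharp\times\id$ commutes with the reflective left adjoints $f_!$, which gives the desired equivalence.

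Finally, applying \cite[Corollary~5.3.6]{Martini2021a} (the sectionwise characterisation of adjunctions internal to $\BB$) to the sectionwise adjoints constructed above together with the verified Beck--Chevalley compatibility produces the global right adjoint $p_\ast\colon\ICocart_{\I{P}}\to\ICocart_{\I{C}}$. Cocontinuity of $p^\ast$ is then automatic as it is a left adjoint. The only mildly subtle point is the Beck--Chevalley check, but this is already implicit in the general pattern established immediately before Proposition~\ref{prop:CocartCocomplete}, so no new combinatorics is required.
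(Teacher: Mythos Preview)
Your proposal is correct and follows exactly the approach the paper has in mind: the proposition carries a \qed\ because it is meant to be read off from the discussion preceding Proposition~\ref{prop:CocartCocomplete}, and your write-up makes that discussion explicit in the case at hand. The sectionwise existence of $(\id_{A^\sharp}\times p^\sharp)_\ast$ via Lemma~\ref{lem:CocartPrpComplete}, the Beck--Chevalley check reduced by mates to the equivalence $(s^\sharp\times\id)_!\,f^\ast\simeq f^\ast\,(s^\sharp\times\id)_!$ coming from $s^\sharp\times\id$ being marked cocartesian (Lemma~\ref{lem:CocartCocartCocomplete}), and the assembly via \cite[Proposition~3.2.8]{Martini2021a} are precisely the ingredients the paper intends.
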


\begin{remark}
	\label{rem:LeftAdjointPullbackCocartPoint}
	If $\I{C}$ is a $\BB$-category, the functor $(\pi_{\I{C}})_!\colon\ICocart_{\I{C}}\to\ICocart_{1}\simeq\ICat_{\BB}$ is explicitly given by sending a cocartesian fibration $p\colon \I{P}\to A\times\I{C}$ to the $\Over{\BB}{A}$-category $\I{P}_\sharp^{-1}\I{P}$, i.e.\ to the pushout
	\begin{equation*}
	\begin{tikzcd}
	\I{P}_\sharp\arrow[r]\arrow[d, hookrightarrow] & \I{P}_\sharp^\gp\arrow[d]\\
	\I{P}\arrow[r] & \I{P}_\sharp^{-1}\I{P}
	\end{tikzcd}
	\end{equation*}
	in $\Cat(\Over{\BB}{A})$. To see this, Remark~\ref{rem:BCCocart} implies that we may assume without loss of generality $A\simeq 1$. Consider the pushout square
	\begin{equation*}
	\begin{tikzcd}
		(\I{P}^\natural_\sharp)^\sharp\arrow[r]\arrow[d, hookrightarrow] & ((\I{P}^\natural_\sharp)^\gp)^\sharp\arrow[d]\\
		\I{P}^\natural\arrow[r] & Z
	\end{tikzcd}
	\end{equation*}
	in $\mSimp{\BB}$. Note that the span in the upper left corner of the first square is obtained by applying the functor $(-)\vert_{\Delta}$ to the span in the upper left corner of the second square. We claim that $Z\vert_{\Delta}^\flat\to Z$ is an equivalence. In fact, since the object $Z_+\in\BB$ is computed as the pushout
	\begin{equation*}
	\begin{tikzcd}
	P_+\arrow[d, "\id"]\arrow[r] & (\I{P}^\natural_\sharp)^\gp\arrow[d]\\
	P_+\arrow[r] & Z_+,
	\end{tikzcd}
	\end{equation*}
	the map $(\I{P}^\natural_\sharp)^\gp\to Z_+$ must be an equivalence. But since by the same argument the map $(\I{P}^\natural_\sharp)^\gp\to Z_0$ is an equivalence as well, the claim holds. Now by construction, the map $\I{P}^\natural\to Z$ is contained in the internal saturation of $s^0\colon (\Delta^1)^\sharp\to 1$ and is therefore in particular marked left anodyne. Moreover, if we denote by $\I{Z}\in\Cat(\BB)$ the image of $Z\vert_{\Delta}$ along the localisation functor $L\colon\Simp{\BB}\to\Cat(\BB)$, the associated localisation map $Z\vert_{\Delta}\to\I{Z}$ induces a marked left anodyne map $Z\to \I{Z}^\flat$. In total, we therefore obtain a marked left anodyne map $\I{P}^\natural\to \I{Z}^\flat$, which implies that the image of $p$ along $(\pi_{\I{C}})_!$ is given by $\I{Z}$. As the $\BB$-category $\I{P}_\sharp^{-1}\I{P}$ is precisely computed by applying the functor $L(-)\vert_{\Delta}$ to the pushout square that defines $Z$, we obtain the desired equivalence $\I{P}_\sharp^{-1}\I{P}\simeq \I{Z}$.
\end{remark}

\begin{remark}
	\label{rem:pushforwardCocartPoint}
	If $\I{C}$ is an arbitrary $\BB$-category, Proposition~\ref{prop:CocartBaseChangeRightAdjoint} and the fact that $\pi_{\I{C}^\sharp}\colon \I{C}^\sharp\to 1$ is proper imply that the functor $(\pi_{\I{C}})^\ast\colon \ICat_{\BB}\simeq\ICocart_{1}\to \ICocart_{\I{C}}$ admits a right adjoint $(\pi_{\I{C}})_\ast$. On global sections, this functor is given by restricting the geometric morphism
	\begin{equation*}
	(\pi_{\I{C}^\sharp})_\ast\colon \Over{(\mSimp\BB)}{\I{C}^\sharp}\to\mSimp\BB
	\end{equation*}
	to marked cocartesian fibrations. Recall that this map is equivalently given by the functor $\Over{\iFun{\I{C}^\sharp}{-}}{\I{C}^\sharp}$ that sends a map $p\colon P\to \I{C}^\sharp$ to the pullback
	\begin{equation*}
	\begin{tikzcd}
	\Over{\iFun{\I{C}^\sharp}{P}}{\I{C}^\sharp}\arrow[r]\arrow[d] & {\iFun{\I{C}^\sharp}{P}}\arrow[d, "p_\ast"]\\
	1\arrow[r, "\id_{\I{C}^\sharp}"] & {\iFun{\I{C}^\sharp}{\I{C}^\sharp}}.
	\end{tikzcd}
	\end{equation*}
	We thus conclude that the functor $(\pi_{\I{C}})_\ast$ carries a cocartesian fibration $p\colon\I{P}\to\I{C}$ to the associated $\BB$-category $(\Over{\iFun{\I{C}^\sharp}{\I{P}^\natural}}{\I{C}^\sharp})\vert_{\Delta}$ of \emph{cocartesian sections} of $p$.
\end{remark}

\begin{remark}
	\label{rem:pushforwardCocartRightFibration}
	By combining Proposition~\ref{prop:CocartBaseChangeRightAdjoint} with Proposition~\ref{prop:rightFibrationsProper}, one deduces in particular that the base change functor $p^\ast\colon \ICocart_{\I{C}}\to\ICocart_{\I{P}}$ along any right fibration $p\colon \I{P}\to\I{C}$ in $\Cat(\BB)$ admits a right adjoint $p_\ast$.
\end{remark}

\begin{remark}
	\label{rem:CartLimitsColimits}
	Note that taking opposite $\BB$-categories defines an equivalence $(-)^\op\colon \ICart_{\I{C}}\simeq\ICocart_{\I{C}^\op}$ that is natural in $\I{C}\in\Cat(\BB)$. Therefore, the results that have been established in this section can also be dualised to cartesian fibrations.
\end{remark}

\section{Straightening and unstraightening}
\label{sec:SU}
The main goal of this chapter is to construct an equivalence $\ICocart_{\I{C}}\simeq\iFun{\I{C}}{\ICat_{\BB}}$ that is natural in $\I{C}\in\Cat(\BB)$. For $\infty$-categories, such an equivalence has first been established by Lurie in~\cite{htt}, who referred to the functor from cocartesian fibrations to $\CatS$-valued functors as \emph{straightening} and to its inverse as \emph{unstraightening}. We will make use of the same terminology here, although our constructions will be substantially different from those in Lurie's approach. We construct a straightening functor in \S~\ref{sec:straightening} and its left adjoint in \S~\ref{sec:unstraightening}. In \S~\ref{sec:straighteningEquivalence}, we prove that this adjunction defines an equivalence of $\BB$-categories. As a consequence, one obtains a \emph{universal} cocartesian fibration over $\ICat_{\BB}$ which is studied in \S~\ref{sec:universalCocartesianFibration}. We close this chapter by giving an explicit description of the straightening functor in the special case where the base $\BB$-category is the interval $\Delta^1$ in~\S~\ref{sec:straighteningInterval}.

\subsection{The straightening functor}
\label{sec:straightening}
Recall from~\cite[Proposition~3.2.12]{Martini2021a} that $\ICat_{\BB}$ is a reflective subcategory of $\IPSh_{\Univ}(\Delta)$. Let $\Delta^{\bullet}\colon\Delta\into\IPSh_{\Univ}(\Delta)$ be the Yoneda embedding. For any $n\geq 0$, the presheaf represented by $\ord{n}\colon 1\to \Delta$ is given by $\Delta^n\in\Simp\BB$ and therefore by a $\BB$-category. Since $\Delta$ is a constant $\BB$-category and therefore generated by the collection of global objects $\ord{n}\colon 1\to \Delta$, this shows that the Yoneda embedding defines a functor $\Delta^\bullet\colon\Delta\into\ICat_{\BB}$. Therefore, Corollary~\ref{cor:CocartPoweredTensored} implies that $\ICocart$ is both tensored and powered over $\Delta\vert^{\Simp\BB}$ as well. Let $\ILFib$ be the large $\Simp\BB$-category that corresponds to the sheaf of left fibrations $\LFib$ on $\Simp\BB$. By~\cite[Corollary~3.2.4]{Martini2021a}, the coreflection $(-)_\sharp\colon\Cocart\to\LFib$ from Proposition~\ref{prop:coreflectionLeftFibrationsCocartesianFibrations} determines an internal right adjoint $(-)_\sharp\colon \ICocart\to\ILFib$ to the inclusion $\ILFib\into\ICocart$. We define the \emph{straightening} functor $\St$ as the composition
\begin{equation*}
\St\colon \ICocart\to \iFun{(\Delta\vert^{\Simp\BB})^{\op}}{\ICocart}\to \iFun{(\Delta\vert^{\Simp\BB})^{\op}}{\ILFib}
\end{equation*}
in which the first map is the transpose of the powering bifunctor and the second map is given by postcomposition with $(-)_\sharp$. Given $\I{C}\in\Cat(\BB)$, applying the functor $\iFun{\I{C}}{-}\vert_{\BB}$ to $\St$ gives rise to a functor
\begin{equation*}
\St_{\I{C}}\colon \ICocart_{\I{C}}\to \iFun{\Delta^{\op}}{\ILFib_{\I{C}}}\simeq  \iFun{\I{C}}{\IPSh_{\Univ}(\Delta)}.
\end{equation*}

\begin{remark}
	\label{rem:straighteningTranposePowering}
	The straightening functor $\St_{\I{C}}\colon \ICocart_{\I{C}}\to\iFun{\Delta^{\op}}{\ILFib_{\I{C}}}$ is transpose to the composition
	\begin{equation*}
	\Delta^{\op}\times\ICocart_{\I{C}}\xrightarrow{(-)^{\Delta^{\bullet}}} \ICocart_{\I{C}}\xrightarrow{(-)_\sharp}\ILFib_{\I{C}}
	\end{equation*}
	in which the first map is the restriction of the powering of $\ICocart_{\I{C}}$ over $\ICat_{\BB}$ along the Yoneda embedding $\Delta^{\bullet}$ and where the second map is the coreflection functor.
\end{remark}

\begin{remark}
 	In the case $\BB=\SS$, the above definition of the straightening functor has previously appeared in lecture notes by Hinich~\cite{Hinich2017}.
\end{remark}

\begin{remark}
	\label{rem:StraigeningPoint}
	For the special case $\I{C}= 1$, the straightening functor is given by the inclusion 
	\begin{equation*}
	\ICocart_{1}\simeq\ICat_{\BB}\into\IPSh_{\Univ}(\Delta).
	\end{equation*}
	In fact, since Remark~\ref{rem:CocartCTensoringPoweringExplicitly} implies that in this case the tensoring is simply given by the product bifunctor $-\times-\colon\ICat_{\BB}\times\ICat_{\BB}\to\ICat_{\BB}$, the powering functor is the internal mapping object of $\ICat_{\BB}$, see~\cite[Corollary~4.5.5]{Martini2021a}. Since the coreflection $(-)_\sharp\colon \ICocart_{\I{C}}\to\ILFib_{\I{C}}$ reduces to the core $\BB$-groupoid functor when $\I{C}=1$, Remark~\ref{rem:straighteningTranposePowering} and~\cite[Remark~4.5.6]{Martini2021a} imply that the straightening functor $\St_1\colon \ICat_{\BB}\to \IPSh_{\Univ}(\Delta)$ is transpose to
	\begin{equation*}
	\map{\ICat_{\BB}}(\Delta^{\bullet},-)\colon \Delta^{\op}\times\ICat_{\BB}\to\Univ.
	\end{equation*}
	and is therefore given by the inclusion $\ICat_{\BB}\into\IPSh_{\Univ}(\Delta)$ on account of Yoneda's lemma.
\end{remark}

\begin{proposition}
	\label{prop:StraighteningIsCategory}
	For any $\BB$-category $\I{C}$, the staightening functor $\St_{\I{C}}\colon\ICocart_{\I{C}}\to\iFun{\I{C}}{\IPSh_{\Univ}(\Delta)}$ takes values in the full subcategory $\iFun{\I{C}}{\ICat_{\BB}}$.
\end{proposition}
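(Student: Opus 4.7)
The plan is to verify the claim pointwise on $\I{C}$. Since $\ICat_{\BB}\into\IPSh_{\Univ}(\Delta)$ is a full subcategory, a functor $F\colon\I{C}\to\IPSh_{\Univ}(\Delta)$ factors through $\ICat_{\BB}$ if and only if, for every object $c\colon A\to\I{C}$, its value $c^\ast F\in\IPSh_{\Univ}(\Delta)(A)=\Simp{(\Over{\BB}{A})}$ is a $\Over{\BB}{A}$-category. Hence it suffices to show that $c^\ast\St_{\I{C}}(p)$ is a $\Over{\BB}{A}$-category for every $c\colon A\to\I{C}$.

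By Remark~\ref{rem:straighteningTranposePowering}, the $n$-th level of $c^\ast\St_{\I{C}}(p)$ is the fibre $((\I{P}^{\Delta^n})_\sharp)\vert_c$ of the left fibration $(\I{P}^{\Delta^n})_\sharp\to\I{C}$ over $c$. I will identify this with $(\I{P}\vert_c)_n$ through three basic compatibilities: \emph{(i)} the cotensoring commutes with base change, which is immediate from pulling back the pullback square of Remark~\ref{rem:CocartCTensoringPoweringSectionwise} along $c$ and yields $(\I{P}^{\Delta^n})\vert_c\simeq(\I{P}\vert_c)^{\Delta^n}$ in $\Cat(\Over{\BB}{A})$; \emph{(ii)} the coreflection $(-)_\sharp$ commutes with base change, because by Remark~\ref{rem:coreflectionCocartLFibExplicitly} the counit $\I{Q}_\sharp\into\I{Q}$ realises the subcategory spanned by cocartesian morphisms, while Proposition~\ref{prop:pullbackCocartesianFibration} shows that a morphism in the pullback of a cocartesian fibration is cocartesian precisely when its image in the original fibration is; \emph{(iii)} since $A$ is a $\BB$-groupoid, every morphism in $A$ is an equivalence, so cocartesian morphisms in any cocartesian fibration over $A$ coincide with equivalences---hence the coreflection on $(\I{P}\vert_c)^{\Delta^n}$ reduces to its core $\Over{\BB}{A}$-groupoid $((\I{P}\vert_c)^{\Delta^n})_0$, which equals $(\I{P}\vert_c)_n$ by the canonical equivalence recalled in~\S\ref{sec:recollection}.

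Combining these three identifications naturally in $[n]\in\Delta^{\op}$ yields an equivalence $c^\ast\St_{\I{C}}(p)\simeq\I{P}\vert_c$ in $\Simp{(\Over{\BB}{A})}$. Since $\I{P}\vert_c=\pi_A^\ast\I{P}\times_{\pi_A^\ast\I{C}} 1$ is a pullback of $\Over{\BB}{A}$-categories, it is itself a $\Over{\BB}{A}$-category, so $c^\ast\St_{\I{C}}(p)$ lies in $\ICat_{\Over{\BB}{A}}$, completing the argument.

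The main subtlety is ensuring that the three level-wise identifications assemble into an equivalence of simplicial objects rather than a mere level-wise coincidence. Each of them arises from a universal construction---a pullback, a subcategory defined by a subobject, and the core---which is manifestly functorial in the $\Delta$-variable, so the assembly amounts to bookkeeping and not a genuine obstacle.
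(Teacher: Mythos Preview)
Your argument is correct and follows the same overall strategy as the paper: reduce to checking that the value of $\St_{\I{C}}(p)$ at every object $c\colon A\to\I{C}$ is a $\Over{\BB}{A}$-category, and identify that value with the fibre $\I{P}\vert_c$.

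The paper's proof is more streamlined in execution. Rather than verifying your three compatibilities (i)--(iii) by hand, it invokes the \emph{naturality of the straightening functor in $\I{C}$}, which is immediate from its construction: $\St_{\I{C}}$ is obtained by applying the functor $\iFun{\I{C}}{-}\vert_{\BB}$ to a single map $\St\colon\ICocart\to\iFun{(\Delta\vert^{\Simp\BB})^{\op}}{\ILFib}$ of $\Simp\BB$-categories, so $c^\ast\St_{\I{C}}\simeq\St_{1}\,c^\ast$ holds automatically (after the standard reduction to $A\simeq 1$ via Remark~\ref{rem:BCCocart}). The base case $\St_{1}$ is then identified with the inclusion $\ICat_{\BB}\into\IPSh_{\Univ}(\Delta)$ by Remark~\ref{rem:StraigeningPoint}. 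In effect, your steps (i) and (ii) re-derive this naturality, and your step (iii) re-derives the content of Remark~\ref{rem:StraigeningPoint}. One small omission in your write-up is that you implicitly treat $p$ as a global object; to conclude that $\St_{\I{C}}$ factors through the full subcategory you must also allow $p$ to live in an arbitrary context $A\in\BB$, which the paper handles by replacing $\BB$ with $\Over{\BB}{A}$.
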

\begin{proof}
	Let $p\colon \I{P}\to A\times\I{C}$ be a cocartesian fibration in context $A\in\BB$. We need to show that the functor $\St_{\I{C}}(p)\colon A\times\I{C}\to\IPSh_{\Univ}(\Delta)$ takes values in $\ICat_{\BB}$. Upon replacing $\BB$ with $\Over{\BB}{A}$, we may assume without loss of generality $A\simeq 1$, cf.\ Remark~\ref{rem:BCCocart}. We can argue object-wise in $\I{C}$, i.e.\ it suffices to show that for every object $c\colon A\to \I{C}$ the simplicial object $\St_{\I{C}}(p)(c)\in\Simp{(\Over{\BB}{A})}$ is a $\Over{\BB}{A}$-category. Again, we can assume $A\simeq 1$. In light of the naturality of the straightening functor, this argument implies that we may reduce to the case $\I{C}\simeq 1$. In this case, Remark~\ref{rem:StraigeningPoint} shows that the straightening functor is simply the inclusion $\ICat_{\BB}\into\IPSh_{\Univ}(\Delta)$, hence the claim follows.
\end{proof}

We conclude this section with the observation that for every $\BB$-category $\I{C}$, restricting the straightening functor $\St_{\I{C}}$ along the inclusion $\ILFib_{\I{C}}\into\ICocart_{\I{C}}$ recovers the equivalence $\ILFib_{\I{C}}\simeq\IPSh_{\Univ}(\I{C}^{\op})$ that is induced by (the inverse of) the Grothendieck construction~\cite[Theorem~4.5.1]{Martini2021}. More precisely, one has:
\begin{proposition}
	\label{prop:StraighteningGrothendieckConstruction}
	There is a commutative square
	\begin{equation*}
	\begin{tikzcd}
	\ILFib_{\I{C}}\arrow[d, hookrightarrow]\arrow[r, "\simeq"] & \IPSh_{\Univ}(\I{C}^{\op})\arrow[d, "\iota_\ast", hookrightarrow]\\
	\ICocart_{\I{C}}\arrow[r, "\St_{\I{C}}"] & \iFun{\I{C}}{\IPSh_{\Univ}(\Delta)}
	\end{tikzcd}
	\end{equation*}
	in which $\iota\colon \Univ\into\IPSh_{\Univ}(\Delta)$ denotes the diagonal embedding.
\end{proposition}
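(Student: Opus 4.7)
The strategy is to show that for every left fibration $p\colon \I{P}\to\I{C}$, the simplicial object $\St_{\I{C}}(p)\in\iFun{\Delta^{\op}}{\ILFib_{\I{C}}}$ is naturally equivalent to the constant simplicial object at $p$. Under the Grothendieck equivalence $\ILFib_{\I{C}}\simeq\iFun{\I{C}}{\Univ}=\IPSh_{\Univ}(\I{C}^{\op})$, the functor $F\mapsto\iota_\ast(F)$ corresponds precisely to the constant simplicial diagram functor into $\iFun{\Delta^{\op}}{\ILFib_{\I{C}}}$, so this will yield the commutativity of the square.

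The first step is to show that whenever $p\colon\I{P}\to\I{C}$ is a left fibration, so is each powering $p^{\Delta^n}\colon \I{P}^{\Delta^n}\to\I{C}$ from Remark~\ref{rem:CocartCTensoringPoweringSectionwise}. Since left fibrations are defined by internal right orthogonality to $d^1\colon\Delta^0\into\Delta^1$, and this property is preserved under $\iFun{\Delta^n}{-}$ by the tensor-cotensor adjunction, the functor $\iFun{\Delta^n}{p}$ is again a left fibration; the base change $p^{\Delta^n}=\iFun{\Delta^n}{\I{P}}\times_{\iFun{\Delta^n}{\I{C}}}\I{C}$ is therefore a left fibration over $\I{C}$ as well. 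Consequently the coreflection counit $(p^{\Delta^n})_\sharp\to p^{\Delta^n}$ is an equivalence, since $(-)_\sharp$ acts as the identity on $\ILFib_{\I{C}}\subset\ICocart_{\I{C}}$. Hence $\St_{\I{C}}(p)_n$ is naturally identified with $p^{\Delta^n}$.

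Next I would verify that the natural augmentation $p\to p^{\Delta^n}$, induced by applying the (contravariant) powering to the unique map $\Delta^n\to 1$ in $\ICat_{\BB}$, is an equivalence in $\ILFib_{\I{C}}$. By the fibrewise criterion of Corollary~\ref{cor:fibrewiseCriterionEquivalencesCocartesian} (whose cocartesian-functor hypothesis is automatic for maps between left fibrations, as every morphism is cocartesian), it suffices to check this fibrewise. For any object $c\colon A\to\I{C}$, the base-change compatibility of the powering in Remark~\ref{rem:BCCocart} identifies the fibre of $p^{\Delta^n}$ at $c$ with $\iFun{\Delta^n}{\I{P}_c}$ in $\Cat(\Over{\BB}{A})$. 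Because $\I{P}_c$ is an $\Over{\BB}{A}$-groupoid and $\BB$-groupoids are local with respect to $\Delta^n\to\Delta^0$, the inclusion of constant diagrams $\I{P}_c\to\iFun{\Delta^n}{\I{P}_c}$ is an equivalence. Naturality of the augmentations $\Delta^n\to 1$ in $[n]\in\Delta$ ensures that these level-wise equivalences assemble into an equivalence $\const_{\bullet}(p)\xrightarrow{\sim}\St_{\I{C}}(p)$ of simplicial objects in $\ILFib_{\I{C}}$, and this equivalence is natural in $p$.

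The last step is to identify the resulting constant-diagram functor $\ILFib_{\I{C}}\to\iFun{\Delta^{\op}}{\ILFib_{\I{C}}}\simeq\iFun{\I{C}}{\IPSh_{\Univ}(\Delta)}$ with $\iota_\ast$ precomposed with the Grothendieck equivalence; this is immediate from the fact that for any $F\colon\I{C}\to\Univ$ the simplicial object $\iota\circ F$ is by definition the constant simplicial object on $F$. The main delicate point will be ensuring that the augmentations assemble coherently as a natural transformation between functors $\ILFib_{\I{C}}\to\iFun{\Delta^{\op}}{\ILFib_{\I{C}}}$; I expect this to follow formally from the fact that the augmentation $\Delta^\bullet\to 1$ is itself a natural transformation of functors $\Delta\to\ICat_{\BB}$, so that its image under the powering bifunctor automatically produces the required natural transformation.
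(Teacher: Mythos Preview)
Your argument is correct, but it takes a more computational route than the paper. You unwind the definition $\St_{\I{C}}(p)_n=(p^{\Delta^n})_\sharp$ directly: you show that $p^{\Delta^n}$ is again a left fibration whenever $p$ is (so the $(-)_\sharp$ is redundant), and then use the fibrewise criterion together with the contractibility of $\Delta^n$ to conclude that the augmentation $p\to p^{\Delta^n}$ is an equivalence, exhibiting $\St_{\I{C}}(p)$ as the constant simplicial object on $p$.

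The paper instead argues by naturality. Since $\St_{\I{C}}$ is natural in $\I{C}$ and equivalences of functors into $\IPSh_{\Univ}(\Delta)$ can be detected object-wise, one may pull back along an arbitrary object $c\colon A\to\I{C}$ and reduce to the case $\I{C}\simeq 1$ (and $A\simeq 1$). There Remark~\ref{rem:StraigeningPoint} identifies $\St_1$ with the inclusion $\ICat_{\BB}\into\IPSh_{\Univ}(\Delta)$, which on $\BB$-groupoids is precisely the diagonal embedding $\iota$. This is shorter and avoids the explicit fibrewise analysis, at the cost of being less transparent about \emph{why} $\St_{\I{C}}(p)$ is constant. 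Your approach has the advantage of making this mechanism visible: the powering $(-)^{\Delta^n}$ does nothing to a left fibration because its fibres are groupoids, and groupoids are local with respect to $\Delta^n\to 1$.
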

\begin{proof}
	We need to show that for every left fibration $p\colon \I{P}\to A\times\I{C}$ in context $A\in\BB$ the straightening $\St_{\I{C}}(p)\colon A\times\I{C}\to\IPSh_{\Univ}(\Delta)$ factors through $\iota\colon\Univ\into\IPSh_{\Univ}(\Delta)$. As in the proof of Proposition~\ref{prop:StraighteningIsCategory}, the fact that the straightening functor is natural in $\I{C}$ and the fact that we may work object-wise in $\I{C}$ allows us to reduce to the case where $\I{C}\simeq 1\simeq A$, in which case the result also immediately follows from Remark~\ref{rem:StraigeningPoint}.
\end{proof}

\begin{remark}
	\label{rem:straighteningCart}
	Dually, by making use of the powering of $\ICart$ over $\ICat_{\BB}\vert^{\Simp\BB}$ (Remark~\ref{rem:definitionCartC}), one can construct a straightening functor
	\begin{equation*}
	\St\colon \ICart\to \iFun{(\Delta\vert^{\Simp\BB})^{\op}}{\IRFib}
	\end{equation*}
	which induces functors $\St_{\I{C}}\colon \ICart_{\I{C}}\to\iFun{\Delta^{\op}}{\IRFib_{\I{C}}}$ for every $\BB$-category $\I{C}$. Note that by making use of the equivalence $(-)^\op\colon\IRFib_{\I{C}}\simeq\ILFib_{\I{C}^\op}$, the codomain of $\St_{\I{C}}$ can be identified with $\iFun{\I{C}^{\op}}{\IPSh_{\Univ}(\Delta)}$.
	Now in light of Remark~\ref{rem:tensoringPoweringCocart+Explicitly}, the equivalence $(-)^\op\colon\ICart_{\I{C}}\simeq\ICocart_{\I{C}^\op}$ from Remark~\ref{rem:definitionCartC} fits into a commutative square
	\begin{equation*}
	\begin{tikzcd}
	\Delta\times\ICart_{\I{C}}\arrow[r, "\Delta^\bullet\otimes -"] \arrow[d, "\op\times (-)^\op"] & \ICocart_{\I{C}}\arrow[d, "(-)^\op"]\\
	\Delta\times\ICocart_{\I{C}^\op}\arrow[r, "\Delta^\bullet\otimes -"] & \ICocart_{\I{C}^\op}.
	\end{tikzcd}
	\end{equation*}
	By making use of the adjunction between tensoring and powering, this square in turn induces a commutative diagram
	\begin{equation*}
	\begin{tikzcd}
	\ICart_{\I{C}}\arrow[d, "(-)^\op"]\arrow[r, "(-)^{\Delta^\bullet}"] & \iFun{\Delta^{\op}}{\ICart_{\I{C}}}\arrow[d, "(-)^\op_\ast"]\\
	\ICocart_{\I{C}^\op}\arrow[r, "(-)^{\Delta^{\bullet,\op}}"] & \iFun{\Delta^{\op}}{\ICocart_{\I{C}^\op}}.
	\end{tikzcd}
	\end{equation*}
	Upon combining this observation with Remark~\ref{rem:straighteningTranposePowering} (and its dual) and the evident fact that the coreflection of (co)cartesian fibrations into right (left) fibrations commutes with taking opposite $\BB$-categories, we conclude that there is a commutative square
	\begin{equation*}
	\begin{tikzcd}
	\ICart_{\I{C}}\arrow[r, "\St_{\I{C}}"]\arrow[d, "(-)^\op"] & \iFun{\I{C}^\op}{\ICat_{\BB}}\arrow[d, "(-)^\op_\ast"]\\
	\ICocart_{\I{C}^\op}\arrow[r, "\St_{\I{C}^\op}"] & \iFun{\I{C}^{\op}}{\ICat_{\BB}}.
	\end{tikzcd}
	\end{equation*}
\end{remark}

\subsection{The unstraightening functor}
\label{sec:unstraightening}
In this section we construct a left adjoint to the straightening functor $\St_{\I{C}}$ for each $\BB$-category $\I{C}$. Given $\I{C}\in\Cat(\BB)$, restricting the tensoring $-\otimes -\colon\ICat_{\BB}\times\ICocart_{\I{C}}\to \ICocart_{\I{C}}$ along the inclusion
\begin{equation*}
\Delta^{\bullet}\times h_{\I{C}^{\op}}\colon\Delta\times\I{C}^{\op}\into \ICat_{\BB}\times\ILFib_{\I{C}}\into\ICat_{\BB}\times\ICocart_{\I{C}}
\end{equation*}
that is induced by the Yoneda embedding on either factor gives rise to a functor
\begin{equation*}
\Delta^{\bullet}\otimes h_{\I{C}^{\op}}(-)\colon \Delta\times\I{C}^{\op}\to\ICocart_{\I{C}}.
\end{equation*}
In light of Proposition~\ref{prop:CocartCocomplete} and the universal property of presheaf $\BB$-categories~\cite[Theorem~7.1.1]{Martini2021a}, we may now define:
\begin{definition}
	\label{def:unstraightening}
	For every $\BB$-category $\I{C}$, the \emph{unstraightening} functor $\Un_{\I{C}}\colon \iFun{\I{C}}{\IPSh_{\Univ}(\Delta)}\to\ICocart_{\I{C}}$ is defined as the left Kan extension of the tensoring $\Delta^{\bullet}\otimes h_{\I{C}^{\op}}(-)\colon \Delta\times\I{C}^{\op}\to\ICocart_{\I{C}}$ along the Yoneda embedding
	\begin{equation*}
	h_{\Delta\times\I{C}^{\op}}\colon\Delta\times\I{C}^{\op}\into\IPSh_{\Univ}(\Delta\times\I{C}^{\op})\simeq\iFun{\I{C}}{\IPSh_{\Univ}(\Delta)}.
	\end{equation*} 
\end{definition}

Our next goal is to show that the unstraightening functor $\Un_{\I{C}}$ is left adjoint to the straightening functor $\St_{\I{C}}$. To that end, recall that a large $\BB$-category $\I{C}$ is locally small if the left fibration $\Tw(\I{C})\to\I{C}^{\op}\times\I{C}$ is small, cf.~\cite[\S~4.7]{Martini2021}.
\begin{lemma}
	\label{lem:BaseChangeLocallySmall}
	Let $f_\ast\colon \BB\to\AA$ be a geometric morphism of $\infty$-topoi, and let $\I{C}$ be a locally small $\BB$-category. Then $f_\ast\I{C}$ is locally small.
\end{lemma}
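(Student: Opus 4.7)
The plan is to reduce the statement to the lemma stated immediately above, namely Lemma~\ref{lem:geometricMorphismPowering}, which provides an equivalence $\Tw(f_\ast\I{C})\simeq f_\ast\Tw(\I{C})$ of left fibrations over $f_\ast\I{C}^{\op}\times f_\ast\I{C}\simeq f_\ast(\I{C}^{\op}\times\I{C})$. By the characterisation of local smallness in terms of the twisted arrow $\AA$-category, it suffices to show that this left fibration is \emph{small}, i.e.\ that the map $f_\ast\Tw(\I{C})\to f_\ast(\I{C}^{\op}\times\I{C})$ classifies a functor that factors through $\Univ[\AA]\subset\Univ[\AAA]$. More concretely, by the criterion recalled in \S~\ref{sec:recollection}, it is enough to verify that for every $A'\in\AA$ and every map $c\colon A'\to f_\ast(\I{C}^{\op}\times\I{C})$, the pullback $f_\ast\Tw(\I{C})\times_{f_\ast(\I{C}^{\op}\times\I{C})} A'$ lies in $\AA$.

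The key step is then to see that $f_\ast$ preserves smallness of left fibrations, which one can unwind via the adjunction $f^\ast\dashv f_\ast$. Namely, I would transpose $c$ to a map $\tilde{c}\colon f^\ast A'\to\I{C}^{\op}\times\I{C}$ in $\BB$, and use local smallness of $\I{C}$ (applied to the object $f^\ast A'\in\BB$) to conclude that the pullback $Q=\Tw(\I{C})\times_{\I{C}^\op\times\I{C}} f^\ast A'$ is contained in $\BB$. Since $f_\ast$ is a right adjoint it preserves limits, so $f_\ast Q\simeq f_\ast\Tw(\I{C})\times_{f_\ast(\I{C}^\op\times\I{C})} f_\ast f^\ast A'$, and this object is contained in $\AA$ because $f_\ast$ restricts to a functor $\BB\to\AA$ after universe enlargement. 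A further pullback along the unit $\eta\colon A'\to f_\ast f^\ast A'$ then identifies the fibre over $c$ with $f_\ast Q\times_{f_\ast f^\ast A'} A'$, which is a finite limit of objects of $\AA$ and therefore lies in $\AA$ since $\AA\into\AAA$ preserves small limits.

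The main obstacle is really just bookkeeping: keeping straight the ambient $\infty$-topos in which each pullback is taken, checking that the equivalence $f_\ast(\I{C}^\op\times\I{C})\simeq f_\ast\I{C}^\op\times f_\ast\I{C}$ is compatible with the transposition of $c$ across the adjunction, and confirming that the two-step pullback decomposition above is valid. None of these points are deep once Lemma~\ref{lem:geometricMorphismPowering} has been invoked; the entire argument amounts to chasing the small-object condition through the adjunction $f^\ast\dashv f_\ast$.
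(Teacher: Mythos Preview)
Your proposal is correct and follows essentially the same route as the paper: reduce via Lemma~\ref{lem:geometricMorphismPowering} to showing that $f_\ast$ preserves small left fibrations, then verify the smallness criterion by transposing a test object $A'\to f_\ast(\I{C}^{\op}\times\I{C})$ across $f^\ast\dashv f_\ast$, using local smallness of $\I{C}$ over $f^\ast A'$, and pulling the resulting small object back along the unit $A'\to f_\ast f^\ast A'$. The paper states the intermediate claim slightly more generally (for an arbitrary small left fibration rather than the twisted arrow fibration specifically), but the argument is identical.
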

\begin{proof}
	In light of Lemma~\ref{lem:geometricMorphismPowering}, it suffices to show that $f_\ast$ preserves small left fibrations. In other words, we need to show that if $p\colon \I{P}\to \I{C}$ is a small left fibration in $\Cat(\BBB)$, the left fibration $f_\ast p$ in $\Cat(\AAA)$ is small too. Consider a cartesian square
	\begin{equation*}
	\begin{tikzcd}
	(f_\ast\I{P})\vert_c\arrow[r]\arrow[d] & f_\ast \I{P}\arrow[d, "f_\ast p"]\\
	A\arrow[r, "c"] & f_\ast \I{C}
	\end{tikzcd}
	\end{equation*}
	where $A\in\AA$ is an arbitrary object. By~\cite[Proposition~4.7.2]{Martini2021}, it suffices to show that $(f_\ast\I{P})\vert_c$ is a small $\AA$-groupoid. The transpose of $c\colon A\to f_\ast\I{C}$ defines an object $c^\prime\colon f^\ast A\to\I{C}$ of $\I{C}$ in context $f^\ast A\in\BB$. By assumption, the $\BB$-groupoid $\I{P}\vert_{c^\prime}$ is small. Hence $f_\ast(\I{P}\vert_{c^\prime})$ is a small $\AA$-groupoid. The claim now follows from the observation that $(f_\ast\I{P})\vert_c$ arises as the pullback of $f_\ast(\I{P}\vert_{c^\prime})$ along the adjunction unit $A\to f_\ast f^\ast A$.
\end{proof}

\begin{proposition}
	\label{prop:CocartCLocallySmall}
	For every $\I{C}\in\Cat(\BB)$, the large $\BB$-category $\ICocart_{\I{C}}$ is locally small.
\end{proposition}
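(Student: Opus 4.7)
The plan is to exploit the sheaf-theoretic description of $\ICocart_{\I{C}}$ from Remark~\ref{rem:cocartCUnderlyingSheaf} in order to identify the mapping $\BB$-groupoids of $\ICocart_{\I{C}}$ with sheaves on $\BB$ whose sections are spaces of cocartesian functors in $\Cat(\BB)$, and then to deduce that these sections are small by comparing them with mapping spaces in the presheaf $\infty$-topos $\Simp\BB$.

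First I would use Remark~\ref{rem:BCCocart} and the equivalence $\pi_A^\ast\ICocart_{\I{C}}\simeq\ICocart_{\pi_A^\ast\I{C}}$ to reduce local smallness to the case of a pair of globally defined objects; that is, it is enough to prove that for any two cocartesian fibrations $\I{P},\I{Q}\to\I{C}$ in $\Cat(\BB)$ the mapping $\BBB$-groupoid $\map{\ICocart_{\I{C}}}(\I{P},\I{Q})$ lies in $\BB\subset\BBB$, since the general case follows upon replacing $\BB$ by $\Over{\BB}{A}$.

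By Remark~\ref{rem:cocartCUnderlyingSheaf}, the underlying sheaf of $\ICocart_{\I{C}}$ on $\BB$ is $B\mapsto\Cocart(B\times\I{C})$, so the $\SSS$-valued sheaf on $\BB$ representing $\map{\ICocart_{\I{C}}}(\I{P},\I{Q})$ sends $B$ to the $\infty$-groupoid $\map{\Cocart(B\times\I{C})}(B\times\I{P},B\times\I{Q})$ of cocartesian functors between the pulled-back fibrations. Since $\BB$ is identified with the $\SS$-valued sheaves on itself, it then suffices to show that each such section is a small $\infty$-groupoid. To verify this, observe that $\Cocart(B\times\I{C})$ is by definition a subcategory of $\Over{\Cat(\BB)}{B\times\I{C}}$, so the space of cocartesian functors is a subobject of the mapping space $\map{\Over{\Cat(\BB)}{B\times\I{C}}}(B\times\I{P},B\times\I{Q})$, which in turn is the fiber of $\map{\Cat(\BB)}(B\times\I{P},B\times\I{Q})\to\map{\Cat(\BB)}(B\times\I{P},B\times\I{C})$ given by postcomposition with the structure map. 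As $\Cat(\BB)$ is a full subcategory of the presheaf $\infty$-topos $\Simp\BB$, both mapping spaces coincide with the corresponding mapping spaces in $\Simp\BB$, and the latter are computed as small ends of mapping spaces in the locally small $\infty$-topos $\BB$; they are therefore themselves small, and so is the fiber, and in particular its subobject of cocartesian functors.

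The main obstacle is administrative rather than conceptual: one has to correctly unwind the mapping $\BB$-groupoid of $\ICocart_{\I{C}}$ through its underlying $\CatSS$-valued sheaf and justify the reduction to globally defined objects via base change. No ingredients beyond the framework already developed are required.
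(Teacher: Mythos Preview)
Your proof is correct and takes a genuinely different route from the paper's.

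The paper argues structurally along the chain of constructions defining $\ICocart_{\I{C}}$: the $\mSimp\BB$-category $\ICocart^+$ is a full subcategory of the universe $\Univ[\mSimp\BB]$ and hence locally small; Lemma~\ref{lem:BaseChangeLocallySmall} (geometric morphisms preserve local smallness) then shows that $\ICocart=(\ICocart^+)_\sharp$ is locally small; functor categories preserve local smallness, so $\iFun{\I{C}}{\ICocart}$ is locally small; and one final application of Lemma~\ref{lem:BaseChangeLocallySmall} yields that $\ICocart_{\I{C}}=\iFun{\I{C}}{\ICocart}\vert_{\BB}$ is locally small. Your approach instead unwinds the mapping $\BB$-groupoid directly via the sheaf description $B\mapsto\Cocart(B\times\I{C})$ and bounds each section by a mapping space in the locally small presheaf category $\Simp\BB$.

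The paper's argument is more modular---it isolates a reusable lemma about geometric morphisms and local smallness---while yours is more self-contained and closer to a hands-on verification. One point to sharpen: your reduction ``$\BB$ is identified with the $\SS$-valued sheaves on itself'' is the representability criterion for presentable $\infty$-categories (a limit-preserving functor $\BB^{\op}\to\SS$ is representable), which is standard but worth citing; without it you only know the mapping sheaf takes small values, not that it lies in $\BB\subset\BBB$.
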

\begin{proof}
	Being a full subcategory of $\Univ[\mSimp\BB]$, the large $\mSimp\BB$-category $\ICocart^+$ is locally small. Using Lemma~\ref{lem:BaseChangeLocallySmall}, the large $\Simp\BB$-category $\ICocart$ is locally small as well, hence so is $\iFun{\I{C}}{\ICocart}$, see~\cite[Proposition~4.7.6]{Martini2021}. Applying Lemma~\ref{lem:BaseChangeLocallySmall} once more, we conclude that $\ICocart_{\I{C}}=\iFun{\I{C}}{\ICocart}\vert_{\BB}$ is locally small, as desired.
\end{proof}

As a result of Proposition~\ref{prop:CocartCLocallySmall}, we deduce from~\cite[Remark~7.1.4]{Martini2021a} that the unstraightening functor $\Un_{\I{C}}$ admits a right adjoint $r$. The computation
\begin{align*}
r &\simeq\map{\IPSh_{\Univ}(\Delta\times\I{C}^{\op})}(h_{\Delta\times\I{C}^{\op}}(-,-), r(-))\\
&\simeq \map{\ICocart_{\I{C}}}(\Delta^{\bullet}\otimes h_{\I{C}^{\op}}(-), -)\\
&\simeq\map{\ICocart_{\I{C}}}(h_{\I{C}^{\op}}(-), (-)^{\Delta^\bullet})\\
&\simeq\map{\ILFib_{\I{C}}}(h_{\I{C}^{\op}}(-), (-)^{\Delta^{\bullet}}_\sharp)\\
&\simeq (-)^{\Delta^\bullet}_\sharp
\end{align*}
and Remark~\ref{rem:straighteningTranposePowering} now show:
\begin{proposition}
	\label{prop:adjunctionStUn}
	The unstraightening functor $\Un_{\I{C}}$ is left adjoint to the straightening functor $\St_{\I{C}}$.\qed
\end{proposition}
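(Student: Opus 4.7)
The plan is to directly verify the identification sketched in the display preceding Proposition~\ref{prop:adjunctionStUn}, treating it as the skeleton of the proof and filling in the justifications for each equivalence. First, since $\ICocart_{\I{C}}$ is locally small by Proposition~\ref{prop:CocartCLocallySmall}, the universal property of presheaf $\BB$-categories (cited as Remark~7.1.4 of~\cite{Martini2021a}) guarantees that the functor $\Un_{\I{C}}$, being a left Kan extension along a Yoneda embedding into a locally small target, automatically admits a right adjoint $r$. The task is then to construct a natural equivalence $r \simeq \St_{\I{C}}$.

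For this I would compute $r(p)$ for an arbitrary $p \in \ICocart_{\I{C}}$ by applying Yoneda's lemma in $\IPSh_{\Univ}(\Delta\times\I{C}^{\op})$ to recover $r(p)$ as a functor $\Delta\times\I{C}^{\op}\to\Univ$ via the formula $(n,c)\mapsto \map{\IPSh_{\Univ}(\Delta\times\I{C}^{\op})}(h_{(n,c)}, r(p))$. Transposing across the adjunction $\Un_{\I{C}}\dashv r$ and then using the defining equivalence $\Un_{\I{C}}\circ h_{\Delta\times\I{C}^{\op}}\simeq \Delta^{\bullet}\otimes h_{\I{C}^{\op}}(-)$ from Definition~\ref{def:unstraightening} converts this into $\map{\ICocart_{\I{C}}}(\Delta^{\bullet}\otimes h_{\I{C}^{\op}}(-),p)$. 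Applying the tensor/power adjunction from Corollary~\ref{cor:CocartCTensoredPowered} turns this into $\map{\ICocart_{\I{C}}}(h_{\I{C}^{\op}}(-),p^{\Delta^{\bullet}})$, and finally the coreflection adjunction $\ILFib_{\I{C}}\leftrightarrows\ICocart_{\I{C}}$ from Proposition~\ref{prop:coreflectionLeftFibrationsCocartesianFibrations} (combined with the fact that $h_{\I{C}^{\op}}(-)$ takes values in $\ILFib_{\I{C}}$) rewrites this as $\map{\ILFib_{\I{C}}}(h_{\I{C}^{\op}}(-),p^{\Delta^{\bullet}}_\sharp)$. One last application of Yoneda's lemma, now in $\ILFib_{\I{C}}\simeq \IPSh_{\Univ}(\I{C}^{\op})$ via the Grothendieck construction, reduces this to the functor $(n,c)\mapsto p^{\Delta^n}_\sharp(c)$.

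The proof is then completed by appealing to Remark~\ref{rem:straighteningTranposePowering}, which identifies the transpose of $\St_{\I{C}}$ with precisely the composition $(n,p)\mapsto p^{\Delta^n}_\sharp$, whence $r\simeq \St_{\I{C}}$ as functors $\ICocart_{\I{C}}\to\iFun{\I{C}}{\IPSh_{\Univ}(\Delta)}$. Every step in this chain is either a formal adjunction or an instance of Yoneda, so there is no substantial obstacle; the only point requiring mild care is to keep the naturality of the equivalences in both the variable $(n,c)\in\Delta\times\I{C}^{\op}$ and the variable $p\in\ICocart_{\I{C}}$, which is ensured because each step is produced by an adjoint transpose of functors internal to $\Cat(\BB)$ rather than merely a pointwise equivalence of objects.
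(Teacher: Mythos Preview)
Your proposal is correct and follows the paper's own proof essentially verbatim: the paper establishes the existence of the right adjoint $r$ via Proposition~\ref{prop:CocartCLocallySmall} and~\cite[Remark~7.1.4]{Martini2021a}, then displays the same five-line chain of equivalences you spell out (Yoneda, the adjunction $\Un_{\I{C}}\dashv r$, the tensor/power adjunction, the coreflection into $\ILFib_{\I{C}}$, and Yoneda again) and invokes Remark~\ref{rem:straighteningTranposePowering} to conclude. There is nothing to add.
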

As a direct consequence of Proposition~\ref{prop:StraighteningIsCategory} and Proposition~\ref{prop:adjunctionStUn}, one obtains:
\begin{corollary}
	\label{cor:straighteningUnstraigheningAdjunction}
	The straightening and unstraightening functors restrict to an adjunction
	\begin{equation*}
	(\Un_{\I{C}}\dashv \St_{\I{C}})\colon \ICocart_{\I{C}}\leftrightarrows\iFun{\I{C}}{\ICat_{\BB}}
	\end{equation*}
	for every $\BB$-category $\I{C}$.\qed
\end{corollary}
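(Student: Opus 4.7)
The plan is to deduce this from the two immediately preceding results by invoking a standard formal principle: if $F\dashv G$ is an adjunction of $\BB$-categories and $G$ factors through a full subcategory, then restricting $F$ to that subcategory produces a left adjoint to the restricted $G$.

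First, I would note that since $\ICat_{\BB}\into\IPSh_{\Univ}(\Delta)$ is fully faithful (as recalled in the preliminaries), postcomposition yields a fully faithful functor $\iFun{\I{C}}{\ICat_{\BB}}\into\iFun{\I{C}}{\IPSh_{\Univ}(\Delta)}$; this uses that the internal functor category construction preserves fully faithful morphisms. By Proposition~\ref{prop:StraighteningIsCategory}, the functor $\St_{\I{C}}$ factors through this full subcategory, so we obtain a (unique) dashed filler
\begin{equation*}
\St_{\I{C}}\colon\ICocart_{\I{C}}\to\iFun{\I{C}}{\ICat_{\BB}}.
\end{equation*}

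Next, I would define $\Un_{\I{C}}\colon\iFun{\I{C}}{\ICat_{\BB}}\to\ICocart_{\I{C}}$ as the restriction of the original unstraightening functor from Definition~\ref{def:unstraightening} along the inclusion $\iFun{\I{C}}{\ICat_{\BB}}\into\iFun{\I{C}}{\IPSh_{\Univ}(\Delta)}$. By Proposition~\ref{prop:adjunctionStUn} together with the Yoneda lemma, for every cocartesian fibration $\I{P}\to A\times\I{C}$ (viewed as an object of $\ICocart_{\I{C}}$ in context $A$) and every $F\colon A\times\I{C}\to\ICat_{\BB}$, we have
\begin{equation*}
\map{\ICocart_{\I{C}}}(\Un_{\I{C}}(F),\I{P})\simeq\map{\iFun{\I{C}}{\IPSh_{\Univ}(\Delta)}}(F,\St_{\I{C}}(\I{P})),
\end{equation*}
and because the right-hand mapping $\BB$-groupoid is computed inside a full subcategory containing both $F$ and $\St_{\I{C}}(\I{P})$, this identifies with $\map{\iFun{\I{C}}{\ICat_{\BB}}}(F,\St_{\I{C}}(\I{P}))$. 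By the characterisation of adjunctions via mapping $\BB$-groupoids recalled in \S~\ref{sec:recollection}, this gives the desired adjunction.

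There is no real obstacle here beyond the bookkeeping of making sure that the restriction of the unstraightening functor along the fully faithful inclusion genuinely serves as the left adjoint in the subcategory; this is purely formal once Propositions~\ref{prop:StraighteningIsCategory} and~\ref{prop:adjunctionStUn} are in place, and it does not require any further analysis of cocartesian fibrations or of the functors themselves.
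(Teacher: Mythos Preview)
Your proposal is correct and matches the paper's approach exactly: the paper simply records the corollary as a direct consequence of Proposition~\ref{prop:StraighteningIsCategory} and Proposition~\ref{prop:adjunctionStUn} (with a \qed and no further argument), and you have spelled out the standard formal reasoning behind that deduction.
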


In general, we have no explicit way to compute the unstraightening $\Un_{\I{C}}(f)$ of a functor $f\colon\I{C}\to\IPSh_{\Univ}(\Delta)$ unless $f$ is contained in the image of the Yoneda embedding $h_{\Delta\times\I{C}^{\op}}$, in which case the unstraightening is simply given by the tensoring in $\ICocart_{\I{C}}$. We conclude this section by explaining how this description extends to a slightly larger class of functors.

\begin{lemma}
	\label{lem:YonedaEmbeddingProduct}
	Let $\I{C}$ and $\I{D}$ be $\BB$-categories. Then there is a commutative square
	\begin{equation*}
	\begin{tikzcd}
	\I{C}\times\I{D}\arrow[r, "h_{\I{C}\times\I{D}}"]\arrow[d, "h_{\I{C}}\times h_{\I{D}}"] & \IPSh_{\Univ}(\I{C}\times\I{D})\\
	\IPSh_{\Univ}(\I{C})\times\IPSh_{\Univ}(\I{D})\arrow[r, "\pr_0^\ast\times\pr_1^\ast"] & \IPSh_{\Univ}(\I{C}\times\I{D})\times\IPSh_{\Univ}(\I{C}\times\I{D})\arrow[u, "-\times-"']
	\end{tikzcd}
	\end{equation*}
	in which $\pr_0\colon \I{C}\times\I{D}\to\I{C}$ and $\pr_1\colon\I{C}\times\I{D}\to\I{D}$ are the two projections.
\end{lemma}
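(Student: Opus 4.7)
By Yoneda's lemma, the Yoneda embedding $h_{\I{E}}\colon \I{E}\to\IPSh_{\Univ}(\I{E})$ is characterised as the transpose of the mapping bifunctor $\map{\I{E}}$. Hence, unwinding the definitions, the claimed commutativity of the square is equivalent to the assertion that for every $(c,d)\colon A\to\I{C}\times\I{D}$ one has a natural equivalence of presheaves
\[
\map{\I{C}\times\I{D}}(-,(c,d))\simeq\pr_0^\ast\map{\I{C}}(-,c)\times\pr_1^\ast\map{\I{D}}(-,d)
\]
in $\IPSh_{\Univ}(\I{C}\times\I{D})$. Transposing once more, the statement reduces to producing a natural equivalence of bifunctors
\[
\map{\I{C}\times\I{D}}\simeq\pr_0^\ast\map{\I{C}}\times\pr_1^\ast\map{\I{D}}
\]
on $(\I{C}\times\I{D})^{\op}\times(\I{C}\times\I{D})\simeq(\I{C}^{\op}\times\I{C})\times(\I{D}^{\op}\times\I{D})$.

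First I would pass through the Grothendieck construction and argue on the level of the associated left fibrations. The left-hand bifunctor is classified by the twisted arrow left fibration $\Tw(\I{C}\times\I{D})\to(\I{C}\times\I{D})^{\op}\times(\I{C}\times\I{D})$. Since the powering functor $(-)^{X}\colon\Cat(\BB)\to\Cat(\BB)$ preserves finite products for every $X\in\Simp\BB$ and since $\Tw(\I{E})_n=(\I{E}^{(\Delta^n)^{\op}\diamond\Delta^n})_0$ is built functorially out of this powering, there is a canonical equivalence $\Tw(\I{C}\times\I{D})\simeq\Tw(\I{C})\times\Tw(\I{D})$ compatible with the projections, exhibiting $\Tw(\I{C}\times\I{D})$ as the external product of $\Tw(\I{C})$ and $\Tw(\I{D})$. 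Tracing this equivalence through the Grothendieck correspondence—and using that a product of left fibrations classifies the product of the pulled-back classifying functors—produces the desired identification of bifunctors.

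Alternatively (and perhaps more transparently), I would argue sectionwise. Viewing $\BB$-categories as $\CatS$-valued sheaves, the functor $(\I{C}\times\I{D})(-)\simeq\I{C}(-)\times\I{D}(-)$ is the pointwise product. Hence for any $B\in\BB$ and objects $c,c'\in\I{C}(B)$, $d,d'\in\I{D}(B)$ one has $\map{(\I{C}\times\I{D})(B)}((c,d),(c',d'))\simeq\map{\I{C}(B)}(c,c')\times\map{\I{D}(B)}(d,d')$, which is the required decomposition at the level of mapping $\BB$-groupoids.

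The main obstacle is essentially notational: no deep difficulty arises, as the statement is a formal consequence of Yoneda's lemma together with the preservation of products by powering. Care is required only to keep track of adjunctions and of the direction of the arrows in order to recover precisely the commutative square as stated rather than some variant.
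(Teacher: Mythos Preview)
Your proposal is correct and follows essentially the same route as the paper: reduce to an equivalence of mapping bifunctors, use the decomposition $\Tw(\I{C}\times\I{D})\simeq\Tw(\I{C})\times\Tw(\I{D})$, and pass through the Grothendieck construction. The paper makes the step ``a product of left fibrations classifies the product of the pulled-back classifying functors'' precise by reducing to the universal case $\Under{\Univ}{1_{\Univ}}\times\Under{\Univ}{1_{\Univ}}\to\Univ\times\Univ$ and invoking a cited result to identify its classifier as the product functor $-\times-\colon\Univ\times\Univ\to\Univ$; your sectionwise alternative, while pointwise correct, would require more care to upgrade to a natural equivalence of functors and is not the path the paper takes.
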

\begin{proof}
	 On account of the evident equivalence $\Tw(\I{C}\times\I{D})\simeq\Tw(\I{C})\times\Tw(\I{D})$ over $(\I{C}\times\I{D})^\op\times(\I{C}\times\I{D})$, there is a cartesian square
	 \begin{equation*}
	 \begin{tikzcd}[column sep=large]
	 \Tw(\I{C}\times\I{D})\arrow[d]\arrow[r] & \Under{\Univ}{1}\times\Under{\Univ}{1}\arrow[d]\\
	 (\I{C}^{\op}\times\I{C})\times(\I{D}^\op\times\I{D})\arrow[r, "\map{\I{C}}\times\map{\I{D}}"] & \Univ\times\Univ.
	 \end{tikzcd}
	 \end{equation*}
	 In light of the equivalence $\Under{\Univ}{1_{\Univ}}\times\Under{\Univ}{1_{\Univ}}\simeq\Under{(\Univ\times\Univ)}{(1_{\Univ},1_{\Univ})}$, the left fibration $\Under{\Univ}{1_{\Univ}}\times\Under{\Univ}{1_{\Univ}}\to\Univ\times\Univ$ is classified by the copresheaf $\Univ\times\Univ\to\Univ$ that is represented by $(1_{\Univ},1_{\Univ})\colon 1\to\Univ\times\Univ$. By~\cite[Corollary~4.4.7]{Martini2021a}, this copresheaf is the product functor $-\times -\colon \Univ\times\Univ\to\Univ$. We therefore obtain a commutative square
	 \begin{equation*}
	 \begin{tikzcd}[column sep=huge]
	 (\I{C}\times\I{D})^{\op}\times(\I{C}\times\I{D})\arrow[d, "\simeq"] \arrow[r, "\map{\I{C}\times\I{D}}"] & \Univ\\
	 (\I{C}^{\op}\times\I{C})\times(\I{D}^\op\times\I{D})\arrow[r, "\map{\I{C}}\times\map{\I{D}}"] & \Univ\times\Univ\arrow[u, "-\times-"'].
	 \end{tikzcd}
	 \end{equation*}
	 By transposing along the adjunction $(\I{C}\times\I{D})^{\op}\times -\dashv \iFun{(\I{C}\times\I{D})^{\op}}{-}$, this square translates into a commutative square
	\begin{equation*}
	\begin{tikzcd}
	\I{C}\times\I{D}\arrow[r, "h_{\I{C}\times\I{D}}"]\arrow[d, "h_{\I{C}}\times h_{\I{D}}"] & \IPSh_{\Univ}(\I{C}\times\I{D})\\
	\IPSh_{\Univ}(\I{C})\times\IPSh_{\Univ}(\I{D})\arrow[r] & \iFun{(\I{C}\times\I{D})^{\op}}{\Univ\times\Univ}\arrow[u, "(-\times-)_\ast"']
	\end{tikzcd}
	\end{equation*}
	in which the lower horizontal map is the transpose of
	\begin{equation*}
	\ev_{\I{C}}\times\ev_{\I{D}}\colon \IPSh_{\Univ}(\I{C})\times\I{C}^{\op}\times\IPSh_{\Univ}(\I{D})\times\I{D}^{\op}\to\Univ\times\Univ.
	\end{equation*}
	It now suffices to observe that with respect to the equivalence $\iFun{(\I{C}\times\I{D})^{\op}}{\Univ\times\Univ}\simeq\IPSh_{\Univ}(\I{C}\times\I{D})\times\IPSh_{\Univ}(\I{C}\times\I{D})$, the lower horizontal map corresponds to $\pr_0^\ast\times\pr_1^\ast$ and the right vertical map corresponds to the product functor on $\IPSh_{\Univ}(\I{C}\times\I{D})$.
\end{proof}
\begin{lemma}
	\label{lem:productFunctorCocontinuous}
	Let $\I{C}$ be a $\BB$-category and let $f\colon\I{C}^{\op}\to\Univ$ be a presheaf on $\I{C}$. Then the product functor $f\times-\colon\IPSh_{\Univ}(\I{C})\to\IPSh_{\Univ}(\I{C})$ has a right adjoint.
\end{lemma}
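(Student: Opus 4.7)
The plan is to establish cocontinuity of $f \times -$ and then invoke an adjoint functor theorem. Since $\IPSh_{\Univ}(\I{C}) \simeq \iFun{\I{C}^{\op}}{\Univ}$, both products and colimits of presheaves are computed pointwise. Thus for any object $c \colon A \to \I{C}$ there is a natural equivalence $\ev_c(f \times g) \simeq f(c) \times \ev_c(g)$ in $\pi_A^\ast \Univ$, and the cocontinuity of $f \times -$ reduces, after pullback to slices $\Over{\BB}{A}$, to the cocontinuity of $a \times - \colon \pi_A^\ast \Univ \to \pi_A^\ast \Univ$ for an arbitrary object $a \colon A \to \Univ$.

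The cocontinuity of $a \times -$ on the universe in turn follows from the identification of $\pi_A^\ast \Univ$ with the universe $\Univ[\Over{\BB}{A}]$ of the $\infty$-topos $\Over{\BB}{A}$, together with the fact that in any $\infty$-topos products distribute over small colimits in each variable. At this point $f \times -$ is a cocontinuous endofunctor of the cocomplete and (by Proposition~\ref{prop:CocartCLocallySmall}-style reasoning applied to presheaf categories) locally small $\BB$-category $\IPSh_{\Univ}(\I{C})$.

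To produce the right adjoint, I would invoke an internal adjoint functor theorem for presentable $\BB$-categories: any cocontinuous functor out of a presheaf $\BB$-category admits a right adjoint. Concretely, by the adjunction criterion via mapping bifunctors recalled in the preliminaries, it suffices to show that for every $g \in \IPSh_{\Univ}(\I{C})$ the presheaf $\map{\IPSh_{\Univ}(\I{C})}(f \times -, g)$ is representable. Using Yoneda, a natural candidate is the assignment
\begin{equation*}
    c \mapsto \map{\IPSh_{\Univ}(\I{C})}(f \times h_{\I{C}^{\op}}(c), g),
\end{equation*}
which one checks to be a small presheaf using local smallness of $\IPSh_{\Univ}(\I{C})$, and to represent the desired functor by combining Yoneda's lemma with the cocontinuity established above.

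The main obstacle is the invocation of the correct form of the adjoint functor theorem internal to $\BB$; the cocontinuity verification itself is essentially formal once one accepts that products and colimits of presheaves are computed pointwise in $\Univ$.
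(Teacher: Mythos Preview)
Your approach is correct but takes a genuinely different route from the paper. The paper argues via the Grothendieck construction: translating along the equivalence $\IPSh_{\Univ}(\I{C})\simeq\IRFib_{\I{C}}$, the functor $f\times-$ becomes $\I{P}\times_{\I{C}}-$ for $p\colon\I{P}\to\I{C}$ the right fibration classified by $f$, and this factors section-wise as $p_! p^\ast$. Since $p^\ast$ admits an \emph{internal} right adjoint $p_\ast$ (by right Kan extension, using completeness of $\Univ$), the composite $p_\ast p^\ast$ furnishes the section-wise right adjoint, and the required base-change compatibility is immediate from $p_\ast$ being internal. This yields an explicit formula for the right adjoint without appealing to any adjoint functor theorem.

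Your argument instead stays on the presheaf side and proceeds via cocontinuity plus representability. The reduction to cocontinuity of $a\times-$ on $\Univ$ is sound (this is essentially cartesian closedness of $\Univ$, or equivalently the internal-hom adjunction in each slice topos), and the candidate right adjoint you write down is exactly what \cite[Remark~7.1.4]{Martini2021a} produces---the same result the paper invokes later for the unstraightening functor. So the obstacle you flag is not a genuine gap: the needed AFT is available in the framework. The trade-off is that the paper's proof gives a concrete description $p_\ast p^\ast$ that is useful elsewhere, whereas your approach is more self-contained on the functor-category side and avoids passing through fibrations.
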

\begin{proof}
	Let $p\colon \I{P}\to\I{C}$ be the right fibration that is classified by $f$. Then $f\times -$ corresponds to the product functor $\I{P}\times -\colon \IRFib_{\I{C}}\to\IRFib_{\I{C}}$. On local sections over $A\in\BB$, this functor is given by the composition
	\begin{equation*}
	\RFib(\pi_A^\ast\I{C})\xrightarrow{p^\ast} \RFib(\pi_A^\ast\I{P})\xrightarrow{p_!} \RFib(\pi_A^\ast\I{C}).
	\end{equation*}
	By the theory of Kan extensions~\cite[\S~6]{Martini2021a} and the fact that $\Univ$ is complete, the functor $p^\ast\colon \IRFib_{\I{C}}\to\IRFib_{\I{P}}$ has a right adjoint $p_\ast$, which implies that $\I{P}\times -$ section-wise admits a right adjoint that is given by the composition $p_\ast p^\ast$. Now if $s\colon B\to A$ is a map in $\BB$, the mate transformation $s^\ast p_\ast p^\ast\to p_\ast p^\ast s^\ast$ is given by the composition
	\begin{equation*}
	s^\ast p_\ast p^\ast \to p_\ast s^\ast p^\ast\simeq  p_\ast p^\ast s^\ast
	\end{equation*}
	in which the first map is induced by the mate transformation $s^\ast p_\ast \to p_\ast s^\ast$. Since $p_\ast$ is an \emph{internal} right adjoint of $p^\ast$, this map must be an equivalence. Using~\cite[Proposition~3.2.8]{Martini2021a}, the claim follows.
\end{proof}

Lemma~\ref{lem:YonedaEmbeddingProduct} implies that there is a commutative triangle
\begin{equation*}
\begin{tikzcd}[column sep=large]
	\Delta\times\I{C}^{\op}\arrow[r, "{h_{\Delta\times\I{C}^{\op}}}"]\arrow[d, "\id_{\Delta}\times h_{\I{C}^{\op}}"] & \IPSh_{\Univ}(\Delta\times\I{C}^{\op})\\
	\Delta\times\IPSh_{\Univ}(\I{C}^{\op}).\arrow[ur, "\pr_0^\ast\Delta^\bullet\times\pr_1^\ast(-)"'] &
\end{tikzcd}
\end{equation*}
We are now able to compute the unstraightening of all functors in $\iFun{\I{C}}{\IPSh_{\Univ}(\Delta)}$ that lie in the image of $\pr_0^\ast \Delta^{\bullet}\times\pr_1^\ast(-)$:
\begin{proposition}
	\label{prop:UnstraighteningExplicitly}
	There is a commutative square
	\begin{equation*}
	\begin{tikzcd}[column sep={between origins,12em}]
	\Delta\times\IPSh_{\Univ}(\I{C}^{\op})\arrow[d, "\simeq"]\arrow[r, "\pr_0^\ast \Delta^\bullet\times\pr_1^\ast(-)"] &\iFun{\I{C}}{\IPSh_{\Univ}(\Delta)}\arrow[d, "\Un_{\I{C}}"]\\
	\Delta\times\ILFib_{\I{C}}\arrow[r, "\Delta^\bullet\otimes -"] & \ICocart_{\I{C}}.
	\end{tikzcd}
	\end{equation*}
\end{proposition}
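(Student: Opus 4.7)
The plan is to verify commutativity of the square by showing that both compositions define bifunctors $\Delta \times \IPSh_{\Univ}(\I{C}^{\op}) \to \ICocart_{\I{C}}$ that are cocontinuous in the second variable and agree after restriction along the Yoneda embedding $h_{\I{C}^{\op}}\colon \I{C}^{\op} \into \IPSh_{\Univ}(\I{C}^{\op})$; the universal property of $\IPSh_{\Univ}(\I{C}^{\op})$ as the free cocompletion of $\I{C}^{\op}$ will then identify the two compositions. To apply this universal property cleanly, I will transpose both bifunctors into functors $\IPSh_{\Univ}(\I{C}^{\op}) \to \iFun{\Delta}{\ICocart_{\I{C}}}$, whose target is cocomplete since $\ICocart_{\I{C}}$ is (Proposition~\ref{prop:CocartCocomplete}) and colimits in functor $\BB$-categories are computed pointwise.

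For cocontinuity of the transpose of $\Un_{\I{C}} \circ (\pr_0^{\ast}\Delta^{\bullet} \times \pr_1^{\ast}(-))$ in the second variable: the precomposition functor $\pr_1^{\ast}$ preserves colimits (as a left adjoint), the operation $\pr_0^{\ast}\Delta^n \times -$ admits a right adjoint by Lemma~\ref{lem:productFunctorCocontinuous}, and $\Un_{\I{C}}$ is a left adjoint by Proposition~\ref{prop:adjunctionStUn}. For cocontinuity of the transpose of $(\Delta^{\bullet} \otimes -)$ composed with the Grothendieck equivalence: the equivalence $\IPSh_{\Univ}(\I{C}^{\op}) \simeq \ILFib_{\I{C}}$ of Proposition~\ref{prop:StraighteningGrothendieckConstruction} is cocontinuous, the inclusion $\ILFib_{\I{C}} \into \ICocart_{\I{C}}$ is cocontinuous as a left adjoint (Proposition~\ref{prop:coreflectionLeftFibrationsCocartesianFibrations}), and $\Delta^n \otimes -$ on $\ICocart_{\I{C}}$ admits the right adjoint $(-)^{\Delta^n}$ supplied by Corollary~\ref{cor:CocartCTensoredPowered}.

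For agreement after restriction to $\I{C}^{\op}$, I would first invoke Lemma~\ref{lem:YonedaEmbeddingProduct} (applied with $\I{C}=\Delta$ and $\I{D}=\I{C}^{\op}$) to obtain a canonical equivalence
\begin{equation*}
\pr_0^{\ast}\Delta^{\bullet} \times \pr_1^{\ast} h_{\I{C}^{\op}}(-) \simeq h_{\Delta \times \I{C}^{\op}}
\end{equation*}
of bifunctors $\Delta \times \I{C}^{\op} \to \IPSh_{\Univ}(\Delta \times \I{C}^{\op})$. Since $\Un_{\I{C}}$ is by definition the left Kan extension of $\Delta^{\bullet} \otimes h_{\I{C}^{\op}}(-)$ along the Yoneda embedding $h_{\Delta \times \I{C}^{\op}}$ and the Yoneda embedding is fully faithful, this composition is equivalent to $\Delta^{\bullet} \otimes h_{\I{C}^{\op}}(-)$ on representables, which is also the value of the bottom composition under the Grothendieck identification (Proposition~\ref{prop:StraighteningGrothendieckConstruction}).

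Finally, the universal property of presheaf $\BB$-categories~\cite[Theorem~7.1.1]{Martini2021a}, applied to the cocomplete large $\BB$-category $\iFun{\Delta}{\ICocart_{\I{C}}}$, implies that two cocontinuous functors out of $\IPSh_{\Univ}(\I{C}^{\op})$ that agree after restriction to $\I{C}^{\op}$ are equivalent. Transposing back gives the desired square. There is no serious obstacle here: once the cocontinuity inputs are assembled, the result is a direct application of the universal property of the free cocompletion. The only mild point of care is to organise the naturality of the identifications in the $\Delta$-variable so that one obtains an equivalence of bifunctors rather than a mere pointwise equivalence, which is precisely what the transposition to $\iFun{\Delta}{\ICocart_{\I{C}}}$ accomplishes.
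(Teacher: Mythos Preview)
Your proposal is correct and follows essentially the same approach as the paper: both verify the square on representables via the definition of $\Un_{\I{C}}$ as a left Kan extension together with Lemma~\ref{lem:YonedaEmbeddingProduct}, then extend using the universal property of $\IPSh_{\Univ}(\I{C}^{\op})$ after checking that both compositions are cocontinuous in the second variable (via Lemma~\ref{lem:productFunctorCocontinuous} for the product and the tensoring/powering adjunction for $\Delta^n\otimes -$). The only cosmetic difference is that the paper records the right adjoint of $\Delta^n\otimes -\colon\ILFib_{\I{C}}\to\ICocart_{\I{C}}$ directly as $(-)^{\Delta^n}_\sharp$, whereas you factor it as the composite of $\Delta^n\otimes -$ on $\ICocart_{\I{C}}$ with the inclusion $\ILFib_{\I{C}}\into\ICocart_{\I{C}}$.
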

\begin{proof}
	By construction of the unstraightening functor, the square commutes when restricted along the inclusion $\id_{\Delta}\times h_{\I{C}^{\op}}\colon \Delta\times\I{C}^{\op}\into\Delta\times\IPSh_{\Univ}(\I{C}^{\op})$. By making use of the universal property of presheaf $\BB$-categories~\cite[Theorem~7.1.1]{Martini2021a} and the fact that $\Un_{\I{C}}$ is a left adjoint functor and therefore cocontinuous~\cite[Proposition~5.1.5]{Martini2021a}, it is enough to show that for every integer $n\geq 0$ both $\Delta^n\otimes -$ and $\pr_0^\ast(\Delta^n)\times\pr_1^\ast(-)$ are cocontinuous as well. For the first functor, this follows from the observation that it has a right adjoint given by $(-)^{\Delta^n}_\sharp$. Regarding the second functor, since $\pr_1^\ast$ is cocontinuous, it suffices to show that $\pr_0^\ast(\Delta^n)\times-$ is cocontinuous as well, which follows from Lemma~\ref{lem:productFunctorCocontinuous}.
\end{proof}
\subsection{The straightening equivalence}
\label{sec:straighteningEquivalence}
We are finally ready to state and prove the main theorem of this paper:
\begin{theorem}
	\label{thm:StraighteningEquivalence}
	For every $\BB$-category $\I{C}$, the straightening functor $\St_{\I{C}}\colon \ICocart_{\I{C}}\to\iFun{\I{C}}{\ICat_{\BB}}$ is an equivalence of large $\BB$-categories that is natural in $\I{C}\in\Cat(\BB)$.
\end{theorem}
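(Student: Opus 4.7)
The strategy is to leverage the adjunction $(\Un_{\I{C}}\dashv\St_{\I{C}})$ from Corollary~\ref{cor:straighteningUnstraigheningAdjunction} by first establishing naturality of both functors in $\I{C}$, then using a pointwise-detection argument to reduce the verification that the unit and counit are equivalences to the case where the base $\BB$-category is a $\BB$-groupoid, and finally invoking Remark~\ref{rem:StraigeningPoint} together with Remark~\ref{rem:BCCocart} for the base case.

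I would first prove that for every functor $f\colon\I{C}\to\I{D}$ in $\Cat(\BB)$, there are canonical equivalences $\St_{\I{C}}\circ f^\ast\simeq f^\ast\circ\St_{\I{D}}$ and $\Un_{\I{C}}\circ f^\ast\simeq f^\ast\circ\Un_{\I{D}}$. Naturality of the straightening is essentially built into its construction: $\St_{\I{C}}$ is obtained by applying $\iFun{\I{C}}{-}\vert_\BB$ to the $\Simp\BB$-enriched functor $\St\colon\ICocart\to\iFun{(\Delta\vert^{\Simp\BB})^{\op}}{\ILFib}$, so the $\Simp\BB$-natural transformation $\iFun{f}{-}\colon\iFun{\I{D}}{-}\to\iFun{\I{C}}{-}$ directly produces the desired commutative square. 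Naturality of the unstraightening is more delicate, as $\Un_{\I{C}}$ is defined implicitly by left Kan extension. Here I would invoke the universal property of presheaf $\BB$-categories from~\cite[Theorem~7.1.1]{Martini2021a}: both $f^\ast\circ\Un_{\I{D}}$ and $\Un_{\I{C}}\circ f^\ast$ are cocontinuous, using Proposition~\ref{prop:CocartBaseChangeLeftAdjoint} on the cocartesian side and the fact that $f^\ast$ on functor $\BB$-categories is a left adjoint. It therefore suffices to compare the two composites on representables $h_{\Delta\times\I{D}^{\op}}(n,d)$, where both sides evaluate to $\Delta^n\otimes h_{\I{C}^{\op}}(f^{\op}(d))$: this combines Remark~\ref{rem:CocartCTensoringPoweringExplicitly}, according to which $f^\ast$ preserves the tensoring since it is defined via pullback, with the evident naturality of the Yoneda embedding.

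With naturality in hand, I would argue that the unit $\eta$ and counit $\epsilon$ of $\Un_{\I{C}}\dashv\St_{\I{C}}$ are equivalences by pointwise detection. Equivalences in $\iFun{\I{C}}{\ICat_{\BB}}$ can be tested at each object $c\colon A\to\I{C}$, so $\eta$ is an equivalence if and only if $c^\ast\eta$ is for every such $c$. For $\epsilon$, Corollary~\ref{cor:fibrewiseCriterionEquivalencesCocartesian} shows that a cocartesian functor between cocartesian fibrations over $\I{C}$ is an equivalence precisely when it is fibrewise an equivalence, so it again suffices to check $c^\ast\epsilon$ for every $c$. Naturality of the adjunction identifies $c^\ast\eta$ and $c^\ast\epsilon$ with the unit and counit of $\Un_A\dashv\St_A$, reducing the problem to showing that $\St_A$ is an equivalence for every $A\in\BB$. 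Using the base-change equivalence $\pi_A^\ast\ICocart_{\I{C}}\simeq\ICocart_{\pi_A^\ast\I{C}}$ from Remark~\ref{rem:BCCocart}, this reduces further to the case $\I{C}=1$ internally to $\Over{\BB}{A}$, which is handled by Remark~\ref{rem:StraigeningPoint}: under the identifications $\ICocart_1\simeq\ICat_{\BB}\simeq\iFun{1}{\ICat_{\BB}}$, the straightening $\St_1$ becomes the identity.

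The principal technical obstacle will be the naturality of $\Un$, since unwinding the defining left Kan extension requires all the compatibility statements---cocontinuity of $f^\ast$, its preservation of the tensoring by $\Delta^\bullet$, and its interaction with the Yoneda embedding---to fit together cleanly. These ingredients are nevertheless available from the preparations in \S\ref{sec:Cocart}, after which the remainder of the argument is formal and relies only on the pointwise detection criteria already established earlier in the paper.
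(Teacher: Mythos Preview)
Your overall shape is right, and the reduction to the case $\I{C}=1$ via pointwise detection is exactly what the paper does. But there is a genuine gap in your naturality argument for $\Un$, and it is precisely the technical heart of the theorem.

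You claim that $f^\ast\circ\Un_{\I{D}}$ is cocontinuous ``using Proposition~\ref{prop:CocartBaseChangeLeftAdjoint} on the cocartesian side''. That proposition, however, says that $f^\ast\colon\ICocart_{\I{D}}\to\ICocart_{\I{C}}$ admits a \emph{left} adjoint $f_!$; this makes $f^\ast$ continuous, not cocontinuous. Cocontinuity of $f^\ast$ would require a \emph{right} adjoint, and Proposition~\ref{prop:CocartBaseChangeRightAdjoint} only supplies one when $f^\sharp$ is marked proper---which is not known for an arbitrary functor $f$. Consequently you cannot invoke the universal property of presheaves to identify the mate $\Un_{\I{C}}f^\ast\to f^\ast\Un_{\I{D}}$ with the equivalence you observe on representables: the two composites agree on $h_{\Delta\times\I{D}^{\op}}$, but without cocontinuity on both sides this does not propagate.

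The paper handles exactly this obstruction. Rather than proving naturality for arbitrary $f$, it takes a point $c\colon 1\to\I{C}$ and factors it as $1\xrightarrow{\id_c}\Over{\I{C}}{c}\xrightarrow{(\pi_c)_!}\I{C}$. Pullback along the right fibration $(\pi_c)_!$ is cocontinuous by Proposition~\ref{prop:rightFibrationsProper} combined with Proposition~\ref{prop:CocartBaseChangeRightAdjoint}, so Proposition~\ref{prop:CocartBCCocontinuousMate} applies to that leg. For the final map $\id_c$, the paper does \emph{not} prove cocontinuity directly from a general principle; instead it shows, via a marked-anodyne argument (using the dual of Remark~\ref{rem:cartesianPartiallyProper}), that $\id_c^\ast$ is equivalent to the left adjoint $(\pi_{\Over{\I{C}}{c}})_!$ and is therefore cocontinuous. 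Only then does the reduction to $\I{C}=1$ go through. Your proposal skips this factorisation and the attendant properness/finality analysis, which is where the real work lies.
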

To prove Theorem~\ref{thm:StraighteningEquivalence}, we will show that both the unit $\eta_{\I{C}}\colon \id_{\iFun{\I{C}}{\ICat_{\BB}}}\to \St_{\I{C}}\Un_{\I{C}}$ and the counit $\epsilon_{\I{C}}\colon\Un_{\I{C}}\St_{\I{C}}\to\id_{\ICocart_{\I{C}}}$ of the adjunction $\Un_{\I{C}}\dashv\St_{\I{C}}$ from Corollary~\ref{cor:straighteningUnstraigheningAdjunction} is an equivalence. By Corollary~\ref{cor:fibrewiseCriterionEquivalencesCocartesian} and the fact that equivalences in functor $\BB$-categories are detected objectwise~\cite[Corollary~4.7.17]{Martini2021}, it will be enough to show that for every object $c\colon A\to \I{C}$ the maps $c^\ast\eta_{\I{C}}$ and $c^\ast\epsilon_{\I{C}}$ are equivalences. Using Remark~\ref{rem:BCCocart}, we may assume $A\simeq 1$. Since furthermore Remark~\ref{rem:StraigeningPoint} implies that $\St_{1}$ is an equivalence, it will suffice to construct equivalences $c^\ast\eta_{\I{C}}\simeq \eta_1c^\ast$ and $c^\ast\epsilon_{\I{C}}\simeq \epsilon_{1}c^\ast$. In other words, we need to show that the  map $\Un_{1}c^\ast\to c^\ast\Un_{\I{C}}$ that arises as the mate of the commutative square
\begin{equation*}
\begin{tikzcd}
\ICocart_{\I{C}}\arrow[d, "c^\ast"]\arrow[r, "\St_{\I{C}}"] & \iFun{\I{C}}{\ICat_{\BB}}\arrow[d, "c^\ast"]\\
\ICocart_{1}\arrow[r, "\St_{1}"] & \ICat_{\BB}
\end{tikzcd}
\end{equation*}
is an equivalence. This will require a few preparatory steps.

\begin{lemma}
	\label{lem:straighteningTensoringCommute}
	There is a commutative square 
	\begin{equation*}
	\begin{tikzcd}
	\Delta\times\ILFib_{\I{C}}\arrow[d, "\Delta^\bullet\otimes -"]\arrow[r, "\Delta^\bullet\times\int^{-1}"] & \IPSh_{\Univ}(\Delta)\times\IPSh_{\Univ}(\I{C}^{\op})\arrow[d, "\pr_0^\ast(-)\times\pr_1^\ast(-)"]\\
	\ICocart_{\I{C}}\arrow[r, "\St_{\I{C}}"] & \iFun{\I{C}}{\IPSh_{\Univ}(\Delta)}
	\end{tikzcd}
	\end{equation*}
	where $\smallint\colon \IPSh_{\Univ}(\I{C}^{\op})\simeq\ILFib_{\I{C}}$ denotes the Grothendieck construction.
\end{lemma}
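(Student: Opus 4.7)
I would establish the square by constructing the canonical comparison transformation from the right-hand side to the left-hand side and then verifying that it is objectwise an equivalence. The natural transformation arises as follows: by Proposition~\ref{prop:UnstraighteningExplicitly}, the functor $\Un_{\I{C}}\circ (\pr_0^\ast\Delta^\bullet\times\pr_1^\ast(-))$ is equivalent to $\Delta^\bullet\otimes\int(-)$, so applying the unit of the adjunction $\Un_{\I{C}}\dashv\St_{\I{C}}$ (Corollary~\ref{cor:straighteningUnstraigheningAdjunction}) yields a natural transformation
\begin{equation*}
	\pr_0^\ast\Delta^n\times\pr_1^\ast\smallint^{-1}(p)\to \St_{\I{C}}(\Delta^n\otimes p),
\end{equation*}
and what must be shown is that this transformation is an equivalence.

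Equivalences in $\iFun{\I{C}}{\IPSh_{\Univ}(\Delta)}$ are detected objectwise at $(c,k)\in\I{C}\times\Delta$, so I would fix $c\colon A\to\I{C}$ and $k\geq 0$ and compute both evaluations. The right-hand side evaluates to $\Delta^n_k\times p_c$, where $p_c\in\Over{\BB}{A}$ denotes the fiber of $p$ at $c$, by Yoneda and the definition of $\int^{-1}$. The left-hand side evaluates, again by Yoneda and the $\Un_{\I{C}}\dashv\St_{\I{C}}$ adjunction, to
\begin{equation*}
	\map{\ICocart_{\I{C}}}(\Un_{\I{C}}(h_{\Delta\times\I{C}^{\op}}(k,c)),\Delta^n\otimes p)\simeq \map{\ICocart_{\I{C}}}(\Delta^k\otimes\Under{\I{C}}{c},\Delta^n\otimes p),
\end{equation*}
where the identification uses Lemma~\ref{lem:YonedaEmbeddingProduct} together with Proposition~\ref{prop:UnstraighteningExplicitly}. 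The lemma thus reduces to a natural equivalence $\map{\ICocart_{\I{C}}}(\Delta^k\otimes\Under{\I{C}}{c},\Delta^n\otimes p)\simeq \Delta^n_k\times p_c$.

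To compute this mapping $\BB$-groupoid, I would use Remark~\ref{rem:CocartCTensoringPoweringExplicitly} to identify $\Delta^k\otimes\Under{\I{C}}{c}\simeq \Delta^k\times\Under{\I{C}}{c}$ and $\Delta^n\otimes p\simeq \Delta^n\times\I{P}$ as cocartesian fibrations over $\I{C}$. A functor between these over $\I{C}$ consists of a pair $(\alpha,q)$ with $\alpha\colon\Delta^k\times\Under{\I{C}}{c}\to\Delta^n$ and $q\colon\Delta^k\times\Under{\I{C}}{c}\to\I{P}$ over $\I{C}$. Since $p$ is a left fibration, every morphism in $\I{P}$ is cocartesian, so the cocartesian preservation condition imposes no constraint on~$q$; for $\alpha$ it demands that every morphism of the shape $(\id_v,\gamma)$ (cocartesian in the source by the same left fibration argument applied to $\Under{\I{C}}{c}\to\I{C}$) map to a degenerate edge of $\Delta^n$. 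Because $\Under{\I{C}}{c}$ has initial object $\id_c$ and $\Delta^n$ is a poset, this forces $\alpha$ to factor through the projection onto $\Delta^k$, giving a space $\map{\Cat(\BB)}(\Delta^k,\Delta^n)\simeq \Delta^n_k$ of allowable $\alpha$'s. Meanwhile, the space of $q$'s is the space of sections of the pullback of $p$ to $\Delta^k\times\Under{\I{C}}{c}$, which—since both factors have an initial object and $p$ is classified by a functor $\I{C}\to\Univ$—reduces to the value at $(0,\id_c)$, namely $p_c$.

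The hard part will be organizing this computation as a genuine chain of natural equivalences in $(n,k,c,p)$ (as opposed to a mere pointwise identification) so that the resulting isomorphism of functors actually agrees with the canonical comparison transformation constructed above. I would achieve this by expressing every step purely in terms of the universal properties involved—the tensor–power adjunction in $\ICocart_{\I{C}}$, the coreflection $\ILFib_{\I{C}}\into\ICocart_{\I{C}}$, and Yoneda's lemma for $\Under{\I{C}}{c}$ in $\ILFib_{\I{C}}$—so that naturality is automatic and compatibility with the adjunction unit is built in.
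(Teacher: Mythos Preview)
Your approach is workable in principle but far more laborious than needed, and the ``hard part'' you flag is a genuine obstacle. The paper's proof is essentially one line.

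The key observation you are missing is that $\St_{\I{C}}$ is a right adjoint and therefore preserves products, and that by Remark~\ref{rem:CocartCTensoringPoweringExplicitly} the tensoring $\Delta^n\otimes p$ is nothing but the product $\pi_{\I{C}}^\ast(\Delta^n)\times p$ in $\ICocart_{\I{C}}$. Hence
\begin{equation*}
\St_{\I{C}}(\Delta^n\otimes p)\simeq\St_{\I{C}}(\pi_{\I{C}}^\ast(\Delta^n))\times\St_{\I{C}}(p),
\end{equation*}
and the two factors are identified separately: the first with $\pr_0^\ast(\Delta^n)$ by naturality of straightening combined with Remark~\ref{rem:StraigeningPoint}, and the second with $\pr_1^\ast(\smallint^{-1}p)$ by Proposition~\ref{prop:StraighteningGrothendieckConstruction}. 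Each of these identifications is already a natural equivalence of functors, so the square commutes with no further bookkeeping.

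Your route instead reduces to verifying that the unit of $\Un_{\I{C}}\dashv\St_{\I{C}}$ is an equivalence on the objects $\pr_0^\ast\Delta^n\times\pr_1^\ast(-)$. But in the paper's logical structure this is precisely what is established \emph{later}, in the proof of Proposition~\ref{prop:CocartBCCocontinuousMate}, by feeding the present lemma (as the ``arbitrary equivalence'' $\phi$) into Lemma~\ref{lem:AdjunctionUnitEquivalenceCriterion}. So you are attempting to prove, by a direct mapping-space computation, the stronger conclusion that these two lemmas jointly yield; this is exactly why your ``hard part'' is hard. The paper's decomposition is designed to sidestep it: the present lemma supplies \emph{some} natural equivalence cheaply via product preservation, and Lemma~\ref{lem:AdjunctionUnitEquivalenceCriterion} then upgrades it to the unit.
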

\begin{proof}
	Being a right adjoint, the straightening functor commutes with products. Thus the claim follows from Proposition~\ref{prop:StraighteningGrothendieckConstruction} as well as from combining~\ref{rem:StraigeningPoint} with the naturality of straightening.
\end{proof}

The argument in the proof of the lemma below was communicated to the author by Maxime Ramzi:
\begin{lemma}
	\label{lem:AdjunctionUnitEquivalenceCriterion}
	Let $(l\dashv r)\colon \I{C}\leftrightarrows\I{D}$ be an adjunction between (not necessarily small) $\BB$-categories in which $\I{D}$ is cocomplete and locally small. Let $\I{E}^0\into\I{E}$ be a full inclusion of $\BB$-categories where $\I{E}^0$ is small and $\I{E}$ is locally small, and let $f\colon \I{E}\to\I{D}$ be a functor such that $f$ is the left Kan extension of $fi$ along $i$ and the identity on $\I{D}$ is the left Kan extension of $fi$ along itself. Suppose furthermore that there is an arbitrary equivalence $\phi\colon f\simeq rlf$. Then the adjunction unit $\eta$ induces an equivalence $\eta f\colon f\simeq rl f$.
\end{lemma}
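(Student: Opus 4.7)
The plan is to prove that $\eta f$ is an equivalence in two stages: first to reduce, via the left Kan extension hypothesis on $f$, to showing $\eta(fi)\colon fi\to rl(fi)$ is an equivalence on $\I{E}^0$; then to use the second (density) hypothesis, the triangle identities, and the given $\phi$ to verify this.

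For the first reduction, note that since $l$ is a left adjoint (hence cocontinuous) and $f\simeq i_!(fi)$, one has $lf\simeq i_!(lfi)$. The hypothesized equivalence $\phi\colon f\simeq rlf$ restricts to an equivalence $\phi i\colon fi\simeq rl(fi)$, and applying $i_!$ gives
\begin{equation*}
	rlf\simeq f\simeq i_!(fi)\xrightarrow{\ i_!(\phi i)\ } i_!(rl(fi)).
\end{equation*}
Thus both $f$ and $rlf$ are canonically left Kan extended from $\I{E}^0$ along $i$, so any natural transformation $f\to rlf$ is an equivalence if and only if its restriction to $\I{E}^0$ is. The problem therefore reduces to showing $\eta(fi)\colon fi\to rl(fi)$ is a pointwise equivalence in $\iFun{\I{E}^0}{\I{D}}$.

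To tackle this, introduce the adjoint transpose $\psi\coloneqq\epsilon_{lf}\circ l\phi\colon lf\to lf$ of $\phi$ under the induced adjunction $l_\ast\dashv r_\ast$ between $\iFun{\I{E}}{\I{D}}$ and $\iFun{\I{E}}{\I{C}}$. By naturality of $\eta$ applied to $\phi$ we have $rl\phi\circ\eta f=\eta(rlf)\circ\phi$, and postcomposing with $r\epsilon_{lf}$ together with the triangle identity $r\epsilon\circ\eta r=\id_r$ yields
\begin{equation*}
	r\psi\circ\eta f\;=\;r\epsilon_{lf}\circ rl\phi\circ\eta f\;=\;r\epsilon_{lf}\circ\eta(rlf)\circ\phi\;=\;\phi.
\end{equation*}
Therefore, provided $\psi$ is a natural equivalence of $lf$, so is $r\psi$, and then $\eta f\simeq(r\psi)^{-1}\circ\phi$ is an equivalence; in particular its restriction $\eta(fi)$ is.

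The main obstacle is showing that $\psi\colon lf\to lf$ is an equivalence. Here the second hypothesis $\id_\I{D}=(fi)_!(fi)$ is essential: by Yoneda's lemma and the local smallness of $\I{D}$, this density condition translates into the statement that the restricted corepresentable functor $\tilde h\colon\I{D}\to\IPSh_{\Univ}(\I{E}^0)$, $y\mapsto\map{\I{D}}(fi(-),y)$, is fully faithful, so equivalences in $\I{D}$ are detected pointwise by $\tilde h$. Under the adjunction $l\dashv r$, the induced map $\map{\I{D}}(fi(e'),rlf(e))\simeq\map{\I{C}}(lfi(e'),lf(e))$ intertwines postcomposition by $\phi i(e)$ on the $\I{D}$-side with postcomposition by $\psi_{i(e)}$ on the $\I{C}$-side. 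Combining this with the fact that $\phi i$ is a pointwise equivalence in $\I{E}^0$ will show that the pointwise components $\psi_{i(e)}$ are equivalences in $\I{C}$ for each $e\in\I{E}^0$; a further application of the Kan extension argument of the first step (now to $lf\simeq i_!(lfi)$) will then promote this to $\psi$ being a natural equivalence on all of $\I{E}$, completing the proof.
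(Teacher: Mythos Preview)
Your first reduction is fine: since $i_!$ is fully faithful and both $f$ and $rlf\simeq f$ lie in its essential image, checking $\eta f$ on $\I{E}^0$ suffices. The identity $r\psi\circ\eta f=\phi$ is also correct.

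The gap is in the final paragraph, where you claim the density hypothesis lets you conclude that $\psi_{i(e)}$ is an equivalence. Density of $fi$ in $\I{D}$ detects equivalences in $\I{D}$, but $\psi_{i(e)}$ is a morphism in $\I{C}$. Your proposed intertwining, made precise, is the commutative square
\begin{equation*}
\begin{tikzcd}
\map{\I{D}}(fi(e'),f(i(e)))\arrow[r,"(\phi_{i(e)})_\ast"]\arrow[d,"l"] & \map{\I{D}}(fi(e'),rlf(i(e)))\arrow[d,"\simeq"]\\
\map{\I{C}}(lfi(e'),lf(i(e)))\arrow[r,"(\psi_{i(e)})_\ast"] & \map{\I{C}}(lfi(e'),lf(i(e))),
\end{tikzcd}
\end{equation*}
where the left vertical map is the action of $l$ on morphisms. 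But this map is not an equivalence in general, so from the top and right maps being equivalences you can only conclude that $(\psi_{i(e)})_\ast$ has a section---which is exactly the relation $r\psi\circ\eta f=\phi$ you already had. The argument is circular: you have shown $\eta f$ admits a retraction, not that it is invertible.

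The paper closes this gap with a trick you are missing. One sets $s=\phi^{-1}\circ\eta f\colon f\to f$, which admits a retraction as you essentially observe. Restricting to $\I{E}^0$ and using that $\id_{\I{D}}$ is the left Kan extension of $fi$ along $fi$, the self-maps $i^\ast(s)$ and its retraction extend uniquely to natural endomorphisms $s',r'\colon\id_{\I{D}}\to\id_{\I{D}}$ with $r's'\simeq\id$. The point is that natural endomorphisms of the identity functor commute pointwise: naturality of $s'$ at the map $r'(d)\colon d\to d$ forces $s'(d)r'(d)\simeq r'(d)s'(d)$, so $r'(d)$ is a two-sided inverse of $s'(d)$. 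This Eckmann--Hilton-style argument is what upgrades the retraction to an equivalence; your density-and-adjunction manoeuvre cannot substitute for it.
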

\begin{proof}
	Note that by~\cite[Corollary~6.3.7]{Martini2021a} the assumptions on $\I{E}^0$, $\I{E}$ and $\I{D}$ make sure that the functors of left Kan extension exist.
	Using the triangle identities, one can construct a commutative diagram
	\begin{equation*}
	\begin{tikzcd}
	f\arrow[r, "\eta f"]\arrow[d, "\phi"] \arrow[rr, bend left,  "\id"]& rlf\arrow[r, dashed] \arrow[d, "rl\phi"]& f\arrow[d, "\phi"]\\
	rlf\arrow[r, "rl\eta f"] \arrow[rr, bend right, "\id"]& rlrlf\arrow[r, "r\epsilon lf"] & rlf
	\end{tikzcd}
	\end{equation*}
	in which $\epsilon$ denotes the adjunction counit. Therefore, the map $s=\phi^{-1}\eta f\colon f\to f$ admits a retraction $r$ that is obtained by composing $\phi$ with the dashed arrow in the above diagram. We complete the proof by showing that $s$ is an equivalence. Since by assumption $f$ is the left Kan extension of $fi$ along $i$ and since $i$ is fully faithful, the functor of left Kan extension $i_!$ being fully faithful~\cite[Theorem~6.3.6]{Martini2021a} implies that it suffices to show that $i^\ast(s)$ is an equivalence. Since furthermore the identity on $\I{D}$ is the left Kan extension of $fi$ along itself, the two maps $i^\ast(s)$ and $i^\ast(r)$ induce maps $s^\prime\colon \id_{\I{D}}\to\id_{\I{D}}$ and $r^\prime\colon \id_{\I{D}}\to\id_{\I{D}}$ such that $r^\prime s^\prime\simeq\id$ and such that $i^\ast f^\ast(s^\prime)\simeq i^\ast(s)$. It therefore suffices to show that $s^\prime$ is an equivalence. Given $d\colon A\to \I{D}$, naturality of $s^\prime$ implies that there is a commutative square
	\begin{equation*}
	\begin{tikzcd}
	d\arrow[r, "s^\prime(d)"]\arrow[d, "r^\prime(d)"] & d\arrow[d, "r^\prime(d)"]\\
	d\arrow[r, "s^\prime(d)"] & d,
	\end{tikzcd}
	\end{equation*}
	hence $r^\prime(d)$ is both a left and right inverse of $s^\prime(d)$, which implies that $s^\prime(d)$ is an equivalence. As $d$ was chosen arbitrarily, the result follows.
\end{proof}

\begin{proposition}
	\label{prop:CocartBCCocontinuousMate}
	Let $f\colon \I{D}\to\I{C}$ be a functor in $\Cat(\BB)$ such that $f^\ast\colon \ICocart_{\I{C}}\to\ICocart_{\I{D}}$ is cocontinuous. Then the mate of the commutative square
	\begin{equation*}
	\begin{tikzcd}
	\ICocart_{\I{C}}\arrow[r, "\St_{\I{C}}"]\arrow[d, "f^\ast"] & \iFun{\I{C}}{\IPSh_{\Univ}(\Delta)}\arrow[d, "f^\ast"]\\
	\ICocart_{\I{D}}\arrow[r, "\St_{\I{D}}"] & \iFun{\I{D}}{\IPSh_{\Univ}(\Delta)}
	\end{tikzcd}
	\end{equation*}
	is an equivalence.
\end{proposition}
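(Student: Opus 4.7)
The plan is to invoke the universal property of presheaf $\BB$-categories. Writing the target of straightening as
\[
\iFun{\I{C}}{\IPSh_{\Univ}(\Delta)}\simeq\IPSh_{\Univ}(\Delta\times\I{C}^{\op}),
\]
the mate
\[
\alpha\colon \Un_{\I{D}}\circ f^{\ast}\longrightarrow f^{\ast}\circ\Un_{\I{C}}
\]
is a natural transformation between functors out of a presheaf $\BB$-category. By the universal property recalled in~\S\ref{sec:recollection}, $\alpha$ will be an equivalence provided that (i) both composites are cocontinuous and (ii) the restriction of $\alpha$ along the Yoneda embedding $h_{\Delta\times\I{C}^{\op}}$ is an equivalence.

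Cocontinuity is immediate: both unstraightenings are left adjoints by Corollary~\ref{cor:straighteningUnstraigheningAdjunction}; the right-hand $f^{\ast}$ is precomposition on a presheaf $\BB$-category and therefore admits both a left and a right Kan-extension adjoint, so is cocontinuous; and the left-hand $f^{\ast}$ on $\ICocart$ is cocontinuous by hypothesis. For the computation on representables, Lemma~\ref{lem:YonedaEmbeddingProduct} factors $h_{\Delta\times\I{C}^{\op}}(n,c)\simeq \pr_{0}^{\ast}(\Delta^{n})\times\pr_{1}^{\ast}(h_{\I{C}^{\op}}(c))$. Combining Proposition~\ref{prop:UnstraighteningExplicitly} with Proposition~\ref{prop:StraighteningGrothendieckConstruction} then yields $\Un_{\I{C}}(h_{\Delta\times\I{C}^{\op}}(n,c))\simeq \Delta^{n}\otimes\Under{\I{C}}{c}$. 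Applying $f^{\ast}$ and using the explicit description of the tensoring from Remark~\ref{rem:CocartCTensoringPoweringSectionwise} gives $f^{\ast}\Un_{\I{C}}(h_{\Delta\times\I{C}^{\op}}(n,c))\simeq \Delta^{n}\otimes f^{\ast}\Under{\I{C}}{c}$. On the other hand, $f^{\ast}h_{\Delta\times\I{C}^{\op}}(n,c)$ is the presheaf $(m,d)\mapsto \map{\Delta}(m,n)\times\map{\I{C}}(c,f(d))$, which a further application of Lemma~\ref{lem:YonedaEmbeddingProduct} identifies as $\pr_{0}^{\ast}(\Delta^{n})\times\pr_{1}^{\ast}(G)$, where $G$ is the presheaf on $\I{D}^{\op}$ whose Grothendieck construction is $f^{\ast}\Under{\I{C}}{c}$ by naturality of the Grothendieck equivalence under base change. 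A second application of Proposition~\ref{prop:UnstraighteningExplicitly}, now to $\I{D}$, then yields $\Un_{\I{D}}f^{\ast}h_{\Delta\times\I{C}^{\op}}(n,c)\simeq \Delta^{n}\otimes f^{\ast}\Under{\I{C}}{c}$ as well.

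The main obstacle will be verifying that under these identifications the mate $\alpha$ corresponds to the canonical equivalence between the two manifest expressions of $\Delta^{n}\otimes f^{\ast}\Under{\I{C}}{c}$, rather than some a priori unrelated map. This is a diagram chase through the construction of $\alpha$ as the pasting of the unit of $\Un_{\I{D}}\dashv\St_{\I{D}}$, the given commutative square $\St_{\I{D}}f^{\ast}\simeq f^{\ast}\St_{\I{C}}$, and the counit of $\Un_{\I{C}}\dashv\St_{\I{C}}$. Each factor has a canonical description on the chosen representables: the unstraightenings restrict to tensoring with $\Under{\I{C}}{-}$ and $\Under{\I{D}}{-}$ by their defining left-Kan-extension property, and the horizontal commutativity restricts, via Remark~\ref{rem:straighteningTranposePowering} and Proposition~\ref{prop:StraighteningGrothendieckConstruction}, to the combination of the compatibility of the powering $(-)^{\Delta^{\bullet}}$ with $f^{\ast}$ (a consequence of Remark~\ref{rem:CocartCTensoringPoweringSectionwise}, since the tensoring and hence the powering are constructed via pullback over the base) and the naturality of the coreflection $(-)_{\sharp}$ under $f^{\ast}$. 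Tracing through these compatibilities should show that the restriction of $\alpha$ to representables is precisely the canonical equivalence computed above, and cocontinuity together with the universal property then completes the proof.
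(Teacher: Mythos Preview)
Your overall plan matches the paper's: reduce via the universal property of $\IPSh_{\Univ}(\Delta\times\I{C}^{\op})$ to checking the mate on representables, and use Lemma~\ref{lem:YonedaEmbeddingProduct} together with Proposition~\ref{prop:UnstraighteningExplicitly} to identify both sides with $\Delta^{n}\otimes f^{\ast}\Under{\I{C}}{c}$. The difference lies entirely in how the acknowledged ``main obstacle'' is handled.

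You propose a direct diagram chase through the pasting that defines the mate. This is not carried out, and it is precisely where the difficulty sits: knowing that $\St_{\I{C}}\Un_{\I{C}}(h_{\Delta\times\I{C}^{\op}}(n,c))$ is \emph{abstractly} equivalent to $h_{\Delta\times\I{C}^{\op}}(n,c)$ (via Lemma~\ref{lem:straighteningTensoringCommute}) does not by itself tell you that the \emph{unit} $\eta_{\I{C}}$ realises this equivalence, and similarly for the counit $\epsilon_{\I{D}}$. Your sketch gestures at compatibilities of powering and $(-)_{\sharp}$ with $f^{\ast}$, but the hard part---identifying the unit and counit components themselves, not just the functors---is left to ``should''. (Minor point: the mate uses the unit of $\Un_{\I{C}}\dashv\St_{\I{C}}$ and the counit of $\Un_{\I{D}}\dashv\St_{\I{D}}$, not the other way around.)

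The paper circumvents this chase entirely. It first proves the auxiliary Lemma~\ref{lem:AdjunctionUnitEquivalenceCriterion}: given an adjunction $l\dashv r$ and a functor $f$ satisfying suitable Kan-extension hypotheses, the existence of an \emph{arbitrary} equivalence $f\simeq rlf$ forces the actual unit $\eta f$ to be an equivalence. Applying this with $f=\pr_{0}^{\ast}\Delta^{\bullet}\times\pr_{1}^{\ast}(-)$ and the abstract equivalence supplied by Lemma~\ref{lem:straighteningTensoringCommute}, one concludes that $\eta_{\I{C}}$ is an equivalence on the relevant objects without ever identifying it explicitly. The mate is then written out as in the paper's proof, and the counit piece is reduced via Proposition~\ref{prop:UnstraighteningExplicitly} and the triangle identities to another instance of the same lemma over $\I{D}$. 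So the paper trades your coherence chase for an Eckmann--Hilton style argument on endomorphisms of the identity; this is the substantive idea you are missing.
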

\begin{proof}
	By Proposition~\ref{prop:CocartCocomplete} and the assumption on the functor $f$, the mate transformation $\phi\colon \Un_{\I{D}}f^\ast\to f^\ast\Un_{\I{C}}$ is a map of cocontinuous functors between cocomplete large $\BB$-categories. Using the universal property of presheaf $\BB$-categories~\cite[Theorem~7.1.1]{Martini2021a}, the map $\phi$ is therefore an equivalence whenever its restriction along the Yoneda embedding $h_{\Delta\times\I{C}^{\op}}\colon \Delta\times\I{C}^{\op}\into\iFun{\I{C}}{\IPSh_{\Univ}(\Delta)}$ is one.  By making use of the commutative triangle
	\begin{equation*}
	\begin{tikzcd}[column sep=large]
		\Delta\times\I{C}^{\op}\arrow[r, "{h_{\Delta\times\I{C}^{\op}}}"]\arrow[d, "\id_{\Delta}\times h_{\I{C}^{\op}}"] & \IPSh_{\Univ}(\Delta\times\I{C}^{\op})\\
		\Delta\times\IPSh_{\Univ}(\I{C}^{\op}),\arrow[ur, "\pr_0^\ast\Delta^\bullet\times\pr_1^\ast(-)"'] &
	\end{tikzcd}
	\end{equation*}
	from Lemma~\ref{lem:YonedaEmbeddingProduct}, we might as well show that $\phi (\pr_0^\ast\Delta^{\bullet}\times\pr_1^\ast(-))$ is an equivalence. 
	
	Next, let us show that the functor $\pr_0^\ast\Delta^{\bullet}\times\pr_1^\ast(-)$ is the left Kan extension of $h_{\Delta\times\I{C}^{\op}}$ along $\id_{\Delta}\times h_{\I{C}^{\op}}$. Note that Lemma~\ref{lem:productFunctorCocontinuous} implies that the the associated functor $\Delta\to \iFun{\IPSh_{\Univ}(\I{C}^{\op})}{\iFun{\I{C}}{\IPSh_{\Univ}(\Delta)}}$ takes values in the full subcategory $\iFun{\IPSh_{\Univ}(\I{C}^{\op})}{\iFun{\I{C}}{\IPSh_{\Univ}(\Delta)}}^{\ICat_{\BB}}$ of cocontinuous functors and therefore factors through the inclusion 
	\begin{equation*}
	(h_{\I{C}^{\op}})_!\colon \iFun{\I{C}}{\iFun{\I{C}}{\IPSh_{\Univ}(\Delta)}}\into\iFun{\IPSh_{\Univ}(\I{C}^{\op})}{\iFun{\I{C}}{\IPSh_{\Univ}(\Delta)}},
	\end{equation*}
	where we make use of the universal property of presheaf $\BB$-categories.
	Consequently, $\pr_0^\ast\Delta^{\bullet}\times\pr_1^\ast(-)$ is in the image of the inclusion
	\begin{equation*}
	(\id\times h_{\I{C}^{\op}})_!\colon \iFun{\Delta\times\I{C}^{\op}}{\iFun{\I{C}}{\IPSh_{\Univ}(\Delta)}}\into \iFun{\Delta\times\IPSh_{\Univ}(\I{C}^{\op})}{\iFun{\I{C}}{\IPSh_{\Univ}(\Delta)}},
	\end{equation*}
	as claimed. 
	
	By combining the previous observation with Proposition~\ref{prop:UnstraighteningExplicitly} and Lemma~\ref{lem:straighteningTensoringCommute}, we are in the situation of Lemma~\ref{lem:AdjunctionUnitEquivalenceCriterion} and may therefore conclude that the map $\eta_{\I{C}}(\pr_0^\ast\Delta^{\bullet}\times\pr_1^\ast(-))$ is an equivalence, where $\eta_{\I{C}}$ denotes the unit of the adjunction $\Un_{\I{C}}\dashv\St_{\I{C}}$. As a consequence, since $\phi$ is explicitly given by the composition
	\begin{equation*}
	\Un_{\I{D}}f^\ast \xrightarrow{\Un_{\I{C}}f^\ast\eta_{\I{C}}} \Un_{\I{D}}f^\ast\St_{\I{C}}\Un_{\I{C}}\xrightarrow{\simeq} \Un_{\I{D}}\St_{\I{D}}f^\ast\Un_{\I{C}}\xrightarrow{\epsilon_{\I{D}}f^\ast\Un_{\I{C}}} f^\ast\Un_{\I{C}},
	\end{equation*}
	the proof is finished once we show that also the counit $\epsilon_{\I{D}}f^\ast\Un_{\I{C}}(\pr_0^\ast\Delta^{\bullet}\times\pr_1^\ast(-))$ is an equivalence. But as in light of Proposition~\ref{prop:UnstraighteningExplicitly} and the naturality of tensoring there is an equivalence
	\begin{equation*}
	f^\ast\Un_{\I{C}}(\pr_0^\ast\Delta^{\bullet}\times\pr_1^\ast(-))\simeq \Un_{\I{D}}(\pr_0^\ast\Delta^{\bullet}\times\pr_1^\ast f^\ast(-)),
	\end{equation*}
	the triangle identities for the adjunction $\Un_{\I{D}}\dashv \St_{\I{C}}$ imply that this follows once we prove that the map $\eta_{\I{D}}(\pr_0^\ast\Delta^{\bullet}\times\pr_1^\ast f^\ast(-))$ is an equivalence, which has already been shown above.
\end{proof}

\begin{proof}[{Proof of Theorem~\ref{thm:StraighteningEquivalence}}]
	Let $c\colon 1\to \I{C}$ be an arbitrary global object. As discussed above, we need only show that the natural map $\phi\colon\Un_{1}c^\ast\to c^\ast\Un_{\I{C}}$ is an equivalence. In light of the factorisation of $c$ into the composition $(\pi_c)_!\id_c\colon 1\to\Over{\I{C}}{c}\to\I{C}$ of a final map and a right fibration, the map $\phi$ arises as the mate of the composite square in the commutative diagram
	\begin{equation*}
	\begin{tikzcd}
	\ICocart_{\I{C}}\arrow[r, "\St_{\I{C}}"]\arrow[d, "(\pi_c)_!^\ast"] & \iFun{\I{C}}{\ICat_{\BB}}\arrow[d, "(\pi_c)_!^\ast"]\\
	\ICocart_{\Over{\I{C}}{c}}\arrow[r, "\St_{\Over{\I{C}}{c}}"]\arrow[d, "\id_c^\ast"] & \iFun{\Over{\I{C}}{c}}{\ICat_{\BB}}\arrow[d, "\id_c^\ast"]\\
	\ICocart_{1}\arrow[r, "\St_{1}"] & \ICat_{\BB}.
	\end{tikzcd}
	\end{equation*}
	Therefore, it suffices to show that the mate of each individual square in the diagram commutes. Using Proposition~\ref{prop:CocartBCCocontinuousMate}, this follows once we show that the two vertical maps on the left-hand side of the above diagram are cocontinuous. As for $(\pi_c)_!^\ast$, this is a consequence of Proposition~\ref{prop:CocartBaseChangeRightAdjoint}, so it suffices to consider the map $\id_c^\ast$. Let $\pi_{\Over{\I{C}}{c}}\colon \Over{\I{C}}{c}\to 1$ be the projection. In light of Proposition~\ref{prop:CocartBaseChangeLeftAdjoint}, we obtain a map
	\begin{equation*}
	\phi\colon (\pi_{\Over{\I{C}}{c}})_!\to \id_c^\ast(\id_c)_!(\pi_{\Over{\I{C}}{c}})_!\simeq \id_c^\ast
	\end{equation*}
	in which the first map is given by the unit of the adjunction $(\id_c)_!\dashv\id_c^\ast$ and the equivalence on the right-hand side follows from the fact that  $\id_c$ is a section of $(\pi_{\Over{\I{C}}{c}})$. Since $(\pi_{\Over{\I{C}}{c}})_!$ is a left adjoint and therefore cocontinuous~\cite[Proposition~5.1.5]{Martini2021a}, it thus suffices to verify that $\phi$ is an equivalence. Explicitly, if $p\colon\I{P}\to A\times\I{C}$ is a cocartesian fibration, the map $\phi(p)$ is constructed via the commutative diagram
	\begin{equation*}
	\begin{tikzcd}
	(\I{P}\vert_c)^\flat\arrow[d]\arrow[r, "i"]\arrow[rr, "\phi(p)^\flat", bend left] & \I{P}^\natural\arrow[d, "p^\natural"]\arrow[r, "j"] & (\pi_{\Over{\I{C}}{c}})_!(\I{C})^\flat\arrow[d]\\
	A\arrow[r, "c^\sharp"] & A\times\I{P}^\sharp\arrow[r, "\pr_0"] & A 
	\end{tikzcd}
	\end{equation*}
	in $\mSimp\BB$ in which the left square is cartesian and the right square is defined by the condition that $j$ is marked left anodyne. Since $c$ is final, the map $c^\sharp$ is contained in the internal saturation of $d^0\colon \Delta^0\into(\Delta^1)^\sharp$. Using the dual of Remark~\ref{rem:cartesianPartiallyProper}, we thus conclude that $i$ is marked right anodyne. Note that since $\Cocart(A)\simeq\Cart(A)$ as full subcategories of $\Over{(\mSimp\BB)}{A}$, the map $j$ is simultaneously the reflection map into $\Cocart(A)$ and $\Cart(A)$ and must therefore be marked right anodyne as well. We therefore conclude that also $\phi(p)^\flat$ is a marked right anodyne map. Being a morphism between marked cartesian fibrations over $A$, this is necessarily also a marked cartesian fibration. Hence $\phi(p)$ is an equivalence.
\end{proof}

\begin{remark}
	\label{rem:straighteningEquivalenceCart}
	By combining Theorem~\ref{thm:StraighteningEquivalence} with Remark~\ref{rem:straighteningCart}, one also obtains that the straightening functor $\St_{\I{C}}\colon\ICart_{\I{C}}\to\iFun{\I{C}^{\op}}{\ICat_{\BB}}$ is an equivalence.
\end{remark}

\subsection{The universal cocartesian fibration}
\label{sec:universalCocartesianFibration}
Let $\I{C}$ be a large $\BB$-category. Recall from~\cite[\S~4.5]{Martini2021} that a functor $p\colon \I{P}\to\I{C}$ in $\Cat(\BBB)$ is said to be \emph{small} if for every (small) $\BB$-category $\I{D}$ and every functor $\I{D}\to\I{C}$ the pullback $\I{P}\times_{\I{C}}\I{D}$ is small as well. The collection of {small} cocartesian fibrations defines a subpresheaf $\Cocart^{\bU}\into\Cocart$ of the sheaf $\Cocart$ on $\Cat(\BBB)$. 

\begin{lemma}
	\label{lem:characterisationSmallCocartesianFibration}
	Let $p\colon\I{P}\to\I{C}$ be a cocartesian fibration between large $\BB$-categories. Then $p$ is small if and only if for all objects $c\colon A\to \I{C}$ in context $A\in \BB$ the fibre $\I{P}\vert_c$ is a small $\BB$-category.
\end{lemma}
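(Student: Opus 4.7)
\medskip

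\noindent\textbf{Proof plan.} The forward direction is immediate: any object $c\colon A\to\I{C}$ with $A\in\BB$ exhibits $A$ as a small $\BB$-groupoid, hence as a small $\BB$-category; so if $p$ is small, then $\I{P}\vert_c=\I{P}\times_{\I{C}}A$ is small by the definition of smallness recalled from~\cite[\S~4.5]{Martini2021}.

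For the reverse direction, the plan is to transport the problem across the straightening equivalence. Applying Theorem~\ref{thm:StraighteningEquivalence} internal to the universe enlargement $\BBB$ provides a classifying functor $F=\St_{\I{C}}(p)\colon\I{C}\to\ICat_{\BBB}$. Given any object $c\colon A\to\I{C}$ with $A\in\BB$, the naturality of the straightening equivalence in $\I{C}$ identifies the composite $F\circ c$ with $\St_{A}(c^\ast p)$. Since $A$ is a $\BB$-groupoid, the evident equivalence $\Cocart(A)\simeq\Cat(\Over{\BBB}{A})$ (valid since $A\in\BBB$) together with Remark~\ref{rem:StraigeningPoint} shows that $F\circ c\colon A\to \ICat_{\BBB}$ is precisely the map classifying $\I{P}\vert_c$ as a large $\Over{\BBB}{A}$-category. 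By hypothesis, $\I{P}\vert_c$ is actually a small $\Over{\BB}{A}$-category, which on the level of classifying maps means that $F\circ c$ factors through the fully faithful inclusion $\ICat_{\BB}\into\ICat_{\BBB}$.

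Because $\ICat_{\BB}\into\ICat_{\BBB}$ is a full subcategory, it is determined by a subobject inclusion of the object-$\BBB$-groupoids. Since $\I{C}_0\in\BBB$ is tautologically the colimit of all objects $A\in\BB$ equipped with a map $A\to\I{C}_0$, the pointwise factorisation established above implies that $F$ itself factors as $\I{C}\to\ICat_{\BB}\into\ICat_{\BBB}$. Now for any small $\BB$-category $\I{D}$ and functor $g\colon\I{D}\to\I{C}$, naturality of straightening gives $\St_{\I{D}}(g^\ast p)\simeq F\circ g$, which therefore takes values in $\ICat_{\BB}$. Applying Theorem~\ref{thm:StraighteningEquivalence} internal to $\BB$ to the small $\BB$-category $\I{D}$ produces a small cocartesian fibration $q\colon\I{Q}\to\I{D}$ in $\Cat(\BB)$ classified by this functor, and the compatibility of the straightening equivalences at the two universe levels (which is built into the fact that both are constructed by the same recipe relative to the inclusion $\ICat_{\BB}\into\ICat_{\BBB}$) identifies $q$ with $g^\ast p$ after enlargement. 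Hence $\I{P}\times_{\I{C}}\I{D}$ is small, which is what we had to show.

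The main obstacle is the last compatibility statement between small and large straightening; this should be handled by checking it at the universal level, i.e.\ observing that the straightening functor $\St$ was constructed in \S~\ref{sec:straightening} purely out of the powering over $\ICat_{\BB}$ and the coreflection $(-)_\sharp$, both of which are compatible with the universe enlargement $\BB\into\BBB$ since the latter is a left exact left adjoint that preserves the relevant left fibrations and cocartesian fibrations.
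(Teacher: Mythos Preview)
Your approach is correct but takes a genuinely different route from the paper. The paper argues directly: after reducing to the case where $\I{C}$ is small, it shows $\I{P}_0$ is small by taking the fibre over the tautological object $\I{C}_0\to\I{C}$, and then establishes local smallness of $\I{P}$ by using a cocartesian lift to reduce the fibre of $\map{\I{P}}(x,y)\to\map{\I{C}}(c,d)$ over any $\alpha\colon c\to d$ to a mapping $\BB$-groupoid in the single fibre $\I{P}\vert_d$ (which is small by hypothesis). This is elementary and uses only the material from \S~\ref{sec:cocartesianMorphisms}, in particular the universal property of cocartesian morphisms.

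Your argument instead invokes the full straightening equivalence at the level of $\BBB$ to transport the problem to a factorisation of the classifying functor through $\ICat_{\BB}\into\ICat_{\BBB}$. This works, but as you rightly flag, it hinges on the compatibility of straightening across the universe enlargement $\BB\into\BBB$; while true for the reasons you sketch (all the ingredients --- powering by $\Delta^\bullet$, pullbacks, the coreflection $(-)_\sharp$ --- are preserved by the left exact inclusion), this compatibility is not verified in the paper and adds a layer of bookkeeping. The paper's direct argument avoids this detour and, more importantly, is logically independent of Theorem~\ref{thm:StraighteningEquivalence}: it makes the lemma an elementary input to the construction of the universal cocartesian fibration rather than a corollary of the main theorem applied at two universe levels.
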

\begin{proof}
	The condition is clearly necessary. Conversely, it suffices to show that if $\I{C}$ is small and if $\I{P}\vert_c$ is small for all objects $c\colon A\to \I{C}$ in context $A\in\BB$, then $\BB$-category $\I{P}$ is small as well. By letting $c$ be the tautological object $\I{C}_0\to \I{C}$, one finds that $\I{P}_0$ is small. It therefore suffices to show that $\I{P}$ is locally small, see~\cite[Proposition~4.7.4]{Martini2021}. Using~\cite[Proposition~4.7.2]{Martini2021}, we need to show that for any two objects $x,y\colon A\rightrightarrows \I{P}$ in context $A\in\BB$ the (large) mapping $\BB$-groupoid $\map{\I{P}}(x,y)$ is contained in $\BB$. Let $c=p(x)$ and $d=p(y)$. Note that $\map{\I{P}}(x,y)\to \map{\I{C}}(c,d)$ is the fibre of $\map{\I{P}}(x,y)\times_A\map{\I{C}}(c,d)\to\map{\I{C}}(c,d)\times_A\map{\I{C}}(c,d)$ over the diagonal $\map{\I{C}}(c,d)\to\map{\I{C}}(c,d)\times_A\map{\I{C}}(c,d)$. Therefore, by replacing $A$ with $\map{\I{C}}(c,d)$, we may assume that there exists a map $\alpha\colon c\to d$ in context $A$ and that we only have to show that the fibre of $\map{\I{P}}(x,y)\to\map{\I{C}}(c,d)$ over $\alpha$ is contained in $\BB$. Let $f\colon x\to z$ be a cocartesian lift of $\alpha$. We then obtain a cartesian square
	\begin{equation*}
	\begin{tikzcd}
		\map{\I{P}}(z,y)\arrow[r, "f^\ast"]\arrow[d] & \map{\I{P}}(x,y)\arrow[d]\\
		\map{\I{C}}(d,d)\arrow[r, "\alpha^\ast"] & \map{\I{C}}(c,d)
	\end{tikzcd}
	\end{equation*}
	such that the fibre of the left vertical map over the identity $\id_d\colon A\to \map{\I{C}}(d,d)$ recovers the fibre of the right vertical morphism over $\alpha$. Hence this fibre recovers the mapping $\BB$-groupoid $\map{\I{P}\vert_d}(z,y)$ and is therefore small.
\end{proof}

Recall that there is an inclusion $\ICat_{\BB}\into\ICat_{\BBB}$ of very large $\BB$-categories that identifies $\ICat_{\BB}$ with the full subcategory of $\ICat_{\BBB}$ that is spanned by the small functors $\I{D}\to A$ over all $A\in\BB$~\cite[Example~2.7.6]{Martini2021a}. As a consequence of Lemma~\ref{lem:characterisationSmallCocartesianFibration}, we now obtain:
\begin{proposition}
	\label{prop:characterisationSmallCocartesianFibrations}
	For every large $\BB$-category $\I{C}$, the subpresheaf $\Cocart^{\bU}(-\times \I{C})\into\Cocart(-\times\I{C})$ that is spanned by the small cocartesian fibrations over $\I{C}$ is a sheaf on $\BB$ and hence defines an (a priori very large) $\BB$-category $\ICocart_{\I{C}}^{\bU}$. Moreover, restricting the straightening functor $\St_{\I{C}}$ to $\ICocart_{\I{C}}^{\bU}$ determines an equivalence $\ICocart_{\I{C}}^{\bU}\simeq \iFun{\I{C}}{\ICat_{\BB}}$.\qed
\end{proposition}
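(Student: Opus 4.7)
The plan is to deduce the statement from the straightening equivalence (Theorem~\ref{thm:StraighteningEquivalence}) applied in the enlarged universe $\BBB$, by identifying the small cocartesian fibrations with the $\ICat_{\BB}$-valued functors under this equivalence.

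First I would address the sheaf property. Let $(s_i\colon A_i\to A)_{i\in I}$ be a cover in $\BB$, and suppose $p\colon \I{P}\to A\times\I{C}$ is a cocartesian fibration whose pullback to each $A_i\times\I{C}$ is small. By Lemma~\ref{lem:characterisationSmallCocartesianFibration}, it suffices to show that for every object $c\colon B\to A\times\I{C}$ with $B\in\BB$, the fibre $\I{P}\vert_c$ is small. The covering $B\times_A A_i\to B$ is a cover in $\BB$, and the pullback of $\I{P}\vert_c$ along each such map is small by hypothesis. Since being a small $\BB$-groupoid is a local property (it can be checked on a cover, as $\BB\into\BBB$ is closed under descent along covers in $\BB$), we conclude that $\I{P}\vert_c$ is small.

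For the equivalence, I would apply Theorem~\ref{thm:StraighteningEquivalence} over $\BBB$ to obtain an equivalence $\St_{\I{C}}\colon \ICocart_{\I{C}}\simeq \iFun{\I{C}}{\ICat_{\BBB}}$ of very large $\BBB$-categories, and show that it restricts to the claimed equivalence. This reduces to showing that a cocartesian fibration $p\colon \I{P}\to A\times\I{C}$ is $\bU$-small if and only if the composite
\begin{equation*}
A\times\I{C}\xrightarrow{\St_{\I{C}}(p)} \ICat_{\BBB}
\end{equation*}
factors through the inclusion $\ICat_{\BB}\into\ICat_{\BBB}$. By naturality of straightening in $\I{C}$, for every object $c\colon B\to A\times\I{C}$ the value $\St_{\I{C}}(p)(c)$ is equivalent to $\St_{B}(c^\ast p)$; and by Remark~\ref{rem:StraigeningPoint} applied internally to $\Over{\BB}{B}$, the functor $\St_{B}$ is just the inclusion of $\ICat_{\Over{\BB}{B}}$ into the universe of $\Over{\BBB}{B}$-categories. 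Hence $\St_{\I{C}}(p)$ takes values in $\ICat_{\BB}$ precisely when $c^\ast p$ is a small $\Over{\BB}{B}$-category for every such $c$. By Lemma~\ref{lem:characterisationSmallCocartesianFibration} this is equivalent to smallness of $p$ (it suffices to test on objects, since all fibres arise as pullbacks to some $B\in\BB$).

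Combining these two points then yields both the sheaf property in the form promised and the desired equivalence. The main subtlety will be bookkeeping the sizes carefully: since $\ICocart_{\I{C}}^{\bU}$ is defined via a subsheaf of $\Cocart(-\times\I{C})$, I need to check that the identification with $\iFun{\I{C}}{\ICat_{\BB}}$ is compatible with restriction in $\BB$, which follows from Remark~\ref{rem:BCCocart} together with the naturality of $\St_{\I{C}}$ in the ambient $\infty$-topos. Everything else is a direct application of results that have already been established.
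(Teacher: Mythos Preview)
Your proposal is correct and follows exactly the approach the paper has in mind: the proposition is marked with a \qed immediately after invoking Lemma~\ref{lem:characterisationSmallCocartesianFibration} and the description of $\ICat_{\BB}\into\ICat_{\BBB}$ as the full subcategory spanned by small objects, and your argument spells out precisely this deduction by passing to the enlarged universe and identifying the two smallness conditions fibrewise. One minor slip: in the sheaf-property paragraph you write ``small $\BB$-groupoid'' where you mean ``small $\Over{\BB}{B}$-category'', but the argument is unaffected.
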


As a consequence of Proposition~\ref{prop:characterisationSmallCocartesianFibrations}, there is a small cocartesian fibration $\phi_{\BB}\colon\LaxUnder{(\ICat_{\BB})}{1}\to\ICat_{\BB}$ that arises as the unstraightening of the identity $\id\colon \ICat_{\BB}\simeq\ICat_{\BB}$ and that is referred to as the \emph{universal} cocartesian fibration.

\begin{remark}
	\label{rem:BCUniversalCocartesianFibration}
	Given $A\in\BB$, Remark~\ref{rem:BCCocart} implies that the base change $\pi_A^\ast(\phi_{\BB})$ of the universal cocartesian fibration $\phi_{\BB}\colon\LaxUnder{(\ICat_{\BB})}{1}\to\ICat_{\BB}$ in $\BB$ is equivalent to the universal cocartesian fibration $\phi_{\Over{\BB}{A}}\colon\LaxUnder{(\ICat_{\Over{\BB}{A}})}{1}\to\ICat_{\Over{\BB}{A}}$.
\end{remark}
\begin{remark}
	\label{rem:universalCocartesianFibration}
	On account of the naturality of straightening, if $\I{C}\to\ICat_{\BB}$ is a functor in $\Cat(\BBB)$, the associated small cocartesian fibration $\Un_{\I{C}}(f)\to\I{C}$ fits into a unique pullback square
	\begin{equation*}
	\begin{tikzcd}
	\Un_{\I{C}}(f)\arrow[d]\arrow[r] & \LaxUnder{(\ICat_{\BB})}{1}\arrow[d, "\phi_{\BB}"]\\
	\I{C}\arrow[r, "f"] & \ICat_{\BB}.
	\end{tikzcd}
	\end{equation*}
\end{remark}
The goal for the remainder of this section is to relate the cocartesian fibration $\Gamma(\phi_{\BB})$ that is obtained by taking global sections of the universal cocartesian fibration in $\BB$ with the universal cocartesian fibration in $\SS$.

\begin{lemma}
	\label{lem:YonedaEmbeddingGlobalSections}
	For any $\infty$-category $\CC$, transposing the Yoneda embedding $\CC\into \IPSh_{\Univ}(\CC)$ in $\Cat(\BB)$ across the adjunction $\const_\BB\dashv \Gamma$ yields the composition
	\begin{equation*}
	\CC\xhookrightarrow{h_{\CC}} \PSh_{\SS}(\CC)\xrightarrow{(\const_\BB)_\ast} \PSh_{\BB}(\CC).
	\end{equation*}
\end{lemma}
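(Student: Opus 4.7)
The plan is to identify both sides of the claimed equality as transposes of bifunctors and then match those bifunctors. By definition, the Yoneda embedding $h\colon\const_\BB(\CC)\to\IPSh_\Univ(\const_\BB\CC)$ in $\Cat(\BB)$ is adjoint to the mapping bifunctor $\map{\const_\BB\CC}\colon\const_\BB(\CC)^\op\times\const_\BB(\CC)\to\Univ_\BB$. Transposing further across $\const_\BB\dashv\Gamma$ and using the canonical identifications $\Gamma(\const_\BB\CC)\simeq\CC$, $\Gamma(\Univ_\BB)\simeq\BB$, and $\Gamma(\IPSh_\Univ(\const_\BB\CC))\simeq\PSh_\BB(\CC)$, the transpose of $h$ corresponds to the bifunctor $\Gamma(\map{\const_\BB\CC})\colon\CC^\op\times\CC\to\BB$. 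On the other hand, $(\const_\BB)_\ast\circ h_\CC$ is by construction transpose to $\const_\BB\circ\map{\CC}\colon\CC^\op\times\CC\to\BB$. It therefore suffices to identify
\begin{equation*}
	\Gamma(\map{\const_\BB\CC})\simeq\const_\BB\circ\map{\CC}.
\end{equation*}

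To establish this identification, I would first show that $\const_\BB$ preserves cotensors with $\infty$-categories. For any $K\in\CatS$ and $\I{C}\in\Cat(\BB)$, an application of the adjunction $\const_\BB\dashv\Gamma$ yields $\Fun_\BB(A,(\const_\BB\CC)^K)\simeq\Fun(\Gamma(A)\times K,\CC)\simeq\Fun_\BB(A,\const_\BB(\CC^K))$ naturally in $A\in\BB$, so that $\const_\BB(\CC^K)\simeq(\const_\BB\CC)^K$. Since the twisted arrow construction $\Tw(\I{C})_n=(\I{C}^{(\Delta^n)^\op\diamond\Delta^n})_0$ is built level-wise from such cotensors together with the core $\BB$-groupoid functor (which $\const_\BB$ also commutes with, as it preserves the inclusion $\BB\into\Simp\BB$), one obtains a natural equivalence $\Tw(\const_\BB\CC)\simeq\const_\BB(\Tw(\CC))$ of simplicial objects over $(\const_\BB\CC)^\op\times\const_\BB\CC\simeq\const_\BB(\CC^\op\times\CC)$. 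Consequently, the left fibration classified by $\map{\const_\BB\CC}$ is obtained by applying $\const_\BB$ to the left fibration classified by $\map{\CC}$ in $\CatS$.

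The remaining step, which is the main obstacle, is to verify the following naturality statement for the Grothendieck construction with respect to the algebraic morphism $\const_\BB$: if a left fibration $p\colon P\to\DD$ in $\CatS$ is classified by $f\colon\DD\to\SS$, then the image $\const_\BB(p)\colon\const_\BB(P)\to\const_\BB(\DD)$, which is again a left fibration, is classified by $\const_\BB\circ f$ under the identification $\Fun_\BB(\const_\BB\DD,\Univ_\BB)\simeq\Fun(\DD,\BB)$. Once this is granted, applying it to $\Tw(\CC)\to\CC^\op\times\CC$ yields the bifunctor identity above and completes the proof. The key input for this compatibility is that $\const_\BB$ preserves fibres: for a global object $c\in\DD$, the fibre of $\const_\BB(p)$ over $c$ is $\const_\BB(P_c)\simeq\const_\BB(f(c))$, which pins down the classifying map pointwise; to upgrade this pointwise statement to an equivalence of classifying maps, one invokes that a left fibration over a fixed base is determined by its corresponding functor to $\Univ_\BB$ via the Grothendieck equivalence together with naturality of both constructions in the base.
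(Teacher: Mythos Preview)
Your overall reduction --- identify the transpose of $h$ as a bifunctor $\CC^{\op}\times\CC\to\BB$ and compare it with $\const_\BB\circ\map{\CC}$ --- matches the paper's. There are, however, two genuine problems.

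First, the claimed identification $\Gamma(\const_\BB\CC)\simeq\CC$ is false in general. For instance, if $\BB=\Fun(BG,\SS)$ for a nontrivial group $G$, then $\Gamma\const_\BB\CC\simeq\Fun(BG,\CC)$. What you have is only the unit $\eta\colon\CC\to\Gamma\const_\BB\CC$, and the bifunctor obtained from $h$ by transposition is $\Gamma(\map{\const_\BB\CC})\circ\eta$, not $\Gamma(\map{\const_\BB\CC})$ alone. The identity to be established is therefore
\[
\Gamma(\map{\const_\BB\CC})\circ\eta\;\simeq\;\const_\BB\circ\map{\CC}.
\]
This is precisely what the paper proves: it invokes the external fact that $\map{\Gamma\I{D}}\simeq\Gamma\circ\Gamma(\map{\I{D}})$ for any $\BB$-category $\I{D}$, uses the action of $\eta$ on mapping $\infty$-groupoids to produce a comparison map $\const_\BB\circ\map{\CC}\to\Gamma(\map{\const_\BB\CC})\circ\eta$, and observes that this map is an equivalence by the triangle identities. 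The same error surfaces in your cotensor argument, where you write $\Fun_\BB(A,(\const_\BB\CC)^K)\simeq\Fun(\Gamma(A)\times K,\CC)$; this too relies on $\Gamma\const_\BB K\simeq K$. (The conclusion $\Tw(\const_\BB\CC)\simeq\const_\BB\Tw(\CC)$ is nonetheless correct, for the simpler reason that $\Tw$ is given by precomposition with a functor $\Delta\to\Delta$ and hence commutes with postcomposition by $\const_\BB$.)

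Second, your final step --- inferring an equivalence of classifying functors from agreement on fibres --- is a gap. A functor into $\Univ_\BB$ is not determined by its values on objects, and the phrase ``naturality of both constructions in the base'' does not furnish the missing functoriality. The compatibility you need (that $\const_\BB$ applied to a left fibration classified by $f$ yields the one classified by $\const_\BB\circ f$) is true and is established in the paper as a separate lemma (Lemma~\ref{lem:InternalExternalPresheavesFibrations}), but its proof proceeds through the explicit simplicial description of the Grothendieck construction rather than a fibrewise check. Once these two points are repaired your route goes through; it is simply a longer path than the paper's, since the paper avoids both $\Tw$ and the Grothendieck-compatibility question by working directly with the mapping bifunctor and the triangle identities.
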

\begin{proof}
	Transposing the Yoneda embedding $\CC\into\IPSh_{\Univ}(\CC)$ across the adjunction $\const_{\BB}\dashv \Gamma$ yields the composition
	\begin{equation*}
	\CC\xrightarrow{\eta} \Gamma(\CC)\into \PSh_{\BB}(\CC)
	\end{equation*}
	in which $\eta$ is the adjunction unit of $\const_{\BB}\dashv\Gamma$ and the right map is given by taking global sections of the Yoneda embedding in $\Cat(\BB)$. By in turn transposing the above map across the adjunction $\CC^{\op}\times -\dashv \Fun(\CC^{\op},-)$ in $\CatSS$, one ends up with the functor
	\begin{equation*}
	\CC^{\op}\times\CC\xrightarrow{\eta} \Gamma(\CC^{\op}\times\CC)\xrightarrow{\Gamma (\map{\CC})}\BB.
	\end{equation*}
	On the other hand, the transpose of the composition $\CC\into\PSh_{\SS}(\CC)\to\PSh_{\BB}(\CC)$ yields
	\begin{equation*}
	\CC^{\op}\times\CC\xrightarrow{\map{\CC}} \SS\xrightarrow{\const_\BB}\BB,
	\end{equation*}
	so it suffices to show that these two functors are equivalent. By~\cite[Lemma~4.4.10]{Martini2021a} the functor $\map{\Gamma\CC}$ is equivalent to the composition $\Gamma\circ\Gamma(\map{\CC})\colon \Gamma\CC^{\op}\times\Gamma\CC\to \BB\to \SS$, hence the morphism $\map{\CC}\to \map{\Gamma\CC}\circ\eta$ that is induced by the action of $\eta$ on mapping $\infty$-groupoids determines a morphism
	$\map{\CC}\to \Gamma\circ\Gamma(\map{\CC})\circ\eta$
	which in turn transposes to a map
	\begin{equation*}
	\const_\BB\circ\map{\CC}\to\Gamma(\map{\CC})\circ\eta.
	\end{equation*}
	By the triangle identities, this is an equivalence.
\end{proof}

\begin{lemma}
	\label{lem:InternalExternalPresheavesFibrations}
	For any $\infty$-category $\CC$, there is a commutative square
	\begin{equation*}
	\begin{tikzcd}
	\Fun(\CC,\SS)\arrow[r, "(\const_{\BB})_\ast"]\arrow[d, "\simeq"] & \Fun(\CC,\BB)\arrow[d, "\simeq"]\\
	\LFib_{\SS}(\CC)\arrow[r, "\const_{\BB}"] & \LFib_{\BB}(\CC)
	\end{tikzcd}
	\end{equation*}
	in which the two vertical equivalences are given by the Grothendieck construction in $\SS$ and in $\BB$, respectively.
\end{lemma}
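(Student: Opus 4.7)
The plan is to exploit the left exactness of $\const_{\BB}$ (as an algebraic morphism) together with the fact that the Grothendieck construction in either ambient topos is implemented by pullback along a universal left fibration. Write $\pi_{\SS} \colon \SS_{\ast/}\to\SS$ for the universal left fibration in $\CatS$, and $\pi_{\BB} \colon \Under{\Univ_{\BB}}{1_{\Univ_{\BB}}}\to\Univ_{\BB}$ for the one in $\Cat(\BB)$. Since $\const_{\BB}$ preserves finite limits and, via the characterisation of left fibrations as maps that are internally right orthogonal to $d^1\colon \Delta^0\into\Delta^1$, carries left fibrations in $\CatS$ to left fibrations in $\Cat(\BB)$, the two routes around the square produce honest left fibrations over $\CC\in\Cat(\BB)$ that can be compared by the Grothendieck construction in $\BB$.

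Given $F\colon\CC\to\SS$, the Grothendieck construction in $\SS$ gives $\int_{\SS} F \simeq F^{\ast}(\pi_{\SS})$. Applying $\const_{\BB}$ and using that it commutes with pullbacks, we obtain an equivalence
\begin{equation*}
\const_{\BB}\bigl(\textstyle\int_{\SS} F\bigr) \simeq \const_{\BB}(F)^{\ast}\bigl(\const_{\BB}(\pi_{\SS})\bigr)
\end{equation*}
of left fibrations over $\CC$ in $\Cat(\BB)$. The crucial step I would then carry out is to identify $\const_{\BB}(\pi_{\SS})$ with the pullback of $\pi_{\BB}$ along the functor $\iota\colon\const_{\BB}(\SS)\to\Univ_{\BB}$ which, under the adjunction $\const_{\BB}\dashv\Gamma$ and the identification $\Gamma(\Univ_{\BB})\simeq\BB$, corresponds to $\const_{\BB}\colon\SS\to\BB$. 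Granting this identification, the composite square gives that $\const_{\BB}(\int_{\SS} F)\to\CC$ is classified by $\iota\circ\const_{\BB}(F)\colon\CC\to\Univ_{\BB}$, which in turn transposes to $\const_{\BB}\circ F = (\const_{\BB})_{\ast} F$. This is precisely the classifying functor of the left fibration in the other route, proving commutativity.

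The main obstacle is the identification of $\const_{\BB}(\pi_{\SS})$ as $\iota^{\ast}(\pi_{\BB})$. Fibrewise this is immediate: for any $K\colon 1\to\SS$ the fibre of $\const_{\BB}(\pi_{\SS})$ is $\const_{\BB}(K)$, which is also the fibre of $\iota^{\ast}(\pi_{\BB})$ by construction of $\iota$. To upgrade this to an equivalence of left fibrations one may argue as follows: the Grothendieck construction is natural in the base, so the claim reduces to the universal case $\CC = \SS$ with $F = \id_{\SS}$. In this case both left fibrations classify the same functor $\const_{\BB}(\SS)\to\Univ_{\BB}$ because, by Lemma~\ref{lem:YonedaEmbeddingGlobalSections}, the Yoneda/mapping-groupoid data of $\const_{\BB}(\SS)$ in $\Cat(\BB)$ is obtained by applying $(\const_{\BB})_{\ast}$ to the corresponding data of $\SS$ in $\CatS$, which is exactly what is needed to recognise $\iota$ as the straightening of $\const_{\BB}(\pi_{\SS})$.
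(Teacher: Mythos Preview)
Your approach via the universal left fibration is valid and genuinely different from the paper's. The paper instead reduces to the case $\CC=\Delta^{\bullet}$: since both columns of the square are sheaves of $\infty$-categories on $\CatS$, it suffices to verify commutativity as a diagram of functors $\Delta^{\op}\to\CatSS$. There the Grothendieck construction is made explicit via the commutative square
\begin{equation*}
\begin{tikzcd}
\LFib_{\BB}(\Delta^\bullet)\arrow[r, hookrightarrow]\arrow[d, "\simeq"] & \Over{(\Simp\BB)}{\Delta^\bullet}\arrow[d,"\simeq"]\\
\Fun(\Delta^\bullet,\BB)\arrow[r, hookrightarrow, "\epsilon^\ast"] & \PSh_{\BB}(\Over{\Delta}{\Delta^\bullet})
\end{tikzcd}
\end{equation*}
(where $\epsilon$ is the first-vertex map), and one simply observes that the horizontal arrows and the right-hand equivalence are natural in $\BB\in\LTop$. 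This sidesteps any identification of universal fibrations.

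Your route works, but the last paragraph is doing more than it lets on. What you actually need is the chain: (i) $\const_{\BB}$ preserves the slice construction, so $\const_{\BB}(\pi_{\SS})$ is the left fibration $\Under{\const_{\BB}(\SS)}{1_{\SS}}\to\const_{\BB}(\SS)$; (ii) this slice is classified by the corepresentable $\map{\const_{\BB}(\SS)}(1_{\SS},-)\colon\const_{\BB}(\SS)\to\Univ_{\BB}$; (iii) the transpose of this corepresentable across $\const_{\BB}\dashv\Gamma$ is $\const_{\BB}\colon\SS\to\BB$, which is the definition of $\iota$. Step~(i) requires that $\const_{\BB}$ preserves the finite limits defining $(-)^{\Delta^1}$ and the pullback defining the slice, which holds by left exactness. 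Step~(iii) is where Lemma~\ref{lem:YonedaEmbeddingGlobalSections} enters, but note that the lemma concerns the \emph{contravariant} Yoneda embedding into $\IPSh_{\Univ}(\CC)$; you must apply it to $\CC=\SS^{\op}$ and then evaluate at the global object $1_{\SS}$ to extract the statement about the corepresentable. None of this is hard, but the phrase ``Yoneda/mapping-groupoid data \dots\ is obtained by applying $(\const_{\BB})_{\ast}$'' elides these steps and should be made precise.
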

\begin{proof}
	By using that both $\Fun(-,\SS)$ and $\Fun(-,\BB)$ are sheaves of $\infty$-categories on $\CatS$, it suffices to show that we have a commutative square
	\begin{equation*}
		\begin{tikzcd}
		\Fun(\Delta^\bullet,\SS)\arrow[r, "(\const_{\BB})_\ast"]\arrow[d, "\simeq"] & \Fun(\Delta^\bullet,\BB)\arrow[d, "\simeq"]\\
		\LFib_{\SS}(\Delta^\bullet)\arrow[r, "\const_{\BB}"] & \LFib_{\BB}(\Delta^\bullet)
		\end{tikzcd}
	\end{equation*}
	of functors $\Delta^{\op}\to\CatS$. Recall from~\cite[\S~4.5]{Martini2021} that the Grothendieck construction fits into a commutative diagram
	\begin{equation*}
	\begin{tikzcd}
	\LFib_{\BB}(\Delta^\bullet)\arrow[r, hookrightarrow]\arrow[d, "\simeq"] & \Over{(\Simp\BB)}{\Delta^\bullet}\arrow[d,"\simeq"]\\
	\Fun(\Delta^\bullet,\BB)\arrow[r, hookrightarrow, "\epsilon^\ast"] & \PSh_{\BB}(\Over{\Delta}{\Delta^\bullet})
	\end{tikzcd}
	\end{equation*}
	in which $\epsilon\colon (\Over{\Delta}{\Delta^\bullet})^\op\to\Delta^\bullet$ carries a map $\tau\colon \ord{k}\to\ord{n}$ to $\tau(0)\in\ord{n}$. It is now straightforward to verify that the two horizontal maps and the equivalence on the right are natural in $\BB\in\LTop$, hence the claim follows.
\end{proof}

\begin{lemma}
	\label{lem:comparisonStraighteningInternalExternal}
	For any $\BB$-category $\I{C}$ there exists a commutative square
	\begin{equation*}
	\begin{tikzcd}
		\Cocart(\I{C})\arrow[r, hookrightarrow, "\Gamma(\St_{\I{C}})"] \arrow[d, "\Gamma"]& \Simp{\LFib(\I{C})}\arrow[d, "\Gamma"]\\
		\Cocart(\Gamma(\I{C}))\arrow[r, hookrightarrow, "\St_{\Gamma\I{C}}"] & \Simp{\LFib(\Gamma\I{C})}.
	\end{tikzcd}
	\end{equation*}
\end{lemma}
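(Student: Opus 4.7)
The plan: By Remark~\ref{rem:straighteningTranposePowering}, applied both internally to $\BB$ and externally to $\SS$, the straightening functor arises by postcomposing the powering $(-)^{\Delta^\bullet}$ with the coreflection $(-)_\sharp$ onto left fibrations. Concretely, $\Gamma(\St_{\I{C}})(p)$ is the simplicial left fibration $n\mapsto\Gamma((p^{\Delta^n})_\sharp)$ over $\Gamma(\I{C})$, while $\St_{\Gamma\I{C}}(\Gamma p)$ is $n\mapsto((\Gamma p)^{\Delta^n})_\sharp$. Constructing the commutative square therefore reduces to establishing, naturally in $n\in\Delta^{\op}$ and $p\in\Cocart(\I{C})$, two compatibilities: (i) that $\Gamma$ commutes with the powering $(-)^{\Delta^n}$ in $\Cocart(-)$, and (ii) that $\Gamma$ commutes with the coreflection $(-)_\sharp\colon\Cocart(-)\to\LFib(-)$.

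Compatibility (i) is the easier one. By Remark~\ref{rem:CocartCTensoringPoweringSectionwise} the powering admits the pullback description
\begin{equation*}
\I{P}^{\Delta^n}\simeq \iFun{\Delta^n}{\I{P}}\times_{\iFun{\Delta^n}{\I{C}}}\I{C}
\end{equation*}
in $\Cat(\BB)$. As $\Gamma$ is a right adjoint it preserves this pullback, and the adjunction $\const_{\BB}\dashv\Gamma$ provides a natural equivalence $\Gamma(\iFun{\Delta^n}{-})\simeq \Fun(\Delta^n,\Gamma(-))$, which together identify the image with the analogous pullback over $\Gamma(\I{C})$ computing $\Gamma(\I{P})^{\Delta^n}$.

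The main technical point will be compatibility (ii). By Remark~\ref{rem:coreflectionCocartLFibExplicitly}, $\I{Q}_\sharp\into\I{Q}$ is the fully faithful inclusion of the subcategory spanned by the subobject $(\Comma{\I{Q}}{\I{C}}{\I{C}})_0\into\I{Q}_1$ of cocartesian morphisms. Since $\Gamma$ preserves full faithfulness and monomorphisms as well as the pullback squares in Definition~\ref{def:cocartesianMorphism} that cut out the cocartesian morphisms, $\Gamma(\I{Q}_\sharp)\into\Gamma(\I{Q})$ is the subcategory spanned by those morphisms which are $p$-cocartesian in context $1\in\BB$. To identify this with $\Gamma(\I{Q})_\sharp$, we argue that a morphism $f$ in $\Gamma(\I{Q})=\I{Q}(1)$ is $p$-cocartesian in context $1$ if and only if it is $\Gamma(p)$-cocartesian: one direction follows from the fact that $\Gamma$ preserves the defining pullback square, while the converse will be obtained by choosing, via Proposition~\ref{prop:characterisationCocartesianFibrationCocartesianMaps}, a $p$-cocartesian lift $\tilde{f}$ of $\Gamma(p)(f)$ in context $1$ and comparing $f$ with $\tilde{f}$ through the universal property of $\tilde{f}$; essential uniqueness of cocartesian lifts together with Remark~\ref{rem:stabilityCocartesianMaps} then implies that $f$ itself is $p$-cocartesian.

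Combining (i) and (ii) naturally in $n\in\Delta^{\op}$ and in $p\in\Cocart(\I{C})$ yields the desired commutative square. The naturality in both variables is built into the construction of powering and coreflection, so no further work is required once the two compatibilities are in place.
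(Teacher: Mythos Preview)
Your proposal is correct and follows essentially the same approach as the paper: both reduce to showing that $\Gamma$ commutes with the powering $(-)^{\Delta^n}$ and with the coreflection $(-)_\sharp$ onto left fibrations, and both handle (i) via the pullback description of the powering from Remark~\ref{rem:CocartCTensoringPoweringSectionwise}. The paper simply asserts that $\Gamma$ ``commutes with taking the underlying left fibration of a cocartesian fibration'' without further comment, whereas you supply an argument.

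One small remark on your treatment of (ii): the converse direction (that every $\Gamma(p)$-cocartesian morphism is $p$-cocartesian in context $1$) can be obtained more directly. By Remark~\ref{rem:characterisationCocartesianMorphisms} the subobject of $p$-cocartesian morphisms is $(\Comma{\I{P}}{\I{C}}{\I{C}})_0\into\I{P}_1$, and since the comma $\BB$-category is defined as a pullback of powers, $\Gamma$ carries it to $\Comma{\Gamma\I{P}}{\Gamma\I{C}}{\Gamma\I{C}}$, which by the same remark (applied in $\SS$) is exactly the subobject of $\Gamma(p)$-cocartesian morphisms. This bypasses the need to choose a cocartesian lift and compare. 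Your argument via uniqueness of cocartesian lifts is nonetheless valid; it just does a little more work than necessary.
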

\begin{proof}
	By construction of the straightening functor and by making use of Remark~\ref{rem:CocartCTensoringPoweringSectionwise} and Lemma~\ref{lem:YonedaEmbeddingGlobalSections}, the functor $\Gamma(\St_{\I{C}})$ fits into the diagram
	\begin{equation*}
	\begin{tikzcd}
	\Gamma(\St_{\I{C}})(-)_\bullet\arrow[r, hookrightarrow]\arrow[dr] & (-)^{\Delta^{\bullet}}\arrow[d] \arrow[r] & \iFun{\Delta^{\bullet}}{-}\arrow[d]\\
	&\I{C}\arrow[r, "\diag"] & \iFun{\Delta^{\bullet}}{\I{C}}.
	\end{tikzcd}
	\end{equation*}
	in which the square is a pullback and the upper left horizontal map is given by the inclusion of the underlying left fibration. As $\Gamma$ commutes with limits, cotensoring by $\infty$-categories and taking the underlying left fibration of a cocartesian fibration, the claim follows.
\end{proof}

\begin{lemma}
	\label{lem:globalSectionsUniversalCosimplicialObject}
	The functor $\Fun_{\BB}(\ICat_{\BB},\Univ)\to\Fun(\Cat(\BB),\SS)$ that corresponds to the global sections functor $\Gamma\colon \LFib_{\BB}(\ICat_{\BB})\to\LFib_{\SS}(\Cat(\BB))$ via the Grothendieck construction carries the simplicial object in $\Fun_{\BB}(\ICat_{\BB},\Univ)$ that is given by $\map{\ICat_{\BB}}(\Delta^\bullet,-)$ to the simplicial object $\map{\Cat(\BB)}(\Delta^\bullet,-)$ in $\Fun(\Cat(\BB),\SS)$.
\end{lemma}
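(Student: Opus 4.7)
The plan is to translate both sides of the desired identification across the Grothendieck construction, where the assertion becomes a statement about slice categories and their preservation by $\Gamma$. Under the Grothendieck equivalence $\Fun_{\BB}(\ICat_{\BB},\Univ)\simeq\LFib_{\BB}(\ICat_{\BB})$, the presheaf $\map{\ICat_{\BB}}(\Delta^n,-)\colon\ICat_{\BB}\to\Univ$ corresponds, by Yoneda's lemma, to the slice left fibration $\Under{\ICat_{\BB}}{\Delta^n}\to\ICat_{\BB}$. As $n$ varies over $\Delta$, these assemble into a simplicial object in $\LFib_{\BB}(\ICat_{\BB})$ obtained as the pullback of the target map $\ICat_{\BB}^{\Delta^1}\to\ICat_{\BB}\times\ICat_{\BB}$ along $\Delta^\bullet\times\id\colon\Delta\times\ICat_{\BB}\to\ICat_{\BB}\times\ICat_{\BB}$.

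Because $\Gamma\colon\Cat(\BB)\to\CatS$ is a right adjoint, it preserves finite limits and powers by $\infty$-categories; in particular it preserves the pullback square defining $\Under{\ICat_{\BB}}{\Delta^n}$ from $\ICat_{\BB}^{\Delta^1}$, and $\Gamma(\ICat_{\BB}^{\Delta^1})\simeq \Gamma(\ICat_{\BB})^{\Delta^1}\simeq \Cat(\BB)^{\Delta^1}$. Combining these observations yields a natural equivalence
\begin{equation*}
    \Gamma(\Under{\ICat_{\BB}}{\Delta^n})\simeq\Under{\Cat(\BB)}{\Delta^n}
\end{equation*}
of left fibrations over $\Cat(\BB)\simeq\Gamma(\ICat_{\BB})$. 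This equivalence is natural in $n\in\Delta$ since $\Gamma$ also carries the cosimplicial object $\Delta^\bullet\colon\Delta\to\ICat_{\BB}$ to the corresponding cosimplicial object $\Delta^\bullet\colon\Delta\to\Cat(\BB)$, so the entire pullback diagram defining the simplicial object $\Under{\ICat_{\BB}}{\Delta^\bullet}$ is transported to the analogous diagram defining $\Under{\Cat(\BB)}{\Delta^\bullet}$.

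Transferring back across the Grothendieck construction in $\SS$, the slice $\Under{\Cat(\BB)}{\Delta^n}\to\Cat(\BB)$ is classified by the representable presheaf $\map{\Cat(\BB)}(\Delta^n,-)\colon\Cat(\BB)\to\SS$, once again by Yoneda. Together with the naturality described above, this produces the required identification of simplicial objects in $\Fun(\Cat(\BB),\SS)$.

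The main technical obstacle is checking that all these compatibilities assemble coherently into a single map of simplicial objects, rather than just a level-wise equivalence; this is handled by viewing the entire construction as a single diagram
\begin{equation*}
    \Under{\ICat_{\BB}}{\Delta^\bullet}\to\ICat_{\BB}^{\Delta^1}\to\ICat_{\BB}\times\ICat_{\BB}
\end{equation*}
of $\BB$-categories parametrised by $\Delta$, and applying $\Gamma$ to it as a whole. The pointwise identification $\Gamma(\map{\ICat_{\BB}}(\Delta^n,C))\simeq\map{\Cat(\BB)}(\Delta^n,C)$ for a global $\BB$-category $C\colon 1\to\ICat_{\BB}$ is exactly the content of \cite[Lemma~4.4.10]{Martini2021a} as used in Lemma~\ref{lem:YonedaEmbeddingGlobalSections}, providing a useful sanity check for the fibre at each object of $\Cat(\BB)$.
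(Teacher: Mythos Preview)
Your argument is correct and considerably more direct than the paper's. Both proofs identify $\map{\ICat_{\BB}}(\Delta^\bullet,-)$ with the simplicial left fibration $\Under{(\ICat_{\BB})}{\Delta^\bullet}\to\ICat_{\BB}$ via Yoneda, but they diverge from there. You exploit the explicit description of the slice as the pullback of $(d^1,d^0)\colon\ICat_{\BB}^{\Delta^1}\to\ICat_{\BB}\times\ICat_{\BB}$ along $\Delta^\bullet\times\id$, and then use that $\Gamma$ preserves finite limits and powering by $\infty$-categories (the latter being Lemma~\ref{lem:geometricMorphismPowering}) to transport this pullback to the analogous one defining $\Under{\Cat(\BB)}{\Delta^\bullet}$; since the global object $\Delta^n\colon 1\to\ICat_{\BB}$ is sent by $\Gamma$ to $\Delta^n\in\Cat(\BB)$ naturally in $n$, the simplicial structure comes along automatically. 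The paper instead first reduces the problem from $\ICat_{\BB}$ to $\IPSh_{\Univ}(\Delta)$ and then to $\Delta$ using left Kan extension along the Yoneda embedding, and finally verifies the claim there by producing a comparison map $(\eta_\Delta)_!\to\Gamma\circ\const_\BB$ and checking it is an equivalence on representables because $\eta\colon\Under{\Delta}{\ord{n}}\to\Gamma\Under{\Delta}{\ord{n}}$ is initial. Your route avoids Lemma~\ref{lem:InternalExternalPresheavesFibrations} and the Kan extension machinery entirely; the paper's approach, while longer, has the virtue of tracing exactly how the internal and external Grothendieck constructions interact with $\const_\BB$ and $\Gamma$, which is conceptually illuminating even if not strictly necessary here.
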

\begin{proof}
	Let $i\colon \ICat_{\BB}\into\IPSh_{\Univ}(\Delta)$ be the inclusion. The simplicial object $\map{\ICat_{\BB}}(\Delta^\bullet,-)$ is obtained as the image of the simplicial object $\map{\IPSh_{\Univ}(\Delta)}(\Delta^\bullet,-)$ in $\Fun_{\BB}(\IPSh_{\Univ}(\Delta),\Univ)$ along the functor
	\begin{equation*}
	i^\ast\colon \Fun_{\BB}(\IPSh_{\Univ}(\Delta),\Univ)\to\Fun_{\BB}(\ICat_{\BB},\Univ).
	\end{equation*}
	Analogously, the simplicial object $\map{\Cat(\BB)}(\Delta^\bullet,-)$ is the image of $\map{\Simp\BB}(\Delta^\bullet,-)$ along $\Gamma(i)^\ast$.
	As the global sections functor $\Gamma\colon \LFib_{\BB}(\I{C})\to\LFib_{\SS}(\Gamma\I{C})$ is natural in $\I{C}\in\Cat(\BBB)$, we may thus replace $\ICat_{\BB}$ by $\IPSh_{\Univ}(\Delta)$ and $\Cat(\BB)$ by $\Simp\BB$. Now the functor of left Kan extension $(h_{\Delta})_!\colon \iFun{\Delta}{\Univ}\into\iFun{\IPSh_{\Univ}(\Delta)}{\Univ}$ induces an inclusion $\LFib_{\BB}(\Delta)\into\LFib_{\BB}(\IPSh_{\Univ}(\Delta))$ that acts by sending a left fibration $\I{P}\to\Delta$ to the left fibration $\I{Q}\to\IPSh(\Delta)$ that arises from the factorisation of $\I{P}\to\Delta\into\IPSh_{\Univ}(\Delta)$ into an initial map and a left fibration (see~\cite[Corollary~3.3.3]{Martini2021a}). In particular, one obtains an initial map $\I{P}\to \I{Q}$. In the case that $\I{P}$ is corepresented by a global object in $\Delta$, the left fibration $\I{Q}\to\IPSh_{\Univ}(\Delta)$ is corepresented by its image along $h_{\Delta}$. Under these conditions, the global sections functor $\Gamma$ carries the initial map $\I{P}\to\I{Q}$ to an initial map in $\CatSS$. As a consequence, the lax square
	\begin{equation*}
	\begin{tikzcd}
	\LFib_{\BB}(\IPSh_{\Univ}(\Delta))\arrow[r, "\Gamma"] & \LFib_{\SS}(\Simp\BB)\\
	\LFib_{\BB}(\Delta)\arrow[r, "\Gamma"]\arrow[u, hookrightarrow, "(h_{\Delta})_!"] & \LFib_{\SS}(\Gamma\Delta)\arrow[u, hookrightarrow, "(\Gamma h_{\Delta})_!"]
	\end{tikzcd}
	\end{equation*}
	commutes after restricting along the inclusion $\Gamma(h_{\Delta^{\op}})\colon\Gamma\Delta^{\op}\into\LFib_{\BB}(\Delta)$. Now by virtue of Lemma~\ref{lem:YonedaEmbeddingGlobalSections}, the restriction of $\Gamma h_{\Delta}$ along the adjunction unit $\eta_{\Delta}\colon \Delta\to\Gamma\Delta$ is equivalent to the composition 
	\begin{equation*}
	 \Delta\xhookrightarrow{h_{\Delta}}\Simp\SS\xrightarrow{\const_\BB}\Simp\BB. 
	\end{equation*}
	Hence, the equivalence $\map{\Simp\BB}(\Delta^\bullet,-)\simeq\map{\Simp\SS}(\Delta^\bullet, \Gamma(-))$ implies that the simplicial object $\map{\Simp\BB}(\Delta^\bullet,-)$ arises as the image of $\eta_{\Delta}^{\op}\in\Simp{(\Gamma\Delta^{\op})}$ along the inclusion $(\Gamma h_{\Delta})_!\circ h_{\Gamma\Delta^{\op}}\colon \Simp{(\Gamma\Delta^{\op})}\into \Simp{\Fun(\Simp\BB,\SS)}$. To complete the proof, it therefore suffices to construct a commutative diagram
	\begin{equation*}
	\begin{tikzcd}
	\LFib_{\BB}(\Delta)\arrow[r, "\Gamma"] & \LFib_{\SS}(\Gamma\Delta)\\
	\Delta^{\op}\arrow[ur, "h_{\Gamma\Delta^{\op}}\circ\eta_{\Delta}^\op"']\arrow[u, "\Gamma (h_{\Delta^{\op}})\circ\eta_{\Delta}^\op"].
	\end{tikzcd}
	\end{equation*}
	Again by Lemma~\ref{lem:YonedaEmbeddingGlobalSections} and by moreover making use of Lemma~\ref{lem:InternalExternalPresheavesFibrations}, the map $\Gamma (h_{\Delta^{\op}})\circ\eta_{\Delta}^\op$ is equivalent to the composition 
	\begin{equation*}
	\Delta^{\op}\xhookrightarrow{h_{\Delta^{\op}}}\LFib_{\SS}(\Delta)\xrightarrow{	\const_\BB}\LFib_{\BB}(\Delta).
	\end{equation*}
	Note that the adjunction unit $\eta$ induces a map $\id_{\LFib_{\SS}(\Delta)}\to \eta_{\Delta}^\ast\circ\Gamma\circ\const_\BB$ in which the codomain denotes the composition
	\begin{equation*}
	\LFib_{\SS}(\Delta)\xrightarrow{\const_\BB} \LFib_{\BB}(\Delta)\xrightarrow{\Gamma} \LFib_{\SS}(\Gamma\Delta)\xrightarrow{\eta_{\Delta}^\ast} \LFib_{\SS}(\Delta).
	\end{equation*}
	By transposition, we therefore end up with a map $\phi\colon(\eta_{\Delta})_!\to \Gamma\circ\const_\BB$. On account of the equivalence $(\eta_{\Delta})_!\circ h_{\Delta^{\op}}\simeq h_{\Gamma\Delta^{\op}}\circ\eta$, it now suffices to show that $\phi h_{\Delta^{\op}}$ is an equivalence. But if $n\geq 0$ is an arbitrary integer, the map $\eta\colon\Under{\Delta}{\ord{n}}\to\Gamma\Under{\Delta}{\ord{n}}$ is already initial, which implies the claim.
\end{proof}

\begin{proposition}
	\label{prop:globalSectionsUniversalCocartesianFibration}
	There is a cartesian square
	\begin{equation*}
	\begin{tikzcd}
	\Gamma\LaxUnder{(\ICat_{\BB})}{1}\arrow[d, "\Gamma(\phi_{\BB})"]\arrow[r] & \LaxUnder{(\CatS)}{1}\arrow[d, "\phi_{\SS}"]\\
	\Gamma (\ICat_{\BB})\arrow[r, "\Gamma"] & \CatS
	\end{tikzcd}
	\end{equation*}
	of $\infty$-categories.
\end{proposition}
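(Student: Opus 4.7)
The plan is to reduce the claim to an identification of two cocartesian fibrations over $\Gamma\ICat_{\BB}\simeq\Cat(\BB)$ via the straightening equivalence in $\SS$ (that is, Theorem~\ref{thm:StraighteningEquivalence} in the case $\BB\simeq\SS$, which recovers Lurie's straightening theorem), and then to verify the identification at the level of the associated $\CatS$-valued functors. By Remark~\ref{rem:universalCocartesianFibration}, the pullback of $\phi_{\SS}\colon\LaxUnder{(\CatS)}{1}\to\CatS$ along $\Gamma\colon\Cat(\BB)\to\CatS$ is the cocartesian fibration classified by $\Gamma$ itself. It therefore suffices to show that the straightening $\St_{\Cat(\BB)}(\Gamma(\phi_{\BB}))\colon\Cat(\BB)\to\CatS$ is equivalent to $\Gamma$, in a way that is compatible with the canonical comparison map.

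To compute this straightening, I would first invoke Lemma~\ref{lem:comparisonStraighteningInternalExternal}, which allows one to write $\St_{\Cat(\BB)}(\Gamma(\phi_{\BB}))$ as the image of $\Gamma(\St_{\ICat_{\BB}})(\phi_{\BB})$ under the further application of $\Gamma\colon \Simp{\LFib(\ICat_{\BB})}\to\Simp{\LFib(\Cat(\BB))}$. By construction of $\phi_{\BB}$ as the unstraightening of $\id_{\ICat_{\BB}}$, we have $\St_{\ICat_{\BB}}(\phi_{\BB})\simeq\id_{\ICat_{\BB}}$, which via the inclusion $\ICat_{\BB}\into\IPSh_{\Univ}(\Delta)$ corresponds to the simplicial object in $\Fun_{\BB}(\ICat_{\BB},\Univ)$ given by $\map{\ICat_{\BB}}(\Delta^{\bullet},-)$, by Yoneda.

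Applying $\Gamma$ to this simplicial object, Lemma~\ref{lem:globalSectionsUniversalCosimplicialObject} yields $\map{\Cat(\BB)}(\Delta^{\bullet},-)\simeq\Fun(\Delta^{\bullet},-)$ as a simplicial object in $\Fun(\Cat(\BB),\SS)$. Since $\Gamma\colon\Cat(\BB)\to\CatS$ preserves powering over $\CatS$, for every $\I{C}\in\Cat(\BB)$ we have a canonical equivalence $\Fun(\Delta^{\bullet},\I{C})\simeq\Fun(\Delta^{\bullet},\Gamma\I{C})$. Under the inclusion $\CatS\into\Simp\SS$ (restricted to the subcategory $\Cat(\BB)$), the functor $\Gamma$ corresponds precisely to the simplicial object $\Fun(\Delta^{\bullet},\Gamma(-))$. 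This exhibits $\St_{\Cat(\BB)}(\Gamma(\phi_{\BB}))$ as being equivalent to the composition $\Cat(\BB)\xrightarrow{\Gamma}\CatS\into\IPSh_{\SS}(\Delta)$, i.e.\ to the image of $\Gamma$ under the canonical inclusion $\iFun{\Cat(\BB)}{\CatS}\into\iFun{\Cat(\BB)}{\IPSh_{\SS}(\Delta)}$; equivalently, $\St_{\Cat(\BB)}(\Gamma(\phi_{\BB}))\simeq\Gamma$.

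The main bookkeeping obstacle will be ensuring that the comparison is \emph{coherent}: one must check that the equivalence $\St_{\Cat(\BB)}(\Gamma(\phi_{\BB}))\simeq\Gamma$ is induced by a \emph{canonical} map of cocartesian fibrations over $\Cat(\BB)$, so that the resulting pullback square is the one induced by the universal property of $\phi_{\SS}$ together with the global sections functor applied to the defining pullback square of $\phi_{\BB}$. This will follow by tracing through the naturality of straightening (Theorem~\ref{thm:StraighteningEquivalence}) in the comparison map from $\Gamma\ICat_{\BB}$ to $\CatS$, combined with the naturality statements implicit in Lemmas~\ref{lem:comparisonStraighteningInternalExternal} and~\ref{lem:globalSectionsUniversalCosimplicialObject}.
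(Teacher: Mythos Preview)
Your proposal is correct and follows essentially the same route as the paper: invoke Lemma~\ref{lem:comparisonStraighteningInternalExternal} to pass from the internal to the external straightening, identify $\Gamma(\St_{\ICat_{\BB}})(\phi_{\BB})$ with $\map{\ICat_{\BB}}(\Delta^{\bullet},-)$, apply Lemma~\ref{lem:globalSectionsUniversalCosimplicialObject} to obtain $\map{\Cat(\BB)}(\Delta^{\bullet},-)$, and then use the adjunction $\const_{\BB}\dashv\Gamma$ (equivalently, that $\Gamma$ preserves powering) together with the naturality of straightening in $\SS$ to conclude. The paper phrases the final identification via $\map{\Cat(\BB)}(\Delta^{\bullet},-)\simeq\map{\CatS}(\Delta^{\bullet},\Gamma(-))$ rather than via powering, and is terser about the coherence you flag, but the argument is the same.
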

\begin{proof}
	Upon identifying $\Fun_{\BB}(\ICat_{\BB},\Univ)\simeq \LFib(
	\ICat_{\BB})$ and $\Fun(\Cat(\BB),\SS)\simeq\LFib(\Cat(\BB))$ via the Grothendieck construction, Lemma~\ref{lem:comparisonStraighteningInternalExternal} gives rise to a commutative square
	\begin{equation*}
	\begin{tikzcd}
			\Cocart(\ICat_{\BB})\arrow[r, hookrightarrow, "\Gamma(\St_{\ICat_{\BB}})"] \arrow[d, "\Gamma"]& \Simp{\Fun_{\BB}(\ICat_{\BB},\Univ)}\arrow[d]\\
			\Cocart(\Cat(\BB))\arrow[r, hookrightarrow, "\St_{\Cat(\BB)}"] & \Simp{\Fun(\Cat(\BB),\SS)}.
	\end{tikzcd}
	\end{equation*}
	The functor $\Gamma(\St_{\ICat_{\BB}})$ carries the universal cocartesian fibration to $\map{\ICat_{\BB}}(\Delta^{\bullet},-)$. By Lemma~\ref{lem:globalSectionsUniversalCosimplicialObject}, the right vertical map in the above diagram sends $\map{\ICat_{\BB}}(\Delta^{\bullet},-)$ to the simplicial object $\map{\Cat(\BB)}(\Delta^{\bullet},-)$, which is equivalent to $\map{\CatS}(\Delta^\bullet,\Gamma(-))$ by virtue of the adjunction $\const_\BB\dashv \Gamma$. Using the naturality of straightening (in $\CatS$), we conclude that $\Gamma(\phi_{\BB})$ is the pullback of $\phi_{\SS}$ along the global sections functor $\Gamma$, as claimed.
\end{proof}
\begin{corollary}
	\label{cor:StraighteningGlobalSections}
	Let $f\colon \I{C}\to\ICat_{\BB}$ be a functor and let $\I{P}\to\I{C}$ be the cocartesian fibration of $\BB$-categories that is classified by $f$. Then the cocartesian fibration $\Gamma\I{P}\to\Gamma\I{C}$ is classified by $\Gamma\circ\Gamma(f)\colon \Gamma\I{C}\to\CatS$.\qed
\end{corollary}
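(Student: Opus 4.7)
The plan is to deduce the corollary as a direct pasting of pullback squares, using Proposition~\ref{prop:globalSectionsUniversalCocartesianFibration} as the main input.

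First, by Remark~\ref{rem:universalCocartesianFibration}, since $f\colon\I{C}\to\ICat_{\BB}$ classifies $\I{P}\to\I{C}$, we obtain a cartesian square in $\Cat(\BBB)$
\begin{equation*}
\begin{tikzcd}
\I{P}\arrow[r]\arrow[d] & \LaxUnder{(\ICat_{\BB})}{1}\arrow[d, "\phi_{\BB}"]\\
\I{C}\arrow[r, "f"] & \ICat_{\BB}.
\end{tikzcd}
\end{equation*}
Applying the global sections functor $\Gamma\colon\Cat(\BBB)\to\CatSS$, which preserves limits by being a right adjoint, yields a cartesian square in $\CatSS$ with bottom edge $\Gamma(f)\colon\Gamma\I{C}\to\Gamma\ICat_{\BB}$ and right edge $\Gamma(\phi_{\BB})$.

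Next, pasting this square on top of the cartesian square from Proposition~\ref{prop:globalSectionsUniversalCocartesianFibration} produces a cartesian rectangle
\begin{equation*}
\begin{tikzcd}
\Gamma\I{P}\arrow[r]\arrow[d] & \Gamma\LaxUnder{(\ICat_{\BB})}{1}\arrow[r]\arrow[d, "\Gamma(\phi_{\BB})"] & \LaxUnder{(\CatS)}{1}\arrow[d, "\phi_{\SS}"]\\
\Gamma\I{C}\arrow[r, "\Gamma(f)"] & \Gamma\ICat_{\BB}\arrow[r, "\Gamma"] & \CatS
\end{tikzcd}
\end{equation*}
whose outer square exhibits $\Gamma\I{P}\to\Gamma\I{C}$ as the pullback of the universal cocartesian fibration $\phi_{\SS}$ along $\Gamma\circ\Gamma(f)$. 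By the straightening equivalence in $\CatS$, this means precisely that $\Gamma\I{P}\to\Gamma\I{C}$ is classified by $\Gamma\circ\Gamma(f)$.

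For completeness, one should also note that $\Gamma\I{P}\to\Gamma\I{C}$ is indeed a cocartesian fibration of $\infty$-categories, but this is immediate from Proposition~\ref{prop:externalCharacterisationCocartesian} applied with $A=1$, so no separate argument is required. There is really no obstacle here; the entire content is the identification of $\Gamma(\phi_{\BB})$ carried out in Proposition~\ref{prop:globalSectionsUniversalCocartesianFibration}, which has already been done.
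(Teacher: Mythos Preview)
Your proof is correct and is exactly the intended argument: the paper marks the corollary with \qed because it follows immediately from Proposition~\ref{prop:globalSectionsUniversalCocartesianFibration} by the pullback-pasting you wrote out, using that $\Gamma$ preserves limits and Remark~\ref{rem:universalCocartesianFibration}.
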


\subsection{Straightening over the interval}
\label{sec:straighteningInterval}
Let $p\colon \I{M}\to\Delta^1$ be a cocartesian fibration in $\Cat(\BB)$, and let $\I{M}\vert_0$ and $\I{M}\vert_1$ be its fibres over $d^1\colon\Delta^0\into\Delta^1$ and $d^0\colon\Delta^0\into\Delta^1$, respectively. Our goal in this section is to understand the functor $f\colon\I{M}\vert_0\to\I{M}\vert_1$ that arises from applying the straightening functor $\St_{\Delta^1}$ to $p$. Note that the inclusion $d^1\colon \I{M}\vert_0^\flat\into(\Delta^1)^\sharp\otimes\I{M}\vert_1^\flat$ being marked left anodyne implies that there exists a unique map $h\colon(\Delta^1)^\sharp\otimes\I{M}\vert_0^\flat\to\I{M}^\natural$ that makes the diagram
\begin{equation*}
\begin{tikzcd}
\I{M}\vert_0^\flat\arrow[r, hookrightarrow]  \arrow[d, "d_1", hookrightarrow]& \I{M}^\natural\arrow[d, "p^\natural"]\\
(\Delta^1)^\sharp\otimes\I{M}\vert_0^\flat\arrow[r, "\pr_0"]\arrow[ur, dashed, "h"]& (\Delta^1)^\sharp 
\end{tikzcd}
\end{equation*}
commute. Upon applying the restriction functor $(-)\vert_{\Delta}$ to this diagram, we therefore end up with a morphism $h\colon \Delta^1\otimes\I{M}\vert_0\to\I{M}$ in $\Cocart(\Delta^1)$ whose fibre over $d^1\colon\Delta^0\into\Delta^1$ recovers the identity on $\I{M}\vert_0$. Note that the cocartesian fibration $\pr_0\colon\Delta^1\otimes\I{M}\vert_0\to\Delta^1$ is the pullback of $\I{M}\vert_0\to 1$ along $s^0\colon\Delta^1\to 1$ and therefore corresponds via straightening to the identity on $\I{M}\vert_0$. Consequently, applying the functor $\St_{\Delta^1}$ to $h$ results in a commutative square
\begin{equation*}
\begin{tikzcd}
	\I{M}\vert_0\arrow[d, "\id"]\arrow[r, "\id"] & \I{M}\vert_0\arrow[d, "f"]\\
	\I{M}\vert_0\arrow[r, "g"] & \I{M}\vert_1
\end{tikzcd}
\end{equation*}
in $\Cat(\BB)$, which of course implies $f\simeq g$. In other words, we may recover $f$ as the fibre of $h$ over the final object $d^0\colon\Delta^0\into\Delta^1$. To proceed, we first need the following characterisation of cocartesian fibrations over $\Delta^1$:

\begin{proposition}
	\label{prop:cocartesianFibrationIntervalReflective}
	A functor $p\colon\I{M}\to\Delta^1$ in $\Cat(\BB)$ is a cocartesian fibration if and only if the inclusion $i_1\colon\I{M}\vert_{1}\into\I{M}$ of the fibre of $p$ over $d^0\colon\Delta^0\into\Delta^1$ admits a left adjoint $L_1$, in which case the adjunction unit $\eta\colon m\to i_1L_1 (m)$ is a cocartesian map for every object $m\colon A\to \I{M}$ in context $A\in\BB$.
\end{proposition}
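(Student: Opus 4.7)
The plan is to translate between the cocartesian-fibration condition in the form of Proposition~\ref{prop:characterisationCocartesianFibrationCocartesianMaps}---existence of cocartesian lifts of every morphism in the base---and the adjoint-functor criterion from~\S\ref{sec:recollection} in terms of corepresentable copresheaves, exploiting that $\Delta^1$ is a poset so all its mapping $\BB$-groupoids are either empty or contractible.

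For the forward direction, assume $p$ is a cocartesian fibration. Given $m\colon A\to\I{M}$, decompose $A\simeq A_0\sqcup A_1$ according to $pm\colon A\to (\Delta^1)_0= 1\sqcup 1$. On $A_1$ set $\eta_m=\id_m$, which is cocartesian by Remark~\ref{rem:equivalencesAreCocartesian}. On $A_0$, Proposition~\ref{prop:characterisationCocartesianFibrationCocartesianMaps} produces a cocartesian lift $\eta_m\colon m\to L_1(m)$ of the unique morphism $\alpha\colon 0\to 1$, and its target necessarily lies over~$1$ and hence in $\I{M}\vert_1$. To see that $m\mapsto L_1(m)$ is the object-wise value of a left adjoint of $i_1$, evaluate the cartesian square of Definition~\ref{def:cocartesianMorphism} on an arbitrary $n\colon A\to\I{M}\vert_1$ (as done in the proof of Proposition~\ref{prop:characterisationCocartesianFibrationCocartesianMaps}) to obtain a cartesian square
\begin{equation*}
\begin{tikzcd}
\map{\I{M}}(L_1(m), i_1 n) \arrow[r, "\eta_m^\ast"] \arrow[d] & \map{\I{M}}(m, i_1 n) \arrow[d] \\
\map{\Delta^1}(1, 1) \arrow[r, "\alpha^\ast"] & \map{\Delta^1}(p(m), 1).
\end{tikzcd}
\end{equation*}
The bottom arrow is an equivalence because $\Delta^1$ is a poset, so $\eta_m^\ast$ is one too; combined with the full faithfulness of $i_1$ (which gives $\map{\I{M}\vert_1}(L_1(m), n)\simeq\map{\I{M}}(L_1(m), i_1 n)$) this exhibits $L_1(m)$ as the corepresenting object of $\map{\I{M}}(m, i_1(-))\colon\I{M}\vert_1\to\Univ$, with $\eta_m$ as its universal element. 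The criterion of~\S\ref{sec:recollection} then delivers the left adjoint $L_1$ with unit $\eta$.

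For the converse, suppose $L_1\dashv i_1$ with unit $\eta$. By Proposition~\ref{prop:characterisationCocartesianFibrationCocartesianMaps} it suffices to construct cocartesian lifts of all morphisms in $\Delta^1$. Identities are lifted by identities, which are cocartesian by Remark~\ref{rem:equivalencesAreCocartesian}; for the unique morphism $0\to 1$ one claims that $\eta_m$ is a cocartesian lift whenever $p(m)=0$. The same displayed square is checked to be cartesian by case analysis on $p(n)$: if $p(n)=0$ both $\map{\I{M}}(L_1(m),n)$ and the pullback are empty because $\map{\Delta^1}(1,0)=\emptyset$; if $p(n)=1$ the square collapses, modulo the full faithfulness of $i_1$, to the adjunction equivalence $\map{\I{M}\vert_1}(L_1(m), n)\simeq\map{\I{M}}(m, i_1 n)$.

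The main delicacy is handling an object $m\colon A\to\I{M}$ whose image under $p$ need not factor through a single vertex of $\Delta^1$; this is addressed uniformly by the decomposition $A\simeq A_0\sqcup A_1$ and the fact that on $A_1$ the unit $\eta_m$ is automatically an equivalence (by the triangle identities and full faithfulness of $i_1$) and hence cocartesian. I do not foresee a serious obstacle beyond this bookkeeping, since the cartesian square from Definition~\ref{def:cocartesianMorphism} and the adjoint-functor criterion match up almost tautologically once $\Delta^1$'s simple morphism structure is exploited.
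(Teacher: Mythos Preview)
Your proof is correct and follows essentially the same argument as the paper's. The only difference is in packaging: the paper first extracts a general criterion for cocartesian fibrations over an arbitrary poset $\CC$ (Lemma~\ref{lem:cocartesianFibrationPoset}), which reduces the cocartesian-morphism condition to the equivalence of $f^\ast$ on $\map{\I{P}}(-,i_{\geq d}(-))$ by performing your case analysis once and for all via the decomposition $\CC_0\simeq\bigsqcup_c 1$, and then applies this lemma to $\CC=\Delta^1$. Your argument inlines exactly this case analysis for $\Delta^1$, so the content is identical. The paper's abstraction pays off later, since the same lemma is reused in the proof of Lemma~\ref{lem:cartesianCocartesianFibrationCriterion}; for the purpose of this single proposition your direct argument is equally valid and perhaps more transparent.
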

The proof of Proposition~\ref{prop:cocartesianFibrationIntervalReflective} will make repeated use of the following observation:
\begin{lemma}
	\label{lem:cocartesianFibrationPoset}
	Let $\CC$ be a poset. Then a functor $p\colon\I{P}\to\CC$ in $\Cat(\BB)$ is a cocartesian fibration if and only if for every $c<d$ in $\CC$ and every object $x\colon A\to \I{P}\vert_c$ there exists a morphism $f\colon x\to y $ in $\I{P}$ such that $p(y)\simeq \pi_A^\ast(d)$ and such that the map
	\begin{equation*}
	f^\ast\colon \map{\I{P}}(y,i_{\geq d}(-))\to\map{\I{P}}(x,i_{\geq d}(-))
	\end{equation*}
	is an equivalence, where $i_{\geq d}\colon \I{P}_{\geq d}=\I{P}\times_{\CC}\Under{\CC}{d}\into\I{P}$ denotes the pullback of the inclusion $(\pi_d)_!\colon\Under{\CC}{d}\into\CC$ along $p$. If this is the case, the map $f$ is a cocartesian morphism.
\end{lemma}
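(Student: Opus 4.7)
The plan is to invoke Proposition~\ref{prop:characterisationCocartesianFibrationCocartesianMaps}: the functor $p$ is cocartesian if and only if every object $x\colon A\to\I{P}$ admits a cocartesian lift of every morphism $\alpha\colon c=p(x)\to e$ in $\CC$. Because $\CC$ is a poset, such an $\alpha$ is either an equivalence---in which case $\id_x$ is itself a cocartesian lift by Remark~\ref{rem:equivalencesAreCocartesian}---or corresponds to a strict relation $c<d$. It therefore suffices to show that for each $c<d$ and each $x\colon A\to\I{P}\vert_{c}$, the condition in the statement is equivalent to the existence of a cocartesian lift of $c<d$ at $x$, and moreover that any $f$ satisfying the condition is itself cocartesian.

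The crucial observation exploiting the poset hypothesis is that for every $z\colon B\to\I{P}$ and every $e\in\CC$, the mapping $\BB$-groupoid $\map{\CC}(e,p(z))$ is sub-terminal; concretely it may be identified with the largest subobject $B_e\hookrightarrow B$ on which the composite with $p(z)$ factors through the sieve $\Under{\CC}{e}\subseteq\CC$. By the analysis carried out in the proof of Proposition~\ref{prop:characterisationCocartesianFibrationCocartesianMaps}, a morphism $f\colon x\to y$ with $p(y)\simeq\pi_A^\ast(d)$ is cocartesian exactly when for every $z\colon B\to\I{P}$ the square
\begin{equation*}
\begin{tikzcd}
\map{\I{P}}(y,z)\arrow[r,"f^\ast"]\arrow[d] & \map{\I{P}}(x,z)\arrow[d]\\
B_d\arrow[r,hookrightarrow] & B_c
\end{tikzcd}
\end{equation*}
is cartesian in the appropriate slice of $\BB$. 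Since the left vertical map factors through $B_d\hookrightarrow B_c$ by functoriality of $p$, this square is cartesian precisely when the induced morphism $\map{\I{P}}(y,z)\to \map{\I{P}}(x,z)\vert_{B_d}$ is an equivalence over $B_d$.

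Both implications of the lemma now fall out. For the forward direction, a cocartesian lift $f$ satisfies the square criterion for every $z$, and restricting to objects $z$ that factor through $i_{\geq d}$ (so that $B_d\simeq B$) yields exactly the hypothesis. For the converse and the final assertion, given any $f$ satisfying the hypothesis, I apply it to $z\vert_{B_d}\colon B_d\to\I{P}$, which does factor through $i_{\geq d}$; combined with the base-change identification $\map{\I{P}}(-,z\vert_{B_d})\simeq\map{\I{P}}(-,z)\vert_{B_d}$, this recovers the required pullback condition for arbitrary $z$, so $f$ is cocartesian, and Proposition~\ref{prop:characterisationCocartesianFibrationCocartesianMaps} then guarantees that $p$ is a cocartesian fibration. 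The main technical hurdle is establishing the sub-terminal identification of $\map{\CC}(e,p(z))$ with $B_e\hookrightarrow B$; once this is in hand, the rest of the argument is formal manipulation of the cocartesian square criterion.
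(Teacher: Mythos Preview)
Your approach is essentially the same as the paper's: both reduce via Proposition~\ref{prop:characterisationCocartesianFibrationCocartesianMaps} to checking the cocartesian square from Definition~\ref{def:cocartesianMorphism}, and both exploit that over a poset the mapping objects $\map{\CC}(e,p(-))$ are sub-terminal. Where you phrase this pointwise (for each $z\colon B\to\I{P}$, identify $\map{\CC}(d,p(z))$ with a subobject $B_d\hookrightarrow B$), the paper instead uses the global splitting $\I{P}_0\simeq(\I{P}_{\geq d})_0\sqcup(\I{P}_{\not\geq d})_0$ and checks the square on each summand; over $\I{P}_{\not\geq d}$ the bottom-left corner is initial so the square is trivially cartesian, and over $\I{P}_{\geq d}$ the bottom map is an equivalence so cartesianness reduces to the top map being an equivalence. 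Your base-change manoeuvre with $z\vert_{B_d}$ is the pointwise shadow of this decomposition.

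There is one genuine imprecision in your reduction step. You write that a morphism $\alpha\colon c=p(x)\to e$ ``is either an equivalence \dots\ or corresponds to a strict relation $c<d$''. But for a general $x\colon A\to\I{P}$, the object $c=p(x)$ is a map $A\to\CC_0$ that need not land in a single element of $\CC$, and $\alpha\colon A\to\CC_1$ can mix identities and various strict relations on different parts of $A$. The paper handles this by using the coproduct decompositions $\CC_0\simeq\bigsqcup_{c\in\CC}1$ and $\CC_1\simeq\bigsqcup_{c\leq d}1$, which induce a splitting $A\simeq\bigsqcup_{c\leq d}A_{c\leq d}$; since cocartesianness is a local condition, one may then treat each summand separately, reducing to $x\colon A\to\I{P}\vert_c$ and $\alpha=\pi_A^\ast(c\leq d)$. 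Once you insert this decomposition in place of your dichotomy, the argument goes through and is equivalent to the paper's.
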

\begin{proof}
	For every relation $c\leq d$ in the poset $\CC$, we shall denote by $(c\leq d)\colon 1\to\CC_1$ the associated morphism in the constant $\BB$-category. Note that $\CC$ being constant implies that $\CC_1$ admits a cover $\bigsqcup_{c\leq d} 1\onto \CC_1$, which implies that for every map $f\colon A\to \CC$ there is a cover $(s_i)\colon \bigsqcup_i A_i\onto A$ such that $s_i^\ast(f)\simeq\pi_{A_i}^\ast(c\leq d)$ for some relation $(c \leq d)$ in the poset $\CC$.
	 By combining this observation with Proposition~\ref{prop:characterisationCocartesianFibrationCocartesianMaps} and Remark~\ref{rem:CocartLiftIsLocal}, we thus conclude that $p$ is cocartesian if and only if for every $c\leq d$ and every object $x\colon A\to \I{P}\vert_c$ there exists a cocartesian lift $f\colon x\to y$ of $\pi_A^\ast(c\leq d)$. Since this is always possible when $c\simeq d$, we may assume $c<d$. Note that since $\CC$ is a poset, the map $(d_1,d_0)\colon\CC_1\to\CC_0\times\CC_0$ is a monomorphism in $\BB$. Therefore, a map $f\colon x\to y$ is a lift of $\pi_A^\ast(c< d)$ if and only if $p(y)\simeq \pi_A^\ast(d)$. Now in order to finish the proof, we only need to show that the map $f$ is cocartesian if and only if the morphism
	\begin{equation*}
	f^\ast\colon \map{\I{P}}(y,i_{\geq d}(-))\to\map{\I{P}}(x,i_{\geq d}(-))
	\end{equation*}
	is an equivalence. By replacing $\BB$ with $\Over{\BB}{A}$, we can assume that $A\simeq 1$. By definition, $f$ being cocartesian means that the commutative square
	\begin{equation*}
		\begin{tikzcd}
		\map{\I{P}}(y,-)\arrow[r, "f^\ast"]\arrow[d] & \map{\I{P}}(x,-)\arrow[d]\\
		\map{\CC}(d,p(-))\arrow[r, "(c< d)^\ast"] & \map{\CC}(c,p(-))
		\end{tikzcd}
	\end{equation*}
	is a pullback. Let $\CC_{\not\geq d}$ be the full subposet of $\CC$ that is spanned by the objects in $\CC$ that do not admit a map from $d$, and let us set $\I{P}_{\not\geq d}=\I{P}\times_{\CC}\CC_{\not\geq d}$. Then $\CC_0$ decomposes into a coproduct $(\Under{\CC}{d})_0\sqcup(\CC_{\not\geq d})_0$, which in turn induces a decomposition $\I{P}_0\simeq(\I{P}_{\geq d})_0\sqcup(\I{P}_{\not\geq d})_0$. As a consequence, the above square is cartesian if and only if its restriction along both $i_{\geq d}\colon \I{P}_{\geq d}\into\I{P}$ and $i_{\not\geq d}\colon\I{P}_{\not\geq d}\into\I{P}$ is cartesian. By construction of $\CC_{\not\geq d}$, the restriction of $\map{\CC}(d,-)$ along the inclusion $\CC_{\not\geq d}\into\CC$ yields the initial object. Consequently, restricting the above square along $i_{\not\geq d}$ trivially gives rise to a pullback diagram. On the other hand, the restriction of $(c<d)^\ast$ along the inclusion $\Under{\CC}{d}\into\CC$ produces an equivalence, which shows that the above square being cartesian is equivalent to the condition that $f^\ast\colon \map{\I{P}}(y,i_{\geq d}(-))\to\map{\I{P}}(x,i_{\geq d}(-))$ is an equivalence.
\end{proof}

\begin{proof}[{Proof of Proposition~\ref{prop:cocartesianFibrationIntervalReflective}}]
	Let us first assume that the inclusion $i_1\colon \I{M}\vert_1\into\I{M}$ admits a left adjoint $L_1$. Let $m\colon A\to\I{M}$ be an arbitrary object and let $\eta\colon m\to iL(m)$ be the adjunction unit. Then the map
	\begin{equation*}
		\begin{tikzcd}
		\map{\I{M}}(i_1L_1(m),i_1(-))\arrow[r, "\eta^\ast"] & \map{\I{M}}(m,i_1(-))
		\end{tikzcd}
	\end{equation*}
	is an equivalence. Hence Lemma~\ref{lem:cocartesianFibrationPoset} implies $p$ is a cocartesian fibration and that $\eta$ is a cocartesian morphism.
	
	Conversely, suppose that $p$ is a cocartesian fibration. Given $m\colon A\to \I{M}$ and $c=p(m)$, the fact that $1$ is a final object in $\Delta^1$ gives rise to a unique map $\alpha \colon c\to 1$ in context $A$. Let $f\colon m\to m^\prime$ be a cocartesian lift of $\alpha$. By construction, $m^\prime$ is contained in the essential image of $i_1$. We would like to show that the map
	\begin{equation*}
	\eta^\ast\colon\map{\I{M}}(m^\prime,i_1(-))\to\map{\I{M}}(m,i_1(-))
	\end{equation*}
	is an equivalence. But the map
	\begin{equation*}
		 \alpha^\ast\colon \map{\Delta^1}(p( m^\prime),pi_1(-))\to\map{\Delta^1}(p(m),pi_1(-))
	\end{equation*}
	 is an equivalence, hence $\eta^\ast$ is one as well on account of $\eta$ being cocartesian. This shows that $i_1$ admits a left adjoint that is given by sending $m$ to $m^\prime$.
\end{proof}
Let $\chi\colon i_0\to i_1 f$ be the morphism of functors $\I{M}\vert_0\to\I{M}$ that is encoded by the map $h\colon\Delta^1\otimes\I{M}\vert_0\to\I{M}$. Note that as for every object $m\colon A\to\I{M}\vert_0$ the associated map $(\pi_A^\ast(0<1),\id_m)$ in $\Delta^1\otimes\I{M}\vert_0$ is cocartesian, the map $\chi(m)\colon i_0(m)\to i_1 f(m)$ in $\I{M}$ is cocartesian too. 
By virtue of Proposition~\ref{prop:cocartesianFibrationIntervalReflective}, the left adjoint $L_1\colon\I{M}\to\I{M}\vert_1$ to $i_1$ thus carries $\chi(m)$ to an equivalence in $\I{M}\vert_1$. In other words, the map $L_1\chi$ is an equivalence of functors. But since $L_1 i_1\simeq \id_{\I{M}\vert_1}$ via the counit of the adjunction $L_1\dashv i_1$, we conclude:
\begin{proposition}
	\label{prop:StraighteningIntervalExplicitly}
	The functor $f\colon\I{M}\vert_0\to\I{M}\vert_1$ that classifies the cocartesian fibration $p\colon\I{M}\to \Delta^1$ is equivalent to the composition $L_1 i_0\colon\I{M}\vert_0\into\I{M}\to\I{M}\vert_1$.\qed
\end{proposition}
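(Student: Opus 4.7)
The plan is to show that applying $L_1$ to the natural transformation $\chi\colon i_0\to i_1 f$ yields an equivalence $L_1 i_0\simeq L_1 i_1 f\simeq f$, where the second equivalence comes from the fact that $L_1 i_1\simeq \id_{\I{M}\vert_1}$ via the counit of the adjunction $L_1\dashv i_1$ (this is automatic since $i_1$ is fully faithful, being the pullback of the fully faithful inclusion $d^0\colon \Delta^0\into\Delta^1$).

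First, I would recall the key observations already made in the paragraph preceding the statement: the construction of $h\colon \Delta^1\otimes\I{M}\vert_0\to\I{M}$ out of the marked left anodyne map $d^1$ produces a natural transformation $\chi\colon i_0\to i_1 f$ whose component $\chi(m)\colon i_0(m)\to i_1 f(m)$ at any object $m\colon A\to\I{M}\vert_0$ is a cocartesian lift of the unique morphism $\pi_A^\ast(0<1)$ in $\Delta^1$. This is because the defining map of $h$ factors through the marked edge of $(\Delta^1)^\sharp\otimes\I{M}\vert_0^\flat$ at each object of $\I{M}\vert_0$, so $h$ carries marked morphisms to marked (i.e., cocartesian) ones by Remark~\ref{rem:characterisationCocartesianMorphisms}.

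The key step is then to show that $L_1$ sends $\chi(m)$ to an equivalence for every object $m$ in context. By Proposition~\ref{prop:cocartesianFibrationIntervalReflective}, the adjunction unit $\eta_{i_0(m)}\colon i_0(m)\to i_1 L_1 i_0(m)$ is itself a cocartesian lift of $\pi_A^\ast(0<1)$. Because cocartesian lifts of a fixed morphism out of a fixed object are essentially unique (this follows directly from Definition~\ref{def:cocartesianMorphism} applied to the pullback square characterizing cocartesian maps, or equivalently from Remark~\ref{rem:stabilityCocartesianMaps} combined with the fact that the map between two such lifts lies over the identity and is therefore cocartesian iff it is an equivalence), there exists an equivalence $i_1 f(m)\simeq i_1 L_1 i_0(m)$ in $\I{M}\vert_1$ compatible with $\chi(m)$ and $\eta_{i_0(m)}$. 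Applying $L_1$ and using the triangle identity $L_1\eta\simeq\id$ (valid since $L_1\dashv i_1$ is a coreflection, hence the counit is invertible), we see that $L_1\chi(m)$ is equivalent to the identity on $L_1 i_0(m)$, hence is an equivalence.

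To promote this pointwise statement to an equivalence of functors $L_1 i_0\simeq f$, I would argue that $L_1\chi\colon L_1 i_0\to L_1 i_1 f\simeq f$ is a map of functors $\I{M}\vert_0\to\I{M}\vert_1$ all of whose components (in every context $A\in\BB$ and at every object) are equivalences, and thus by~\cite[Corollary~4.7.17]{Martini2021} it is an equivalence. The only subtlety to watch for is ensuring that the identification of cocartesian lifts is sufficiently natural in $m$; this should cause no real difficulty since the construction of $\chi$ itself is natural by construction (it is obtained by straightening a globally defined map in $\Cocart(\Delta^1)$), and the adjunction unit $\eta$ is a natural transformation. Thus the expected main obstacle — establishing naturality of the comparison map $i_1 f\simeq i_1 L_1 i_0$ — dissolves once one observes that both maps $\chi$ and $\eta i_0$ are natural transformations whose components are cocartesian lifts of the same morphism, and apply the essential uniqueness of such lifts parametrically.
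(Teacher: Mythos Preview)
Your proposal is correct and follows essentially the same approach as the paper: both construct the natural transformation $\chi\colon i_0\to i_1 f$, observe that its components are cocartesian, deduce that $L_1\chi$ is an equivalence, and conclude via $L_1 i_1\simeq\id$. Your argument for why $L_1\chi(m)$ is an equivalence (comparing $\chi(m)$ with the unit $\eta_{i_0(m)}$ via essential uniqueness of cocartesian lifts) just makes explicit what the paper leaves implicit when it invokes Proposition~\ref{prop:cocartesianFibrationIntervalReflective}. One terminological slip: $L_1\dashv i_1$ with $i_1$ fully faithful is a \emph{reflection}, not a coreflection, but your conclusion (invertible counit, hence $L_1\eta$ invertible by the triangle identity) is the correct one for that situation.
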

\begin{remark}
	\label{rem:StraighteningIntervalExplicitly}
	Proposition~\ref{prop:StraighteningIntervalExplicitly} in particular shows that for any object $m\colon A\to\I{M}\vert_0$ in context $A\in\BB$, the object $f(m)\colon A\to\I{M}\vert_1$ is given by the codomain of the unique cocartesian lift $f\colon m\to m^\prime$ of the map $\pi_A^\ast(0<1)$ in $\Delta^1$. More generally, if $F\colon\I{C}\to\ICat_{\BB}$ is an arbitrary functor and if $f\colon c\to d$ is a map in $\I{C}$ in context $A\in\BB$, the straightforward observation that a lift $h\colon x\to y$ in $\Un_{\I{C}}(F)$ of $f$ is cocartesian if and only if it defines a cocartesian lift of $0<1$ in the pullback $\Un_{\pi_A^\ast\I{C}}(\pi_A^\ast F)\times_{\pi_A^\ast\I{C}}\Delta^1\simeq\Un_{\Delta^1}(F(f))$ implies that the object $y\colon A\to \Un_{\I{C}}(F)\vert_d\simeq F(d)$ recovers the image of $x$ along $F(f)$.
\end{remark}

\begin{corollary}
	\label{cor:cartesianCocartesianFibrationInterval}
	A cocartesian fibration $p\colon \I{M}\to\Delta^1$ is cartesian if and only if the functor $f\colon\I{M}\vert_0\to\I{M}\vert_1$ admits a right adjoint $g\colon \I{M}\vert_1\to\I{M}\vert_0$. If this is the case, then $g^{\op}$ is classified by the cocartesian fibration $p^\op\colon\I{M}^\op\to (\Delta^1)^\op\simeq\Delta^1$.
\end{corollary}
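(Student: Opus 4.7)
My plan is to combine Proposition~\ref{prop:cocartesianFibrationIntervalReflective} (applied to both $p$ and $p^\op$) with Proposition~\ref{prop:StraighteningIntervalExplicitly} and basic adjunction calculus. By the dual of Proposition~\ref{prop:cocartesianFibrationIntervalReflective}, the cocartesian fibration $p$ is also cartesian precisely when the inclusion $i_0\colon\I{M}\vert_0\into\I{M}$ admits a right adjoint $R_0$. So the task reduces to showing that this condition is equivalent to $f\simeq L_1 i_0$ admitting a right adjoint.

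For the easy direction, if $i_0\dashv R_0$ then composing with $L_1\dashv i_1$ yields $f = L_1 i_0 \dashv R_0 i_1$ by~\cite[Corollary~3.1.11]{Martini2021a} (or simply by pasting adjunctions), so $g\simeq R_0 i_1$ is the right adjoint of $f$.

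For the converse, assume $f\dashv g$. I construct a right adjoint to $i_0$ fibrewise. Given $m\colon A\to\I{M}$, the decomposition $\Delta^1_0\simeq 1\sqcup 1$ pulls back to a decomposition $A\simeq A_0\sqcup A_1$ according to whether $p(m)$ equals $0$ or $1$. On the $A_0$-piece, $m$ already lies in $\I{M}\vert_0$ and I take $R_0(m)\vert_{A_0}=m$ with identity counit. On the $A_1$-piece, regarding $m\vert_{A_1}$ as an object of $\I{M}\vert_1$, I set $R_0(m)\vert_{A_1}=g(m\vert_{A_1})$ with counit $i_0 g(m)\to m$ defined as the composition
\begin{equation*}
i_0 g(m)\xrightarrow{\chi(g(m))} i_1 f g(m)\xrightarrow{i_1\epsilon_m} i_1(m),
\end{equation*}
where $\chi$ is the unit $\id\to i_1 L_1$ of $L_1\dashv i_1$ and $\epsilon$ is the counit of $f\dashv g$. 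To verify universality, note that $i_0$ is fully faithful (as a pullback of the fully faithful $d^1\colon\Delta^0\into\Delta^1$), and for any $n\colon B\to\I{M}\vert_0$ the cocartesianness of $\chi(n)$ (proved just before Proposition~\ref{prop:StraighteningIntervalExplicitly}) yields
\begin{equation*}
\map{\I{M}}(i_0 n, i_1 m) \simeq \map{\I{M}}(i_1 f n, i_1 m)\simeq \map{\I{M}\vert_1}(fn, m)\simeq \map{\I{M}\vert_0}(n, gn),
\end{equation*}
so the constructed map indeed exhibits $g(m)$ as a reflection.

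For the final statement, the $p$ cocartesian-and-cartesian hypothesis makes $p^\op\colon\I{M}^\op\to(\Delta^1)^\op\simeq\Delta^1$ a cocartesian fibration. Under the identification $(\Delta^1)^\op\simeq\Delta^1$, the fibre of $p^\op$ over $0$ becomes $(\I{M}\vert_1)^\op$ and the fibre over $1$ becomes $(\I{M}\vert_0)^\op$, while the inclusion of the fibre over $1$ is $i_0^\op$. Proposition~\ref{prop:StraighteningIntervalExplicitly} then identifies the classifying functor of $p^\op$ as $(R_0)^\op\circ i_1^\op = (R_0 i_1)^\op$, and uniqueness of right adjoints gives $R_0 i_1\simeq g$, hence the classifying functor is $g^\op$ as claimed.

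The only step that requires some care is the converse direction (constructing $R_0$ from $g$), because one has to glue the trivial behaviour on the fibre $\I{M}\vert_0$ with the nontrivial behaviour coming from $g$ on $\I{M}\vert_1$; everything else is a formal manipulation of the adjunctions $L_1\dashv i_1$ and $i_0\dashv R_0$ together with the cocartesianness of the unit $\chi$.
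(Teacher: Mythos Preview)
Your argument is correct and follows essentially the same route as the paper. The only cosmetic difference is that for the converse direction the paper invokes the dual of Lemma~\ref{lem:cocartesianFibrationPoset}, which reduces the cartesianness check to objects $m$ in $\I{M}\vert_1$ only, thereby avoiding your explicit decomposition $A\simeq A_0\sqcup A_1$; the mapping-space computation you give for the $A_1$-component is exactly the one in the paper. (Two trivial slips: in your displayed chain the final term should be $\map{\I{M}\vert_0}(n,gm)$ rather than $\map{\I{M}\vert_0}(n,gn)$, and since $R_0$ is right adjoint to $i_0$ the correct word is ``coreflection'' rather than ``reflection''.)
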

\begin{proof}
	The dual of Proposition~\ref{prop:cocartesianFibrationIntervalReflective} implies that $p$ is a cartesian fibration if and only if the inclusion $i_0\colon\I{M}\vert_0\into\I{M}$ admits a right adjoint $R_0\colon\I{M}\to\I{M}\vert_0$. Hence Proposition~\ref{prop:StraighteningIntervalExplicitly} both shows that the functor $g=R_0i_1$ is right adjoint to $f\simeq L_1 i_0$ and that $g^\op$ arises as the straightening of $p^\op$. Conversely, suppose that $f$ has a right adjoint $g$. For any object $m\colon A\to\I{M}\vert_1$, we obtain a map $h\colon i_0 g(m)\to i_1(m)$ that is defined via the composition
	\begin{equation*}
	i_0g(m)\xrightarrow{\eta} i_1L_1 i_0g(m)\xrightarrow{\simeq} i_1 f g(m)\xrightarrow{\epsilon}i_1(m)
	\end{equation*}
	where $\eta$ is the unit of the adjunction $L_1\dashv i_1$ and $\epsilon$ is the counit of the adjunction $f\dashv g$. We claim the map
	\begin{equation*}
	h_\ast\colon \map{\I{M}}(i_0(-),i_0g(m))\to\map{\I{M}}(i_0(-),i_1(m))
	\end{equation*}
	is an equivalence. Unwinding the definitions, the composition of $h_\ast$ with the equivalence
	\begin{equation*}
	\map{\I{M}\vert_0}(-,g(m))\xrightarrow{\simeq} \map{\I{M}}(i_0(-),i_0g(m))
	\end{equation*}
	turns out to be equivalent to the composition
	\begin{equation*}
	\map{\I{M}\vert_0}(-,g(m))\xrightarrow{\simeq}\map{\I{M}\vert_1}(f(-), m)\xrightarrow{\simeq} \map{\I{M}\vert_1}(L_1 i_0(-), m)\xrightarrow{\simeq} \map{\I{M}}(i_0(-), i_1(m))
	\end{equation*}
	in which the two outer equivalences are determined by the two adjunctions $f\dashv g$ and $L_1\dashv i_1$.  Consequently, $h_\ast$ is an equivalence, hence the dual version of Lemma~\ref{lem:cocartesianFibrationPoset}  implies that $p$ is a cartesian fibration.
\end{proof}
\begin{remark}
	In large parts, our treatment of cocartesian fibrations over the interval is an adaptation of the discussion in~\cite[\href{https://kerodon.net/tag/02FJ}{\S~02FJ}]{kerodon} to $\BB$-categories.
\end{remark}

\section{Applications}
\label{sec:app}
Having established the straightening equivalence in \S~\ref{sec:straighteningEquivalence}, we will use this final chapter to briefly mention two applications: in \S~\ref{sec:applicationLimitsColimits} we give formulas for the limit and colimit of a diagram in $\ICat_{\BB}$, and in \S~\ref{sec:applicationFunctorialityAdjoints} we discuss how passing from a left adjoint functor between $\BB$-categories to its right adjoint can be turned into a functor.
\subsection{Internal limits and colimits of $\BB$-categories}
\label{sec:applicationLimitsColimits}
The straightening equivalence allows us to derive formulas for the limit and the colimit of a diagram of the form $d\colon\I{J}\to\ICat_{\BB}$. As the colimit functor $\colim\colon \iFun{\I{J}}{\ICat_{\BB}}\to\ICat_{\BB}$ is left adjoint to the diagonal functor, one can compute $\colim d$ as the value of the left adjoint of the pullback map $\pi_{\I{J}}^\ast\colon\ICat_{\BB}\to\ICocart_{\I{J}}$ at $\Un_{\I{J}}(d)$. Together with Remark~\ref{rem:LeftAdjointPullbackCocartPoint}, this shows:
\begin{proposition}
	\label{prop:formulaColimitsCatB}
	Let $d\colon \I{J}\to\ICat_{\BB}$ be a small diagram, and let $p\colon\I{P}\to\I{J}$ be the unstraightening of $d$. Then the $\BB$-category $\colim d$ is equivalent to the localisation $\I{P}_\sharp^{-1}\I{P}$ of $\I{P}$ at the subcategory $\I{P}_\sharp\into\I{P}$ that is spanned by the cocartesian maps.\qed
\end{proposition}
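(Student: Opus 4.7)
The plan is to transport the computation of $\colim d$ across the straightening equivalence of Theorem~\ref{thm:StraighteningEquivalence}, reducing the statement to the explicit description of the base-change left adjoint already recorded in Remark~\ref{rem:LeftAdjointPullbackCocartPoint}. First I would observe that, by naturality of straightening in $\I{J}$ (applied to the structure map $\pi_{\I{J}}\colon\I{J}\to 1$), there is a commutative square
\begin{equation*}
\begin{tikzcd}
\ICocart_{1}\arrow[r, "\St_{1}"]\arrow[d, "\pi_{\I{J}}^\ast"'] & \ICat_{\BB}\arrow[d, "\diag"]\\
\ICocart_{\I{J}}\arrow[r, "\St_{\I{J}}"] & \iFun{\I{J}}{\ICat_{\BB}}
\end{tikzcd}
\end{equation*}
whose horizontal maps are equivalences by Theorem~\ref{thm:StraighteningEquivalence} (together with Remark~\ref{rem:StraigeningPoint} for the top row, which allows us to identify $\St_{1}$ with the inclusion $\ICat_{\BB}\simeq\ICocart_{1}\into\ICat_{\BB}$). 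In particular, the diagonal functor $\diag$ on $\ICat_{\BB}$ is conjugate under straightening to the base-change functor $\pi_{\I{J}}^\ast\colon \ICocart_{1}\to\ICocart_{\I{J}}$.

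The colimit functor $\colim_{\I{J}}\colon\iFun{\I{J}}{\ICat_{\BB}}\to\ICat_{\BB}$ is by definition the left adjoint of $\diag$, which exists because $\ICat_{\BB}$ is cocomplete. By Proposition~\ref{prop:CocartBaseChangeLeftAdjoint}, the functor $\pi_{\I{J}}^\ast$ also admits a left adjoint $(\pi_{\I{J}})_!$, and by passing to left adjoints in the square above one obtains a canonical equivalence
\begin{equation*}
\colim_{\I{J}}\circ\St_{\I{J}}\simeq\St_{1}\circ(\pi_{\I{J}})_!.
\end{equation*}
Applied to $p=\Un_{\I{J}}(d)$ and using once more that $\St_{1}$ is (essentially) the identity, this gives $\colim d\simeq (\pi_{\I{J}})_!(p)$.

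Finally, I would invoke Remark~\ref{rem:LeftAdjointPullbackCocartPoint}, which identifies $(\pi_{\I{J}})_!(p)$ explicitly with the pushout $\I{P}_\sharp^{-1}\I{P}=\I{P}\sqcup_{\I{P}_\sharp}\I{P}_\sharp^{\gp}$, i.e.\ the localisation of $\I{P}$ at the wide subcategory $\I{P}_\sharp\into\I{P}$ spanned by the cocartesian morphisms (cf.\ Remark~\ref{rem:coreflectionCocartLFibExplicitly}). Combining these two identifications yields $\colim d\simeq\I{P}_\sharp^{-1}\I{P}$, as claimed. There is no real obstacle here beyond checking the naturality square commutes; this is subsumed by the naturality assertion of Theorem~\ref{thm:StraighteningEquivalence}, and the only mildly delicate point is confirming that the natural transformation $\diag\circ\St_{1}\to\St_{\I{J}}\circ\pi_{\I{J}}^\ast$ is an equivalence rather than merely a lax comparison, which follows since both composites classify, via straightening, the cocartesian fibration $\I{J}\times\I{D}\to\I{J}$ attached to a constant $\BB$-category $\I{D}$.
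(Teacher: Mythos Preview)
Your argument is correct and is exactly the approach the paper takes: transport the adjunction $\colim_{\I{J}}\dashv\diag$ across the straightening equivalence to the adjunction $(\pi_{\I{J}})_!\dashv\pi_{\I{J}}^\ast$, then invoke Remark~\ref{rem:LeftAdjointPullbackCocartPoint} for the explicit computation of $(\pi_{\I{J}})_!$. The paper simply states this in one sentence rather than writing out the naturality square and the passage to left adjoints, but the content is identical; your closing remark about a ``mildly delicate point'' is unnecessary, since the naturality assertion in Theorem~\ref{thm:StraighteningEquivalence} already guarantees the square commutes on the nose.
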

Dually, the limit functor $\lim\colon\iFun{\I{J}}{\ICat_{\BB}}\to\ICat_{\BB}$ is right adjoint to the diagonal functor and therefore corresponds via unstraightening to the right adjoint of $\pi_{\I{J}}^\ast\colon \ICat_{\BB}\to\ICocart_{\I{J}}$. Using Remark~\ref{rem:pushforwardCocartPoint}, this shows:
\begin{proposition}
	\label{prop:formulaLimitsCatB}
	Let $d\colon \I{J}\to\ICat_{\BB}$ be a small diagram, and let $p\colon\I{P}\to\I{J}$ be the unstraightening of $d$. Then the $\BB$-category $\lim d$ is equivalent to the $\BB$-category $(\Over{\iFun{\I{J}^\sharp}{\I{P}^\natural}}{\I{J}^\sharp})\vert_{\Delta}$ of cocartesian sections of $p$.\qed
\end{proposition}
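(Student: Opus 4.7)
The plan is to dualise the argument sketched for Proposition~\ref{prop:formulaColimitsCatB}. The limit functor $\lim\colon\iFun{\I{J}}{\ICat_{\BB}}\to\ICat_{\BB}$ is by definition the right adjoint of the diagonal $\diag\colon\ICat_{\BB}\to\iFun{\I{J}}{\ICat_{\BB}}$. Under the straightening equivalence from Theorem~\ref{thm:StraighteningEquivalence}, applied naturally to the projection $\pi_{\I{J}}\colon\I{J}\to 1$, the diagonal corresponds to the pullback functor $\pi_{\I{J}}^\ast\colon\ICocart_1\simeq\ICat_{\BB}\to\ICocart_{\I{J}}$. Consequently, after conjugating by the equivalence $\St_{\I{J}}$, the limit functor is identified with the right adjoint of $\pi_{\I{J}}^\ast$.

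Next, I would invoke the existence of this right adjoint. Since the terminal projection $\pi_{\I{J}^\sharp}\colon\I{J}^\sharp\to 1$ is marked proper by Proposition~\ref{prop:projectionsAreProper}, Proposition~\ref{prop:CocartBaseChangeRightAdjoint} yields a right adjoint $(\pi_{\I{J}})_\ast\colon\ICocart_{\I{J}}\to\ICocart_1\simeq\ICat_{\BB}$. Applying this to $\Un_{\I{J}}(d)\simeq p$ therefore computes $\lim d$.

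It then remains to describe $(\pi_{\I{J}})_\ast(p)$ explicitly. This is precisely the content of Remark~\ref{rem:pushforwardCocartPoint}: the right adjoint of $\pi_{\I{J}}^\ast$ is obtained by restricting the geometric morphism $(\pi_{\I{J}^\sharp})_\ast\colon\Over{(\mSimp\BB)}{\I{J}^\sharp}\to\mSimp\BB$ to marked cocartesian fibrations, and evaluated on $p^\natural$ it yields the $\BB$-category $(\Over{\iFun{\I{J}^\sharp}{\I{P}^\natural}}{\I{J}^\sharp})\vert_{\Delta}$ of cocartesian sections. Chaining these identifications produces the claimed equivalence.

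The main conceptual point to verify carefully is the first one: that the diagonal $\diag\colon\ICat_{\BB}\to\iFun{\I{J}}{\ICat_{\BB}}$ transports under $\St$ to $\pi_{\I{J}}^\ast$. This is where the naturality of straightening in $\I{J}$ is used, together with the fact that $\St_1\colon\ICocart_1\simeq\ICat_{\BB}$ is the identity (Remark~\ref{rem:StraigeningPoint}); the rest of the argument is a formal manipulation of adjoints, so no genuine obstacle is expected beyond this bookkeeping.
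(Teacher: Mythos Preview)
Your proposal is correct and follows essentially the same approach as the paper: the paper's proof consists precisely of identifying $\lim$ with the right adjoint of $\pi_{\I{J}}^\ast$ via the straightening equivalence and then invoking Remark~\ref{rem:pushforwardCocartPoint} for the explicit description. Your additional justification that $(\pi_{\I{J}})_\ast$ exists via Propositions~\ref{prop:projectionsAreProper} and~\ref{prop:CocartBaseChangeRightAdjoint} is exactly what the first sentence of Remark~\ref{rem:pushforwardCocartPoint} already records.
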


\subsection{Functoriality of passing between right and left adjoints}
\label{sec:applicationFunctorialityAdjoints}
Let $R\into (\ICat_{\BB})_1$ be the subobject that is spanned by the right adjoint functors, and let $\ICat_{\BB}^R$ be the subcategory of $\ICat_{\BB}$ that is determined by $R$ (in the sense of the discussion in~\cite[\S~2.9]{Martini2021a}). Note that since the condition for a functor between $\BB$-categories to be a right adjoint is local~\cite[Remark~3.3.6]{Martini2021a}, a functor $f\colon\I{C}\to\I{D}$ between $\Over{\BB}{A}$-categories defines an object in $R$ if and only if $f$ is a right adjoint. Moreover, $R$ is closed under equivalences and composition in the sense of~\cite[Proposition~2.9.8]{Martini2021a}, hence the inclusion $\ICat_{\BB}^R\into\ICat_{\BB}$ induces an equivalence $(\ICat_{\BB}^R)_1\simeq R$. In particular, a functor between $\Over{\BB}{A}$-categories defines a map in $\ICat_{\BB}^R$ if and only if it is a right adjoint. We define the subcategory $\ICat_{\BB}^L\into\ICat_{\BB}$ that is spanned by the \emph{left} adjoints in an analogous fashion. Note that the equivalence $(-)^\op\colon \ICat_{\BB}\simeq\ICat_{\BB}$ restricts to an equivalence $\ICat_{\BB}^R\simeq\ICat_{\BB}^L$. Our goal in this section is to prove:
\begin{proposition}
	\label{prop:dualityRightLeftAdjoint}
	There is an equivalence $(\ICat_{\BB}^R)^\op\simeq\ICat_{\BB}^L$ that carries a right adjoint functor to its left adjoint.
\end{proposition}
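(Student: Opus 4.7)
The plan is to exhibit both $\ICat_{\BB}^L$ and $(\ICat_{\BB}^R)^\op$ as the simplicial object that assigns to $[n]\in \Delta$ the core $\infty$-groupoid of bicartesian fibrations over $\Delta^n$, where by a bicartesian fibration we mean a cocartesian fibration that is simultaneously cartesian.

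For the left-hand side, since $\ICat_{\BB}^L$ is a subcategory of $\ICat_{\BB}$ in the sense of~\cite[\S~2.9]{Martini2021a}, its $n$-simplices are the core of the $\infty$-category of functors $\Delta^n\to \ICat_{\BB}$ that carry every edge to a left adjoint. By Theorem~\ref{thm:StraighteningEquivalence}, such a functor classifies a cocartesian fibration $p\colon \I{P}\to\Delta^n$, and by Proposition~\ref{prop:StraighteningIntervalExplicitly} the edge-wise condition translates into every transport functor $\alpha^\ast$ of $p$ admitting a right adjoint. Combining Proposition~\ref{prop:characterisationCocartesianFibrationCocartesianMaps} (and its dual) with Corollary~\ref{cor:cartesianCocartesianFibrationInterval}, this is in turn equivalent to $p$ being bicartesian, yielding a natural equivalence between $(\ICat_{\BB}^L)_n$ and the core of the $\infty$-category of bicartesian fibrations over $\Delta^n$. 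For the right-hand side, using the standard identification $\Fun_{\BB}(\Delta^n,\I{D}^\op)^\simeq\simeq \Fun_{\BB}((\Delta^n)^\op,\I{D})^\simeq$, one computes
\begin{equation*}
((\ICat_{\BB}^R)^\op)_n\simeq \Fun_{\BB}((\Delta^n)^\op,\ICat_{\BB}^R)^\simeq,
\end{equation*}
i.e.\ the core of the $\infty$-category of functors $(\Delta^n)^\op\to \ICat_{\BB}$ whose every edge is a right adjoint. The cartesian straightening of Remark~\ref{rem:straighteningEquivalenceCart}, together with the dual of Corollary~\ref{cor:cartesianCocartesianFibrationInterval}, identifies these with bicartesian fibrations over $\Delta^n$, giving a second natural equivalence of the same type.

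Both identifications are compatible with the simplicial structure: for $\phi\colon [m]\to [n]$ in $\Delta$, pullback along $\phi_\ast\colon \Delta^m\to \Delta^n$ on bicartesian fibrations corresponds to the induced face/degeneracy of $\ICat_{\BB}^L$ by naturality of cocartesian straightening, and simultaneously, by naturality of cartesian straightening, to precomposition with $(\phi_\ast)^\op\colon (\Delta^m)^\op\to(\Delta^n)^\op$ on $\Fun_{\BB}((\Delta^\bullet)^\op,\ICat_{\BB}^R)^\simeq$, which is precisely the face/degeneracy of $(\ICat_{\BB}^R)^\op$. This yields an equivalence $\ICat_{\BB}^L\simeq (\ICat_{\BB}^R)^\op$ of simplicial objects and hence of large $\BB$-categories. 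At the level of $1$-simplices, a bicartesian fibration $p\colon \I{M}\to \Delta^1$ straightens under cocartesian and cartesian straightening, respectively, to the functors $f=L_1 i_0\colon \I{M}\vert_0\to \I{M}\vert_1$ and $g=R_0 i_1\colon \I{M}\vert_1\to \I{M}\vert_0$, and by the proof of Corollary~\ref{cor:cartesianCocartesianFibrationInterval} one has $f\dashv g$; thus the equivalence sends a right adjoint to its left adjoint partner, as required.

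The principal technical obstacle is precisely this last compatibility: verifying that the $\op$-twist implicit in the cartesian straightening equivalence $\ICart_{\I{C}}\simeq \iFun{\I{C}^\op}{\ICat_{\BB}}$ matches the $\op$-twist in the definition of $(\ICat_{\BB}^R)^\op$, so that both identifications of the common intermediate bicartesian object assemble into a genuine equivalence of simplicial objects.
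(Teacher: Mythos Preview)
Your approach is essentially the paper's: both identify $(\ICat_{\BB}^L)_\bullet$ with the core of bicartesian fibrations over $\Delta^\bullet$ via straightening and then exploit that this intermediate object is self-dual (the paper applies $(-)^\op$ to the fibrations and then composes with $(-)^\op\colon(\ICat_{\BB}^L)^\op\simeq(\ICat_{\BB}^R)^\op$, whereas you invoke cartesian straightening directly, but this is the same manoeuvre repackaged). The one substantive point is that the passage from ``edge-wise bicartesian'' to ``bicartesian over $\Delta^n$'' is not covered by Proposition~\ref{prop:characterisationCocartesianFibrationCocartesianMaps}; the paper isolates this as a separate Lemma~\ref{lem:cartesianCocartesianFibrationCriterion}, and your argument needs that lemma (or its proof) rather than the cited proposition.
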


The proof of Proposition~\ref{prop:dualityRightLeftAdjoint} requires the following lemma, whose analogue for cocartesian fibrations of $\infty$-categories appears as~\cite[\href{https://kerodon.net/tag/02FP}{Proposition~02FP}]{kerodon}.
\begin{lemma}
	\label{lem:cartesianCocartesianFibrationCriterion}
	A cocartesian fibration $p\colon\I{P}\to\I{C}$ in $\Cat(\BB)$ is a cartesian fibration if and only if for every morphism $f\colon\Delta^1\otimes A\to \I{C}$ the functor $\I{P}\vert_f\to\Delta^1\otimes A$ is a cartesian fibration.
\end{lemma}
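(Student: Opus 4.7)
The ``only if'' direction is immediate: cartesian fibrations are stable under pullback by the dual of Proposition~\ref{prop:pullbackCocartesianFibration}, so every pullback of a cartesian fibration is again cartesian. For the ``if'' direction, the dual of Proposition~\ref{prop:characterisationCocartesianFibrationCocartesianMaps} reduces the task to exhibiting, for every object $y\colon A\to\I{P}$ and every morphism $\alpha\colon c\to p(y)$ in $\I{C}$ in context $A\in\BB$, a $p$-cartesian lift $\tilde{f}\colon x\to y$ of $\alpha$. Viewing $\alpha$ as a functor $\Delta^1\otimes A\to\I{C}$, form the pullback $q\colon\I{P}\vert_\alpha\to\Delta^1\otimes A$; by Proposition~\ref{prop:pullbackCocartesianFibration} this is cocartesian, and by hypothesis it is also cartesian. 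The dual of Proposition~\ref{prop:characterisationCocartesianFibrationCocartesianMaps} applied to $q$ (working internally to $\Over{\BB}{A}$ via Remark~\ref{rem:BCCocart}) then produces a $q$-cartesian morphism $\tilde{f}\colon x\to y$ lifting the canonical edge of $\Delta^1\otimes A$; this will be our candidate.

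To verify that $\tilde{f}$ is $p$-cartesian in $\I{P}$, one checks the mapping space criterion: for every test object $z\colon B\to\I{P}$ and every morphism $\beta\colon p(z)\to c\vert_B$, composition with $\tilde{f}\vert_B$ should induce an equivalence $\map{\I{P}}(z,x\vert_B)_\beta\simeq\map{\I{P}}(z,y\vert_B)_{\alpha\vert_B\beta}$. Applying the hypothesis to both $\beta$ and the composite $\alpha\vert_B\beta$ yields cartesian fibrations $\I{P}\vert_\beta\to\Delta^1\otimes B$ and $\I{P}\vert_{\alpha\vert_B\beta}\to\Delta^1\otimes B$ with cartesian lifts $\tilde{\beta}\colon w\to x\vert_B$ and $\tilde{\gamma}\colon w'\to y\vert_B$, respectively. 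Their universal properties identify the two sides of the desired equivalence with the fibre mapping spaces $\map{\I{P}\vert_{p(z)}}(z,w)$ and $\map{\I{P}\vert_{p(z)}}(z,w')$, while $\tilde{\gamma}$'s universal property factors the composite $\tilde{f}\vert_B\circ\tilde{\beta}$ through $\tilde{\gamma}$ via a unique fibre morphism $\phi\colon w\to w'$; the induced map on mapping spaces is then composition with $\phi$, reducing the problem to showing that $\phi$ is an equivalence.

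The main obstacle is establishing that $\phi$ is an equivalence, which amounts to showing that $\tilde{f}\vert_B\circ\tilde{\beta}$ is itself cartesian for $\I{P}\vert_{\alpha\vert_B\beta}\to\Delta^1\otimes B$, so that uniqueness of cartesian lifts forces agreement with $\tilde{\gamma}$ up to a fibre equivalence. For this, I consider the $2$-simplex $\sigma\colon\Delta^2\otimes B\to\I{C}$ whose edges are $\beta$, $\alpha\vert_B$ and $\alpha\vert_B\beta$, and compare cartesian morphisms in the various edge-pullbacks via the dual of Proposition~\ref{prop:characterisationCocartesianMorphismLifting} together with the dual of Remark~\ref{rem:stabilityCocartesianMaps}; the hypothesis ensures that each edge-pullback is cartesian, which is what makes it possible to paste cartesian morphisms across the triangle. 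Once this compatibility is in hand, $\phi$ is seen to be an equivalence, so $\tilde{f}$ satisfies the mapping space criterion for all $z$ and $\beta$ and is thus $p$-cartesian, completing the proof.
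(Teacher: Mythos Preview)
Your overall strategy---produce a candidate cartesian lift from the edge-pullback and then verify the full cartesian property by testing against arbitrary $z$ and $\beta$---is reasonable, and it avoids the paper's reduction to $\I{C}\simeq\Delta^n$ via the sheaf property of $\Cocart^+$ and $\Cart^+$. However, the final paragraph contains a genuine gap.

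The problem is the claim that one can ``paste cartesian morphisms across the triangle''. To apply the dual of Remark~\ref{rem:stabilityCocartesianMaps} inside $\I{P}\vert_\sigma\to\Delta^2\otimes B$, both $\tilde{\beta}$ and $\tilde{f}\vert_B$ must be cartesian \emph{for that fibration}. For $\tilde{\beta}$ this is automatic: it lies over the initial edge $0\to 1$, so the only non-trivial test objects live over vertex $0$, and those are already covered by the cartesian property of $\tilde{\beta}$ in $\I{P}\vert_\beta$. But $\tilde{f}\vert_B$ lies over $1\to 2$, and its cartesian property in $\I{P}\vert_{\alpha\vert_B}$ only controls test objects over vertex $1$; nothing in the hypothesis tells you about test objects over vertex $0$. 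Put differently, your argument at this point would equally apply to an arbitrary functor $p$ all of whose $\Delta^1$-pullbacks are cartesian---but that condition (``locally cartesian'') does not imply cartesian in general, so the cocartesian hypothesis must be used somewhere, and you never invoke it.

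This is exactly what the paper's proof supplies. After reducing to $\I{C}=\Delta^n$, it chooses a \emph{cocartesian} lift $g\colon z\to w$ of the edge $j\to k$ (using that $p$ is cocartesian), which transports the test object $z$ to the vertex $k$ where the edge-cartesian property of $f$ applies; the resulting square with $g^\ast$ on the verticals then collapses the problem to the known case. You can import the same idea into your setup: given $z'$ over vertex $0$, take a $p$-cocartesian lift of $\beta$ starting at $z'$ to land over vertex $1$, and then the cartesian property of $\tilde{f}\vert_B$ in $\I{P}\vert_{\alpha\vert_B}$ finishes the job. Without this cocartesian step the argument does not close.
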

\begin{proof}
	The condition is clearly necessary, so it suffices to prove the converse. Since both $\Cocart^+$ and $\Cart^+$ are sheaves on $\mSimp\BB$ and on account of the equivalence $\I{C}^\sharp\simeq\colim (\Delta^n\otimes A)^\sharp$ in $\mSimp\BB$, we may assume $\I{C}\simeq\Delta^n\otimes A$. Using Remark~\ref{rem:BCCocart} (and its dual), we can furthermore reduce to the case $A\simeq 1$. By the dual of Lemma~\ref{lem:cocartesianFibrationPoset}, we need to show that for any $k<l$ in $\Delta^n$ and any object $x\colon A\to\I{P}\vert_l$, there exists a map $f\colon y\to x$ in $\I{P}$ such that $p(y)\simeq k$ and such that the map
	\begin{equation*}
	f_\ast\colon \map{\I{P}}(i_{\leq k}(-),y)\to\map{\I{P}}(i_{\leq k}(-),x)
	\end{equation*}
	is an equivalence. By assumption, the pullback $\I{P}\vert_{k< l}\to\Delta^1$ of $p$ along $(k< l)\colon\Delta^1\into\Delta^n$ is a cartesian fibration. We can therefore choose a map $f\colon y\to x$ that defines a cartesian morphism in $\I{P}\vert_{k<l}$. It will be sufficient to show that for every object $z\colon A\to \I{P}_{\leq k}$, the morphism
	\begin{equation*}
	f_\ast\colon \map{\I{P}}(i_{\leq k}(z),y)\to\map{\I{P}}(i_{\leq k}(z),x)
	\end{equation*}
	is an equivalence in $\Over{\BB}{A}$. As $z$ is locally contained in $\I{P}\vert_j$ for some $j\leq k$, we can furthermore assume that $z$ is already contained in $\I{P}\vert_j$, i.e.\ that $p(z)\simeq j$ holds. Let $g\colon z\to w$ be a cocartesian morphism in $\I{P}$ such that $p(w)\simeq k$. We then obtain a commutative square
	\begin{equation*}
	\begin{tikzcd}
		\map{\I{P}}(i_{\leq k}(w),y)\arrow[r, "f_\ast"]\arrow[d, "g^\ast"] & \map{\I{P}}(i_{\leq k}(w),x)\arrow[d, "g^\ast"]\\
		\map{\I{P}}(i_{\leq k}(z),y)\arrow[r, "f_\ast"] & \map{\I{P}}(i_{\leq k}(z),x).
	\end{tikzcd}
	\end{equation*}
	By Lemma~\ref{lem:cocartesianFibrationPoset}, the two vertical maps are equivalences. On the other hand, since $w$ defines an object in $\I{P}\vert_{k<l}$, the dual of Lemma~\ref{lem:cocartesianFibrationPoset} shows that the upper horizontal map is an equivalence on account of $f$ being a cartesian morphism in $\I{P}\vert_{k<l}$. Hence the claim follows. 
\end{proof}
\begin{proof}[{Proof of Proposition~\ref{prop:dualityRightLeftAdjoint}}]
	By making use of the straightening equivalence, there is a chain of equivalences
	\begin{equation*}
		\ICat_{\BB}\simeq \iFun{\Delta^\bullet}{\ICat_{\BB}}^\core\simeq (\ICocart_{\Delta^\bullet})^\core
	\end{equation*}
	of simplicial objects in $\BBB$. Moreover, since $\ICat_{\BB}^L\into \ICat_{\BB}$ is a subcategory, a functor $\Delta^n\to \ICat_{\BB}$ factors through $\ICat_{\BB}^L$ if and only if its restriction along $\Delta^1\otimes A\to \Delta^n$ factors through $\ICat_{\BB}^L$ for every morphism $f\colon \Delta^1\otimes A\to\Delta^n$ (see~\cite[Proposition~2.9.3]{Martini2021a}). By combining this observation with Corollary~\ref{cor:cartesianCocartesianFibrationInterval}, one concludes that a cocartesian fibration $p\colon \I{P}\to\Delta^n$ arises as the unstraightening of a functor $\Delta^n\to\ICat_{\BB}^R$ if and only if for every map $f\colon\Delta^1\otimes A\to\Delta^n$ the functor $\I{P}\vert_f\to\Delta^1\otimes A$ is also a cartesian fibration. By Lemma~\ref{lem:cartesianCocartesianFibrationCriterion}, this is in turn equivalent to $p$ being a cartesian fibration itself. As the same is true locally, the equivalence $\iFun{\Delta^n}{\ICat_{\BB}}^\core\simeq (\ICocart_{\Delta^n})^\core$ identifies $\iFun{\Delta^n}{\ICat_{\BB}^L}^\core\into\iFun{\Delta^n}{\ICat_{\BB}}^\core$ with the subobject $(\ICocart_{\Delta^n}^{\Cart})^\core\into(\ICocart_{\Delta^n})^\core$ that is spanned by the cartesian and cocartesian fibrations. Since taking opposite $\BB$-categories determines an equivalence $(\ICocart_{\Delta^n}^{\Cart})^\core\simeq (\ICocart_{(\Delta^n)^\op}^{\Cart})^\core$ that is natural in $n$, we obtain equivalences
	\begin{equation*}
	\iFun{\Delta^\bullet}{\ICat_{\BB}^L}^\core\simeq \iFun{(\Delta^\bullet)^\op}{\ICat_{\BB}^L}^\core\simeq\iFun{\Delta^\bullet}{(\ICat_{\BB}^L)^\op}^\core
	\end{equation*}
	of simplicial objects in $\BBB$ and thus an equivalence $\ICat_{\BB}^L\simeq(\ICat_{\BB}^L)^{\op}$. The desired result now follows by composing this map with $(-)^\op\colon(\ICat_{\BB}^L)^\op\simeq(\ICat_{\BB}^R)^\op$.
\end{proof}

\appendix
\section{The proof of Lemma~\ref{lem:markedSimplexOrdinaryCategory}}
\label{sec:appA}
The goal of this section is to show that the  $\infty$-category $\Delta_+$ as defined in \S~\ref{sec:markedSimplicialObjects} is a $1$-category. To that end, recall that since $+\in\Delta_+$ is the only object that is not contained in the essential image of the inclusion $\iota\colon\Delta\into\Delta_+$, it suffices to show that the two functors $\map{\Delta_+}(+,-)$ and $\map{\Delta_+}(-,+)$ take values in sets. Furthermore, by making use of the adjunctions $\flat\dashv \iota$ and $\iota \dashv\sharp$, there are equivalences $\map{\Delta_+}(\iota \ord{n},+)\simeq \map{\Delta}(\ord{n},\ord{1})$ and $\map{\Delta_+}(+,\iota \ord{n})\simeq\map{\Delta}(\ord{0},\ord{n})$ for all $n\geq 0$. Consequently, we only need to show that $\map{\Delta_+}(+,+)$ is a set. We will do so by explicitly constructing a simplicial model of this $\infty$-groupoid using the approach via necklaces due to Dugger and Spivak~\cite{Dugger2011}.

\subsection{The Dugger-Spivak model for mapping $\infty$-groupoids}
A \emph{necklace} is defined to be a simplicial set $T$ of the form
\begin{equation*}
T=\Delta^{n_0}\vee\cdots\vee \Delta^{n_k}
\end{equation*}
with $n_i\geq 0$ and where in each wedge the final vertex of $\Delta^{n_i}$ has been glued to the initial vertex of $\Delta^{n_{i+1}}$. Note that in the case $n_i=1$ for all $i=0,\dots,k$, the above necklace is precisely the $k$-spine $I^k$. Every necklace is naturally bi-pointed by its initial and final vertex and will therefore be regarded as an object in the $1$-category $\Under{(\Simp\Set)}{\partial\Delta^1}$. We let $\Nec$ be the full subcategory of $\Under{(\Simp\Set)}{\partial\Delta^1}$ that is spanned by the necklaces. Now if $S$ is a simplicial set and if $s,t\in S$ are vertices, we denote by $S_{(s,t)}$ the associated bi-pointed simplicial set. The main input to our proof of Lemma~\ref{lem:markedSimplexOrdinaryCategory} is the following theorem:
\begin{theorem}[{\cite[Theorem~1.2]{Dugger2011}}]
	\label{thm:DuggerSpivak}
	Let $S$ be a simplicial set and let $S\to\CC$ be a fibrant replacement in the Joyal model structure on $\Simp\Set$. Given two vertices $s,t\in S$, the mapping $\infty$-groupoid $\map{\CC}(s,t)$ is equivalent to $(\Over{\Nec}{S_{(s,t)}})^\gp$.
\end{theorem}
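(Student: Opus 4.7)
The plan is to combine Lurie's rigidification--coherent nerve Quillen equivalence with an explicit necklace description of the rigidification functor. Recall the adjunction $\mathfrak{C}\colon\Simp\Set\rightleftarrows\Cat(\Simp\Set)\colon N^{\mathrm{hc}}$ (with $\Cat(\Simp\Set)$ denoting simplicially enriched categories), which is a Quillen equivalence from the Joyal model structure to the Bergner model structure by~\cite[Theorem~2.2.5.1]{htt}. Since $\mathfrak{C}$ is left Quillen it preserves weak equivalences, and for Joyal-fibrant $\CC$ the simplicial set $\mathfrak{C}[\CC](s,t)$ is a model for the mapping $\infty$-groupoid $\map{\CC}(s,t)$. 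It therefore suffices to produce a natural weak equivalence $\mathfrak{C}[S](s,t)\simeq(\Over{\Nec}{S_{(s,t)}})^{\gp}$ of simplicial sets, naturally in the bi-pointed simplicial set $S_{(s,t)}$.

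The first observation is that for any necklace $T=\Delta^{n_0}\vee\cdots\vee\Delta^{n_k}$ with endpoints the initial and final vertex $t$, the simplicial set $\mathfrak{C}[T](0,t)$ is canonically a product of cubes $(\Delta^1)^{n_i-1}$ (from the explicit formula $\mathfrak{C}[\Delta^n](0,n)=N(\mathcal{P}_0(\{1,\dots,n-1\}))$) and is therefore contractible. Consequently, each object $T\to S_{(s,t)}$ of $\Over{\Nec}{S_{(s,t)}}$ yields a contractible simplicial set together with a canonical map into $\mathfrak{C}[S](s,t)$, and these assemble into a functor from $\Over{\Nec}{S_{(s,t)}}$ to the slice of $\Simp\Set$ over $\mathfrak{C}[S](s,t)$.

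The essential combinatorial step is to show that $\mathfrak{C}[S](s,t)$ is the homotopy colimit of the cube-valued diagram $(T\to S_{(s,t)})\mapsto\mathfrak{C}[T](0,t)$ indexed by $\Over{\Nec}{S_{(s,t)}}$. I would model this by a bisimplicial set $E_{\bullet,\bullet}$ whose column of degree $n$ is the coproduct, over $n$-fold chains $T_0\to\cdots\to T_n$ of necklaces in $\Over{\Nec}{S_{(s,t)}}$, of the simplicial set $\mathfrak{C}[T_n](0,t)$. Then I would show (a) that the diagonal of $E_{\bullet,\bullet}$ is weakly equivalent to $\mathfrak{C}[S](s,t)$ by analysing how every non-degenerate simplex of $\mathfrak{C}[S](s,t)$ admits a canonical necklace factorisation of the underlying chain of simplices of $S$, and (b) that the column-wise fibres are contractible so that the diagonal collapses, via row-wise realisation and the Bousfield--Kan description of homotopy colimits of simplicial diagrams, to the nerve of $\Over{\Nec}{S_{(s,t)}}$, whose weak homotopy type is by definition $(\Over{\Nec}{S_{(s,t)}})^{\gp}$.

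The main obstacle is step (a), which requires a careful bookkeeping of non-degenerate simplices of $\mathfrak{C}[S](s,t)$ in terms of necklace factorisations, matching the cubical combinatorics of $\mathfrak{C}[\Delta^n](0,n)$ with the wedge decompositions of necklaces landing in $S_{(s,t)}$. This is the technical heart of~\cite{Dugger2011}, and in a full write-up I would follow their argument essentially verbatim, translating between their language of mapping simplicial sets in simplicially enriched categories and our language of $\infty$-categorical mapping groupoids via the Quillen equivalence above.
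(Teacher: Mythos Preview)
The paper does not prove this statement at all: Theorem~\ref{thm:DuggerSpivak} is quoted verbatim from~\cite{Dugger2011} and used as a black box in Appendix~\ref{sec:appA}. There is therefore no ``paper's own proof'' to compare against.

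That said, your sketch is a reasonable outline of the actual Dugger--Spivak argument. The essential idea---expressing $\mathfrak{C}[S](s,t)$ as a homotopy colimit over the necklace category of contractible cubes, then collapsing to the nerve of $\Over{\Nec}{S_{(s,t)}}$---is correct in spirit. A couple of small remarks: the phrase ``since $\mathfrak{C}$ is left Quillen it preserves weak equivalences'' is fine here because every simplicial set is Joyal-cofibrant, but you should say so; and the identification of $\mathfrak{C}[\CC](s,t)$ with $\map{\CC}(s,t)$ for fibrant $\CC$ is itself a nontrivial comparison (the content of~\cite[Theorem~2.2.0.1]{htt} or the Dugger--Spivak companion paper), not merely a formal consequence of the Quillen equivalence. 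Your acknowledgement that step~(a) is ``the technical heart'' and that you would follow Dugger--Spivak verbatim is honest: the precise bookkeeping of flanked, flanking, and totally non-degenerate necklaces is where the work lies, and your bisimplicial model is one of several equivalent ways to organise it.
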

Now let $K$ be the simplicial set that is defined by the pushout
\begin{equation*}
\begin{tikzcd}
\Delta^1\arrow[d, "\sigma_0", hookrightarrow]\arrow[r, hookrightarrow, "d_{\{0,2\}}"] & \Delta^2\arrow[d, hookrightarrow]\\
\Delta\arrow[r, hookrightarrow] & K
\end{tikzcd}
\end{equation*}
in $\Simp\Set$, where we implicitly identify the three $1$-categories with their associated nerves. We will again denote by $+$ the image of $\{1\}\in\Delta^2$ in K, and we will implicitly identify $\Delta$ with its image in $K$. Any fibrant replacement of $K$ in the Joyal model structure on $\Simp\Set$ will be a model for $\Delta_+$. Therefore, by making use of Theorem~\ref{thm:DuggerSpivak}, the $\infty$-groupoid $\map{\Delta_+}(+,+)$ is presented by the nerve of the $1$-category $\Over{\Nec}{K_{(++)}}$. Lemma~\ref{lem:markedSimplexOrdinaryCategory} will thus be an immediate consequence of the following proposition:
\begin{proposition}
	\label{prop:HomotopyTypeNecklaces}
	The $\infty$-groupoid $(\Over{\Nec}{K_{(++)}})^\gp$ is a set.
\end{proposition}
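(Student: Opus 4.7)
The plan is to first invoke Theorem~\ref{thm:DuggerSpivak}: the mapping $\infty$-groupoid $\map{\Delta_+}(+,+)$ is equivalent to $(\Over{\Nec}{K_{(++)}})^\gp$, so it suffices to show that the classifying space of the $1$-category $\Over{\Nec}{K_{(++)}}$ is $0$-truncated. As a preparatory step, I would analyze the simplicial set $K$ directly: since $K$ is obtained from $\Delta$ by attaching a single $2$-simplex along the edge $\sigma^0 \colon \langle 1 \rangle \to \langle 0 \rangle$, its nondegenerate simplices containing the vertex $+$ are precisely the vertex $+$ itself, the two edges $\alpha \colon \langle 1 \rangle \to +$ and $\beta \colon + \to \langle 0 \rangle$ corresponding to $\{0,1\}$ and $\{1,2\}$ of the attached $\Delta^2$, and the unique $2$-simplex $\gamma$ with faces $(\beta,\sigma^0,\alpha)$.

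The heart of the argument is to introduce the full subcategory $\Nec^0 \subseteq \Over{\Nec}{K_{(++)}}$ consisting of those pairs $(T,f)$ for which every bead of $T$ maps nondegenerately to $K$ without $+$ occurring as an interior vertex (i.e.\ no bead factors through $\gamma$ with $+$ in the middle, nor is any bead degenerately constant at $+$). Using the structural description above, the objects of $\Nec^0$ split into two classes: the constant necklace $(\Delta^0, +)$, and necklaces of the form $\Delta^1_{\beta} \vee T' \vee \Delta^1_{\alpha}$ where $T'$ is a necklace equipped with a map to $\Delta$ sending its endpoints to $\langle 0 \rangle$ and $\langle 1 \rangle$ respectively. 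A short morphism analysis (using that any map of bi-pointed simplicial sets over $K$ must send $+$ to $+$) shows that these two classes are disconnected in $\Over{\Nec}{K_{(++)}}$, and that the second class is equivalent as a $1$-category to $\Over{\Nec}{\Delta_{(\langle 0 \rangle, \langle 1 \rangle)}}$.

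The main obstacle will be to prove that the inclusion $\iota \colon \Nec^0 \hookrightarrow \Over{\Nec}{K_{(++)}}$ induces a weak equivalence on nerves. My plan is to construct a canonical \emph{normalization functor} $\rho \colon \Over{\Nec}{K_{(++)}} \to \Nec^0$: given $(T,f)$, one replaces every bead factoring through the $2$-simplex $\gamma$ by two beads meeting at $+$, and systematically contracts beads that are degenerately constant at $+$. Functoriality of $\rho$ follows from the observation that a morphism in $\Over{\Nec}{K_{(++)}}$ must carry vertices lying over $+$ to vertices lying over $+$, which are precisely the new wedge points introduced by normalization. The construction furnishes $\rho \circ \iota \simeq \id_{\Nec^0}$ together with a natural refinement morphism $\rho(T,f) \to (T,f)$; assembling these refinements fibre-wise yields a simplicial homotopy between $\iota \rho$ and $\id_{\Over{\Nec}{K_{(++)}}}$, from which the nerve-level equivalence follows.

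Granting this, the proof concludes by applying Theorem~\ref{thm:DuggerSpivak} a second time, now to the simplicial set $\Delta$: since $\Delta$ is (the nerve of) a $1$-category, its nerve is already fibrant in the Joyal model structure and $(\Over{\Nec}{\Delta_{(\langle 0 \rangle, \langle 1 \rangle)}})^\gp \simeq \map{\Delta}(\langle 0 \rangle, \langle 1 \rangle)$ is the two-element set $\{d^0,d^1\}$. Combining with the decomposition of $\Nec^0$ established above, we obtain $(\Over{\Nec}{K_{(++)}})^\gp \simeq \{\ast\} \sqcup \map{\Delta}(\langle 0 \rangle, \langle 1 \rangle)$, a discrete three-element set. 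This completes the proof of Proposition~\ref{prop:HomotopyTypeNecklaces} and, by the discussion preceding it, of Lemma~\ref{lem:markedSimplexOrdinaryCategory}.
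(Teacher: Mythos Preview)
Your approach has a genuine gap in the description of $\Nec^0$. You assert that its objects are exhausted by the constant necklace $(\Delta^0,+)$ together with necklaces of the form $\Delta^1_\beta\vee T'\vee\Delta^1_\alpha$ with $T'$ mapping into $\Delta$. This is false: the necklace
\[
D=\Delta^1_\beta\vee\Delta^1_{\delta^i}\vee\Delta^1_\alpha\vee\Delta^1_\beta\vee\Delta^1_{\delta^j}\vee\Delta^1_\alpha
\]
(passing through $+$ at an interior joint) has every bead a non-degenerate edge and no bead with $+$ as an interior vertex, so $D\in\Nec^0$; yet it is not of the stated form. Worse, inside $\Nec^0$ such multi-excursion necklaces are disconnected from the single-excursion ones. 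Indeed, any bi-pointed map between $D$ and a single-excursion necklace $S=\Delta^1_\beta\vee T'\vee\Delta^1_\alpha$ must send each $\beta$-edge to the unique $\beta$-edge of the target and each $\alpha$-edge to the unique $\alpha$-edge, which forces an identification of the initial and terminal $+$ of the target, a contradiction. The necklaces in the ambient category that \emph{do} connect $D$ to single-excursion necklaces are precisely those carrying a bead over the $2$-simplex $\gamma$ (so that the long edge $d_{\{0,2\}}$ over $\sigma^0$ becomes available), and you have excluded exactly these from $\Nec^0$. Consequently, even granting the retraction onto $\Nec^0$, its classifying space has infinitely many components rather than three. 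A smaller issue: your ``natural refinement morphism $\rho(T,f)\to(T,f)$'' does not exist as stated, since splitting beads through $\gamma$ produces a map \emph{to} $(T,f)$ while contracting beads constant at $+$ produces a map \emph{from} $(T,f)$; at best you obtain a zig-zag.

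The paper sidesteps these difficulties with a coarser decomposition. It splits $\Over{\Nec}{K_{(++)}}$ into the necklaces that are entirely constant at $+$ (these have a terminal object, hence contractible classifying space) and the complement. On the non-degenerate part it applies Quillen's theorem~A to the functor $\Over{\Nec}{K_{(\ord 0,+)}}\to\Over{\Nec}{K_{(++)}}^{\mathrm{nondeg}}$ that prepends the edge $\beta$, showing each slice has an initial object obtained by collapsing the initial run of beads down to a single $\Delta^1$. This single reduction absorbs all the multi-excursion phenomena at once, and the source category computes $\map{\Delta_+}(\ord 0,+)\simeq\map{\Delta}(\ord 0,\ord 1)$, already known to be a set via the adjunction $\iota\dashv\sharp$.
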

In order to prove Proposition~\ref{prop:HomotopyTypeNecklaces}, we need to understand the $1$-category $\Over{\Nec}{K_{(++)}}$ in more detail. This is the content of the next section.

\subsection{Necklaces in $K$}
 By construction, there is a unique non-degenerate edge $\alpha\colon\ord{1}\to +$ in $K$ with codomain $+$. Similarly, there is a unique non-degenerate edge $\beta\colon+\to \ord{0}$ in $K$ with domain $+$. Therefore, an arbitrary object $f\colon\Delta^{n_0}\vee\cdots\vee\Delta^{n_k}\to K$ in $\Over{\Nec}{K_{(+,+)}}$ satisfies exactly one of the two disjoint conditions:
\begin{enumerate}
	\item for all $0\leq i\leq k$, the $n_i$-simplex $\sigma_k\colon\Delta^{n_k}\to K$ factors through $+\colon\Delta^0\to K$;
	\item there are indices $0\leq l<r\leq k$ such that
	\begin{enumerate}
	\item for all $i<l$ and all $i>r$ the simplex $\sigma_i\colon \Delta^{n_i}\to K$ factors through $+\colon\Delta^0\to K$,
	\item the simplex $\sigma_{l}\colon \Delta^{n_l}\to K$ factors uniquely into a surjection $\Delta^{n_l}\onto \Delta^1$ followed by $\beta\colon\Delta^1\to K$,
	\item  the simplex $\sigma_{r}\colon \Delta^{n_r}\to K$ factors uniquely into a surjection $\Delta^{n_r}\onto\Delta^1$ followed by $\alpha\colon\Delta^1\to K$.
	\end{enumerate}
\end{enumerate}
We say that an object in $\Over{\Nec}{K_{(++)}}$ is \emph{degenerate} if it satisfies condition~(1), and \emph{non-degenerate} otherwise.
\begin{lemma}
	\label{lem:mapsDegenerateNondegenerateNecklaces}
	There are no maps between a degenerate and a non-degenerate object in $\Over{\Nec}{K_{(++)}}$.
\end{lemma}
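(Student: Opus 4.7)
My plan is to split according to which of $T$, $T'$ is degenerate and rule out each possibility. The direction with $T$ non-degenerate and $T'$ degenerate will be immediate from functoriality: if the structure map $T' \to K$ factors through $+$, then so does the composition $T \xrightarrow{f} T' \to K$, which coincides with the structure map of $T$. Hence $T$ would be degenerate, contradicting our assumption.

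The substantial case is $T$ degenerate, $T'$ non-degenerate. Since $T \to K$ is constant at $+$, the image $f(T) \subset T'$ is contained in the subsimplicial set $P \subset T'$ given by the preimage of the vertex $+$. On the other hand, $f$ preserves the bi-pointing and $T$ is connected, so $f(T)$ is a connected subsimplicial set of $T'$ containing both the initial and the final vertex of $T'$. The strategy is to derive a contradiction by exhibiting a vertex $v \in T' \setminus P$ whose removal disconnects $T'$ into two pieces, separating its initial from its final vertex.

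Choose indices $0 \leq l < r \leq k$ witnessing the non-degeneracy of $T'$ and take $v \in T'$ to be the vertex shared by beads $l$ and $l+1$, namely the terminal vertex $n_l$ of $\Delta^{n_l}$ identified with the initial vertex $0$ of $\Delta^{n_{l+1}}$. Because bead $l$ factors uniquely as a surjection $\Delta^{n_l}\onto\Delta^1$ followed by $\beta\colon\Delta^1\to K$, and surjections in $\Delta$ preserve terminal vertices, $v$ maps to $\beta(1) = \ord{0} \neq +$, so $v \notin P$. The main step is then to verify that $v$ is a cut vertex of $T'$: on geometric realisations, $|T'| \setminus \{v\}$ decomposes as the disjoint union of the open subspace obtained from beads $0, \ldots, l$ by removing the corner $v$ of the last bead and the analogous open subspace obtained from beads $l+1, \ldots, k$, with the initial and final vertex of $T'$ lying in distinct components. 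Since $f(T) \subset P \subset T' \setminus \{v\}$ is connected but would have to meet both components, the desired contradiction follows. The cut-vertex assertion is the only nontrivial ingredient; it is transparent from the picture of a necklace as a chain of simplices glued at single vertices and requires only a direct check on geometric realisations.
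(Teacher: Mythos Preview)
Your proof is correct and follows essentially the same strategy as the paper: the first direction is handled identically via factoring through the terminal degenerate object, and the second direction in both cases reduces to the observation that a bi-pointed map from a degenerate necklace into a non-degenerate one would have to pass through a joint vertex lying over $\ord{0}\neq +$. The only cosmetic difference is that the paper reduces to the spine $I^n$ of the degenerate necklace and argues combinatorially that any edge-path in a necklace must visit every joint, whereas you phrase the same obstruction via connectedness of $|f(T)|$ and the cut-vertex property of the joint in $|T'|$.
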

\begin{proof}
	Let us fix a degenerate object $f\colon\Delta^{n_0}\vee\cdots\vee\Delta^{n_k}\to K$ and a non-degenerate object $g\colon\Delta^{n_0}\vee\cdots\vee\Delta^{n_l}\to K$. Note that there is always a unique map from $f$ to the degenerate object $+\colon\Delta^0\to K$. Therefore, if there were a map $g\to f$ in $\Over{\Nec}{K_{(+,+)}}$, we would in particular obtain a map $g\to +$, which would however imply that $g$ is degenerate. Conversely, note that if we set $n=\sum_{i=0}^k n_i$, the inclusion of the spine $I^{n_i}\into \Delta^{n_i}$ for all $i=0,\dots k$ induces an inclusion $I^n\into \Delta^{n_0}\vee\cdots\vee\Delta^{n_k}$ of bi-pointed simplicial sets that in turn gives rise to a degenerate object $f^\prime\colon I^n\to K$ in $\Over{\Nec}{K_{(+,+)}}$. Therefore, any map $f\to g$ in $\Over{\Nec}{K_{(+,+)}}$ restricts to a map $f^\prime\to g$, which is clearly not possible as this would imply that the image of $f^\prime$ in $K$ contains objects that are different from $+$.
\end{proof}
As a consequence of Lemma~\ref{lem:mapsDegenerateNondegenerateNecklaces}, there is a decomposition
\begin{equation*}
\Over{\Nec}{K_{(+,+)}}\simeq \Over{\Nec}{K_{(+,+)}}^{\text{deg}}\sqcup\Over{\Nec}{K_{(+,+)}}^{\text{nondeg}}
\end{equation*}
of $\Over{\Nec}{K_{(+,+)}}$ into its degenerate and non-degenerate part. Together with the fact that the groupoidification functor $(-)^\gp\colon\CatS\to \SS$ commutes with colimits, this implies that we may treat the degenerate and the non-degenerate part of $\Over{\Nec}{K_{(+,+)}}$ separately.

\subsection{Proof of Proposition~\ref{prop:HomotopyTypeNecklaces}}
The computation of the groupoidification of $\Over{\Nec}{K_{(+,+)}}^{\text{deg}}$ is easy: as observed in the proof of Lemma~\ref{lem:mapsDegenerateNondegenerateNecklaces}, this category has a final object $+\colon \Delta^0\to K$, which immediately implies:
\begin{lemma}
	\label{lem:homotopyTypeDeg}
	There is an equivalence $(\Over{\Nec}{K_{(+,+)}}^{\mathrm{deg}})^\gp\simeq 1$.\qed
\end{lemma}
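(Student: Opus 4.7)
The plan is to show that $\Over{\Nec}{K_{(+,+)}}^{\mathrm{deg}}$ admits a terminal object; once this is established, the equivalence $(\Over{\Nec}{K_{(+,+)}}^{\mathrm{deg}})^\gp\simeq 1$ is immediate, since the groupoidification of any $\infty$-category with a terminal object is contractible. The candidate is the object $+\colon\Delta^0\to K$, where $\Delta^0$ is regarded as a necklace in the degenerate case $k=0$ and $n_0=0$, with its single vertex playing the role of both the initial and the final basepoint.

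To verify terminality, I would fix an arbitrary degenerate object $f\colon T\to K$ with underlying necklace $T=\Delta^{n_0}\vee\cdots\vee\Delta^{n_k}$ and analyse morphisms $T\to \Delta^0$ in $\Under{(\Simp\Set)}{\partial\Delta^1}$. Any such bi-pointed morphism must send both basepoints of $T$ to the unique vertex of $\Delta^0$, which already forces the map to be the unique constant map; so there is exactly one morphism $T\to\Delta^0$ of bi-pointed simplicial sets, hence at most one morphism in the slice. Postcomposing this constant map with $+\colon\Delta^0\to K$ produces the constant-at-$+$ map $T\to K$, and by the definition of degeneracy (condition~(1) of the preceding discussion) each $\sigma_i$ factors through $+$, so $f$ itself coincides with this constant map. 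Hence the unique candidate morphism $T\to\Delta^0$ actually lives over $K$, establishing $+\colon\Delta^0\to K$ as a terminal object of $\Over{\Nec}{K_{(+,+)}}^{\mathrm{deg}}$.

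I do not anticipate any substantive obstacle: the whole argument is just unpacking the definitions of the slice category, of the bi-pointed structure, and of degenerate objects. The only subtlety worth highlighting is the verification that $\Delta^0$ is itself a bi-pointed necklace in the sense of the paper (allowed by $n_0=0$), and that composing the forced constant map $T\to\Delta^0$ with $+$ recovers $f$ precisely because $f$ is degenerate; once these are noted, the result follows in one line.
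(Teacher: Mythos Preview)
Your proposal is correct and matches the paper's approach exactly: the paper simply observes (in the proof of the preceding lemma) that $+\colon\Delta^0\to K$ is a final object of $\Over{\Nec}{K_{(+,+)}}^{\mathrm{deg}}$ and concludes immediately. Your write-up just spells out the verification of terminality that the paper leaves implicit.
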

In order to compute the groupoidification of $\Over{\Nec}{K_{(+,+)}}^{\text{nondeg}}$, on the other hand, we need one additional step. Recall that we denote by $\alpha\colon \Delta^1\to K$ the map that picks out the unique $1$-simplex $+\to\ord{0}$. We obtain an evident functor
\begin{equation*}
\alpha\vee -\colon \Over{\Nec}{K_{(0,+)}}\to \Over{\Nec}{K_{(+,+)}}^{\mathrm{nondeg}},\quad (f\colon T\to K)\mapsto (\alpha\vee f\colon\Delta^1\vee T\to K).
\end{equation*}
\begin{lemma}
	\label{lem:precompositionNecklacesFinal}
	The functor $\alpha\vee -$ is homotopy final.
\end{lemma}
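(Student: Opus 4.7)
The plan is to invoke Quillen's Theorem A in its $1$-categorical form: a functor $F\colon\CC\to\DD$ between $1$-categories is homotopy final if and only if, for every object $d$ of $\DD$, the comma category $d\downarrow F$ has weakly contractible nerve. Accordingly, I will fix an object $g\colon T\to K$ of $\Over{\Nec}{K_{(+,+)}}^{\mathrm{nondeg}}$ and prove that the under-category $g\downarrow(\alpha\vee-)$ has weakly contractible nerve by exhibiting an initial object. An object of this under-category is a pair consisting of $(f\colon T'\to K)\in\Over{\Nec}{K_{(0,+)}}$ together with a morphism of bi-pointed necklaces $\phi\colon T\to\Delta^1\vee T'$ satisfying $(\alpha\vee f)\circ\phi=g$; a morphism is the evident commuting triangle under $g$.

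The heart of the argument is the rigidity of $K$ around the vertex $+$. Exactly two non-degenerate edges of $K$ are incident to $+$, namely $\alpha$ and $\beta$, and the only non-degenerate higher simplex involving $+$ is the $2$-simplex with faces $\alpha$, $\beta$ and $\sigma_0$. Combined with the structural classification of non-degenerate objects from \S~A, this means $g$ admits a canonical decomposition with a unique $\beta$-type bead at some index $l$ and a unique $\alpha$-type bead at some index $r$, flanked by beads mapping to the point $+$. I will use this to produce a preferred object $(f_g,\phi_g)$ of the under-category: $f_g$ is obtained from $g$ by excising the initial portion of $g$ corresponding to the $\alpha$-bead and its preceding $+$-beads, yielding a sub-necklace of $T$; the morphism $\phi_g\colon T\to\Delta^1\vee T_g$ collapses this excised portion onto $\Delta^1$ via the unique surjection compatible with $\alpha$ and embeds the remainder identically into $T_g$.

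The main obstacle will be the verification of universality of $(f_g,\phi_g)$: given any other object $(f,\phi)$, I need to produce a unique morphism $(f_g,\phi_g)\to(f,\phi)$, equivalently a unique necklace morphism $T_g\to T'$ compatible with $f$ and with $\phi$. Uniqueness reduces to showing that $\phi$ is forced to respect the canonical splitting of $T$: the beads of $T$ mapping to $+$, together with the $\alpha$-bead, can only land in the $\Delta^1$-factor of $\Delta^1\vee T'$ (since edges between the two factors of the wedge must pass through the wedge vertex, which maps to $+$, and non-degenerate edges at $+$ in $K$ are precisely $\alpha$ and $\beta$), whereas the remaining portion must embed into $T'$. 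This rigidity, together with the fact that a simplex of $K$ having a specific image factors in at most one way through $\alpha$ (by the last clause of the case-analysis recalled before the lemma), forces the required morphism to be unique and its existence is then immediate from reading off $\phi$ on the non-excised portion of $T$. The combinatorial bookkeeping here — particularly handling the case where a single bead of $T$ is sent by $\phi$ across the wedge point via a degeneration — is the only delicate step, and is controlled by the non-degeneracy of $g$.
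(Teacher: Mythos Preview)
Your approach is the same as the paper's: invoke Quillen's Theorem~A and exhibit an initial object of $g\downarrow(\alpha\vee-)$ by collapsing the leading $+$-beads together with the bead at index $l$ onto the $\Delta^1$ factor, and taking the tail $T_g=\Delta^{n_{l+1}}\vee\cdots\vee\Delta^{n_k}$ with the restricted map as the object $f_g$. The paper simply writes down this object and asserts initiality; your added verification is welcome.

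One slip in the justification: the wedge vertex of $\Delta^1\vee T'$ maps to $\ord{0}$, not to $+$ (it is the terminal vertex of the $\Delta^1$ factor, and that factor parametrises the edge $+\to\ord{0}$). The correct reason the leading beads of $T$ are forced into the $\Delta^1$ factor is not a constraint at the wedge vertex but rather bi-pointedness plus the fact that every simplex of a wedge along a point lies entirely in one factor. Concretely: the initial vertex of $T$ goes to vertex $0$ of $\Delta^1$; hence bead $0$ lies in the $\Delta^1$ factor; if $0<l$ it maps to $+$ and so must collapse onto vertex $0$; inductively all beads $i<l$ collapse there; bead $l$ then starts at vertex $0$ and, since it lies over the edge $+\to\ord{0}$, must surject onto $\Delta^1$ via the given degeneracy $\tau$; its terminal vertex therefore lands at the wedge vertex, so beads $l+1,\ldots,k$ map into $T'$. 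This pins down $\phi$ on the first $l+1$ beads uniquely and forces the factoring map $\psi\colon T_g\to T'$ to be the restriction of $\phi$, which is the initiality statement you need. The worry about a bead ``crossing the wedge point via a degeneration'' is empty: no nondegenerate simplex of a wedge along a point meets both factors.
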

\begin{proof}
	Let us fix an arbitrary object $f\colon \Delta^{n_0}\vee\cdots\vee\Delta^{n_k}\to K$ in $\Over{\Nec}{K_{(+,+)}}^{\mathrm{nondeg}}$. By Quillen's theorem A, it suffices to show that the category $\Under{(\Over{\Nec}{K_{(0,+)}})}{f}$ admits an initial object. Recall that we denote by $0\leq l$ the largest index such that for all $i<l$ the simplex $\sigma_i\colon \Delta^{n_i}\to K$ factors through $+\colon\Delta^0\to K$ and such that $\sigma_l$ factors into a surjection $\tau\colon\Delta^{n_l}\onto\Delta^1$ followed by $\beta\colon\Delta^1\to K$. We may therefore construct a map
	\begin{equation*}
	\Delta^{n_0}\vee\cdots\vee\Delta^{n_k}\to \Delta^{1}\vee\Delta^{n_{k_l+1}}\vee\cdots\vee\Delta^{n_k}
	\end{equation*}
	over $K$ that sends $\Delta^{n_i}$ to the initial object for all $i< k_l$, that sends $\Delta^{k_l}$ to $\Delta^1$ via the degeneracy map $\tau$ and that acts as the identity on the remaining summands. This map defines the desired initial object in $\Under{(\Over{\Nec}{K_{(0,+)}})}{f}$.
\end{proof}

\begin{proof}[{Proof of Proposition~\ref{prop:HomotopyTypeNecklaces}}]
	By Lemma~\ref{lem:homotopyTypeDeg}, the $\infty$-groupoid $(\Over{\Nec}{K_{(+,+)}}^{\mathrm{deg}})^\gp$ is a set. By Lemma~\ref{lem:precompositionNecklacesFinal}, the functor $\alpha\vee -$ induces an equivalence
	\begin{equation*}
	(\Over{\Nec}{K_{(+,+)}}^{\mathrm{nondeg}})^\gp\simeq(\Over{\Nec}{K_{(\ord{0},+)}})^\gp.
	\end{equation*}
	Since the right-hand side is equivalent to $\map{\Delta_+}(\ord{0},+)$ by Theorem~\ref{thm:DuggerSpivak}, this is a set as well.
\end{proof}

\bibliographystyle{halpha}
\bibliography{references.bib}

\end{document}